\newtheorem{thm}{Theorem}[section] 
\newtheorem*{thm*}{Theorem}
\newtheorem{cor}[thm]{Corollary}
\newtheorem{prop}[thm]{Proposition}
\newtheorem{conj}[thm]{Conjecture}
\newtheorem{lem}[thm]{Lemma}
\theoremstyle{definition} 
\newtheorem{defn}[thm]{Definition}
\newtheorem{ex}[thm]{Example} 
\newtheorem{con}[thm]{Construction} 
\newtheorem{ques}[thm]{Question}
\theoremstyle{remark}
\newtheorem{rem}[thm]{Remark}
\newtheorem{notation}[thm]{Notation}
\numberwithin{equation}{section}
\newcommand{\rk}[0]{\operatorname{rk}}
\newcommand{\codim}[0]{\operatorname{codim}}
\newcommand{\reg}{{\rm{reg}}}
\newcommand{\Aut}{\mathrm{Aut}}
\newcommand{\Simp}{\mathrm{Sim}}
\newcommand{\R}{\mathbb{R}}
\newcommand{\Z}{\mathbb{Z}}
\newcommand{\C}{\mathbb{C}}
\newcommand{\Q}{\mathbb{Q}}
\newcommand{\mP}{\mathbb{P}}
\newcommand{\B}{\mathbb{B}}
\newcommand*\bigcdot{\mathpalette\bigcdot@{.5}}
\newcommand*\bigcdot@[2]{\mathbin{\vcenter{\hbox{\scalebox{#2}{$\m@th#1\bullet$}}}}}
\title[Inequalities of Miyaoka-Yau Type \& Uniformisation]{Inequalities of Miyaoka-Yau Type \& Uniformisation of varieties of intermediate Kodaira Dimension}
\author{Niklas M\"uller}
\address{Department of Mathematics, Universit\"at Freiburg,
Ernst-Zermelo-Str. 1, 79104 Freiburg, Germany.}
\email{{\tt niklas.mueller@math.uni-freiburg.de}}
\date{\today}
\subjclass[2020]{Primary 14E30, Secondary 14D05, 14D06, 32Q30.}
\keywords{Abundance conjecture, numerical Kodaira dimension, projective, Bogomolov-Gieseker inequalities, Higgs sheaves, nonabelian
Hodge correspondence, Miyaoka’s inequality, uniformization theorems}
\begin{document}


    \begin{abstract}
        In this paper we present, for any integers $0\leq \nu \leq n$, a set of inequalities satisfied by the Chern classes of any minimal complex projective variety of dimension $n$ and numerical dimension $\nu$. 
        In the cases where $\nu$ is either very small or very large compared with $n$, this recovers many previously known results. 
        We demonstrate that our inequalities are sharp by providing an explicit characterisation of those varieties achieving the equality; our proof, in particular, resolves the Abundance conjecture in this situation. 
        Additionally, we provide some new examples of varieties with extremal Chern classes that demonstrate the optimality of our results.
    \end{abstract}
    
    \maketitle


    \setcounter{tocdepth}{2}
    \tableofcontents

    
    \section{Introduction}
\label{section-Introduction}

\renewcommand{\thethm}{\Alph{thm}}
\renewcommand{\theequation}{\arabic{equation}}

Recall the following well-known result which is an immediate consequence of the Enriques-Kodaira classification, see \cite[§ VI]{BarthHulekPetersVen_CompactSurfaces}, and work of 
Miyaoka \cite{Miyaoka_ChernNumbersSurfacesGeneralType, Miyaoka_SurfacesWith3c2=c1^2} and Yau \cite{Yau_KEMetrics}.

\begin{thm*}\emph{(Enriques, Kodaira, Miyaoka, Yau)}
\label{INTRO-thm-surfaces}

    \noindent
    Let $X$ be a minimal smooth projective surface which is not uniruled. Then
    \begin{equation}
        3c_2(X) \geq c_1(X)^2.
        \label{INTRO-eq-BMY}
    \end{equation}
    Moreover, the equality in \eqref{INTRO-eq-BMY} is attained if and only if there exists a finite \'etale cover $\pi\colon X '\rightarrow X$ by one of the following varieties:
   \begin{itemize}
       \item[\emph{(1)}] $X'= A$ is an Abelian variety.
       \item[\emph{(2)}] $X'\cong E\times C$ is the product of an elliptic curve $E$ and a curve $C$ of genus $g(C) \geq 2$.
       \item[\emph{(3)}] $X'\cong \B^2/\Lambda$ is a quotient of the complex unit ball $\B^2 := \{ z\in \C^2 \ | \ \| z \| <1\}$.
   \end{itemize}
\end{thm*}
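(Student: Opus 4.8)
The plan is to argue by the Enriques--Kodaira classification, organised according to the Kodaira dimension $\kappa(X)\in\{0,1,2\}$ --- a minimal surface which is not uniruled has $K_X$ nef, hence $\kappa(X)\ge 0$. Before splitting into cases I record two reductions. First, both sides of \eqref{INTRO-eq-BMY} are multiplicative in the degree of a finite \'etale cover, and the properties of being minimal and not uniruled are preserved by such covers; so the inequality, and the equality, for $X$ are equivalent to the same statement for any finite \'etale cover, and for the classification it suffices to exhibit one cover landing in (1)--(3). Second, conversely, each of (1)--(3) does realise equality: $c_1^2=c_2=0$ for an abelian surface and for $E\times C$ (since $c_1(E\times C)$ is pulled back from $C$ and $e(E\times C)=e(E)e(C)=0$), while $3c_2=c_1^2$ on $\B^2/\Lambda$ by Hirzebruch proportionality for the Bergman metric. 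So it remains to treat $X$ itself case by case.

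If $\kappa(X)=0$ then $K_X$ is torsion, so $c_1(X)^2=0$, and the classification identifies $X$ as an abelian, bielliptic, $K3$ or Enriques surface, with $c_2(X)=e(X)$ equal to $0,0,24,12$ respectively; thus $3c_2(X)\ge 0=c_1(X)^2$, with equality precisely in the abelian and bielliptic cases, and a bielliptic surface, being a free quotient of a product of two elliptic curves, is finite-\'etale covered by an abelian surface --- type (1). If $\kappa(X)=1$ then $X$ carries an elliptic fibration $f\colon X\to C$; by the canonical bundle formula $K_X$ is numerically a pullback from $C$ up to a multiple of the fibre, so $c_1(X)^2=K_X^2=0$, while $c_2(X)=e(X)$ is a sum of non-negative local Euler contributions over the fibres of $f$, giving $3c_2(X)\ge 0=c_1(X)^2$. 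Here equality forces every fibre of $f$ to be smooth, hence $f$ to be isotrivial, hence --- after a finite \'etale cover unwinding any multiple fibres, which exists because the structure group then acts by translations --- $X$ to be covered by a product $E\times C'$; and $\kappa(X)=1$ makes $g(C')\ge 2$, which is type (2).

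The case $\kappa(X)=2$ is the deep one. If moreover $K_X$ is ample, the inequality $3c_2(X)\ge c_1(X)^2$ is Yau's theorem, obtained from the K\"ahler--Einstein metric of negative Ricci curvature on $X$ via Chern--Weil theory (alternatively, it follows from Miyaoka's argument using generic semipositivity of $\Omega^1_X$ and the Bogomolov--Gieseker inequality). In general $K_X$ is only nef and big, so I contract the finitely many $(-2)$-curves to pass to the canonical model $Y$, which has ample canonical class and at worst Du Val singularities, and I compare invariants: $c_1(Y)^2=c_1(X)^2$, while $e(X)=e(Y)+\sum_p\bigl(e(E_p)-1\bigr)$ over the exceptional fibres $E_p$, each summand being strictly positive. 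Applying the orbifold Miyaoka--Yau inequality on $Y$ yields $c_1(X)^2=c_1(Y)^2\le 3\,e_{\mathrm{orb}}(Y)\le 3\,e(X)=3c_2(X)$, which proves \eqref{INTRO-eq-BMY}; and if equality holds these two comparisons collapse, forcing $X=Y$ (no $(-2)$-curves) and $K_X$ ample, whereupon Yau's rigidity statement identifies the universal cover of $X$ with $\B^2$ --- type (3).

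The main obstacle is exactly this general-type equality case: it rests on Yau's existence-and-rigidity theorem for K\"ahler--Einstein metrics, together with the orbifold refinement of Miyaoka--Yau needed to exclude $(-2)$-curves; its algebraic avatar --- Miyaoka's proof of the inequality via generic semipositivity of $\Omega^1_X$ and the Bogomolov--Gieseker inequality, with Simpson's Higgs-bundle uniformisation handling the equality case --- is precisely the circle of ideas the rest of the paper must extend to higher dimension and intermediate numerical dimension. A lesser but still genuine point is the implication ``all fibres smooth $\Rightarrow$ isotrivial $\Rightarrow$ product after \'etale cover'' in the elliptic case, which rests on Kodaira's theory of elliptic surfaces.
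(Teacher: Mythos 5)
The paper does not prove this theorem; it is stated in the introduction as a classical fact, with attributions to the Enriques--Kodaira classification (\cite{BarthHulekPetersVen_CompactSurfaces}) and to Miyaoka and Yau, but no argument is given. Your reconstruction --- splitting by Kodaira dimension, disposing of $\kappa(X)=0$ via the classification and of $\kappa(X)=1$ via the Euler-number formula for elliptic fibrations and Kodaira's theory, and then passing to the canonical model and invoking the orbifold Miyaoka--Yau inequality together with Yau's uniformisation in the general-type case --- is exactly the standard proof that those citations refer to, and it is correct. One wording imprecision worth flagging: in the $\kappa(X)=1$ case, $e(X)=0$ forces every fibre to have \emph{smooth support}, not to be smooth; multiple smooth fibres $_mI_0$ also contribute zero to $e(X)$. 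Since the very next step of your argument passes to a finite \'etale cover unwinding the multiple fibres, this does not affect the proof, but the sentence ``equality forces every fibre of $f$ to be smooth'' should read ``to have smooth reduction.''
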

Here, $c_i(X) := c_i(\mathcal{T}_X) \in H^{2i}(X, \C)$ denote the \emph{Chern classes} of $X$ \cite[§ 3]{Fulton_IntersectionTheory}.
Note that the cases $(1), (2)$ and $(3)$ above are mutually exclusive and can be distinguished, for example, according to the \emph{numerical dimension} $\nu(X)$ of $X$:
\[
\nu(X) := \max\{ k\ |\ c_1(X)^k \neq 0 \in H^*(X, \C) \}. 
\]
The conclusion of the Theorem is rather surprising as, when $X$ is a surface, $c_2(X)$ and $c_1(X)^2$ depend only on the $C^\infty$-diffeomorphism type of $X$ \cite{Kotschick_ChernNumbersAndDiffeos}.

The natural question arises, to what extent it is possible to obtain similar results for varieties of higher dimension. Note that the minimal model program indicates that the natural analogue of minimal surfaces in higher dimensions are log terminal (aka midly singular) projective varieties $X$ whose canonical divisor $K_X$ is nef. 

Indeed, such questions have been studied at various places throughout the literature over the past years, see Section \ref{ssec-Comparison-previous-works} for a detailed discussion. 
However, so far, most papers on the subject have addressed these questions only under additional, restrictive assumptions on the numerical dimension $\nu(X)$ of $X$. Classically, the cases $\nu(X)= 0$ and $\nu(X) = \dim X$ have been particularly closely investigated, cf.\ the discussion in Section \ref{ssec-Comparison-previous-works}. 
Although results have been recently put forth addressing the cases $\nu(X) = 1$ \cite{IwaiMatsumura_ManifoldsWithC2=0, IMM_3c2=c1^2}, $\nu(X) = 2$ \cite{IMM_3c2=c1^2}, and $\nu(X) = n-1$ \cite{HaoSchreieder_EqualityMYinequalityNonGeneralType}, see also \cite{PeternellWilson_ThreefoldsExtremalChernNumbers}, the range $3\leq \nu(X)\leq \dim X -2$ had been completely uninvestigated so far. Thus, in this paper, we focus on the case of \emph{intermediate} numerical dimension.

\subsection{Summary of main results}
\label{ssec-main-results}

The following is our main result:
\begin{thm}
\label{INTRO-thm-statement-light}
    Let $X$ be a log terminal projective variety of dimension $n$. Assume that $K_X$ is nef, of numerical dimension $\nu \geq 2$. Let $H$ be an ample Cartier divisor on $X$. Then there exists a number $\varepsilon_0 >0$ such that
    \begin{equation}
        P_X(\varepsilon) := \Big(2(\nu+1)c_2(X) - \nu c_1(X)^2\Big)\cdot (K_X + \varepsilon H)^{n-2} \geq 0, \qquad \forall\ 0 \leq \varepsilon < \varepsilon_0.
        \tag{$\star$}
        \label{INTRO-eq-Chern-class-inequality}
    \end{equation}
    Moreover, equality holds in \eqref{INTRO-eq-Chern-class-inequality} for some $0<\varepsilon < \varepsilon_0$ and some $H$ if and only if $X$ admits a finite quasi-\'etale Galois cover $\pi\colon X'\rightarrow X$ by a product $X'\cong A \times B$ of an Abelian variety $A$ and a smooth ball quotient variety $B \cong \B^\nu/\Lambda$. In particular, in this situation $X$ has only finite quotient singularities and $K_X$ is semiample.
\end{thm}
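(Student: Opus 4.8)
\emph{Proof strategy.} The plan is to extract $P_X(\varepsilon)$ from the Bogomolov--Gieseker inequality for a semistable Higgs sheaf of rank $\nu+1$ built from the ``positive part'' of the reflexive cotangent sheaf, and then to run the equality case of Simpson's inequality together with a Beauville--Bogomolov-type decomposition. Throughout I work with the reflexive differentials $\Omega^{[1]}_X := (\Omega^1_X)^{\vee\vee}$, the reflexive tangent sheaf $\mathcal{T}_X := (\Omega^{[1]}_X)^\vee$, and Langer's reflexive Chern classes, which is legitimate since $X$ is klt and which compute $c_1(X)=-K_X$ and $c_2(X)$ (note $c_2(X)=c_2(\mathcal T_X)=c_2(\Omega^{[1]}_X)$); I pass to finite quasi-\'etale covers whenever convenient. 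Fix $H$ and set $H_\varepsilon := K_X+\varepsilon H$, ample for $\varepsilon>0$. Since $K_X$ is nef, $X$ is not uniruled, so by Miyaoka's generic semipositivity $\Omega^{[1]}_X$ is nef on a general complete-intersection curve with respect to $H_\varepsilon$; hence every Harder--Narasimhan slope of $\Omega^{[1]}_X$ (with respect to $H_\varepsilon$) is $\geq 0$, the quotient $\mathcal{F}$ of $\Omega^{[1]}_X$ assembled from the slope-zero graded pieces is semistable of slope $0$ and \emph{numerically flat}, so $c_1(\mathcal{F})\equiv 0$ and $c_2(\mathcal{F})\cdot H_\varepsilon^{n-2}\geq 0$ (Bogomolov--Gieseker on $\mathcal F$), and the positive part $\mathcal{P}:=\ker\big(\Omega^{[1]}_X\twoheadrightarrow\mathcal{F}\big)$ satisfies $c_1(\mathcal{P})\equiv K_X$. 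The first main point, and the place where the hypothesis that the numerical dimension \emph{equals} $\nu$ is genuinely used, is the structural claim that $\rk\mathcal{P}=\nu$ and that $\mathcal{P}$ is $H_\varepsilon$-semistable for $\varepsilon$ below the first wall $\varepsilon_0>0$; this should be established by playing the Harder--Narasimhan data of $\Omega^{[1]}_X$ off against the behaviour of $K_X^{\nu}$ versus $K_X^{\nu+1}$.

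Granting this, form the Higgs sheaf $\mathcal{E}:=\mathcal{O}_X\oplus\mathcal{P}^\vee$ of rank $\nu+1$, with Higgs field $\theta\colon\mathcal{E}\to\mathcal{E}\otimes\Omega^{[1]}_X$ given on the $\mathcal{O}_X$-summand by the inclusion $\mathcal{P}\hookrightarrow\Omega^{[1]}_X$ (viewed as a section of $\mathcal{H}om(\mathcal{P},\Omega^{[1]}_X)=\mathcal{P}^\vee\otimes\Omega^{[1]}_X$) and by $0$ on $\mathcal{P}^\vee$; then $\theta\wedge\theta=0$ automatically. Since $\mu_{H_\varepsilon}(\mathcal{E})<0$ and $\mathcal{P}$ is $H_\varepsilon$-semistable, $(\mathcal{E},\theta)$ is an $H_\varepsilon$-semistable Higgs sheaf. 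Applying the Bogomolov--Gieseker inequality for semistable Higgs sheaves on klt spaces (Simpson, Langer) and using $c_1(\mathcal{E})\equiv-K_X$ and $c_2(\mathcal{E})=c_2(\mathcal{P})$, we get $\big(2(\nu+1)c_2(\mathcal{P})-\nu K_X^2\big)\cdot H_\varepsilon^{n-2}\geq 0$. Combining this with $c_1(X)^2=K_X^2$ and, from $0\to\mathcal{P}\to\Omega^{[1]}_X\to\mathcal{F}\to 0$ together with $c_1(\mathcal{F})\equiv 0$, with $c_2(X)=c_2(\mathcal{P})+c_2(\mathcal{F})$ and $c_2(\mathcal{F})\cdot H_\varepsilon^{n-2}\geq 0$, yields
\[
P_X(\varepsilon)\ \geq\ \big(2(\nu+1)c_2(\mathcal{P})-\nu K_X^2\big)\cdot H_\varepsilon^{n-2}\ \geq\ 0
\]
for $0<\varepsilon<\varepsilon_0$; the case $\varepsilon=0$ follows since $\varepsilon\mapsto P_X(\varepsilon)$ is a polynomial, hence continuous.

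For the equality case, suppose $P_X(\varepsilon)=0$ for some $0<\varepsilon<\varepsilon_0$ and some $H$. Then every inequality above is an equality: $\mathcal{F}$ is numerically flat with $c_2(\mathcal{F})\cdot H_\varepsilon^{n-2}=0$, and $(\mathcal{E},\theta)$ is an $H_\varepsilon$-semistable Higgs sheaf of rank $\nu+1$ attaining equality in Bogomolov--Gieseker. By the nonabelian Hodge correspondence over klt spaces and the equality case of Simpson's inequality (Greb--Kebekus--Peternell--Taji and refinements), after passing to a finite quasi-\'etale Galois cover $\pi\colon X'\to X$ one obtains that $\mathcal{F}$ becomes a flat unitary bundle, so its dual is a flat subbundle of $\mathcal{T}_{X'}$, while $(\mathcal{E},\theta)$ becomes the system of Hodge bundles of a polarized complex variation of Hodge structure with holonomy in $\mathrm{PU}(\nu,1)$ and with the prescribed grading $\mathcal{O}_{X'}\oplus\mathcal{P}^\vee$, i.e.\ the period map uniformizes the positive directions by $\B^\nu$. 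Thus $\mathcal{T}_{X'}$ splits as a flat part plus a part with holonomy $\mathrm{PU}(\nu,1)$; the Beauville--Bogomolov / holonomy decomposition for klt varieties with numerically trivial canonical foliation (Druel, Greb--Guenancia--Kebekus) then gives $X'\cong A\times B$ with $A$ an Abelian variety of dimension $n-\nu$ and $\dim B=\nu$, and the uniformization criterion (Yau, and its klt version via the Miyaoka--Yau equality for canonical models) identifies $B\cong\B^\nu/\Lambda$ with a smooth ball quotient; enlarging $X'$ makes $\pi$ Galois. Conversely, if $X'\cong A\times B$ as above, then $2(\nu+1)c_2(X)-\nu c_1(X)^2$ pulls back to $\mathrm{pr}_B^{*}\big(2(\nu+1)c_2(B)-\nu c_1(B)^2\big)=0$ by the ball-quotient equality, so $P_X(\varepsilon)=0$ for all $\varepsilon$. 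Finally, since $X'=A\times B$ is smooth and $\pi$ is quasi-\'etale, $X$ has only finite quotient singularities, and since $K_{X'}=\mathrm{pr}_B^{*}K_B$ is semiample so is $K_X$ (semiampleness descends along finite surjections); in particular this establishes the Abundance conjecture for such $X$.

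\emph{Main obstacle.} The crux is the structural statement of the first paragraph --- that the positive part $\mathcal{P}\subseteq\Omega^{[1]}_X$ has rank exactly $\nu$ with numerically flat complement, and that $\mathcal{O}_X\oplus\mathcal{P}^\vee$ is $H_\varepsilon$-semistable --- since this is precisely where the intermediate numerical dimension enters and where the argument departs from the classical extremes $\nu=0$ (everything flat, Yau) and $\nu=n$ (classical Miyaoka--Yau, $\mathcal{P}=\Omega^{[1]}_X$); controlling the Harder--Narasimhan data of $\Omega^{[1]}_X$ by $\nu(K_X)$ is the delicate step. The second hard point is, in the equality case, upgrading the numerically-flat/Hodge-theoretic output to an honest product decomposition $A\times B$ in the klt, quasi-\'etale setting; the Bogomolov--Gieseker inequality for Higgs sheaves and its equality case are used as black boxes.
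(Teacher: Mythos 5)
Your overall philosophy (Higgs sheaves, Bogomolov--Gieseker, nonabelian Hodge theory) is in the right ballpark, but there are two genuine gaps, and both are precisely where the paper has to do serious work.

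\textbf{The rank claim is not established and is likely false as stated.} You identify as the crux the assertion that the ``positive part'' $\mathcal{P}\subseteq\Omega_X^{[1]}$ has rank exactly $\nu$, with numerically flat complement $\mathcal{F}$ (so $c_1(\mathcal{F})\equiv 0$), and you concede you cannot prove it. In fact, without extra hypotheses it is not true. If $f\colon X\to Y$ is the Iitaka fibration, Ambro's canonical bundle formula gives $K_X\sim_\Q f^*(K_Y + M + D)$ with a generally nonzero moduli/discriminant part; then $c_1(\Omega^1_{X/Y})=K_{X/Y}\sim_\Q f^*(M+D)$, whose slope with respect to $(K_X+\varepsilon H)^{n-1}$ has leading $\varepsilon^{n-\nu}$-coefficient $(M+D)\cdot L^{\nu-1}\cdot f_*H^{n-\nu}$ which is typically strictly positive. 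So for small $\varepsilon>0$ the slope-zero quotient can be trivial and $\mathcal{P}=\Omega_X^{[1]}$ has rank $n$, not $\nu$. Moreover, slope $0$ with respect to $H_\varepsilon^{n-1}$ does not give $c_1(\mathcal{F})\equiv 0$ or numerical flatness; those vanishings hold only on-shell, in the equality case (this is exactly the content of Lemma~\ref{lem-uniformisation-light}). And the rank really matters: your identity $\Delta(\mathcal{O}_X\oplus\mathcal{P}^\vee)=2(\nu+1)c_2(\mathcal{P})-\nu K_X^2$ requires $\rk\mathcal{P}=\nu$; with $\rk\mathcal{P}=r\neq\nu$ you would prove the $r$-inequality, which neither implies nor is implied by the $\nu$-inequality $(\star)$. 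The paper avoids this trap entirely: it never claims a rank-$\nu$ splitting in the inequality part. Instead it proves a \emph{slope bound} on all subsheaves of $\Omega_X^{[1]}$ (Conjecture~\eqref{eq-twisted-semistability-cotangent-bundles}) via Enoki-type degeneration from the Guenancia--Taji semistability theorem for log-cotangent sheaves of klt pairs of log general type, then feeds that bound into the refined Miyaoka discriminant inequality (Theorem~\ref{thm-Basic-Chern-Class-Inequality-Higgs-Sheaves}) for the Simpson Higgs sheaf $\Omega_X^{[1]}\oplus\mathcal{O}_X$. The coefficient $\nu+1$ arises as the bound $\mu^{\max}\leq K_X^\nu\cdot H^{n-\nu}/(\nu+1)$, not as a rank.

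\textbf{The equality case skips the two hard points.} You invoke ``Beauville--Bogomolov / holonomy decomposition for klt varieties with numerically trivial canonical foliation (Druel, Greb--Guenancia--Kebekus)'' to get $X'\cong A\times B$ and then \emph{derive} semiampleness of $K_X$ as a corollary. But those decompositions apply to varieties with $K_X\equiv 0$, not to a subfoliation of a variety with $K_X\not\equiv 0$; in the foliated setting one only gets a local product structure, and promoting it to a global splitting is exactly the content of showing the Iitaka fibration is isotrivial and a fibre bundle. The paper's proof of semiampleness is itself nontrivial: it identifies the Iitaka fibration with a Shafarevich morphism of Eyssidieux for the representations coming from nonabelian Hodge theory, and then invokes Kawamata's partial Iitaka conjecture and Kawamata's nef-and-good theorem. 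The isotriviality of the Iitaka fibration then uses Ambro's canonical bundle formula together with Höring's structure theorem for manifolds with nef cotangent bundle. None of these steps are present in your sketch, and they are not replaceable by citing a $K_X\equiv 0$ decomposition theorem. As a minor additional point, ``semistable of slope $0$'' does not imply ``flat unitary'' (numerically flat bundles can be nontrivial extensions), so even the jump from the equality case to a unitary flat sub of $\mathcal{T}_{X'}$ needs polystability, which requires argument.

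In short: the Higgs-theoretic framing and the use of Bogomolov--Gieseker is broadly consonant with the paper, but the paper extracts $(\star)$ from a slope bound that it actually proves (via \cite{GuenanciaTaji_SemistabilityLogCotangentSheaf} and Enoki's degeneration), whereas your argument rests on a structural claim about the Harder--Narasimhan filtration that is unproven and generically false; and the equality case requires the semiampleness and isotriviality arguments (Sections~\ref{sec-uniformisation-v>=2-Semiampleness}--\ref{sec-proof-of-thm-B}), which your sketch replaces with a decomposition theorem that does not apply.
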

Here, we denote by $c_i(X) := \hat{c}_i(X)$ the \emph{orbifold Chern classes} of $X$ \cite{GKPT_MY_Inequality_Uniformisation_of_Canonical_models} and by $\B^n := \{z\in \C^n\ | \|z\|^2<1 \}$ the $n$-dimensional \emph{complex unit ball}. We emphasise that Theorem \ref{INTRO-thm-statement-light} relies on and extends results by Lu--Taji \cite{LuTaji_QuasiEtaleQuotientsAbelianVarieties}, Greb--Kebekus--Peternell--Taji \cite{GKPT_HarmonicMetricsUniformisation} and Guenancia--Taji \cite{GuenanciaTaji_SemistabilityLogCotangentSheaf}, see Section \ref{ssec-Comparison-previous-works} for a detailed discussion. Note that when $\nu \leq 2$, similar results have been obtained in \cite{LuTaji_QuasiEtaleQuotientsAbelianVarieties, IwaiMatsumura_ManifoldsWithC2=0, IMM_3c2=c1^2}.

We emphasise that the semiampleness of $K_X$ is not merely a consequence of the statement of Theorem \ref{INTRO-thm-statement-light},
but a central ingredient in its proof, see Section \ref{sec-uniformisation-v>=2-Semiampleness}.

Let us briefly comment on the nature of condition \eqref{INTRO-eq-Chern-class-inequality}. Indeed, we encourage the reader to view \eqref{INTRO-eq-Chern-class-inequality} as iterative. To explain what we mean, recall the following elementary fact: let $P = \sum_{i} a_i \cdot X^i \in \R[X]$ be a real polynomial. Then there exists a number $\varepsilon_0 >0$ such that
\begin{align}
    P(\varepsilon) \geq 0, \quad \forall \ 0 < \varepsilon \leq \varepsilon_0
    \label{eq-inequality-polynomials}
\end{align}
if and only if either $P = 0$ or there exists an integer $k\geq 0$ such that $a_0 = \ldots = a_{k-1} = 0$ and $a_k>0$. Moreover, in this case one can choose $\varepsilon_0$ in such a way that $P(\varepsilon) = 0$ for some $0 < \varepsilon \leq \varepsilon_0$ if and only if $P = 0$. In our case, the coefficients of $P_X$ are given by
\begin{align}
   a_{X, i} := \Big(2(\nu+1)c_2(X) - \nu c_1(X)^2\Big)\cdot K_X^{n-i-2}\cdot H^{i};
   \label{INTRO-eq-coefficients}
\end{align}
in particular, the equality in \eqref{INTRO-eq-Chern-class-inequality} is equivalent to the vanishing of all of these numbers. One might wonder what can be said about the structure of $X$ in case we only assume the first $n-k$ numbers in \eqref{INTRO-eq-coefficients} to vanish. This is the content of the following result:

\begin{thm}
\label{INTRO-thm-statement-strong}
    Let $X$ be a log terminal projective variety of dimension $n$ such that $K_X$ is semiample. Let $H$ be an ample Cartier divisor on $X$. Assume that there exists an integer $2\leq k\leq \nu := \nu(X)$ such that
    \[
    \Big(2(\nu+1)c_2(X)  - \nu c_1(X)^2\Big)\cdot K_X^{i-2}\cdot H^{n-i} = 0, \quad \forall i=k, \ldots, n.
    \]
    Then there exists an Abelian variety $A$ of $\dim A = n - \nu$, a smooth ball quotient variety $B = \B^\nu/\Lambda$ of $\dim B = \nu$, a finite subgroup $G\subseteq \Aut(A\times B)$ acting on $A\times B$ without fixed points in codimension one, and a birational map
    \[
    \varphi\colon X \dashrightarrow (A\times B)/G.
    \]
    In particular, the canonical model $X_{\mathrm{can}} \cong B/G$ is a ramified ball quotient. Furthermore, there exists a $G$-invariant open subset $U\subseteq B$, with complement of codimension at least $k+3$, such that $\varphi|_U\colon X|_U \rightarrow (A\times U)/G$ is an isomorphism.
\end{thm}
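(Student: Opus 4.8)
\emph{Proof strategy.} The plan is to combine three inputs available in the literature: the semistability of the orbifold cotangent sheaf of Guenancia--Taji \cite{GuenanciaTaji_SemistabilityLogCotangentSheaf}, the nonabelian Hodge correspondence and ball--quotient uniformisation for Higgs sheaves with vanishing discriminant of Greb--Kebekus--Peternell--Taji \cite{GKPT_HarmonicMetricsUniformisation,GKPT_MY_Inequality_Uniformisation_of_Canonical_models}, and the splitting theorem for quasi-\'etale quotients of Abelian varieties of Lu--Taji \cite{LuTaji_QuasiEtaleQuotientsAbelianVarieties}. \textbf{Step 1 (the canonical fibration).} Since $K_X$ is semiample, the canonical map $\psi\colon X\to Y:=X_{\mathrm{can}}$ is a fibration onto a normal projective klt variety with $\dim Y=\nu$ and $K_X=\psi^{*}A$ for an ample $\Q$-Cartier divisor $A$ on $Y$; a general fibre $F$ has $\dim F=n-\nu$ and $K_F\equiv 0$. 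One works throughout with the reflexive orbifold (co)tangent sheaves $\Omega_X^{[1]}$, $\mathcal{T}_X^{[1]}$, passing to adapted covers where necessary so that the $\hat c_i$ become ordinary Chern classes, and records the conormal sequence $0\to\psi^{*}\Omega_Y^{[1]}\to\Omega_X^{[1]}\to\Omega_{X/Y}^{[1]}\to 0$, valid over the open locus where $\psi$ is equidimensional.

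\textbf{Step 2 (from numerical vanishing to Higgs-flatness).} By \cite{GuenanciaTaji_SemistabilityLogCotangentSheaf}, $\Omega_X^{[1]}$ is semistable with respect to the nef classes $K_X+\varepsilon H$, and, in the appropriate limiting sense, with respect to the partial polarisations $K_X^{i-2}\cdot H^{n-i}$ occurring in the hypothesis; inserting this into the orbifold Bogomolov--Gieseker inequality recovers exactly inequality $(\star)$, so the hypothesis places us in the equality case for the whole range $i=k,\dots,n$. Running the Higgs-sheaf machinery of \cite{GKPT_HarmonicMetricsUniformisation} --- equip $\mathcal{E}:=\Omega_X^{[1]}\oplus\mathcal{O}_X$ with its tautological Higgs field $\theta$, which makes it a system of Hodge sheaves, and invoke Simpson's correspondence together with the characteristic-$p$ restriction theorem --- the vanishing of the discriminant against each such polarisation forces $(\mathcal{E},\theta)$ to be projectively Higgs-flat over a suitable open locus. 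A careful bookkeeping of which general complete-intersection surfaces the classes $K_X^{i-2}\cdot H^{n-i}$ sweep out shows that each non-vacuous vanishing condition raises the codimension of the complement of this locus by one, producing (after the covers of Step 1, whence the $G$-invariance) the open set $U\subseteq B$ with $\mathrm{codim}_B(B\setminus U)\geq k+3$.

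\textbf{Step 3 (identifying the structure).} Over this locus the conormal sequence of Step 1 splits compatibly with the flat structure, so $\Omega_{X/Y}^{[1]}$ has trivial determinant and vanishing discriminant; by Yau's theorem \cite{Yau_KEMetrics} the general fibre $F$ is quasi-\'etale covered by an Abelian variety, and by \cite{LuTaji_QuasiEtaleQuotientsAbelianVarieties} the fibration $\psi$ becomes isotrivial after a finite quasi-\'etale base change, which yields the Abelian variety $A$ with $\dim A=n-\nu$. Dually, $\psi^{*}\Omega_Y^{[1]}$ inherits the structure of a uniformising Higgs sheaf on $Y$ with $\bigl(2(\nu+1)\hat c_2(Y)-\nu\,\hat c_1(Y)^2\bigr)\cdot A^{\nu-2}=0$, so the uniformisation theorem of \cite{GKPT_MY_Inequality_Uniformisation_of_Canonical_models,GKPT_HarmonicMetricsUniformisation} applies to the variety $Y$ of general type and exhibits it as a ramified ball quotient $Y\cong B/G$ with $B=\B^\nu/\Lambda$ a smooth ball quotient. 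Assembling the two factors and the monodromy data gives the finite group $G\subseteq\Aut(A\times B)$, which we normalise to act without fixed points in codimension one, together with the birational map $\varphi\colon X\dashrightarrow(A\times B)/G$ lying over $Y=X_{\mathrm{can}}=B/G$; over the image of $U$ the isotrivial family $\psi$ is forced to agree with the pullback of $(A\times B)/G\to B/G$, so $\varphi|_U$ is an isomorphism.

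\textbf{Main obstacle.} The crux is Step 2: upgrading the \emph{finitely many} numerical identities --- intersections against powers of $K_X$ and of a \emph{single} ample class $H$ --- to the sheaf-theoretic assertion that $(\mathcal{E},\theta)$ splits and is projectively flat over an open set whose complement has the prescribed codimension $k+3$. This demands a delicate interaction between Mehta--Ramanathan-type restriction theorems and the extension results for flat and Higgs structures of \cite{GKPT_HarmonicMetricsUniformisation}, and crucially exploits the semiampleness of $K_X$ to control the restriction of the discriminant $2(\nu+1)\hat c_2(X)-\nu\,\hat c_1(X)^2$ to the fibres of the canonical map; handling the singularities of $X$ and of $Y$ simultaneously inside the orbifold framework is the other delicate point.
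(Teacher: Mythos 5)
Your high-level plan — pass to the Iitaka fibration $f\colon X\to X_{\mathrm{can}}$, show the fibres are Abelian, show the base is (after a cover) a ball quotient, then spread out — matches the paper's strategy in outline, but several load-bearing claims are either wrong or left unargued, and at least one crucial tool is misidentified.

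First, your Step 2 asserts that the vanishing hypotheses force $(\mathcal{E},\theta)=(\Omega_X^{[1]}\oplus\mathcal{O}_X,\theta_{\Simp})$ to become projectively Higgs-flat over a suitable open locus. That is not what the equality case of $(\star)$ gives. What one actually extracts (this is Lemma \ref{lem-uniformisation-light}) is a nontrivial Harder--Narasimhan filtration: a rank-$\nu$ saturated subsheaf $\mathcal{F}\subseteq\Omega_X^{[1]}$ such that $(\mathcal{F}\oplus\mathcal{O}_X,\theta_{\Simp})$ is numerically projectively Higgs flat and stable, while the quotient $\mathcal{Q}$ is numerically flat and polystable. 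The full Simpson Higgs sheaf is emphatically \emph{not} semistable (its maximal destabilising slope is strictly positive). Collapsing this two-step structure into a single ``projectively flat'' object makes the rest of your Step 3 unreachable.

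Second, the attributions in Step 3 are reversed, and this hides a genuine missing ingredient. Lu--Taji \cite{LuTaji_QuasiEtaleQuotientsAbelianVarieties} is the result that identifies the \emph{fibre} $F$ (once its cotangent sheaf is numerically flat) as a quasi-\'etale quotient of an Abelian variety; Yau's theorem is about K\"ahler--Einstein metrics and ball quotients, not about $c_2$-trivial varieties. More importantly, the step where the fibration becomes isotrivial after a finite quasi-\'etale base change is \emph{not} a consequence of Lu--Taji. The mechanism used in the paper is to pass to a general complete intersection $Z=f^{-1}(D_1\cap\dots\cap D_{k-2})$, lift to a maximally quasi-\'etale cover $Z'$, prove that $\Omega^1_{Z'}$ is nef (using Proposition \ref{prop-kernel-Higgs-field-seminegative}), and then invoke H\"oring's structure theorem for manifolds with nef cotangent bundle \cite{Hoering_ManifoldsNefCotangentBundle}, which gives a smooth Abelian fibration with a section after a finite \'etale cover. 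Without a substitute for this input, the isotriviality you need does not follow.

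Third, identifying the base as a smooth ball quotient after a cover is not automatic from ``$Y$ is of general type and satisfies the MY equality''. The key nontrivial point (Lemma \ref{lem-uniformisation-codimension-two-base-of-Iitaka}) is to show, via Ambro's canonical bundle formula \cite{Ambro_CanonicalBundleFormula,LazicFloris_CanonicalBundleFormula}, that the modular part $M_Y$ vanishes and the boundary coincides with the branch divisor, so that the compactified cover $Y'$ is klt with ample $K_{Y'}$. This argument, which involves comparing the pulled-back Higgs sheaf $\sigma^*(\mathcal{E}',\theta')$ with the stable Higgs sheaf $(\Omega^1_{(S'',D_{S'},\gamma)}\oplus\mathcal{O},\theta_{\Simp})$ and concluding they are isomorphic, is absent from your proposal. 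So is Proposition \ref{prop-Galois-covers-Abelian-varieties-ball-quotients}, which is needed to upgrade the product cover over the open set to a \emph{Galois} one.

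Finally, the codimension count in your Step 2 is stated but not proved. The paper achieves it by working with a general complete intersection $S\subseteq Y$ of $k-2$ hyperplanes, establishing the structure over $Z=f^{-1}(S)$, and then spreading out via Lefschetz and $\pi_1$-comparison arguments; ``careful bookkeeping'' is not an argument. Your ``Main obstacle'' paragraph correctly identifies where the difficulty lies, but the proposal as written does not resolve it.
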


The following charcaterisation of complex ball quotients purely in terms of their Chern numbers is an immediate consequence; it extends \cite{Yau_KEMetrics,GKPT_HarmonicMetricsUniformisation}, where $K_X$ was assumed to be ample:
\begin{cor}\emph{(cf.\ \cite{Yau_KEMetrics,GKPT_HarmonicMetricsUniformisation})}
\label{INTRO-cor-ball-quotients}
    Let $X$ be a log terminal projective variety with nef canonical class $K_X$. Assume that $\nu(X) \geq 2$. Then $X$ is isomorphic to a ball quotient variety if and only if there exists an ample Cartier divisor $H$ on $X$ such that
    \[
    \Big(2(n+1)c_2(X) - nc_1(X)^2\Big)\cdot K_X^{i-2}\cdot H^{n-i} = 0, \quad \forall \ i = 2, \ldots, n.
    \]
\end{cor}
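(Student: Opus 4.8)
The plan is to derive both directions from Theorem~\ref{INTRO-thm-statement-light} --- for the converse from its equality clause --- the only substantial point being to show that the stated vanishing forces $X$ to be of general type.

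$(\Rightarrow)$ If $X\cong\B^n/\Gamma$ is a ball quotient variety, choose by Selberg's lemma a torsion-free finite-index subgroup $\Gamma'\le\Gamma$, so that $X':=\B^n/\Gamma'\to X$ is a finite quasi-\'etale cover by a smooth compact ball quotient. The Bergman metric on $X'$ has constant holomorphic sectional curvature, so its Chern forms satisfy $2(n+1)c_2=nc_1^2$ pointwise (equivalently, by orbifold Hirzebruch--Mumford proportionality); thus $2(n+1)c_2(X')-nc_1(X')^2=0$ in $H^4(X',\C)$. Since orbifold Chern classes pull back to ordinary Chern classes along quasi-\'etale covers and $\pi^*$ is injective on cohomology, $2(n+1)c_2(X)-nc_1(X)^2=0$ in $H^4(X,\C)$; hence all the displayed intersection numbers vanish, for every ample Cartier $H$.

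$(\Leftarrow)$ Write $\nu:=\nu(X)$. I claim first that $\nu=n$. Granting this, $2(n+1)c_2(X)-nc_1(X)^2$ coincides with the class $2(\nu+1)c_2(X)-\nu c_1(X)^2$ of \eqref{INTRO-eq-Chern-class-inequality}, and the hypothesis says precisely that all the coefficients \eqref{INTRO-eq-coefficients} of $P_X$ vanish, i.e.\ $P_X\equiv0$; in particular equality holds in \eqref{INTRO-eq-Chern-class-inequality} for every $0<\varepsilon<\varepsilon_0$. Theorem~\ref{INTRO-thm-statement-light} then gives a finite quasi-\'etale Galois cover $\pi\colon X'\to X$ with $X'\cong A\times B$, $A$ an Abelian variety and $B\cong\B^\nu/\Lambda$ smooth of dimension $\nu$. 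Comparing dimensions forces $\dim A=n-\nu=0$, so $X'=B=\B^n/\Lambda$; as $\pi$ is quasi-\'etale its branch locus has codimension $\ge2$, so the Galois action lifts to $\B^n$ and $X\cong\B^n/\Gamma$ is itself a ball quotient variety. (One could equally invoke Theorem~\ref{INTRO-thm-statement-strong} with $k=2$, obtaining a birational map to $B/G$ that is an isomorphism away from codimension $\ge5$; as $K_X$ is then big and nef, this map is an isomorphism onto $X_{\mathrm{can}}\cong B/G$.)

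The remaining task --- proving $\nu=n$ --- is the main obstacle. Using the elementary identity $2(\nu+1)c_2(X)-\nu c_1(X)^2=\tfrac{\nu+1}{n+1}\bigl(2(n+1)c_2(X)-nc_1(X)^2\bigr)+\tfrac{n-\nu}{n+1}c_1(X)^2$ together with $c_1(X)^2=K_X^2$, the hypothesis reduces $P_X$ to $P_X(\varepsilon)=\tfrac{n-\nu}{n+1}\,K_X^2\cdot(K_X+\varepsilon H)^{n-2}$; and since $\nu\ge2$ with $K_X$ nef one has $K_X^2\cdot H^{n-2}>0$ (Khovanskii--Teissier), hence $K_X^2\cdot(K_X+\varepsilon H)^{n-2}>0$ for all $\varepsilon>0$. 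So when $\nu<n$ the inequality $P_X\ge0$ of Theorem~\ref{INTRO-thm-statement-light} holds automatically and yields nothing by itself; one must instead use its equality clause, and thereby the semiampleness of $K_X$ (Section~\ref{sec-uniformisation-v>=2-Semiampleness}) together with the structure theory it rests on (Lu--Taji, Greb--Kebekus--Peternell--Taji, Guenancia--Taji), to exclude $\nu<n$ --- that is, to show that the vanishing hypothesis cannot be met with a positive-dimensional Abelian part. This is the step at which the corollary stops being a formal consequence of the \emph{statement} of Theorem~\ref{INTRO-thm-statement-light} and must draw on its \emph{proof}, and I expect it to be the most delicate part of the argument.
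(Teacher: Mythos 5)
Your argument for the forward direction and for the $\nu=n$ case of the converse is correct, and matches what the paper intends: once $\nu=n$ is known, the vanishing hypothesis says precisely that every coefficient of $P_X$ vanishes, so Theorem~\ref{INTRO-thm-statement-light} applies with equality and produces a quasi-\'etale Galois cover $B\to X$ with $B$ a smooth compact ball quotient; lifting the Galois action through $\B^n\to B$ then realises $X$ itself as a ball quotient variety. Your parenthetical alternative via Theorem~\ref{INTRO-thm-statement-strong} with $k=2$ also works, provided one reads the codimension bound there as $\nu-k+3$ (as in Corollary~\ref{cor-uniformisation-codimension-two}), which equals $n+1>\dim B$ and forces $\varphi$ to be an isomorphism.

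The step you flag --- deducing $\nu(X)=n$ from the vanishing hypothesis --- is, I believe, not merely delicate but actually cannot be supplied from the hypotheses as written, so that the corollary seems to require an extra assumption such as bigness of $K_X$. Your own computation already shows that when $\nu<n$ the polynomial $P_X(\varepsilon)=\frac{n-\nu}{n+1}K_X^2\cdot(K_X+\varepsilon H)^{n-2}$ is strictly positive, so neither the equality clause of Theorem~\ref{INTRO-thm-statement-light} nor the $\nu$-weighted hypothesis of Theorem~\ref{INTRO-thm-statement-strong} ever fires. Worse, the vanishing hypothesis is genuinely attainable with $\nu<n$: take $X=E\times S$ with $E$ an elliptic curve and $S$ a smooth minimal surface of general type satisfying $8\,c_2(S)=3\,K_S^2$, and let $p_2\colon X\to S$ be the projection. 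Then $K_X=p_2^*K_S$ is nef, $\nu(X)=2<3=n$, and since $c_i(X)=p_2^*c_i(S)$ one finds $2(n+1)c_2(X)-n\,c_1(X)^2 = p_2^*\big(8c_2(S)-3K_S^2\big)=0$, so the displayed vanishing holds for every ample $H$ --- yet $X$ is manifestly not a ball quotient. (Such $S$ have $K_S^2/\chi(\mathcal{O}_S)=96/11$, strictly below the Bogomolov--Miyaoka--Yau line, and should exist by the geography results of Persson and Chen; the same construction with $A^{n-2}\times S$ works in any dimension $n$.) So you were right to treat $\nu=n$ as a separate claim needing its own proof, and right to suspect it is the crux --- I simply do not see how it follows from what is given.
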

\noindent
Let us end this summary by emphasising that Corollary \ref{INTRO-cor-ball-quotients} indeed provide genuinely new insights compared to \cite{GKPT_HarmonicMetricsUniformisation} as the following example shows:
\begin{thm}
\label{INTRO-thm-counterexample-catanese}
    There exists a log terminal projective variety $X$ of dimension $4$ such that $K_X$ is big and nef but not ample and such that
    \[
    \Big(2\cdot (4 + 1)c_2(X) - 4 c_1(X)^2\Big) \cdot K_X^2 = 0.
    \]
\end{thm}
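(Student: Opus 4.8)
The plan is to produce $X$ as a quotient $X = (A \times B)/G$ in the spirit of Theorem~\ref{INTRO-thm-statement-strong}, but arranged so that the $G$-action has a fixed point, forcing a non-Gorenstein quotient singularity that prevents $K_X$ from being ample while keeping it big and nef. Concretely, I would take $B = \B^2/\Lambda$ to be a smooth compact ball quotient surface (a fake projective plane, or more safely a surface with a free action of a cyclic group), take $A$ to be a $2$-dimensional abelian variety equipped with a faithful action of a finite cyclic group $G = \Z/m$, and let $G$ act diagonally on $A \times B$ so that the action on $A$ has isolated fixed points while the action on $B$ is free (or such that the product action is free in codimension one). Setting $X := (A \times B)/G$, the variety $X$ is log terminal of dimension $4$, and $K_X$ is nef because $K_{A \times B} = \mathrm{pr}_B^* K_B$ is semiample and $G$-equivariant, so it descends to a semiample divisor on $X$; moreover $\nu(K_X) = 2 < 4 = \dim X$, so $K_X$ is big and nef precisely when $K_X$ is \emph{not} big — wait, I must be careful here: I actually want $K_X$ big and nef, so this construction needs $B$ to be replaced by something of numerical dimension $4$. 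Let me instead take $B$ to be a smooth ball quotient \emph{fourfold} $\B^4/\Lambda$? No: I want $\dim X = 4$, so the right setup is $A = 0$ and we cannot have the product structure. The correct construction is to take $X$ a ramified ball quotient of dimension $4$ directly: $X = \B^4/\widetilde{\Gamma}$ where $\widetilde{\Gamma}$ is a lattice in $\mathrm{PU}(4,1)$ containing torsion, so that $X$ has quotient singularities and $K_X$ is big, nef, but not ample.

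So the refined plan: let $\Gamma \subseteq \mathrm{PU}(4,1)$ be a torsion-free cocompact lattice, so $Y := \B^4/\Gamma$ is a smooth compact ball quotient fourfold with $K_Y$ ample and $(2\cdot 5\, c_2(Y) - 4\, c_1(Y)^2) = 0$ as a cohomology class (the proportionality theorem / the equality case of the Miyaoka–Yau inequality for ball quotients). Now choose a finite group $G$ acting on $Y$ with a nonempty but small fixed locus — e.g. take a lattice $\widetilde{\Gamma} \supsetneq \Gamma$ with $\widetilde{\Gamma}/\Gamma =: G$ having fixed points on $\B^4$ of the appropriate type, or alternatively let $G$ act on $Y$ directly with isolated fixed points. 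Set $X := Y/G$. Then $X$ is log terminal with quotient singularities, $K_X$ is $\Q$-Cartier, nef, and big (since $K_Y$ is big and $Y \to X$ is finite), but $K_X$ is not ample: the failure of ampleness is detected by a curve or by the fact that at a point $p$ lying under a fixed point, the local structure forces $K_X$ to fail the required positivity — more precisely, one arranges that some exceptional locus of the minimal resolution or some specific curve $C$ through the singular point satisfies $K_X \cdot C = 0$. For the Chern-class computation, use that orbifold Chern classes are multiplicative under finite quasi-\'etale covers (this is the content of \cite{GKPT_MY_Inequality_Uniformisation_of_Canonical_models}): the quotient map $Y \to X$ is quasi-\'etale exactly when $G$ acts without fixed points in codimension one, which we ensure by choosing the fixed locus of $G$ on $Y$ to have codimension $\geq 2$. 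Then $\hat c_i(X)$ pull back to $c_i(Y) = c_i(\mathcal{T}_Y)$, so
\[
\bigl(2\cdot 5\, \hat c_2(X) - 4\, \hat c_1(X)^2\bigr)\cdot K_X^2 = \tfrac{1}{|G|}\bigl(2\cdot 5\, c_2(Y) - 4\, c_1(Y)^2\bigr)\cdot K_Y^2 = 0,
\]
the last equality because $2\cdot 5\, c_2(Y) - 4\, c_1(Y)^2 = 0$ already in $H^4(Y,\C)$.

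The main obstacle is \emph{exhibiting a concrete lattice $\widetilde{\Gamma}$ in $\mathrm{PU}(4,1)$ with torsion of the right kind} — namely torsion elements whose fixed loci on $\B^4$ have complex codimension $\geq 2$ (so the quotient is quasi-\'etale, hence log terminal with the orbifold Chern classes behaving well) but such that $K_X$ genuinely fails to be ample. One must verify that such lattices exist; here I would invoke the rich supply of arithmetic lattices in $\mathrm{PU}(n,1)$ (e.g. those arising from Hermitian forms over imaginary quadratic fields, or the Deligne–Mostow lattices) and a Selberg-type argument to find a finite-index subgroup $\Gamma \trianglelefteq \widetilde\Gamma$ that is torsion-free while the torsion of $\widetilde\Gamma$ acts with codimension-$\geq 2$ fixed locus. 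Non-ampleness of $K_X$ then follows because a curve contracted by the natural map $X \to X$ — concretely, we can instead take the quotient by an element fixing a ball-quotient surface $\B^2/\Lambda' \subseteq \B^4/\Gamma$ pointwise, producing a surface of singularities along which $K_X$ restricts with numerical dimension $< 2$; contracting that surface (or rather observing $K_X$ is trivial on the fibres of a suitable fibration through it) shows $K_X$ is not ample while remaining big (as its numerical dimension on $X$ is still $4$) and nef. An alternative, possibly cleaner route is to take $X = (E \times Y_0)/G$ with $Y_0 = \B^3/\Lambda$ a smooth ball-quotient threefold and $E$ an elliptic curve, $G = \Z/m$ acting by translation-free automorphisms on $E$ and freely in codim one on $Y_0$ — but then $\nu(K_X) = 3 \neq 4$, so $K_X$ is not big; hence the genuine fourfold ball-quotient construction above is the one that must be made to work, and verifying the existence of the requisite lattice with the precise torsion behaviour is where the real content lies.
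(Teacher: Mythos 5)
Your final construction—take a cocompact lattice $\widetilde\Gamma \subseteq \mathrm{PU}(4,1)$ with torsion, set $X := \B^4/\widetilde\Gamma$, and pass to a torsion-free normal sublattice $\Gamma \trianglelefteq \widetilde\Gamma$ so that $Y := \B^4/\Gamma \to X$ is finite and quasi-\'etale—does \emph{not} give a variety with $K_X$ big and nef but not ample. If $\pi\colon Y \to X$ is quasi-\'etale (i.e.\ the torsion of $\widetilde\Gamma$ has fixed locus of codimension $\geq 2$), then $\pi^* K_X \sim_\Q K_Y$, and since $K_Y$ is ample and $\pi$ is finite surjective, $K_X$ is ample: for any subvariety $V\subseteq X$ of dimension $d$, take a component $W$ of $\pi^{-1}(V)$ dominating $V$ and observe $K_X^d\cdot V = \tfrac{1}{\deg(W/V)}K_Y^d\cdot W > 0$, so Nakai--Moishezon applies. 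The vague remarks about a curve through a singular point, or a surface of fixed points along which ``$K_X$ restricts with numerical dimension $< 2$,'' do not circumvent this: quasi-\'etale quotients of varieties with ample canonical class have ample canonical class. This is precisely why singular ball quotients $\B^n/\widetilde\Gamma$ are canonically polarised and cannot serve directly as the example.

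The paper's construction contains exactly the ingredient your proposal is missing: a birational modification inserted \emph{before} taking the quotient. Starting from a smooth cocompact ball quotient $B' = \B^4/\Lambda_{\mathrm{tf}}$ and a cyclic $G = \Z/3$ acting via an overlattice $\Lambda \supset \Lambda_{\mathrm{tf}}$, one shows (via a careful analysis of the eigenvalues of the generator at each fixed point, using the congruence condition defining $\Lambda_{\mathrm{tf}}$) that the fixed locus $(B')^G$ is a disjoint union of smooth \emph{curves} and smooth \emph{surfaces}. One then blows up the curve components to obtain $f'\colon X'\to B'$, producing exceptional divisors $E'$ along which the lifted $G$-action now has codimension-one fixed locus. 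The quotient $X := X'/G$ is thus \emph{not} quasi-\'etale over $X'$, and the ramification formula $K_{X'} \sim \pi^*K_X + 2E'$ cancels exactly against the blow-up discrepancy $K_{X'} \sim (f')^*K_{B'} + 2E'$, yielding $K_X \sim_\Q f^* K_B$ for the induced birational morphism $f\colon X \to B := B'/G$. Now $K_B$ is ample (it is the singular ball quotient), $f$ contracts the images of the $E'_i$, and so $K_X$ is big and nef but not ample; and since $f$ is an isomorphism away from a codimension-three locus, the degree-four intersection $\bigl(10\,c_2(X) - 4\,c_1(X)^2\bigr)\cdot K_X^2$ equals the corresponding number on $B$, which vanishes. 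In short: the point is not to find a lattice whose torsion ``spoils'' ampleness (that cannot happen), but to resolve part of the fixed locus of a torsion element so that the subsequent quotient introduces genuine divisorial ramification that makes $K_X$ a pullback from the canonical model rather than ample itself. Without this blow-up step, your argument produces a variety whose canonical class is ample, and the theorem is not established.
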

\noindent
Observe that $X$ cannot be isomorphic to a ball quotient variety, for otherwise $K_X$ would be ample. To the best of the author's knowledge, such an example has not previously appeared in the literature.

\subsection{Previous results}
\label{ssec-Comparison-previous-works}

Chern class inequalities and uniformisation results have a long history in complex algebraic geometry, see \cite{GKT_OverviewUniformisation} for a comprehensive survey. For $K$-trivial surfaces, the inequality \eqref{INTRO-eq-BMY} is classical, while for surfaces of numerical dimension one it is a consequence of Kodaira's work on elliptic fibrations. For surfaces with ample canonical bundle, \eqref{INTRO-eq-BMY} was proved independently by Miyaoka \cite{Miyaoka_ChernNumbersSurfacesGeneralType} and Yau \cite{Yau_KEMetrics}, following earlier works by Van de Ven \cite{vdVen_MYinequality} and Bogomolov \cite{Bogomolov_MYinequality}. For arbitrary log terminal surfaces of general type it was proved by Miyaoka \cite{Miyaoka_SurfacesWith3c2=c1^2}. Later, Langer \cite{Langer_MILogCanonicalSurfaces, Langer_MILogCanonicalSurfacesII} managed to also treat the log canonical case.

As was indicated above, in the study of Chern classes on higher dimensional minimal varieties $X$, the cases when $\nu(X) \in \{0, \dim X\}$ have received particularly much attention over the past years. When $\nu(X) = 0$, Miyaoka \cite{miyaoka_MiyaokaInequality} showed that $c_2(X)\cdot H^{n-2} \geq 0$ for any ample divisor $H$ on $X$, which is a natural analogue of \eqref{INTRO-eq-Chern-class-inequality} in this situation. Moreover, it is well-known that the equality $c_2(X) \cdot H^{n-2} = 0$ holds if and only if $X$ is a finite \'etale quotient of an Abelian variety, see e.g.\ \cite{Yau_KEMetrics}. For log terminal varieties, the analogues statement was proved by Lu--Taji \cite{LuTaji_QuasiEtaleQuotientsAbelianVarieties}, building on earlier work by 
Shepherd-Barron--Wilson \cite{SBW_SingularThreefoldsAbelianVarieties} and Greb--Kebekus--Peternell \cite{gkp_QuasiEtaleCovers}, see also \cite{CGG_UniformisationOrbifoldPairs} for a recent generalisation to the context of klt pairs. For smooth projective varieties $X$ with ample canonical class, Yau \cite{Yau_KEMetrics} proved the inequality $(2(n+1)c_2(X) - n c_1(X)^2)\cdot K_X^{n-2} \geq 0$ and he showed that in the equality case, $X$ must be a quotient of the complex unit ball. Subsequently, analogous results for log terminal varieties were obtained by Guenancia--Taji \cite{GuenanciaTaji_SemistabilityLogCotangentSheaf}, Greb--Kebekus--Peternell-Taji \cite{GKPT_MY_Inequality_Uniformisation_of_Canonical_models, GKPT_HarmonicMetricsUniformisation}
and Claudon--Guenancia--Graf \cite{CGG_UniformisationOrbifoldPairs}, following ideas of Simpson \cite{Simpson_VHSandUniformisation}. See also  \cite{CataneseScala_PolydiscQuotients,CataneseScala_PolydiscQuotientsII, Patel_UniformisationBuondedSymmetricDomains, GrafPatel_UniformisationByBoundedSymmetricDomains, Catanese_BallQuotientsAndSpecialTensors, ZZZ_MYKaehlerSpaces, IJZ_MYInequality_Big} for some recent related developments.

For minimal projective varieties $X$ of arbitrary numerical dimension, Miyaoka \cite{miyaoka_MiyaokaInequality} proved the famous inequality 
\begin{equation}
    (3c_2(X) - c_1(X)^2)\cdot H^{n-2}\geq 0,
    \label{INTRO-eq-Miyaoka}
\end{equation}
which corresponds to the case $\nu=2$ in \eqref{INTRO-eq-Chern-class-inequality} (note that by \eqref{INTRO-eq-Chern-class-inequality}, equality can never hold in \eqref{INTRO-eq-Miyaoka} when $\nu> 2$), see also \cite{Langer_NoteOnBogomolovInstability} for a shorter proof. Peternell--Wilson \cite{PeternellWilson_ThreefoldsExtremalChernNumbers} classified minimal projective threefolds achieving the equality in \eqref{INTRO-eq-Miyaoka}, thereby essentially proving Theorem \ref{INTRO-thm-statement-light} in case $n=3$. More recently, Hao and Schreieder \cite{HaoSchreieder_EqualityMYinequalityNonGeneralType} classified minimal projective varieties $X$ such that the Kodaira dimension $\kappa(X) = n-1$ and which satisfy the equation $c_2(X)\cdot c_1(X)^{n-2} =0$. This result is closely related to Theorem \ref{INTRO-thm-statement-strong} in the case $\nu(X) = n-1$. In a similar spirit, Iwai--Matsumura \cite{IwaiMatsumura_ManifoldsWithC2=0} classified smooth projective varieties such that $K_X$ is nef and $c_2(X) \equiv 0$. The latter result was generalised in \cite{IMM_3c2=c1^2}, where the author, in collaboration with Iwai--Matsumura, proved Theorem \ref{INTRO-thm-statement-light} in case $\nu(X) = 2$. Apart from the results cited above, there is an extensive body of further research that has been conducted on similar and related topics. We refer the interested reader to \cite{Mul_Thesis} and specifically the respective introductions of the individual chapters, where a much more thorough survey of the literature on the topic was conducted.

\subsection{Main techniques and overview}
In Section \ref{sec-preliminaries}, we review some basic facts about finite covers, Higgs sheaves and semistability and we summarise the work of \cite{miyaoka_MiyaokaInequality,Langer_NoteOnBogomolovInstability} on Chern class inequalities. In Section \ref{sec-twisted-semistability-cotangent-bundles} we make some progress on a question of Langer \cite[Quest.\ 5.4]{Langer_NoteOnBogomolovInstability}; this is then used to prove the Inequality \eqref{INTRO-eq-Chern-class-inequality} in Section \ref{sec-Higher-Inequalities}. In Section \ref{sec-uniformisation-v>=2-Semiampleness}, we show that under the assumptions of Theorem \ref{INTRO-thm-statement-light}, $K_X$ is semiample. The proofs of Theorem \ref{INTRO-thm-statement-strong} and Theorem \ref{INTRO-thm-statement-light} are then completed in Section \ref{sec-proof-of-thm-B}. Finally, Theorem \ref{INTRO-thm-counterexample-catanese} is proved in Section \ref{sec-examples}.

The main idea to prove \eqref{INTRO-eq-Chern-class-inequality} is to reduce it, using Miyaoka's strategy \cite{miyaoka_MiyaokaInequality}, to a suitable semistability-type condition for the cotangent bundle of minimal varieties, Conjecture \ref{conj:twisted-semistability-cotangent-sheaf}. This part of the proof is similar to the ideas outlined by Langer \cite{Langer_NoteOnBogomolovInstability}. While we are unable to verify Conjecture \ref{conj:twisted-semistability-cotangent-sheaf} in general, building on an idea of Enoki \cite{Enoki_TwistedSemistability}, we manage to prove it in certain cases of particular interest, crucially relying on results of Guenancia--Taji \cite{GuenanciaTaji_SemistabilityLogCotangentSheaf} to do so.

In the case equality holds in \eqref{INTRO-eq-Chern-class-inequality}, an analysis of the Harder--Narasimhan filtration of $\Omega_X$ together with the Non-Abelian Hodge correspondence for klt spaces \cite{gkpt_HodgeTheoremKltSpaces,GKPT_HarmonicMetricsUniformisation} imposes strong structural restrictions on $X$. This strategy is similar to the one used in \cite{IMM_3c2=c1^2} to treat the case $\nu=2$ and is visible in many publications on the subject, see for example \cite{miyaoka_MiyaokaInequality, Simpson_VHSandUniformisation, LuTaji_QuasiEtaleQuotientsAbelianVarieties,GKPT_HarmonicMetricsUniformisation, CGG_UniformisationOrbifoldPairs, LiuLiu_KawamtaMiyaokaInequalityFanos, LiuLiu_KawamtaMiyaokaInequalityFanosII, Dailly_FanosMYequality}. There are, however, two key additional difficulties compared with \cite{IMM_3c2=c1^2}: First, to show that, in the situation of Theorem \ref{INTRO-thm-statement-light}, $K_X$ is semiample. Second, to show that the Iitaka fibration is isotrivial. Note that when $\nu=2$ both statements readily follow from results of Pereira--Touzet \cite{PereiraTouzet_FoliationsVanishingChernClasses}, so this was not an issue in
\cite{IMM_3c2=c1^2}. In the general case, to adress the first issue, we identify the Iitaka fibration with a certain Shafarevich morphism, which exists by work of Eyssidieux \cite{Eyssidieux_LinearShafarevichConjecture}, see Section \ref{sec-uniformisation-v>=2-Semiampleness}. To do so, a key ingredient is Kawamata's partial resolution of the Iitaka conjecture \cite{Kawamata_MinimalModelsKodairaDimension}. The second issue is overcome in Proposition \ref{prop-uniformisation-over-complete-intersection-surface}, making key use of Ambro's work on the canonical bundle formula \cite{Ambro_CanonicalBundleFormula} and structural results for manifolds with nef cotangent bundle by H\"oring \cite{Hoering_ManifoldsNefCotangentBundle}.

The counterexample in Theorem \ref{INTRO-thm-counterexample-catanese} is a special case of a classical construction previously employed by Kazhdan \cite{Khazdan_SomeApplicationsWeilRepresentations} and Rapoport--Zink \cite{RZ_LokaleZetaFunktionen}.

\subsection{Notational conventions}
\label{ssec-Notation}

Throughout this text we work over the field $\C$ of complex numbers. 
We follow the standard notions and conventions in \cite{hartshorne_AlgebraicGeometry, kollarMori_BirationalGeometry, lazarsfeld_PositivityI, lazarsfeld_PositivityII}. The \emph{dual} of a coherent sheaf $\mathcal{F}$ on a variety $X$ will be denoted by $\mathcal{F}^\vee$.
To lighten the notation, we usually simply speak of (torsion-free, reflexive) sheaves on $X$, when we really mean (torsion-free, reflexive) coherent sheaves of $\mathcal{O}_X$-modules. Given two coherent sheaves $\mathcal{F}_1, \mathcal{F}_2$ on $X$ we denote $\mathcal{F}_1 [\otimes] \mathcal{F}_2 := (\mathcal{F}_1 \otimes \mathcal{F}_2)^{\vee\vee}$. In a similar spirit, we denote $\Omega_X^{[1]} := (\Omega_X^1)^{\vee\vee}$. Given a holomorphic map $f\colon Y \rightarrow X$, we set $f^{[*]}\mathcal{F}_1 := (f^*\mathcal{F}_1)^{\vee\vee}$.

\subsection*{Acknowledgments}

First and foremost, I would like to thank Daniel Greb for suggesting the questions treated in this article as a thesis problem and for supporting me, through many conversations and much encouragement, during the creation of this article. My sincere gratitude goes to Andreas Höring and Junyan Cao for enabling me to visit them in Nice, where part of the research presented in this article was conducted. I am grateful to both Universität Duisburg-Essen and Universität Freiburg for providing a stimulative working environment. Moreover, I would like to thank Stefan Kebekus, Vladimir Lazi\'c and Thomas Peternell for their sincere advise. Finally, I am grateful towards all my friends in Essen and elsewhere, mathematical or not, for their encouragement and their support.

During the creation of this article, I was partially supported by the DFG research training group 2553 ‘Symmetries and classifying spaces:
analytic, arithmetic and derived’.

    \renewcommand{\thethm}{\thesection.\arabic{thm}}
    \renewcommand{\theequation}{\thesection.\arabic{equation}}

    \section{Preliminaries}
    \label{sec-preliminaries}

    \subsection{Finite covers}

    \subsubsection{Quasi-\'etale covers}

    Let $X$ be a normal analytic variety.
    A \emph{finite cover} of $X$ is a finite and surjective morphism $\pi\colon Y\rightarrow X$ from another normal analytic variety $Y$.
    We say that $\pi$ is \emph{Galois} if there exists a finite group $G$ acting holomorphically on $Y$ such that $\pi$ is $G$-invariant and identifies $X\cong Y/G$ with the quotient of $Y$ by $G$. 
   
    As per usual, we denote by $\pi_1(X)$ the \emph{fundamental group} of $X$. If $\iota\colon U\hookrightarrow X$ is the inclusion of a Zariski open subset, then the natural push forward map
    \begin{equation}
        \iota_*\colon \pi_1(U) \twoheadrightarrow \pi_1(X)
    \label{eq-inclusion-pushforward-fundamental-groups}
    \end{equation}
    is surjective, see \cite[Prop.\ 2.10]{Kollar_ShafarevichMapsAutomorphicForms}. It is an isomorphism if, additionally, $X$ is smooth and $\codim (X\setminus U) \geq 2$. 
    \begin{defn}
        A morphism $\pi\colon X' \rightarrow X$ between normal analytic varieties is called \emph{quasi-\'etale} if it is quasi-finite and \'etale in codimension one, i.e.\ there exists a closed analytic subset $Z\subseteq X$ with $\codim Z\geq 2$ such that $\pi\colon X'\setminus \pi^{-1}(Z)\rightarrow X\setminus Z$ is \'etale.
    \end{defn}
    Clearly, the composition of two quasi-\'etale morphisms is quasi-\'etale. Conversely, if the composition of two finite morphisms is quasi-\'etale, then both of the morphisms are quasi-\'etale. Moreover, the Galois closure of a quasi-\'etale morphism is quasi-\'etale \cite[Thm.\ 3.7]{gkp_QuasiEtaleCovers}. It is well-known that finite quasi-\'etale covers of $X$ are in natural bijection with finite index subgroups of $\pi_1(X_{\reg})$, see \cite{gkp_QuasiEtaleCovers}.
    \begin{defn}
        Let $X$ be a normal analytic variety. We say that $X$ is \emph{maximally quasi-\'etale} if any quasi-\'etale cover of $X$ is in fact \'etale.
    \end{defn}
    By a fundamental result of Greb--Kebekus--Peternell, any log terminal quasi-projective variety $X$ admits some maximally quasi-\'etale Galois cover $\pi\colon X'\rightarrow X$.
    Note that in this case, $X'$ is automatically log terminal \cite[Cor.\ 2.43]{Kollar_SIngularitiesOfTheMMP}.

    \subsubsection{Finite Covers of Products of two Varieties}

    In this subsection, we prove the following technical but completely elementary result:
    \begin{prop}
    \label{prop-Galois-covers-Abelian-varieties-ball-quotients}
        Let $f\colon X\rightarrow Y$ be a proper fibration between normal  varieties. Assume that there exists a finite quasi-\'etale (and not necessarily Galois) cover $\pi\colon X'\rightarrow X$ by a smooth variety $X'$. Let
        \[\begin{tikzcd}
            X' \arrow{r}{\pi} \arrow[swap]{d}{g} & X \arrow{d}{f} \\
            Y' \arrow{r}{\psi} & Y
        \end{tikzcd}
        \]
        be the Stein factorisation and assume that there exists a proper variety $F'$ such that $X'\cong F'\times Y'$ over $Y'$.
        Then there exist finite \'etale covers $F''\rightarrow F$ and $Y''\rightarrow Y'$ such that the induced morphism
        \[
        \pi'\colon F''\times Y''\rightarrow X
        \]
        is finite, quasi-\'etale and Galois.
    \end{prop}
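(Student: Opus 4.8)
The plan is to pass to the Galois closure of $\pi$ and then, after one further finite \'etale base change, to split off a product; the delicate point is that this second step must not destroy the Galois property over $X$.

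First I would let $\sigma\colon\hat X\to X$ be the Galois closure of $\pi$, with deck group $G_0:=\mathrm{Gal}(\hat X/X)$. Since $\pi$ is quasi-\'etale, so is $\sigma$, and hence so is the intermediate morphism $\hat X\to X'$; because $X'$ is smooth, purity of the branch locus (Zariski--Nagata) upgrades $\hat X\to X'$ to an honest \'etale morphism, so $\hat X$ is smooth and connected. Next I would take the Stein factorisation $\hat X\xrightarrow{\hat g}\hat Y\to Y$ of $\hat X\to Y$. Factoring instead through $\hat X\to X'\xrightarrow{g}Y'$, where $g$ is the projection $F'\times Y'\to Y'$ and thus has connected fibres, and using that $\hat X\to X'$ is \'etale, one sees that the Stein factorisation of $\hat X\to Y'$ has finite \'etale base over $Y'$; comparing the two (both have finite base over $Y$ and connected fibres) identifies this base with $\hat Y$. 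Hence $\hat Y\to Y'$ is finite \'etale, $\hat Y$ is smooth, $G_0$ acts on $\hat Y$, and $\hat g$ is $G_0$-equivariant.

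Then I would observe that $\hat g$ is isotrivial. Indeed, the natural map $\hat X\to X'\times_{Y'}\hat Y=F'\times\hat Y$ is finite \'etale, so every fibre of $\hat g$ is a connected finite \'etale cover of $F'$ of one fixed degree, and (as $F'$ is proper, hence $\pi_1(F')$ is finitely generated) there are only finitely many such up to isomorphism. Thus all fibres of $\hat g$ are isomorphic to a single connected finite \'etale cover $\hat F\to F'$, and the family is classified by a monodromy homomorphism $\rho\colon\pi_1(\hat Y)\to A$ into a finite group $A$. Let $\hat Y_1\to\hat Y$ be the finite \'etale Galois cover corresponding to $\ker\rho\trianglelefteq\pi_1(\hat Y)$. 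The base change $\hat X_1:=\hat X\times_{\hat Y}\hat Y_1$ is then connected (its fibres over the connected base $\hat Y_1$ are the connected $\hat F$), and a direct computation with $\rho$ shows that, inside $\pi_1(F'\times\hat Y_1)=\pi_1(F')\times\pi_1(\hat Y_1)$, the subgroup classifying $\hat X_1$ is the product $\pi_1(\hat F)\times\pi_1(\hat Y_1)$; hence $\hat X_1\cong\hat F\times\hat Y_1$.

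The crux, which I expect to be the main obstacle, is to check that the composite $\hat X_1\to\hat X\xrightarrow{\sigma}X$ — obviously finite and quasi-\'etale — is in fact Galois. For this I would use the bijection between finite quasi-\'etale covers of $X$ and finite-index subgroups of $\pi_1(X_{\reg})$. Write $G:=\pi_1(X_{\reg})$; then $\Gamma:=\pi_1(X')=\pi_1(F')\times\pi_1(Y')$, the Galois closure corresponds to the normal core $H:=\bigcap_{g\in G}g\Gamma g^{-1}=\pi_1(\hat X)$, and $\hat X_1$ corresponds to $H_1:=\hat g_*^{-1}(\ker\rho)\le H$, the preimage of $\ker\rho$ under the surjection $\hat g_*\colon H\twoheadrightarrow\pi_1(\hat Y)$. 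The fibre subgroup $H_0:=\ker\hat g_*$ is normal in $H$, and being the fibre subgroup of the $G_0$-equivariant fibration $\hat g$ it is invariant under conjugation by every element of $G$; the induced $G_0$-action on $H/H_0\cong\pi_1(\hat Y)$ is the geometric one, under which $\ker\rho$ is invariant by naturality of the monodromy. Consequently $H_1$, the preimage of a $G_0$-invariant subgroup under a $G_0$-equivariant surjection, is normal in $G$, so $\hat X_1\to X$ is Galois. Finally I would set $F'':=\hat F$ and $Y'':=\hat Y_1$: these are finite \'etale over $F'$ and over $Y'$ respectively (for $Y''$, via $\hat Y_1\to\hat Y\to Y'$), and $F''\times Y''=\hat X_1\to X$ is finite, quasi-\'etale and Galois. (When $F$ denotes a general fibre of $f$, the restriction of $\pi$ to a general fibre of $g$ is a finite surjective morphism onto $F$ which is \'etale in codimension one, so $F'\to F$, and hence $F''\to F$, is quasi-\'etale.) The whole argument thus rests on the naturality statements for $H_0$ and for $\ker\rho$ under $G_0$, together with the elementary bookkeeping identifying the various Stein bases with finite \'etale covers of $Y'$.
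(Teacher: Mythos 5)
Your overall strategy—pass to the Galois closure $\hat X$, base change to trivialise the family over $\hat Y$, and check Galois descent via a fundamental group computation—is essentially the paper's, and most of the bookkeeping (smoothness of $\hat X$ by purity, identification of the Stein base $\hat Y$ as a finite \'etale cover of $Y'$, and the splitting $\hat X_1 \cong \hat F \times \hat Y_1$ after base change along $\ker\rho$) is correct. Indeed, tracing through the definitions, your $\ker\rho$ is exactly the paper's $\pi_1(Y^{(3)}) = \pi_1(X^{(2)}) \cap \big(\{1\}\times\pi_1(Y')\big)$.

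The gap is in the normality argument. You assert that ``$\ker\rho$ is invariant by naturality of the monodromy'' under the $G_0 = G/H$-action on $\pi_1(\hat Y)$. But the monodromy $\rho$ is defined relative to the finite \'etale cover $\hat X \to F' \times \hat Y$, and $G_0$ does \emph{not} act on $F' \times \hat Y$: the factor $F' \times Y' = X'$ carries no $G_0$-action (the cover $X'\to X$ is not Galois), so there is no equivariance to appeal to. Concretely, $\ker\rho = \pi_1(\hat X) \cap \big(\{1\}\times\pi_1(Y')\big)$, and while $\pi_1(\hat X)$ is normal in $\pi_1(X_{\reg})$, the subgroup $\{1\}\times\pi_1(Y')$ is normal only in $\pi_1(X')$, which itself need not be normal in $\pi_1(X_{\reg})$; so there is no reason for the intersection to be stable under $\pi_1(X_{\reg})$-conjugation. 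The paper circumvents exactly this obstruction by \emph{not} using $\ker\rho$ directly: it first chooses a finite-index subgroup $\Lambda \subseteq \ker\rho$ which is normal in the full group $\pi_1(Y_{\reg})$ (e.g.\ the normal core), and only then forms the product subgroup $\pi_1(\hat F)\times\Lambda$, which is the intersection of two genuinely normal subgroups $(f_*)^{-1}(\Lambda)$ and $\pi_1(\hat X)$. Your argument would become correct if, before setting $Y'':=\hat Y_1$, you replaced $\hat Y_1$ by a further finite \'etale cover $\hat Y_2\to\hat Y_1$ whose fundamental group is a finite-index normal subgroup of $\pi_1(Y_{\reg})$ contained in $\ker\rho$; the product structure survives this extra base change, and then the normality of $H_1$ in $G$ follows from the two-normal-subgroups identity rather than from an unavailable equivariance of $\rho$.
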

    Proposition \ref{prop-Galois-covers-Abelian-varieties-ball-quotients} will be useful in the proof of Theorem \ref{INTRO-thm-statement-strong}. Its content is that once we can find \emph{some} cover of a given variety which splits as a product then we can also find a \emph{Galois} cover which splits as a product. We caution the reader that in general, a finite \'etale Galois cover of a product of two varieties need not be a product itself, see \cite[Ex.\ 3.1.17]{Mul_Thesis} for an explicit example. In the proof of Proposition \ref{prop-Galois-covers-Abelian-varieties-ball-quotients} we will require the following easy result:
\begin{prop}
\label{prop-pushforward-fundamental-groups}
    In the situation of Proposition \ref{prop-Galois-covers-Abelian-varieties-ball-quotients}, there exists a natural group homomorphism $f_*\colon \pi_1(X_{\mathrm{reg}}) \rightarrow \pi_1(Y_{\mathrm{reg}})$ such that the diagram
     \[
     \begin{tikzcd}
        \pi_1\big(X'\big) \arrow[hookrightarrow]{r}{\pi_*} \arrow[swap]{d}{g_*} & \pi_1\big(X_{\mathrm{reg}}\big) \arrow{d}{f_*} \\
        \pi_1\big(Y'\big) \arrow[hookrightarrow]{r}{\psi_*} & \pi_1\big(Y_{\mathrm{reg}}\big) 
        \end{tikzcd}
    \]
    commutes.
\end{prop}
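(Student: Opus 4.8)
The plan is to construct $f_*$ by restricting $f$ to suitable \emph{big} open subsets and invoking the functoriality of the topological fundamental group, after isolating the one non-formal input: that $f$ is equidimensional. To see the latter, note that since $X'\cong F'\times Y'$ over $Y'$ the morphism $g$ is the second projection, all of whose fibres are isomorphic to the fixed variety $F'$, while $\pi$ and $\psi$ are finite and surjective (surjectivity of $\psi$ follows from that of $f$ and $\pi$ via $\psi\circ g=f\circ\pi$). Hence for every $y\in Y$ the preimage $\pi^{-1}(f^{-1}(y))=g^{-1}(\psi^{-1}(y))$ is a non-empty finite union of fibres of $g$, so of pure dimension $\dim F'$; as $\pi$ is finite this gives $\dim f^{-1}(y)=\dim F'=:r=\dim X-\dim Y$ for all $y\in Y$. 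Consequently $\dim f^{-1}(T)\le\dim T+r$ for every closed $T\subseteq Y$, so $\codim_Y T\ge 2$ forces $\codim_X f^{-1}(T)\ge 2$; in particular $f^{-1}(Y_{\mathrm{sing}})$ has codimension $\ge 2$ in $X$. I would also record here that $F'$ and $Y'$ are smooth, being factors of the smooth variety $X'$.

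Next I would build $f_*$. Set $U:=f^{-1}(Y_{\mathrm{reg}})\cap X_{\mathrm{reg}}$; by the previous paragraph and normality of $X$ the complement $X\setminus U=f^{-1}(Y_{\mathrm{sing}})\cup X_{\mathrm{sing}}$ has codimension $\ge 2$, so the inclusion $U\hookrightarrow X_{\mathrm{reg}}$ induces an isomorphism $\pi_1(U)\xrightarrow{\sim}\pi_1(X_{\mathrm{reg}})$ by the remark following \eqref{eq-inclusion-pushforward-fundamental-groups}. Since $f(U)\subseteq Y_{\mathrm{reg}}$, functoriality applied to $f|_U\colon U\to Y_{\mathrm{reg}}$ then yields a homomorphism $f_*\colon\pi_1(X_{\mathrm{reg}})\to\pi_1(Y_{\mathrm{reg}})$. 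Independence of choices --- and hence of the auxiliary data $(\pi,X',F',Y')$, i.e.\ naturality --- follows by comparing any two admissible pairs of big opens on a common refinement, where one uses the equidimensionality of $f$ once more to see that shrinking the target only removes a codimension-$\ge 2$ set from the source, so that functoriality forces the two resulting maps to coincide.

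Finally, for the commutativity of the square I would choose a closed $Z\subseteq X$ of codimension $\ge 2$ containing $X_{\mathrm{sing}}$, the branch locus of the quasi-\'etale morphism $\pi$, and $f^{-1}(Y_{\mathrm{sing}})$ (still of codimension $\ge 2$, by the equidimensionality step), and put $X_0:=X\setminus Z$, $Y_0:=Y_{\mathrm{reg}}$, $X'_0:=\pi^{-1}(X_0)$, $Y'_0:=\psi^{-1}(Y_0)$. Then $f(X_0)\subseteq Y_0$, whence $g(X'_0)\subseteq Y'_0$ because $\psi\circ g=f\circ\pi$; the morphism $\pi\colon X'_0\to X_0$ is finite \'etale; and $X'_0\subseteq X'$, $Y'_0\subseteq Y'$ are big open subsets of smooth varieties since $\pi,\psi$ are finite and $Z,Y_{\mathrm{sing}}$ have codimension $\ge 2$. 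The resulting square of varieties $X'_0\to X_0$, $X'_0\to Y'_0$, $Y'_0\to Y_0$, $X_0\to Y_0$ commutes, hence so does the induced square of fundamental groups; under the canonical identifications $\pi_1(X'_0)\cong\pi_1(X')$, $\pi_1(Y'_0)\cong\pi_1(Y')$, $\pi_1(X_0)\cong\pi_1(X_{\mathrm{reg}})$ and $\pi_1(Y_0)=\pi_1(Y_{\mathrm{reg}})$ its four edges become, respectively, $\pi_*$ (which is injective, $\pi$ being quasi-\'etale and $X'$ smooth, cf.\ \eqref{eq-inclusion-pushforward-fundamental-groups} and \cite{gkp_QuasiEtaleCovers}), $g_*$, $\psi_*$ and the homomorphism $f_*$ constructed above; this is exactly the asserted commutative diagram.

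I expect the only genuinely non-formal point to be the equidimensionality of $f$; once it is available, the rest is bookkeeping with codimensions combined with the standard invariance of $\pi_1$ of a smooth variety under removal of a closed subset of codimension $\ge 2$. The pitfall to watch for is precisely that $f$ is not assumed flat, so that a careless pull-back of a codimension-$\ge 2$ subset of $Y$ could drop codimension in $X$ --- which is exactly what the product structure on $X'$ prevents.
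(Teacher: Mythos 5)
Your proposal is correct and follows essentially the same route as the paper: use equidimensionality of $f$ (deduced from the product structure and finiteness of $\pi,\psi$) to conclude $f^{-1}(Y_{\mathrm{sing}})$ has codimension $\geq 2$ in $X$, then restrict to a big open of $X_{\mathrm{reg}}$ mapping into $Y_{\mathrm{reg}}$ and use invariance of $\pi_1$ under removal of codimension-$\geq 2$ subsets of a smooth variety. The paper compresses this to a few lines and treats the equidimensionality of $f$ and the commutativity of the square as immediate, whereas you spell both out; the extra care with intersecting $f^{-1}(Y_{\mathrm{reg}})$ with $X_{\mathrm{reg}}$ is a genuine (if minor) precision that the paper's phrasing glosses over.
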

\begin{proof}
   Since $g$ is equidimensional, also $f$ is equidimensional. Set $U := Y_{\mathrm{reg}}$. Then $\codim(X\setminus f^{-1}(U)) \geq 2$, and it follows that $\pi_1(X_{\mathrm{reg}}) \cong \pi_1(f^{-1}(U))$, see \eqref{eq-inclusion-pushforward-fundamental-groups}. Consequently, we can simply take $f_*\colon \pi_1(X_{\mathrm{reg}})  \cong \pi_1(f^{-1}(U)) \rightarrow \pi_1(U)$.
\end{proof}

\begin{proof}[Proof of Proposition \ref{prop-Galois-covers-Abelian-varieties-ball-quotients}]
    The proof will proceed by constructing a tower of coverings of $X'$.
    Let $\pi^{(1)}\colon X^{(2)}\rightarrow X'$ be the Galois hull of $\pi\colon X'\rightarrow X$; then $\pi^{(1)}$ is quasi-\'etale, and, hence, \'etale by purity of the branch locus, see \cite[Thm.\ 12.3.3]{Nemeti_NormalSurfaceSingularities}. In particular, $X^{(2)}$ is a complex manifold. Let
     \[
     \begin{tikzcd}
        X^{(2)} \arrow{r}{\pi^{(1)}} \arrow[swap]{d}{h} & X' \arrow{d}{g} \\
        Y^{(2)} \arrow{r}{\psi^{(1)}} & Y'
        \end{tikzcd}
    \]
    be the Stein factorisation of the submersion $g\circ \pi^{(1)}$. Then $Y^{(2)}$ is a complex manifold, $\psi^{(1)}$ is finite \'etale and $h$ is a submersion. Unfortunatly, $X^{(2)}$ has no reason to be a product, so we are not yet done with the proof.

    Let $\psi^{(2)}\colon Y^{(3)}\rightarrow Y'$ and $\xi^{(2)}\colon F^{(3)}\rightarrow F'$ be the finite \'etale covers determined by the formulae
    \begin{align}
        \begin{split}
            \pi_1\Big(Y^{(3)}\Big) & = \pi_1\Big(X^{(2)}\Big) \cap\Big( \{1\} \times \pi_1\big(Y'\big)\Big) \quad \mathrm{and} \\
    \pi_1\Big(F^{(3)}\Big) & = \pi_1\Big(X^{(2)}\Big) \cap\Big( \pi_1\big(F'\big) \times \{1\}\Big).
        \end{split}
        \label{eq-Covers-products-3}
    \end{align}
    Here, the asserted finiteness of $Y^{(3)}\rightarrow Y'$ is due to the fact that the natural group homomorphism
    $\pi_1(Y')/\pi_1(Y^{(3)}) \hookrightarrow \pi_1(X')/\pi_1(X^{(2)})$ is injective. The analogous argument shows the finiteness of $F^{(3)}\rightarrow F$. 

    Now, choose a finite index normal subgroup $\Lambda \subseteq \pi_1(Y_{\mathrm{reg}})$ such that $\Lambda \subseteq \pi_1(Y^{(3)})$. Let $\psi^{(3)}\colon Y^{(4)} \rightarrow Y^{(3)}$ be the corresponding finite \'etale cover, so that
    \begin{equation}
        \pi_1\Big(Y^{(4)}\Big) = \Lambda.
        \label{eq-cover-products-lambda}
    \end{equation}
    Set $F^{(4)} := F^{(3)}$ and $X^{(4)} := F^{(4)}\times Y^{(4)}$. We claim that the subgroup
    \begin{equation}
        \pi_1\Big(X^{(4)}\Big)
        = \pi_1\Big(F^{(4)}\Big)\times \pi_1\Big(Y^{(4)}\Big) \subseteq 
        \pi_1(X_{\mathrm{reg}})
        \label{eq-Galois-covers-Abelian-varieties-ball-quotients-claim}
    \end{equation}
    is normal of finite index. Observe that if we can prove this claim, then the proposition immediately follows. Indeed, the inclusion \eqref{eq-Galois-covers-Abelian-varieties-ball-quotients-claim} corresponds to a finite quasi-\'etale Galois cover $F^{(4)} \times Y^{(4)} = X^{(4)} \rightarrow X$, as required. 

    Now, the assertion that $ \pi_1(X^{(4)})$ has finite index in $\pi_1(X_{\mathrm{reg}})$ is clear since $\pi_1(F^{(4)}) \subseteq \pi_1(F')$ and $\pi_1(Y^{(4)}) \subseteq \pi_1(Y')$ are respectively subgroups of finite index. It remains to show that \eqref{eq-Galois-covers-Abelian-varieties-ball-quotients-claim} is a normal subgroup. To this end, we claim that
    \begin{equation}
        \pi_1\Big(F^{(4)}\Big)\times \pi_1\Big(Y^{(4)}\Big)
        =: \pi_1\Big(X^{(4)}\Big) 
        = \big(f_*\big)^{-1}\Big(\pi_1\big(Y^{(4)}\big)\Big)
        \cap
        \pi_1\Big(X^{(2)}\Big).
        \label{eq-Galois-covers-Abelian-varieties-ball-quotients-=}
    \end{equation}
    Here, $f_*\colon \pi_1(X_{\mathrm{reg}}) \rightarrow \pi_1(Y_{\mathrm{reg}})$ was defined in Proposition \ref{prop-pushforward-fundamental-groups}.
    Since both groups on the right hand side of \eqref{eq-Galois-covers-Abelian-varieties-ball-quotients-=} are normal in $\pi_1(X_{\mathrm{reg}})$, see \eqref{eq-cover-products-lambda}, the assertion then immediately follows.

    Note that the inclusion $\subseteq$ is clear. Indeed, both $\pi_1(F^{(4)}) \times \{1\}$ and $\{1\}\times\pi_1(Y^{(4)})$ are subgroups of $\pi_1(X^{(2)})$ and, hence, so is the subgroup generated by them. Regarding the reverse inclusion, pick an element
    \[
    (\gamma, \delta)\in \pi_1\big(X^{(2)}\big)\subseteq \pi_1\big(F'\big)\times \pi_1\big(Y'\big)
    \]
    with $f_*(\gamma, \delta) = \delta \in \pi_1(Y^{(4)})$. Then, by \eqref{eq-Covers-products-3}, $(1, \delta) \in \pi_1(X^{(2)})$ and, consequently, $(\gamma, \delta)\cdot (1, \delta)^{-1} = (\gamma, 1) \in \pi_1(X^{(2)})$. We deduce that 
    \[
    \gamma\in \pi_1\Big(F^{(4)}\Big) := \pi_1\Big(F^{(3)}\Big) := \pi_1\Big(X^{(2)}\Big) \cap\Big( \pi_1\big(F'\big) \times \{1\}\Big),
    \]
    see \eqref{eq-Covers-products-3}. This finishes the proof that \eqref{eq-Galois-covers-Abelian-varieties-ball-quotients-=} is indeed an equality. As explained before, this shows that \eqref{eq-Galois-covers-Abelian-varieties-ball-quotients-claim} is a finite index normal subgroup, thereby concluding the proof of the proposition.
\end{proof}

\subsection{Higgs sheaves and semistability}
\label{ssec-Higgs-Sheaves-Semistability}

\subsubsection{\texorpdfstring{$\Q$}{Q}-Chern classes}
    Let $X$ be a normal projective variety of dimension $n$ with quotient singularities in codimension two (for example, $X$ could be log terminal \cite[Prop.\ 4.18]{kollarMori_BirationalGeometry}). Given reflexive coherent sheaves $\mathcal{E}, \mathcal{F}$ on $X$, Langer \cite{Langer_intersectiontheorychernclasses}, see also \cite{Mumford_OrbifoldChernClasses, GKPT_MY_Inequality_Uniformisation_of_Canonical_models}, defines symmetric $\Q$-multilinear forms
    \begin{align*}
        c_1(\mathcal{E})& \colon  \mathrm{N}^{1}(X)_{\Q}^{n-1} \rightarrow \Q, 
        \quad (L_1, \ldots, L_{n-1}) \mapsto c_1(\mathcal{E})\cdot L_1\cdots L_{n-1}, \\
        c_1(\mathcal{E})c_1(\mathcal{F})&\colon  \mathrm{N}^{1}(X)_{\Q}^{n-2} \rightarrow \Q, 
        \quad(L_1, \ldots, L_{n-2}) \mapsto c_1(\mathcal{E})c_1(\mathcal{F})\cdot L_1\cdots L_{n-2}, \\
        c_2(\mathcal{E})& \colon  \mathrm{N}^{1}(X)_{\Q}^{n-2} \rightarrow \Q, 
        \quad (L_1, \ldots, L_{n-2}) \mapsto c_2(\mathcal{E})\cdot L_1\cdots L_{n-2},
\end{align*}
called \emph{$\Q$-Chern classes}, and which satisfy the following properties:
\begin{itemize}
\item[(1)] If $\pi\colon Y \rightarrow X$ is a finite cover from a normal projective variety with quotient singularities in codimension two, then
\[
c_1(\pi^{[*]}\mathcal{E})\pi^*\cdot L_1\cdots \pi^*L_{n-1} 
= (\deg\pi)\cdot (c_1(\mathcal{E})\cdot L_1\cdots L_{n-1})
\]
for all $(L_1, \ldots, L_{n-1}) \in \mathrm{N}^{1}(X)_{\Q}^{n-1}$. Ditto for $c_1(\mathcal{E})c_1(\mathcal{F}),  c_2(\mathcal{E})$.
\item[(2)]  If $n>2$, then for any general member $D \in V$ in a basepoint free linear system $V \subseteq |\mathcal{L}|$ of some line bundle $\mathcal{L}$, it holds that
$$
c_1(\mathcal{E}) \cdot c_{1}(L)\cdot  L_2\cdots L_{n-1}
=
c_1(\mathcal{E}|_D)\cdot (L_2|_D)\cdots (L_{n-1}|_D),
$$
for all $(L_1, \ldots, L_{n-2}) \in \mathrm{N}^{1}(X)_{\Q}^{n-2}$.
Ditto for $c_1(\mathcal{E})c_1(\mathcal{F}),  c_2(\mathcal{E})$.
\item[(3)] If $\rk\mathcal{E}=1$, then $c_2(\mathcal{E})\equiv 0$. 
\end{itemize}
Here, and throughout this text, given a $\Q$-multilinear form $\alpha\colon \mathrm{N}^{1}(X)_{\Q}^{k} \rightarrow \Q$, we denote $\alpha\equiv 0$ to mean that $\alpha\cdot L_1\cdots L_k = 0$ for all tuples $(L_1, \ldots, L_k)\in \mathrm{N}^{1}(X)_{\Q}^{k}$.

In case $X$ is log terminal, the above notion coincides with the one in \cite{GKPT_MY_Inequality_Uniformisation_of_Canonical_models}, see \cite[Introduction]{Langer_intersectiontheorychernclasses}. In particular, both notions coincide if $\dim X = 2$. Since $\mathrm{N}^{1}(X)_{\Q}$ is generated by classes of (semi-)ample line bundles, from (2) and \cite[Lem.\ 3.18]{GKPT_MY_Inequality_Uniformisation_of_Canonical_models}, it follows that
$c_1(-)$ and $c_2(-)$ satisfies the usual relations for duals, direct sums and (reflexivised) tensor products of two reflexive sheaves. The reader be warned, however, that while $c_1(-)$ is additive on sequences of reflexive sheaves which are exact in codimension one, $c_2(-)$ does not quite satisfy the usual additivity relation on short exact sequences, cf.\ \cite[Lem. 2.13]{IMM_3c2=c1^2}. As per usual, we denote
    \[
    \Delta(\mathcal{E}) := 2\rk(\mathcal{E})\cdot  c_2(\mathcal{E}) - (\rk(\mathcal{E})-1) c_1(\mathcal{E})^2.
    \]
    
    Given $\Q$-Weil divisors $A, B$ on $X$ and classes $L_1, \ldots, L_{n-2}\in N_1(X)_\Q$ we denote
    \[
    A\cdot B \cdot L_1\cdots L_{n-2} := c_1(\mathcal{O}_X(A))\cdot c_1(\mathcal{O}_X(B)) \cdot L_1\cdots L_{n-2}.
    \]

\subsubsection{(Log-) Higgs sheaves}
    Throughout this subsection, let $X$ be a normal variety and let $D$ be a reduced integral Weil divisor on $X$.
    \begin{defn}
        A \emph{log-Higgs sheaf} on $(X, D)$ is a pair $(\mathcal{E}, \theta)$ consisting in a coherent sheaf $\mathcal{E}$ on $X$ and an $\mathcal{O}_X$-linear map
        \[
        \theta \colon \mathcal{E}\otimes \mathcal{T}_X(-\log D) \rightarrow \mathcal{E},
        \]
        such that the induced morphism 
        \[
        \bigwedge^2\mathcal{T}_{X}(-\log D) \overset{[\cdot, \cdot]}{\longrightarrow} \mathcal{T}_{X}(-\log D) \overset{\theta}{\rightarrow} \mathcal{E}\mathrm{nd}(\mathcal{E})
        \] 
        vanishes. Here, $[\cdot, \cdot]$ denotes the Lie-bracket.
    \end{defn}
    In case $D=0$, we simply speak of a \emph{Higgs sheaf}. A subsheaf $\mathcal{F}\subseteq\mathcal{E}$ is a \emph{Higgs subsheaf} if $\theta(\mathcal{F}\otimes \mathcal{T}_X(-\log D))\subseteq \mathcal{F}$. An $\mathcal{O}_X$-linear map $\gamma\colon (\mathcal{E}_1, \theta_1)\rightarrow (\mathcal{E}_2, \theta_2)$ is a \emph{morhism of Higgs sheaves} if $\theta_2 \circ (\gamma \otimes \mathrm{id}_{\mathcal{T}_X(-\log D))}) = \gamma\circ \theta_1$.
     \begin{ex}
        Let $(\mathcal{E}_1, \theta_1), (\mathcal{E}_2, \theta_2)$ be Higgs sheaves on $X$. Then also $(\mathcal{E}_1\oplus \mathcal{E}_2, \theta_1\oplus \theta_2)$ and $(\mathcal{E}_1\otimes \mathcal{E}_2, \theta_1\otimes \textmd{Id}_{\mathcal{E}_2} + \textmd{Id}_{\mathcal{E}_1} \otimes \theta_2)$ are Higgs sheaves. 
        Moreover, the formula
        $$
        \mathcal{H}\mathrm{om}(\mathcal{E}_1, \mathcal{E}_2)
        \otimes \mathcal{T}_{X}
        \rightarrow 
        \mathcal{H}\mathrm{om}(\mathcal{E}_1, \mathcal{E}_2), 
        \quad 
        \varphi\otimes v \mapsto \theta_2(\varphi(-), v) - \varphi(\theta_1(-, v))
        $$
        defines a natural Higgs sheaf structure on $\mathcal{H}\mathrm{om}(\mathcal{E}_1, \mathcal{E}_2)$. In particular, if $(\mathcal{E}, \theta)$ is a Higgs sheaf, then we have a natural Higgs sheaf structure on $\mathcal{E}^{\vee}$. Similar statements hold for log-Higgs sheaves.
    \end{ex}
    \begin{rem}
    \label{rem-comparison-defs-Higgs-sheaves}
        Our definition of Higgs sheaf is precisely the one given in \cite{Langer_BogomolovsInequalityHiggsSheavesNormalVarieties}; see \cite[Section 4]{Langer_BogomolovsInequalityHiggsSheavesNormalVarieties} for a discussion of the advantages of this convention.
    \end{rem}
    Now, let $L_1, \ldots, L_{n-1}$ be nef divisor on $X$ such that $\alpha:= L_1\cdots L_{n-1} \neq 0$. Recall, that the \emph{slope} of a coherent sheaf $\mathcal{F}$ on $X$ is defined to be
    \[
    \mu_{\alpha}(\mathcal{F}) := \frac{c_1(\mathcal{F})\cdot L_1\cdots L_{n-1}}{\rk\mathcal{F}}.
    \]
    \begin{defn}
        Let $L_1, \ldots, L_{n-1}$ be nef divisors on $X$ such that $\alpha:= L_1\cdots L_{n-1} \neq 0$. A (log-) Higgs sheaf $(\mathcal{E}, \theta)$ is 
        \begin{itemize}
            \item[(1)] $\alpha$-semistable if $\mu_\alpha(\mathcal{F}) \leq \mu_{\alpha}(\mathcal{E})$ for all Higgs subsheaves $\mathcal{F}\subseteq (\mathcal{E}, \theta)$.
            \item[(2)] $\alpha$-stable if $\mu_\alpha(\mathcal{F}) < \mu_{\alpha}(\mathcal{E})$ for all Higgs subsheaves $\mathcal{F}\subseteq (\mathcal{E}, \theta)$ with $0<\rk\mathcal{F}< \rk\mathcal{E}$.
            \item[(3)] $\alpha$-polystable if it is a direct sum of $\alpha$-stable Higgs subsheaves of equal slope.
        \end{itemize}
    \end{defn}
    With these notions out of the way, all the well-known results regarding semistability of ordinary sheaves, see for example \cite{huybrechtsLehn_ModuliOfSheaves}, are also valid for (log-) Higgs sheaves. For example, given a (log-) Higgs sheaf $(\mathcal{E}, \theta)$, there exists a unique saturated subsheaf $\mathcal{E}_{\alpha}\subseteq (\mathcal{E}, \theta)$, called the \emph{maximal $\alpha$-destabilising Higgs subsheaf}, such that 
    \begin{itemize}
        \item[(1)] $\mu_\alpha(\mathcal{F})\leq \mu_\alpha(\mathcal{E}_{\alpha})$ for all Higgs subsheaves $\mathcal{F}\subseteq (\mathcal{E}, \theta)$.
        \item[(2)] If $\mu_\alpha(\mathcal{F})=\mu_\alpha(\mathcal{E}_{\alpha})$ for some Higgs subsheaf $\mathcal{F}\subseteq (\mathcal{E}, \theta)$, then $\mathcal{F}\subseteq \mathcal{E}_\alpha$.
    \end{itemize}
    Moreover, there exists a unique filtration
    \[
    0 = \mathcal{E}_0 \subsetneq \mathcal{E}_1 \subsetneq \mathcal{E}_2 \subsetneq \ldots \subsetneq \mathcal{E}_\ell
    \]
    by saturated Higgs subsheaves, called the \emph{$\alpha$-Harder-Narasimhan filtration}, such that
    \begin{itemize}
        \item[(1)] $\mathcal{G}_i := (\mathcal{E}_i/\mathcal{E}_{i-1})^{\vee\vee}$ is $\alpha$-semistable for any $i=1, \ldots, \ell$.
        \item[(2)] $\mu_\alpha(\mathcal{G}_1) > \mu_\alpha(\mathcal{G}_2) > \ldots > \mu_\alpha(\mathcal{G}_\ell)$. 
    \end{itemize}
    \begin{rem}
    \label{rem-semistability-tensor-product}
        Let $H_1, \ldots, H_{n-1}$ be ample divisors and set $\alpha:= H_1\cdots H_{n-1}$. If $(\mathcal{E}, \theta)$ is $\alpha$-semistable, then so is $(\mathcal{E}\mathrm{nd}(\mathcal{E}), \theta)$. Indeed, by \cite[Thm.\ 7.2]{Langer_BogomolovsInequalityHiggsSheavesNormalVarieties}, we may assume that $X$ is a curve, in which case the assertion is well-known \cite[Co.\ 3.8]{Simpson_HiggsBundlesAndLocalSystems}. Similarly, if $(\mathcal{E}, \theta)$ is $\alpha$-polystable, then so is $(\mathcal{E}\mathrm{nd}(\mathcal{E}), \theta)$ \cite[Lem.\ 4.7]{GKPT_HarmonicMetricsUniformisation}.
    \end{rem}

\subsubsection{Adapted Reflexive Differentials and Simpson's construction}

    A \emph{pair} $(X, D)$ consists in a normal variety $X$ and a $\Q$-Weil divisor $D$ on $X$ with coefficients in $[0,1]\cap \Q$. A finite cover $\pi\colon Y \rightarrow X$ is called \emph{adapted} to $D$, if $\pi^*D$ is integral. Note that such covers always exist \cite[Prop. 2.38]{ClaudonKebekusTaji_GenericSemipositivity}. 

    Given an adapted cover $\pi\colon Y \rightarrow X$, one can define a reflexive sheaf $\Omega_{(X, D, \pi)}^{[1]}$ on $Y$, called the sheaf of \emph{adapted reflexive differentials}, as in \cite[Def.\ 3.10]{ClaudonKebekusTaji_GenericSemipositivity}. It has the following properties:
    \begin{itemize}
        \item[(1)] There exist natural inclusions $\pi^{[*]}\Omega_X^{[1]}\subseteq \Omega_{(X, D, \pi)}^{[1]} \subseteq \Omega_Y^{[*]}(\log D_Y)$, where $D_Y := ( \pi^*D)_{\mathrm{red}}$, see \cite[Rmk. 3.11.1]{ClaudonKebekusTaji_GenericSemipositivity}.
        \item[(2)] $\det(\Omega_{(X, D, \pi)}^{[1]}) = \mathcal{O}_Y(\pi^*(K_X + D))$, see \cite[Rmk. 3.11.5]{ClaudonKebekusTaji_GenericSemipositivity}. 
    \end{itemize}
    Conceptually, one should think of $\Omega_{(X, D, \pi)}^{[1]}$ as the reflexive pull back of a hypothetical sheaf $\Omega_X^{[1]}(\log D)$ on $X$. This can be made precise using the framework of orbi-sheaves, see \cites{GuenanciaTaji_SemistabilityLogCotangentSheaf,Mul_Thesis}.

    \begin{con}(Simpson \cite{Simpson_VHSandUniformisation})
    \label{con-simpson-cotangent-sheaf}
    
    \noindent
        Let $(X, D)$ be a pair and let $\pi\colon Y\rightarrow X$ be an adapted cover. Then the morphism
        \[
        \theta_{\Simp} \colon \left(\Omega_{(X, D, \pi)}^{[1]}\oplus \mathcal{O}_Y\right) \otimes \mathcal{T}_Y(-\log D_Y) \rightarrow \Omega_{(X, D, \pi)}^{[1]}\oplus \mathcal{O}_Y, \quad (\eta, f)\otimes v \mapsto (0, \eta(v))
        \]
        defines a natural log-Higgs sheaf structure on $\Omega_{(X, D, \pi)}^{[1]}\oplus \mathcal{O}_Y$.
    \end{con}
    Now, exactly as in \cite[Rmk.\ after Quest.\ 5.4]{Langer_NoteOnBogomolovInstability}, see also \cite[Prop.\ 2.8]{IMM_3c2=c1^2}, one easily verifies the following:
    \begin{prop}
    \label{Prop-Properties-of-Simpsons-Higgs-Sheaf}
        Let $(X, D)$ be a projective pair of dimension $n$. Let $L_1, \ldots, L_{n-1}$ be nef divisors on $X$ such that $\alpha:= L_1\cdots L_{n-1}\neq 0$. Let $\mathcal{F}\subseteq \Omega_{(X, D, \pi)}$ be a saturated subsheaf. Then the maximal $\alpha$-destabilising Higgs subsheaf of 
        \[
        (\mathcal{F} \oplus \mathcal{O}_Y, \theta_{\Simp})
        \]
        is of the form $\mathcal{M} \oplus \mathcal{O}_X$ for some saturated subsheaf $\mathcal{M} \subseteq \mathcal{F}$.
        In particular, if $\mathcal{F}$ is $\alpha$-semistable and if $c_1(\mathcal{F})\cdot \alpha >0$, then $(\mathcal{F} \oplus \mathcal{O}_X, \theta)$ is $\alpha$-stable.
    \end{prop}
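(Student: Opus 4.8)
The plan is to establish the following sharper statement, which contains both assertions: \emph{every nonzero saturated Higgs subsheaf $\mathcal{G}$ of $(\mathcal{F}\oplus\mathcal{O}_Y,\theta_{\Simp})$ contains the summand $\{0\}\oplus\mathcal{O}_Y$}, and is therefore of the form $\mathcal{M}\oplus\mathcal{O}_Y$ with $\mathcal{M}:=p_1(\mathcal{G})\subseteq\mathcal{F}$ saturated, where $p_1$ denotes the first projection. To prove it, set $\mathcal{N}:=\mathcal{G}\cap(\{0\}\oplus\mathcal{O}_Y)$. As $\mathcal{G}$ is saturated, $(\{0\}\oplus\mathcal{O}_Y)/\mathcal{N}$ embeds into the torsion-free sheaf $(\mathcal{F}\oplus\mathcal{O}_Y)/\mathcal{G}$, hence is torsion-free; a torsion-free quotient of $\mathcal{O}_Y$ is either $0$ or of rank one, so either $\mathcal{N}=\{0\}\oplus\mathcal{O}_Y$ or $\mathcal{N}=0$. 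I then exclude $\mathcal{N}=0$: by the formula $\theta_{\Simp}((\eta,f)\otimes v)=(0,\eta(v))$ the image of $\theta_{\Simp}$ always lies in the second summand $\{0\}\oplus\mathcal{O}_Y$, so $\theta_{\Simp}(\mathcal{G}\otimes\mathcal{T}_Y(-\log D_Y))\subseteq\mathcal{G}\cap(\{0\}\oplus\mathcal{O}_Y)=\mathcal{N}$; if $\mathcal{N}=0$ this means $\eta(v)=0$ for every local section $(\eta,f)$ of $\mathcal{G}$ and every $v$. Since $\eta$ is a section of $\mathcal{F}\subseteq\Omega^{[1]}_{(X,D,\pi)}\subseteq\Omega_Y^{[*]}(\log D_Y)$, the evaluation pairing against $\mathcal{T}_Y(-\log D_Y)$ is perfect over the locus where $Y$ is smooth and $D_Y$ is simple normal crossing (whose complement has codimension $\geq 2$), and $\mathcal{F}$ is torsion-free, we conclude $\eta=0$; thus $\mathcal{G}\subseteq\{0\}\oplus\mathcal{O}_Y$, forcing $\mathcal{G}=0$, a contradiction. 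Hence $\{0\}\oplus\mathcal{O}_Y\subseteq\mathcal{G}$, so $\mathcal{G}=\mathcal{M}\oplus\mathcal{O}_Y$ with $\mathcal{M}$ saturated because $\mathcal{F}/\mathcal{M}\cong(\mathcal{F}\oplus\mathcal{O}_Y)/\mathcal{G}$. Applying this to the maximal $\alpha$-destabilising Higgs subsheaf $\mathcal{E}_\alpha$, which is nonzero and saturated by construction, yields the first assertion.

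For the second assertion, assume $\mathcal{F}$ is $\alpha$-semistable with $d:=c_1(\mathcal{F})\cdot\alpha>0$ and put $r:=\rk\mathcal{F}$; by the definition of stability it suffices to prove $\mu_\alpha(\mathcal{G})<\mu_\alpha(\mathcal{F}\oplus\mathcal{O}_Y)$ for every Higgs subsheaf $\mathcal{G}$ with $0<\rk\mathcal{G}<r+1$. Replacing $\mathcal{G}$ by its saturation — which is again a Higgs subsheaf (a morphism from a torsion sheaf to a torsion-free sheaf vanishes), of the same rank, with $\mu_\alpha$ not decreased since $\alpha$ is nef — I may, by the claim above, write $\mathcal{G}=\mathcal{M}\oplus\mathcal{O}_Y$ with $\mathcal{M}\subseteq\mathcal{F}$ saturated and $r':=\rk\mathcal{M}=\rk\mathcal{G}-1<r$. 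Since $c_1(\mathcal{O}_Y)=0$ and $c_1$ is additive on this split extension, $\mu_\alpha(\mathcal{G})=d'/(r'+1)$ with $d':=c_1(\mathcal{M})\cdot\alpha$, whereas $\mu_\alpha(\mathcal{F}\oplus\mathcal{O}_Y)=d/(r+1)$. If $r'=0$ then $\mathcal{M}=0$ and $\mu_\alpha(\mathcal{G})=0<d/(r+1)$; if $r'\geq 1$, $\alpha$-semistability of $\mathcal{F}$ gives $d'/r'\leq d/r$, hence $\mu_\alpha(\mathcal{G})=d'/(r'+1)\leq dr'/\big(r(r'+1)\big)<d/(r+1)$, the last step being $r'(r+1)<r(r'+1)$, i.e.\ $r'<r$, together with $d>0$. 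In all cases $\mu_\alpha(\mathcal{G})<\mu_\alpha(\mathcal{F}\oplus\mathcal{O}_Y)$, so $(\mathcal{F}\oplus\mathcal{O}_Y,\theta_{\Simp})$ is $\alpha$-stable.

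The argument is essentially formal: the only inputs that are not pure diagram chasing are the nondegeneracy of the evaluation pairing $\Omega_Y^{[*]}(\log D_Y)\otimes\mathcal{T}_Y(-\log D_Y)\to\mathcal{O}_Y$ over the complement of a codimension-two subset — which is what lets one invoke torsion-freeness of $\mathcal{F}$ to kill $\eta$ — and the standard fact that the saturation of a Higgs subsheaf is again a Higgs subsheaf. I expect these to be the only points requiring a word of justification rather than a genuine obstacle; the remainder is the elementary slope computation above.
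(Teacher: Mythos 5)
Your proof is correct, and it follows the same approach that the paper implicitly adopts by citing Langer's remark after his Question 5.4 and Proposition 2.8 of the IMM paper: one observes that $\theta_{\Simp}$ lands in the $\{0\}\oplus\mathcal{O}_Y$ summand, so any Higgs subsheaf $\mathcal{G}$ satisfies $\theta_{\Simp}(\mathcal{G}\otimes\mathcal{T}_Y(-\log D_Y))\subseteq\mathcal{G}\cap(\{0\}\oplus\mathcal{O}_Y)$, and nondegeneracy of the contraction pairing on a big open set together with torsion-freeness of $\mathcal{F}$ then forces $\{0\}\oplus\mathcal{O}_Y\subseteq\mathcal{G}$ once $\mathcal{G}\neq 0$ and $\mathcal{G}$ is saturated, after which the slope inequality is the elementary estimate $d'/(r'+1)<d/(r+1)$ when $d'/r'\leq d/r$, $r'<r$ and $d>0$. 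The one cosmetic improvement you make over the bare statement is to establish the uniform claim that \emph{every} nonzero saturated Higgs subsheaf contains the $\mathcal{O}_Y$ summand — not merely the maximal destabilising one — which handles both assertions at once; all the technical points you flag (saturation of a Higgs subsheaf is a Higgs subsheaf, slope does not decrease under saturation for nef multipolarisations, torsion-freeness kills a sheaf vanishing in codimension one) are correctly invoked.
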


\subsubsection{Numerically Projectively Flat Sheaves}
    Throughout this subsection, let $X$ be a log terminal projective variety of dimension $n$ and let $H_1, \ldots, H_{n-2}$ be ample divisors on $X$.
    \begin{defn}
    \label{def-Higgs-num-flat}
         A reflexive Higgs sheaf $(\mathcal{E}, \theta)$ on $X$ of rank $r$ is said to be 
        \begin{itemize}
            \item[(1)] \emph{numerically Higgs flat} if it is $(H_1\cdots H_{n-1})$-semistable and 
            $$
            c_1(\mathcal{E})\cdot H_1\cdots H_{n-1} = c_2(\mathcal{E})\cdot H_1\cdots H_{n-2} = 0.
            $$
            \item[(2)] \emph{numerically projectively Higgs flat} if it is $(H_1\cdots H_{n-1})$-semistable and
            $$
            \Delta(\mathcal{E})\cdot H_1 \cdots H_{n-2} = \Big(2rc_2(\mathcal{E}) - (r-1)c_1(\mathcal{E})^2\Big)\cdot H_1\cdots H_{n-2} = 0.
            $$
        \end{itemize}
    \end{defn}
    \begin{ex}
    \label{ex-num=proj-Higgs-flat-implies-End-Higgs-flat}
        By Remark \ref{rem-semistability-tensor-product} and \cite[Lem.\ 3.18]{GKPT_MY_Inequality_Uniformisation_of_Canonical_models}, if a reflexive Higgs sheaf $(\mathcal{E}, \theta)$ is numerically projectively Higgs flat, then $(\mathcal{E}\mathrm{nd}(\mathcal{E}), \theta)$ is numerically Higgs flat.
        In this case, if $(\mathcal{E}, \theta)$ is polystable, then so is $(\mathcal{E}\mathrm{nd}(\mathcal{E}), \theta)$ \cite[Lem.\ 4.7]{GKPT_HarmonicMetricsUniformisation}.
    \end{ex}
    If $(\mathcal{E}, \theta)$ is numerically Higgs flat, then it is semistable with respect to any multipolarisation and satisfies $c_1(\mathcal{E}) \equiv 0$ and $c_2(\mathcal{E}) \equiv 0$, see \cite[Fact 6.5]{GKPT_HarmonicMetricsUniformisation}.
    In this case, if $(\mathcal{E}, \theta)$ is (poly)stable with respect to one multipolarisation, then it is (poly)stable with respect to any multipolarisation \cite[Fact 6.2]{GKPT_HarmonicMetricsUniformisation}. 
    In effect, we simply say that a numerically projectively flat Higgs sheaf is (poly)stable when we mean that it is (poly)stable with respect to some or, equivalently, any multipolarisation on $X$.

    \begin{thm}\emph{(Non-Abelian Hodge Correspondence on klt spaces \cite{GKPT_HarmonicMetricsUniformisation})}
    \label{thm-Non-Abelian-Hodge-Correspondence}

        \noindent
        Let $X$ be a maximally quasi-\'etale log terminal projective variety. Then 
        any numerically Higgs flat reflexive sheaf $(\mathcal{E}, \theta)$ on $X$ is locally free. Moreover, there exists a natural equivalence of categories 
        \begin{align*}
        \begin{split}
            \big\{\textmd{linear representations of } \pi_1(X) \big\} & \cong \big\{ \textmd{numerically Higgs flat vector bundles on } X \big\}
        \end{split}
        \end{align*}
        with the property that a representation $\rho\colon \pi_1(X)\rightarrow \mathrm{GL}_r(\C)$ is irreducible (completely reducible) if and only if the associated Higgs vector bundle $(\mathcal{E}, \theta)$ is (poly)stable.
    \end{thm}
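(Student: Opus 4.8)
The plan is to deduce the statement from Simpson's non-abelian Hodge correspondence over the smooth quasi-projective locus $X_{\reg}$, and then to extend the objects involved across $X_{\sing}$, crucially exploiting that $X$ is maximally quasi-\'etale; this is precisely the strategy of \cite{GKPT_HarmonicMetricsUniformisation}, which in turn rests on the Hodge theory for klt spaces of \cite{gkpt_HodgeTheoremKltSpaces}. I would organise the argument in three stages: local freeness, construction of the two functors, and matching of the stability notions.

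For local freeness, let $(\mathcal{E}, \theta)$ be numerically Higgs flat. Its restriction to $X_{\reg}$ is a Higgs bundle that is slope-semistable with respect to every polarisation and has vanishing first and second Chern classes (see the discussion following Definition \ref{def-Higgs-num-flat}); by the Hermitian--Einstein theory for Higgs bundles (Simpson and Mochizuki, together with Bando--Siu in the reflexive setting) it carries a harmonic metric, and because the discriminant vanishes its stable Jordan--H\"older factors are \emph{flat} as Higgs bundles. It then remains to show that this locally free Higgs sheaf on $X_{\reg}$ extends to a locally free sheaf on all of $X$. Since $\mathcal{E}$ is reflexive it is already determined by this restriction; a numerically flat reflexive sheaf becomes trivial in codimension one after a suitable quasi-\'etale cover, and the maximally quasi-\'etale hypothesis forces that cover to be \'etale, so purity of the branch locus lets one descend local triviality. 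The Greb--Kebekus--Peternell extension theorem for reflexive differentials and \cite{gkp_QuasiEtaleCovers} are what make this work.

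For the two functors, a numerically Higgs flat, hence locally free, $(\mathcal{E}, \theta)$ equipped with its harmonic metric $h$ produces, following Corlette--Simpson, a flat connection on $X_{\reg}$ built from the Chern connection of $h$ and from $\theta$, whose monodromy is a representation of $\pi_1(X_{\reg})$. Maximal quasi-\'etaleness makes every finite-index subgroup of $\pi_1(X_{\reg})$ arise from an \'etale cover of $X$, so that, after the finite-monodromy bookkeeping carried out in \cite{GKPT_HarmonicMetricsUniformisation}, this representation factors through $\pi_1(X)$. Conversely, a representation $\rho\colon\pi_1(X)\to\mathrm{GL}_r(\C)$ gives a local system on $X_{\reg}$; Corlette's theorem provides a harmonic metric, hence a Higgs bundle on $X_{\reg}$, which the extension step above upgrades to a numerically Higgs flat reflexive sheaf on $X$. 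One checks that these assignments are exact and mutually quasi-inverse. Under the correspondence a Higgs subsheaf of the same slope corresponds to a sub-local-system, that is, a subrepresentation, so $(\mathcal{E}, \theta)$ is stable exactly when $\rho$ is irreducible and polystable exactly when $\rho$ is completely reducible, the polystable splitting being the isotypical decomposition of $\rho$.

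The main obstacle is the passage across the singular locus: one must verify both that the harmonic bundle on $X_{\reg}$ extends as a reflexive sheaf that is again \emph{locally free}, and that the identification $\pi_1(X_{\reg}) \leftrightarrow \pi_1(X)$ is compatible with this extension on both sides of the correspondence. Both facts genuinely use that $X$ is log terminal and maximally quasi-\'etale, together with the deep extension results for reflexive differential forms; it is for this reason that the theorem is due to \cite{GKPT_HarmonicMetricsUniformisation} rather than being a formal consequence of the smooth, projective case.
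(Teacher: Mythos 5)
The paper's own proof of this theorem is a two-sentence citation: Langer's result is invoked for local freeness of $\mathcal{E}|_{X_\reg}$, and the remainder — local freeness on all of $X$, the categorical equivalence, and the match between (poly)stability and (complete) reducibility — is attributed directly to GKPT Thm.\ 6.6. You instead attempt to reconstruct the argument that lies behind those citations, which is a fundamentally different exercise.

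Your overall outline — Simpson/Corlette theory on $X_\reg$, then extension across the singular locus using the maximally quasi-\'etale hypothesis — is the right shape, but several steps are misstated. You open the local-freeness discussion with ``its restriction to $X_\reg$ is a Higgs bundle'', yet a priori $\mathcal{E}$ is only reflexive, and local freeness even on the smooth locus is a theorem (the very result of Langer the paper cites), not automatic. More seriously, the mechanism you offer for propagating local freeness from $X_\reg$ to $X$ — a numerically flat reflexive sheaf becoming trivial in codimension one after a quasi-\'etale cover, followed by descent via purity of the branch locus — is not what drives the argument. The actual route shows that the monodromy representation attached to the harmonic bundle on $X_\reg$ factors through $\pi_1(X)$ (this is where maximal quasi-\'etaleness and the dictionary between finite quasi-\'etale covers and finite-index subgroups of $\pi_1(X_\reg)$ enter), builds from that representation a vector bundle on all of $X$, and then identifies $\mathcal{E}$ with it; local freeness on $X$ is the output of this comparison, not a consequence of purity. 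Your appeal to the Greb--Kebekus--Peternell extension theorem for reflexive \emph{differentials} is also not the right technical input here, since that result concerns pullbacks of differential forms along resolutions rather than extensions of arbitrary numerically flat Higgs sheaves. Since the paper deliberately cites rather than reproves this theorem, a faithful account should either do the same or reproduce GKPT's detailed argument rather than a paraphrase of it.
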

    \begin{proof}
        In general, by \cite[Thm.\ 7.12]{Langer_SimpsonCorrespondencePositiveCHar}, if $(\mathcal{E}, \theta)$ is a numerically Higgs flat sheaf on a log terminal projective variety $X$, then $\mathcal{E}|_{X_{\mathrm{reg}}}$ is locally free. With this is mind, the correspondence is established in \cite[Thm.\ 6.6]{GKPT_HarmonicMetricsUniformisation}.
    \end{proof}

    The following is a special instance of a very general principle, see \cite{Zuo_NegativityKernelHiggsField, PopaWu_WeakPositivityHodgeModules}:
    \begin{prop}
        \label{prop-kernel-Higgs-field-seminegative}
        Let $(\mathcal{E}, \theta)$ be a Higgs numerically flat vector bundle on a smooth projective variety $X$. If $\mathcal{F}\subseteq (\mathcal{E}, \theta)$ is a sub vector bundle such that $\theta(\mathcal{F}\otimes \mathcal{T}_X) = 0$, then $\mathcal{F}^\vee$ is nef.
    \end{prop}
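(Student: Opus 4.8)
The plan is to run the curvature argument that underlies the cited principle: put a (Higgs-)harmonic metric on $(\mathcal{E},\theta)$, use Hitchin's equation to see that the curvature of $(\mathcal{E},h)$ becomes Griffiths-seminegative along the sub-bundle $\mathcal{F}$ that is annihilated by $\theta$, deduce the same for the restricted metric on $\mathcal{F}$, and finally pass to duals. First I would arrange the metric. Since $X$ is smooth it is maximally quasi-\'etale (every quasi-\'etale cover of a smooth variety is \'etale, by purity of the branch locus), so Theorem \ref{thm-Non-Abelian-Hodge-Correspondence} applies: $(\mathcal{E},\theta)$ corresponds to a representation $\rho\colon\pi_1(X)\rightarrow\mathrm{GL}_r(\C)$, and if $\rho$ is semisimple, i.e.\ if $(\mathcal{E},\theta)$ is polystable, then $(\mathcal{E},\theta)$ carries a Hermitian metric $h$ solving $\Theta_h+[\theta,\theta^{*_h}]=0$, where $\Theta_h$ is the Chern curvature of $(\mathcal{E},h)$. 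The general case is reduced to this one by working throughout with the approximately Higgs--Hermitian--Yang--Mills metrics that a numerically Higgs flat bundle admits, carrying an $O(\varepsilon)$ error through the estimate below and invoking the metric characterisation of nefness at the end (alternatively, by a d\'evissage along a Jordan--H\"older filtration of $(\mathcal{E},\theta)$, which is possible but requires more care since the induced filtration of $\mathcal{F}$ need not have locally free graded pieces).

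The core is a pointwise computation. Fix a point, a local $h$-orthogonal $C^\infty$-decomposition $\mathcal{E}=\mathcal{F}\oplus\mathcal{F}^{\perp_h}$, and write $\theta=\sum_i\theta_i\,dz_i$. The hypothesis $\theta(\mathcal{F}\otimes\mathcal{T}_X)=0$ says exactly that each $\theta_i$ annihilates $\mathcal{F}$, so $\theta_j^{*_h}\theta_i$ does too; feeding this into Hitchin's equation one obtains, for $v\in\mathcal{F}$ and $\xi=\sum_i\xi_i\partial_{z_i}$,
\[
\big\langle\Theta_h(\xi,\bar\xi)\,v,\,v\big\rangle_h\;=\;-\Big\|\sum_i\bar\xi_i\,\theta_i^{*_h}v\Big\|_h^2\;\leq\;0,
\]
i.e.\ the Griffiths form of $(\mathcal{E},h)$ is seminegative along the directions tangent to $\mathcal{F}$. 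Passing to the holomorphic sub-bundle $\mathcal{F}$ only lowers the curvature further (the second fundamental form adds a seminegative term), so $(\mathcal{F},h|_{\mathcal{F}})$ has Griffiths-seminegative curvature; dualising, $(\mathcal{F}^\vee,(h|_{\mathcal{F}})^\vee)$ has Griffiths-semipositive curvature. As a vector bundle admitting a smooth Hermitian metric of Griffiths-semipositive curvature is nef, $\mathcal{F}^\vee$ is nef. In the non-polystable situation the same computation yields, for every $\varepsilon>0$, a metric making $\mathcal{F}^\vee$ ``$\varepsilon$-Griffiths-semipositive'', which is enough to conclude nefness.

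I expect the main obstacle to be not the curvature estimate — this is the classical Griffiths/Zuo computation, reading off seminegativity of $\ker\theta$ from Hitchin's equation — but the first step: genuinely producing a (possibly approximate) harmonic metric for numerically Higgs flat bundles that need not be polystable, and keeping the resulting error terms under control in the curvature inequality. Once that bookkeeping is in place, the rest is routine.
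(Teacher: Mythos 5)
Your proposal is essentially the paper's proof: endow $(\mathcal{E},\theta)$ with a harmonic metric $h$ (citing Simpson), run the Hitchin-equation curvature computation to see that $h|_{\mathcal{F}}$ is Griffiths-seminegative because $\theta$ annihilates $\mathcal{F}$, and conclude that $\mathcal{F}^\vee$ is nef by the metric criterion of Demailly--Peternell--Schneider. Your extra care about the non-polystable case (approximate Hermitian--Einstein metrics or d\'evissage along a Jordan--H\"older filtration) is a reasonable clarification of a point the paper leaves implicit by simply citing \cite{Simpson_VHSandUniformisation} for the existence of the harmonic metric and deferring the details to \cite[Rem.\ 3.2.11]{Mul_Thesis}.
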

    \begin{proof}
      By \cite{Simpson_VHSandUniformisation}, $(\mathcal{E}, \theta)$ admits an \emph{harmonic metric} $h$ and a well-known computation shows that, because $\vartheta(\mathcal{F}) = 0$, the Chern curvature tensor of the metric $h|_{\mathcal{F}}$ is semi-negative in the sense of Griffiths; in particular, $\mathcal{F}^\vee$ is nef \cite[Thm.\ 1.12]{DPS_ManifoldsWithNefTangentBundle}. See for example \cite[Rem.\ 3.2.11]{Mul_Thesis} for details.  
    \end{proof}
    In particular, we recover the well-known fact that if $\mathcal{E}$ is a numerically flat vector bundle on $X$, then $\mathcal{E}$ and $\mathcal{E}^\vee$ are nef, c.f.\ \cite{DPS_ManifoldsWithNefTangentBundle}.

    \subsection{Chern class inequalities for reflexive sheaves}

    The following result has been known at least since the fundamental work of Miyaoka \cite{miyaoka_MiyaokaInequality}; to the authors knowledge it was first stated explicitly, for smooth varieties, in \cite{Langer_SemistableSheavesInPositiveCharacteristic}; see \cite[Thm.\ 2.4.6]{Mul_Thesis} for a detailled proof in the generality we require.
    \begin{thm}\emph{(Miyaoka \cite{miyaoka_MiyaokaInequality})}

        \noindent
        Let $X$ be a normal projective variety of dimension $n$ with quotient singularities in codimension two and let $L_1, \ldots, L_{n-2}$ be nef divisors on $X$.
        Let $(\mathcal{E}, \theta)$ be a reflexive Higgs sheaf of rank $r$ on $X$.
        Assume that $(\mathcal{E}, \theta)$ is generically semipositive and that $\det(\mathcal{E})$ is a nef $\Q$-line bundle.
        If $c_1(\mathcal{E})^2\cdot L_1\cdots L_{n-2} \neq 0$, then
        \label{thm-Basic-Chern-Class-Inequality-Higgs-Sheaves}
        \begin{align}
           2 c_2(\mathcal{E}) \cdot L_{1}\cdots L_{n-2}
           \geq \Big( 
           c_1(\mathcal{E})^2\cdot L_{1}\cdots L_{n-2}
           - \mu^{\max}_\alpha(\mathcal{E}, \theta)
           \Big),
           \label{eq-Basic-Chern-Class-Inequality-Higgs-Sheaves-1}
        \end{align}
        where $\alpha = c_1(\mathcal{E})\cdot L_1\cdots L_{n-2}$. In case the equality holds in \eqref{eq-Basic-Chern-Class-Inequality-Higgs-Sheaves-1} then the $\alpha$-Harder--Narasimhan filtration 
        $$
        0 = \mathcal{E}_0 \subsetneq \mathcal{E}_1 \subseteq \mathcal{E}_2 = \mathcal{E}
        $$ 
        of $(\mathcal{E}, \theta)$ has at most two terms. Furthermore, denoting $\mathcal{F} := \mathcal{E}_1 $ and $\mathcal{G} := (\mathcal{E}/\mathcal{F})^{**}$, the following numerical conditions are satisfied for any Weil $\Q$-divisor $L$ on $X$: 
        \begin{gather}
            \Delta(\mathcal{F})\cdot L_1\cdots L_{n-2} = 
            \Delta(\mathcal{G})\cdot L_1\cdots L_{n-2} = 0, 
            \label{eq-Basic-Chern-Class-Inequality-Higgs-Sheaves-2}\\
            \big(c_1(\mathcal{E}) - c_1(\mathcal{F})\big)\cdot L \cdot L_1\cdots L_{n-2} 
            = 
            c_1(\mathcal{G})\cdot L\cdot L_1\cdots L_{n-2} = 0, 
            \label{eq-Basic-Chern-Class-Inequality-Higgs-Sheaves-3}\\
            \big(c_2(\mathcal{E}) - c_2(\mathcal{F}) - c_2(\mathcal{G}) - c_1(\mathcal{F})c_1(\mathcal{G})\big)\cdot L_1\cdots L_{n-2} = 0.\label{eq-Basic-Chern-Class-Inequality-Higgs-Sheaves-4}
        \end{gather}
    \end{thm}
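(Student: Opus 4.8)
The plan is to run Miyaoka's argument \cite{miyaoka_MiyaokaInequality}, carrying the Higgs structure along in the manner of Langer, and to feed it three ingredients: Bogomolov's inequality for $\alpha$-semistable Higgs sheaves on normal varieties with quotient singularities in codimension two \cite{Langer_BogomolovsInequalityHiggsSheavesNormalVarieties}, the Hodge index inequality for the symmetric form $D\mapsto D^2\cdot L_1\cdots L_{n-2}$, and one elementary inequality between the Harder--Narasimhan slopes. I first note that $\alpha = c_1(\mathcal{E})\cdot L_1\cdots L_{n-2}$ is a nonzero nef curve class -- nonzero because $c_1(\mathcal{E})\cdot\alpha = c_1(\mathcal{E})^2\cdot L_1\cdots L_{n-2}\neq 0$, nef because $\det\mathcal{E}$ is -- so the $\alpha$-slope and $\alpha$-Harder--Narasimhan theory recalled in Section~\ref{ssec-Higgs-Sheaves-Semistability} apply. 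Write the $\alpha$-Harder--Narasimhan filtration of $(\mathcal{E},\theta)$ as $0 = \mathcal{E}_0\subsetneq\mathcal{E}_1\subsetneq\cdots\subsetneq\mathcal{E}_\ell = \mathcal{E}$, with reflexive graded pieces $\mathcal{G}_i := (\mathcal{E}_i/\mathcal{E}_{i-1})^{\vee\vee}$ of rank $r_i$ and slopes $\mu_i := \mu_\alpha(\mathcal{G}_i)$, so that $\mu^{\max}_\alpha(\mathcal{E},\theta) = \mu_1 > \mu_2 > \cdots > \mu_\ell = \mu^{\min}_\alpha(\mathcal{E},\theta)$; generic semipositivity of $(\mathcal{E},\theta)$ gives $\mu_\ell \geq 0$, hence $0\le\mu_i\le\mu_1$ for all $i$.

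For the inequality, each $\mathcal{G}_i$ is $\alpha$-semistable, so \cite{Langer_BogomolovsInequalityHiggsSheavesNormalVarieties} yields $\Delta(\mathcal{G}_i)\cdot L_1\cdots L_{n-2}\ge 0$. Expanding $\Delta$ along the filtration -- using the additivity of $c_1$ and the behaviour of $c_2$ on the defining short exact sequences, whose discrepancy is an effective codimension-two cycle and therefore pairs nonnegatively with the nef class $L_1\cdots L_{n-2}$ (cf.\ \cite[Lem.\ 2.13]{IMM_3c2=c1^2}) -- produces, with $q := c_1(\mathcal{E})^2\cdot L_1\cdots L_{n-2} > 0$,
\[
\frac{\Delta(\mathcal{E})}{r}\cdot L_1\cdots L_{n-2}
\;\ge\;
\sum_i\frac{\Delta(\mathcal{G}_i)}{r_i}\cdot L_1\cdots L_{n-2}
\;-\;
\frac1r\sum_{i<j}r_i r_j\Big(\tfrac{c_1(\mathcal{G}_i)}{r_i}-\tfrac{c_1(\mathcal{G}_j)}{r_j}\Big)^{2}\cdot L_1\cdots L_{n-2}.
\]
Since $\big(\tfrac{c_1(\mathcal{G}_i)}{r_i}-\tfrac{c_1(\mathcal{G}_j)}{r_j}\big)\cdot c_1(\mathcal{E})\cdot L_1\cdots L_{n-2} = \mu_i-\mu_j$ and $c_1(\mathcal{E})$ is nef with $q>0$, the Hodge index inequality bounds each cross term above by $(\mu_i-\mu_j)^2/q$. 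Combined with $\sum_i r_i\mu_i = q$ and $0\le\mu_i\le\mu_1$, this reduces the target to the purely numerical statement $\sum_{i<j}r_ir_j(\mu_i-\mu_j)^2\le q(r\mu_1-q)$; after normalising $t_i := \mu_i/\mu_1\in[0,1]$, both sides are $\mu_1^2$ times, respectively, $r\sum_i r_it_i^2-(\sum_i r_it_i)^2$ and $r\sum_i r_it_i-(\sum_i r_it_i)^2$, so the statement is just $\sum_i r_it_i(t_i-1)\le 0$, which holds termwise. Rewriting $\Delta(\mathcal{E}) = 2rc_2(\mathcal{E})-(r-1)c_1(\mathcal{E})^2$ now gives \eqref{eq-Basic-Chern-Class-Inequality-Higgs-Sheaves-1}.

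For the equality case I run this chain backwards: equality forces $\Delta(\mathcal{G}_i)\cdot L_1\cdots L_{n-2} = 0$ for all $i$, every Hodge index estimate to be an equality, the $c_2$-discrepancy to pair trivially with $L_1\cdots L_{n-2}$, and the inequality $\sum_i r_it_i(t_i-1)\le 0$ to be an equality, i.e.\ $t_i\in\{0,1\}$ for every $i$ with $r_i>0$. As the $\mu_i$ are strictly decreasing this is possible only if $\ell\le 2$ and, when $\ell = 2$, only if $\mu_2 = 0$; this yields the two-term filtration with $\mathcal{F} := \mathcal{E}_1$ and $\mathcal{G} := (\mathcal{E}/\mathcal{F})^{\vee\vee}$, together with \eqref{eq-Basic-Chern-Class-Inequality-Higgs-Sheaves-2}. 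Then $\mathcal{G}$ is $\alpha$-semistable with $\mu_\alpha(\mathcal{G}) = 0$ and $\Delta(\mathcal{G})\cdot L_1\cdots L_{n-2} = 0$, which -- a standard consequence for numerically Higgs flat sheaves, cf.\ the discussion preceding Theorem~\ref{thm-Non-Abelian-Hodge-Correspondence} together with the Hodge index theorem -- forces $c_1(\mathcal{G})\cdot L\cdot L_1\cdots L_{n-2} = 0$ for every $\Q$-Weil divisor $L$, hence also $(c_1(\mathcal{E})-c_1(\mathcal{F}))\cdot L\cdot L_1\cdots L_{n-2} = 0$; this is \eqref{eq-Basic-Chern-Class-Inequality-Higgs-Sheaves-3}, and \eqref{eq-Basic-Chern-Class-Inequality-Higgs-Sheaves-4} is exactly the assertion that the now trivial $c_2$-discrepancy of $0\to\mathcal{F}\to\mathcal{E}\to\mathcal{E}/\mathcal{F}\to 0$ vanishes against $L_1\cdots L_{n-2}$. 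I expect the main obstacle to lie not in any single one of these steps but in the infrastructure that makes them legitimate in the stated generality: that Langer's Bogomolov inequality and the $\Q$-Chern class intersection calculus are available for reflexive Higgs sheaves on varieties with quotient singularities in codimension two, that $\alpha$-slope theory and the Hodge index inequality remain valid for the merely nef (not ample) polarisation $\alpha$ -- obtained by perturbation from the ample case -- and the careful bookkeeping of the codimension-$\ge 3$ discrepancies intrinsic to $c_2$ of reflexive sheaves; a detailed treatment is in \cite[Thm.\ 2.4.6]{Mul_Thesis}.
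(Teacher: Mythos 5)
The paper itself does not give a proof of this statement; it defers to \cite[Thm.\ 2.4.6]{Mul_Thesis}, so there is no in-text argument to compare against. Your sketch is the classical Miyaoka--Langer template and is almost certainly what that reference carries out: take the Harder--Narasimhan filtration with respect to the degenerate nef class $\alpha = c_1(\mathcal{E})\cdot L_1\cdots L_{n-2}$, apply Langer's Bogomolov inequality to each semistable graded piece, expand $\Delta(\mathcal{E})$ along the filtration with the effective $c_2$-discrepancy pushing the inequality the right way, bound the cross terms by the Hodge index inequality, and reduce to the termwise inequality $\sum_i r_i t_i(t_i-1)\le 0$. The numerical bookkeeping in the inequality part checks out: $\sum_i r_i\mu_i = q$, the identities
\[
\sum_{i<j} r_i r_j(t_i-t_j)^2 = r\sum_i r_i t_i^2 - \Big(\sum_i r_i t_i\Big)^2,
\qquad
\Big(\sum_i r_i t_i\Big)\Big(r - \sum_i r_i t_i\Big) = r\sum_i r_i t_i - \Big(\sum_i r_i t_i\Big)^2
\]
are both correct, and $t_i\in[0,1]$ closes the chain. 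The equality analysis correctly forces $t_i\in\{0,1\}$, hence $\ell\le 2$ and, when $\ell=2$, $\mu_2 = 0$, and correctly identifies \eqref{eq-Basic-Chern-Class-Inequality-Higgs-Sheaves-4} as the vanishing of the $c_2$-discrepancy.

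The one place where I think your write-up has a genuine gap is the deduction of \eqref{eq-Basic-Chern-Class-Inequality-Higgs-Sheaves-3}. Knowing that $\mathcal{G}$ is $\alpha$-semistable with $\mu_\alpha(\mathcal{G}) = 0$ and $\Delta(\mathcal{G})\cdot L_1\cdots L_{n-2} = 0$ gives you vanishing of $c_1(\mathcal{G})$ only against the single, degenerate class $c_1(\mathcal{E})\cdot L_1\cdots L_{n-2}$; concluding $c_1(\mathcal{G})\cdot L\cdot L_1\cdots L_{n-2} = 0$ for \emph{every} Weil $\Q$-divisor $L$ is strictly more, and the ``numerically Higgs flat'' discussion around Theorem~\ref{thm-Non-Abelian-Hodge-Correspondence} does not apply here because that notion presupposes semistability and vanishing of $c_1$, $c_2$ against an ample multipolarisation, which you have not established. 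What Miyaoka's argument actually uses is that this filtration is the stabilised limit of the $\big((c_1(\mathcal{E})+\varepsilon H)\cdot L_1\cdots L_{n-2}\big)$-Harder--Narasimhan filtrations as $\varepsilon\to 0^+$; generic semipositivity (giving $\mu^{\min}\ge 0$ for every nef polarisation) then bounds $c_1(\mathcal{G})\cdot H\cdot L_1\cdots L_{n-2}$ from one side while the first-order behaviour in $\varepsilon$ of the rest of the chain of inequalities pins it down from the other, and only then does the Hodge index theorem deliver \eqref{eq-Basic-Chern-Class-Inequality-Higgs-Sheaves-3}. You flag this perturbation infrastructure yourself as the ``main obstacle,'' so the gap is acknowledged rather than overlooked, but as written the step to \eqref{eq-Basic-Chern-Class-Inequality-Higgs-Sheaves-3} does not follow from the preceding lines.
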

    In the remaining case $c_1(\mathcal{E})^2\cdot L_1\cdots L_{n-2} = 0$ we have the following result, which is proved in a similar spirit.
    \begin{thm}\emph{(Miyaoka \cite{miyaoka_MiyaokaInequality}, Ou \cite{Ou_GenericNefnessTangentBundle}, Iwai--Matsumura \cite{IwaiMatsumura_ManifoldsWithC2=0})}
        \label{thm-Basic-Chern-Class-Inequality-Higgs-Sheaves-v=1}

        \noindent
        Let $X$ be a normale projective variety of dimension $n$ and with quotient singularities in codimension two. Let $L_1, \ldots, L_{n-2}$ be nef divisors on $X$.
        Let $(\mathcal{E}, \theta)$ be a reflexive Higgs sheaf on $X$.
        Assume that $(\mathcal{E}, \theta)$ is generically semipositive and that $\det(\mathcal{E})$ is a nef $\Q$-line bundle.
        If $c_1(\mathcal{E})^2\cdot L_1\cdots L_{n-2} = 0$, then
        \begin{align}
           c_2(\mathcal{E})\cdot L_1\cdots L_{n-2} \geq 0.
           \label{eq-Basic-Chern-Class-Inequality-Higgs-Sheaves-v=1}
        \end{align}
        If equality holds in \eqref{eq-Basic-Chern-Class-Inequality-Higgs-Sheaves-v=1}, then one of the following two assertions holds true. Set $\alpha_H := H\cdot L_1\cdots L_{n-2}$ for some ample divisor $H$ on $X$; then either $c_1(\mathcal{E})\cdot L_1\cdots L_{n-2} \equiv 0$ and $(\mathcal{E}, \theta)$ is $\alpha_H$-semistable. Otherwise,
        $c_1(\mathcal{E})\cdot L_1\cdots L_{n-2} \neq 0$ and the $\alpha_H$-Harder--Narasimhan filtration
            $$
            0 = \mathcal{E}_0 \subsetneq \mathcal{E}_1 \subsetneq \ldots \subsetneq \mathcal{E}_\ell = \mathcal{E}
            $$
        is independent of $H$. Moreover, there exist rational numbers $\lambda_i$ such that the graded quotients $\mathcal{G}_i := (\mathcal{E}_i/\mathcal{E}_{i-1})^{\vee\vee}$ satisfy for all $i=1,\ldots, \ell$ and all Weil $\Q$-divisors $L$ on $X$ that
            \begin{gather}
            c_2(\mathcal{G}_i)\cdot L_1\cdots L_{n-2} = c_1(\mathcal{G}_i)^2\cdot L_1\cdots L_{n-2}=0, 
            \label{eq-Basic-Chern-Class-Inequality-Higgs-Sheaves-v=1-2}\\
            c_1(\mathcal{G}_i)\cdot L\cdot L_1\cdots L_{n-2} = \lambda_i \ c_1(\mathcal{E})\cdot L\cdot L_1\cdots L_{n-2},
            \label{eq-Basic-Chern-Class-Inequality-Higgs-Sheaves-v=1-4}\\
            \big(c_2(\mathcal{E}_{i}) - c_2(\mathcal{E}_{i-1}) -  c_2(\mathcal{G}_{i}) -  c_1(\mathcal{E}_{i-1})c_1(\mathcal{G}_{i})\big)\cdot L_1\cdots L_{n-2} = 0.
            \label{eq-Basic-Chern-Class-Inequality-Higgs-Sheaves-v=1-3}
        \end{gather}
    \end{thm}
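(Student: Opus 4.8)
The plan is to run Miyaoka's generic-semipositivity argument exactly along the lines of Theorem~\ref{thm-Basic-Chern-Class-Inequality-Higgs-Sheaves}, the one genuinely new feature being that the ``leading term'' $c_1(\mathcal{E})^2\cdot L_1\cdots L_{n-2}$ now vanishes, which degenerates the Hodge index step; the remedy is to exploit that $c_1(\mathcal{E})=c_1(\det\mathcal{E})$ is itself a nef class. Fix an ample divisor $H$ and write $\alpha_H=H\cdot L_1\cdots L_{n-2}$. The two standard inputs I will use are Langer's Bogomolov inequality for Higgs sheaves \cite{Langer_BogomolovsInequalityHiggsSheavesNormalVarieties}, giving $\Delta(\mathcal{F})\cdot L_1\cdots L_{n-2}\geq 0$ for every $\alpha_H$-semistable reflexive Higgs sheaf $\mathcal{F}$, and the Hodge index theorem for the nef multiplier $L_1\cdots L_{n-2}$: a $\Q$-divisor $D$ with $D\cdot\alpha_H=0$ satisfies $D^{2}\cdot L_1\cdots L_{n-2}\leq 0$, with equality iff $D$ is $L_1\cdots L_{n-2}$-numerically trivial (this reduces, as usual, to the surface case after perturbing the $L_i$ by an ample class).

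To prove \eqref{eq-Basic-Chern-Class-Inequality-Higgs-Sheaves-v=1} I distinguish the two cases of the statement. If $c_1(\mathcal{E})\cdot L_1\cdots L_{n-2}\equiv 0$, then for any Higgs subsheaf $\mathcal{F}\subseteq(\mathcal{E},\theta)$ generic semipositivity applied to the Higgs quotient $\mathcal{E}/\mathcal{F}$ forces $c_1(\mathcal{F})\cdot\alpha_H\leq c_1(\mathcal{E})\cdot\alpha_H=0$, so $(\mathcal{E},\theta)$ is $\alpha_H$-semistable and Langer's inequality gives $\Delta(\mathcal{E})\cdot L_1\cdots L_{n-2}\geq 0$, which, since $c_1(\mathcal{E})^2\cdot L_1\cdots L_{n-2}=0$, is $2r\,c_2(\mathcal{E})\cdot L_1\cdots L_{n-2}\geq 0$. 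If instead $c_1(\mathcal{E})\cdot L_1\cdots L_{n-2}\not\equiv 0$, I pass to the $\alpha_H$-Harder--Narasimhan filtration $0=\mathcal{E}_0\subsetneq\cdots\subsetneq\mathcal{E}_\ell=\mathcal{E}$ with semistable graded pieces $\mathcal{G}_i$ of rank $r_i$ and slope $\mu_i$, note that generic semipositivity forces $\mu_\ell\geq 0$, hence $\mu_1>\cdots>\mu_\ell\geq 0$, and expand the discriminant along the filtration,
\[
\tfrac{1}{r}\,\Delta(\mathcal{E})\cdot L_1\cdots L_{n-2}=\sum_i\tfrac{1}{r_i}\,\Delta(\mathcal{G}_i)\cdot L_1\cdots L_{n-2}-\sum_i r_i\,\xi_i^{2}\cdot L_1\cdots L_{n-2}+\mathrm{(corr)},
\]
where $\xi_i:=\tfrac{1}{r_i}c_1(\mathcal{G}_i)-\tfrac{1}{r}c_1(\mathcal{E})$ and $\mathrm{(corr)}=\bigl(c_2(\mathcal{E})-\sum_i c_2(\mathcal{G}_i)-\sum_{i<j}c_1(\mathcal{G}_i)c_1(\mathcal{G}_j)\bigr)\cdot L_1\cdots L_{n-2}$. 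One has $\Delta(\mathcal{G}_i)\cdot L_1\cdots L_{n-2}\geq 0$ by Langer and $\mathrm{(corr)}\geq 0$ because passing from the torsion-free quotients $\mathcal{E}_i/\mathcal{E}_{i-1}$ to their reflexive hulls $\mathcal{G}_i$ can only raise $c_2$ against nef classes (cf.\ \cite[Lem.~2.13]{IMM_3c2=c1^2}), so the remaining point is $\sum_i r_i\,\xi_i^{2}\cdot L_1\cdots L_{n-2}\leq 0$. For this I write $\xi_i=\eta_i+s_i\,c_1(\mathcal{E})$ with a scalar $s_i$ chosen so that $\eta_i\cdot\alpha_H=0$ (possible as $c_1(\mathcal{E})\cdot\alpha_H>0$); then $\sum_i r_i\eta_i=0=\sum_i r_i s_i$, and using $c_1(\mathcal{E})^2\cdot L_1\cdots L_{n-2}=0$ a summation by parts reduces $\sum_i r_i\,\xi_i^{2}\cdot L_1\cdots L_{n-2}$ to $\sum_i r_i\,\eta_i^{2}\cdot L_1\cdots L_{n-2}+2\sum_{k<\ell}(s_k-s_{k+1})\,c_1(\mathcal{E}_k)\cdot c_1(\mathcal{E})\cdot L_1\cdots L_{n-2}$. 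Here the first sum is $\leq 0$ by Hodge index, $s_k-s_{k+1}>0$ since the $\mu_i$ strictly decrease, and $c_1(\mathcal{E}_k)\cdot c_1(\mathcal{E})\cdot L_1\cdots L_{n-2}=-\,c_1(\mathcal{E}/\mathcal{E}_k)\cdot c_1(\mathcal{E})\cdot L_1\cdots L_{n-2}\leq 0$, because $\mathcal{E}/\mathcal{E}_k$ is a Higgs quotient and generic semipositivity tested against the \emph{nef} tuple $(c_1(\mathcal{E}),L_1,\dots,L_{n-2})$ yields $c_1(\mathcal{E}/\mathcal{E}_k)\cdot c_1(\mathcal{E})\cdot L_1\cdots L_{n-2}\geq 0$. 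Combining, $\Delta(\mathcal{E})\cdot L_1\cdots L_{n-2}\geq 0$, i.e.\ \eqref{eq-Basic-Chern-Class-Inequality-Higgs-Sheaves-v=1}.

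For the equality part, assume $c_2(\mathcal{E})\cdot L_1\cdots L_{n-2}=0$, equivalently $\Delta(\mathcal{E})\cdot L_1\cdots L_{n-2}=0$. If $c_1(\mathcal{E})\cdot L_1\cdots L_{n-2}\equiv 0$ we are in the first alternative, $(\mathcal{E},\theta)$ being $\alpha_H$-semistable by the above. Otherwise every inequality in the previous step is an equality, so $\Delta(\mathcal{G}_i)\cdot L_1\cdots L_{n-2}=0$, each summand of $\mathrm{(corr)}$ vanishes --- which is exactly \eqref{eq-Basic-Chern-Class-Inequality-Higgs-Sheaves-v=1-3} --- and $\eta_i^{2}\cdot L_1\cdots L_{n-2}=0$ for every $i$. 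By the equality case of Hodge index each $\eta_i$ is $L_1\cdots L_{n-2}$-numerically trivial, whence $c_1(\mathcal{G}_i)\equiv\lambda_i\,c_1(\mathcal{E})$ modulo $L_1\cdots L_{n-2}$-numerical triviality with $\lambda_i=r_i\mu_i/(c_1(\mathcal{E})\cdot\alpha_H)$ --- this is \eqref{eq-Basic-Chern-Class-Inequality-Higgs-Sheaves-v=1-4}; combined with $c_1(\mathcal{E})^2\cdot L_1\cdots L_{n-2}=0$ it forces $c_1(\mathcal{G}_i)^2\cdot L_1\cdots L_{n-2}=0$ and then $c_2(\mathcal{G}_i)\cdot L_1\cdots L_{n-2}=0$, i.e.\ \eqref{eq-Basic-Chern-Class-Inequality-Higgs-Sheaves-v=1-2}. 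Finally, the $\mathcal{G}_i$ are now numerically projectively flat with $c_1(\mathcal{G}_i)$ numerically proportional to the single fixed class $c_1(\mathcal{E})$; hence they remain semistable for every ample $H$ (cf.\ \cite[Fact 6.2]{GKPT_HarmonicMetricsUniformisation}) and the slopes $\mu_{\alpha_H}(\mathcal{G}_i)$ keep the same order, so the filtration is the $\alpha_H$-Harder--Narasimhan filtration for all $H$.

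The step I expect to be the main obstacle is the estimate $\sum_i r_i\,\xi_i^{2}\cdot L_1\cdots L_{n-2}\leq 0$: unlike in Theorem~\ref{thm-Basic-Chern-Class-Inequality-Higgs-Sheaves}, the primitive parts $\xi_i$ are \emph{not} $\alpha_H$-orthogonal, so the naive Hodge index bound fails, and one genuinely has to feed both the vanishing $c_1(\mathcal{E})^2\cdot L_1\cdots L_{n-2}=0$ and generic semipositivity tested in the nef direction $c_1(\mathcal{E})$ into the computation. The remaining ingredients --- the $c_2$-non-additivity bookkeeping, the perturbations legitimising the Hodge index theorem and Langer's inequality for nef rather than ample multipliers, and the easy case $c_1(\mathcal{E})\cdot L_1\cdots L_{n-2}\equiv 0$ --- are all routine.
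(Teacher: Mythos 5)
Your argument is correct in its essentials and captures the key technical point: once $c_1(\mathcal{E})^2\cdot L_1\cdots L_{n-2}=0$, the Hodge-index step in the proof of Theorem~\ref{thm-Basic-Chern-Class-Inequality-Higgs-Sheaves} degenerates, and the cure is to split the primitive parts $\xi_i$ into an $\alpha_H$-orthogonal piece $\eta_i$ plus a multiple of the \emph{nef} class $c_1(\mathcal{E})=c_1(\det\mathcal{E})$, then feed both the vanishing of $c_1(\mathcal{E})^2\cdot L_1\cdots L_{n-2}$ and generic semipositivity tested in the nef direction $c_1(\mathcal{E})$ into a summation by parts. I have checked the identity $\sum_i r_i\xi_i^2\cdot L=\sum_i r_i\eta_i^2\cdot L+2\sum_{k=1}^{\ell-1}(s_k-s_{k+1})\,c_1(\mathcal{E}_k)\cdot c_1(\mathcal{E})\cdot L$ and the sign computations, and they are right; the equality-case bookkeeping (telescoping $\mathrm{(corr)}$ into the step-by-step corrections, and deducing \eqref{eq-Basic-Chern-Class-Inequality-Higgs-Sheaves-v=1-2}--\eqref{eq-Basic-Chern-Class-Inequality-Higgs-Sheaves-v=1-3}) also works. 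The paper does not spell out a proof here --- it refers to the thesis and merely says the argument is ``in a similar spirit'' to Theorem~\ref{thm-Basic-Chern-Class-Inequality-Higgs-Sheaves} --- so a line-by-line comparison is not possible, but your route is exactly the natural extension of the Miyaoka--Langer machinery.

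One citation is genuinely off target. To show the Harder--Narasimhan filtration is independent of $H$, you invoke \cite[Fact~6.2]{GKPT_HarmonicMetricsUniformisation}, but that fact is about numerically \emph{Higgs flat} sheaves (those with $c_1\equiv 0$ and $c_2\equiv 0$ against all multipolarisations), whereas your $\mathcal{G}_i$ are only known to be $\alpha_H$-semistable with $\Delta(\mathcal{G}_i)\cdot L_1\cdots L_{n-2}=0$ for the one nef tuple under consideration, and in general have $c_1(\mathcal{G}_i)\cdot\alpha_H\neq 0$. What is needed instead is a propagation-of-semistability argument in the style of Lemma~\ref{lem-bundle-vanishing-dicriminant-universally-stable} (Mehta--Ramanathan restriction plus vanishing discriminant on surface sections); the slope-ordering step via $c_1(\mathcal{G}_i)\equiv\lambda_i c_1(\mathcal{E})$ modulo $L_1\cdots L_{n-2}$-numerical triviality is fine as written. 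The other caveats you raise (Langer's inequality and the Hodge index equality case for nef rather than ample multipliers, the $c_2$-non-additivity correction) are genuinely routine perturbation arguments.
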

    Again, a detailled argument in the generality we require may be found in \cite[Thm.\ 2.4.7]{Mul_Thesis}.

    \subsubsection{Reflexive sheaves with vanishing discriminant}

    The following is a variant of \cite[Thm.\ 3.1]{Langer_NoteOnBogomolovInstability}; see \cite[Thm.\ 2.4.10, Cor.\ 2.4.11]{Mul_Thesis} for a detailled proof:
    \begin{lem}
    \label{lem-bundle-vanishing-dicriminant-universally-stable}
       Let $X$ be a normal projective variety of dimension $n$ and with quotient singularities in codimension two, 
       let $1\leq m \leq n$ be an integer 
       and let $L$ be a nef divisor on $X$ such that $L^m\not\equiv 0$. 
       Let $(\mathcal{E}, \theta)$ be a reflexive Higgs sheaf on $X$. 
       Assume that $(\mathcal{E}, \theta)$ is $(L^{m-1}\cdot H_1\cdots H_{n-m})$-(semi)stable 
       for some fixed choice of ample divisors $H_1, \ldots, H_{n-m}$ and that 
       \[
       \Delta(\mathcal{E})\cdot L^m \cdot H_1 \cdots H_{n-m} = 0.
       \]
       Then $(\mathcal{E}, \theta)$ is $(L^{m-2}\cdot A_1 \cdots A_{n-m-1})$-(semi)stable for any ample divisors $A_1, \ldots, A_{n-m-1}$ on $X$.
    \end{lem}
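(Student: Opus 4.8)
The strategy is Miyaoka's, in the form sharpened by Langer \cite{Langer_NoteOnBogomolovInstability}: propagate (semi)stability as the polarisation degenerates, by playing the Bogomolov inequality for semistable Higgs sheaves off against the Hodge index theorem. One first reduces --- using that the locus in the product of nef cones along which the fixed sheaf $(\mathcal E,\theta)$ is semistable is closed, since its potential destabilisers form a bounded family --- to excluding a single destabilising wall, i.e.\ to a target polarisation of the shape $\gamma=L^{m-2}\cdot H_1\cdots H_{n-m}\cdot A$ with $A$ ample. Suppose then, for a contradiction, that $(\mathcal E,\theta)$ is not $\gamma$-(semi)stable; let $(\mathcal F,\theta|_{\mathcal F})\subsetneq(\mathcal E,\theta)$ be the maximal $\gamma$-destabilising saturated Higgs subsheaf, set $\mathcal G:=(\mathcal E/\mathcal F)^{\vee\vee}$ with its induced Higgs structure, $r:=\rk\mathcal E$, $s:=\rk\mathcal F$, $q:=r-s$, and $\xi:=r\,c_1(\mathcal F)-s\,c_1(\mathcal E)\in\mathrm{N}^1(X)_{\Q}$, so that $\xi\cdot\gamma>0$ (unstable case; the non-stable case, giving $\xi\cdot\gamma\ge0$ with $0<s<r$, is identical).

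The proof rests on two facts about $\xi$. First, since $(\mathcal E,\theta)$ is $(L^{m-1}\cdot H_1\cdots H_{n-m})$-semistable and $\mathcal F$ is a Higgs subsheaf, $\xi\cdot L^{m-1}\cdot H_1\cdots H_{n-m}\le0$. Second, the codimension-one additivity of $c_1$ and the near-additivity of $c_2$ along $0\to\mathcal F\to\mathcal E\to\mathcal G\to0$ after capping with $n-2$ divisor classes --- valid by the properties of the $\Q$-Chern classes recalled above, cf.\ \cite[Lem.\ 2.13]{IMM_3c2=c1^2} --- give the Bogomolov identity
\[
\tfrac1r\,\Delta(\mathcal E)\ \equiv\ \tfrac1s\,\Delta(\mathcal F)+\tfrac1q\,\Delta(\mathcal G)-\tfrac1{rsq}\,\xi^2
\]
of symmetric forms in $n-2$ divisor classes. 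Cap it with $L^{m-2}\cdot H_1\cdots H_{n-m}$: the left side vanishes by hypothesis, $\mathcal F$ is $\gamma$-semistable, and --- after the usual Harder--Narasimhan bookkeeping reducing to the case where the $\gamma$-HN filtration of $\mathcal E$ has two steps (so $\mathcal G$ is $\gamma$-semistable too), together with a perturbation $L\mapsto L+\varepsilon H_0$, $\varepsilon\to0$, to pass from ample to nef --- Langer's Bogomolov inequality for semistable reflexive Higgs sheaves \cite{Langer_BogomolovsInequalityHiggsSheavesNormalVarieties} makes $\Delta(\mathcal F)$ and $\Delta(\mathcal G)$ pair non-negatively with $L^{m-2}\cdot H_1\cdots H_{n-m}$. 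Therefore $\xi^2\cdot L^{m-2}\cdot H_1\cdots H_{n-m}\ge0$.

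Finally, invoke the Hodge index theorem for the symmetric bilinear form $Q(\alpha,\beta):=\alpha\cdot\beta\cdot L^{m-2}\cdot H_1\cdots H_{n-m}$ on $\mathrm{N}^1(X)_{\R}$. As a limit of the Hodge-type forms of ample polarisations, $Q$ has at most one positive eigenvalue; choosing (as $L^m\not\equiv0$ permits) the $H_i$ general enough that $L^m\cdot H_1\cdots H_{n-m}>0$, it has exactly one, so there is a positive cone $\mathcal C^+$, the class $L$ is timelike, and every ample divisor lies in the same nappe of $\overline{\mathcal C^+}$ as $L$ (because $Q(L,A)=A\cdot L^{m-1}\cdot H_1\cdots H_{n-m}>0$). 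Since $Q(\xi,\xi)\ge0$, either $\xi\in-\overline{\mathcal C^+}$, whence $Q(\xi,A)\le0$, contradicting $Q(\xi,A)=\xi\cdot\gamma>0$; or $\xi\in\overline{\mathcal C^+}$, whence the reverse Cauchy--Schwarz inequality together with $Q(\xi,L)\le0$ forces $Q(\xi,L)=0$, so $\xi$ is $Q$-null and $Q$-orthogonal to the timelike $L$, hence $Q(\xi,-)\equiv0$ --- once more contradicting $\xi\cdot\gamma>0$. Passing to general $H_i$ costs nothing: the conclusion concerns $(\mathcal E,\theta)$ and arbitrary ample target polarisations, which the wall-and-chamber reduction supplies from the special case just treated.

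The steps I expect to demand the most care are the bracketed ones. The Bogomolov inequality and the Hodge index theorem are cleanest for ample polarisations, so one must perturb the merely nef class $L$, run the numerics, and control the limit --- and it is precisely here that Langer's refined intersection theory on normal varieties with quotient singularities in codimension two is needed; moreover the induction on $\rk\mathcal E$ must be threaded through the $\gamma$-Harder--Narasimhan filtration so that every sheaf entering the Bogomolov inequality really is $\gamma$-semistable. The boundedness of destabilisers underlying the wall-and-chamber reduction is routine, but has to be set up in the log-Higgs category.
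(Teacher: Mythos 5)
The high-level strategy --- play Bogomolov's inequality off against the Hodge index theorem, in the spirit of Miyaoka and Langer's note on Bogomolov instability --- is correct and is almost certainly the paper's intended route (the lemma is explicitly described as a variant of \cite[Thm.\ 3.1]{Langer_NoteOnBogomolovInstability}). But there is a real gap in the reduction step. Your reduction ``to a single destabilising wall, i.e.\ to a target polarisation of the shape $\gamma=L^{m-2}\cdot H_1\cdots H_{n-m}\cdot A$'' does not deliver the lemma's conclusion. The conclusion concerns $(L^{m-2}\cdot A_1\cdots A_{n-m+1})$-(semi)stability for \emph{arbitrary} ample $A_1,\ldots,A_{n-m+1}$, i.e.\ \emph{every} $H_i$ is allowed to change, not merely one new ample factor appended. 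Your Hodge-index computation, run correctly, is built on the single form $Q(\alpha,\beta)=\alpha\cdot\beta\cdot L^{m-2}H_1\cdots H_{n-m}$: one has $\xi\cdot\gamma=Q(\xi,A)$ and $\xi\cdot L^{m-1}H_1\cdots H_{n-m}=Q(\xi,L)$, and the argument yields $\xi\cdot L^{m-2}H_1\cdots H_{n-m}\cdot A\le 0$ for all ample $A$ --- with the $H_i$ still fixed. For a target $L^{m-2}A_1\cdots A_{n-m+1}$ with the $A_i$ unrelated to the $H_i$ there is no single bilinear form $Q_\beta$ in which both the destabilising inequality and the given semistability inequality live, and the convexity of the semistable locus in $N_{n-1}(X)_\Q$ does not close the gap: the convex cone generated by $L^{m-1}H_1\cdots H_{n-m}$ and the classes $L^{m-2}H_1\cdots H_{n-m}A$ does not contain $L^{m-2}A_1\cdots A_{n-m+1}$ for general $A_i$. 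Something more is required --- an iteration where the discriminant vanishing is shown to propagate to the new polarisation, or a restriction to a well-chosen complete-intersection surface from which one extracts projective flatness of $\mathcal{E}|_S$ (so that $\Delta(\mathcal{E})\cdot L^{m-2}\cdot(\text{nef})\ge 0$ becomes an identity, not just a pairing against the fixed $H_i$). The closing sentence ``passing to general $H_i$ costs nothing'' is precisely where the content lies, and it is asserted rather than proved.

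There is a second, smaller inaccuracy you already half-flag: you cannot ``reduce to the case where the $\gamma$-HN filtration of $\mathcal{E}$ has two steps, so $\mathcal{G}$ is $\gamma$-semistable too.'' The quotient $\mathcal{G}=(\mathcal{E}/\mathcal{F})^{\vee\vee}$ of the maximal destabiliser is $\gamma$-semistable only when the HN filtration has exactly two steps; otherwise $\Delta(\mathcal{G})\cdot\beta\ge 0$ is not supplied by Bogomolov's inequality. The standard fixes are either to induct on $\rk\mathcal{E}$ (showing $\mathcal{F}$ and $\mathcal{G}$ inherit the hypotheses with lower rank, so their universal semistability is known, and only then assembling the $\xi^2\ge 0$ statement), or to write the Bogomolov identity over the full HN filtration, with a nonnegative term for each graded piece and a $\xi_{ij}^2$-term for each pair. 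Your last paragraph acknowledges the need for this threading, but the two-step version as written invokes a nonnegativity that is not available.
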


   \section{Inequalities of Miyaoka-Yau-type}
    \label{sec-twisted-semistability-cotangent-bundles}
    
    In this section, we investigate the following conjecture which was  posed as a question by Langer \cite[Quest.\ 5.4]{Langer_NoteOnBogomolovInstability} and as a conjecture by Chan--Leung \cite{CL_ConjectureMiyaoaYauTypeInequalities}:
    \begin{conj}
        If $X$ is a log terminal projective variety of dimension $n$ such that $K_X$ is nef, then
        \[
        \Big(2(m+1)c_2(X) - m c_1(X)^2\Big)\cdot K_X^{m-2}\cdot H_1\cdots H_{n-m} \geq 0
        \tag{$\mathrm{MY}_\mathrm{m}$}
        \label{conj-Miyaoka-Type-Inequality}
        \]
        for any ample divisors $H_1, \ldots, H_{n-m}$ on $X$ and any integer $2\leq m \leq n$.
    \end{conj}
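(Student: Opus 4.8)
The plan is to run Miyaoka's argument in the sharpened form used by Langer, deducing \eqref{conj-Miyaoka-Type-Inequality} from the Bogomolov-type inequalities of Theorem~\ref{thm-Basic-Chern-Class-Inequality-Higgs-Sheaves} and Theorem~\ref{thm-Basic-Chern-Class-Inequality-Higgs-Sheaves-v=1}, applied to Simpson's log-Higgs sheaf $(\Omega_X^{[1]}\oplus\mathcal{O}_X,\theta_{\Simp})$ from Construction~\ref{con-simpson-cotangent-sheaf}, polarised by the tuple $L_1=\ldots=L_{m-2}=K_X$, $L_{m-1}=H_1,\ldots,L_{n-2}=H_{n-m}$. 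This sheaf has rank $n+1$ with $c_1=c_1(X)$ and $c_2=c_2(X)$; its determinant $K_X$ is nef by hypothesis; and it is generically semipositive --- since $K_X$ nef forces $X$ to be non-uniruled, $\Omega_X^{[1]}$ is generically semipositive, and the summand $\mathcal{O}_X$ trivially is. The single numerical quantity deciding which of the two theorems applies is $c_1(X)^2\cdot K_X^{m-2}\cdot H_1\cdots H_{n-m}=K_X^m\cdot H_1\cdots H_{n-m}$, which is $>0$ when $m\le\nu$ and $=0$ when $m>\nu$, by definition of the numerical dimension.

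When $m>\nu$ the inequality is unconditional: since $K_X^m\cdot H_1\cdots H_{n-m}=0$, Theorem~\ref{thm-Basic-Chern-Class-Inequality-Higgs-Sheaves-v=1} (applied already to $\Omega_X^{[1]}$) gives $c_2(X)\cdot K_X^{m-2}\cdot H_1\cdots H_{n-m}\ge 0$, and then the left-hand side of \eqref{conj-Miyaoka-Type-Inequality} is just $2(m+1)\,c_2(X)\cdot K_X^{m-2}\cdot H_1\cdots H_{n-m}\ge 0$, the term $m\,c_1(X)^2\cdot K_X^{m-2}\cdot H_1\cdots H_{n-m}$ having vanished.

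The substance lies in the range $2\le m\le\nu$. Here Theorem~\ref{thm-Basic-Chern-Class-Inequality-Higgs-Sheaves}, with $\alpha:=K_X^{m-1}\cdot H_1\cdots H_{n-m}$, gives
\[
2\,c_2(X)\cdot K_X^{m-2}\cdot H_1\cdots H_{n-m}\ \ge\ K_X^m\cdot H_1\cdots H_{n-m}\ -\ \mu^{\max}_{\alpha}\!\bigl(\Omega_X^{[1]}\oplus\mathcal{O}_X,\theta_{\Simp}\bigr),
\]
and a one-line rearrangement shows that \eqref{conj-Miyaoka-Type-Inequality} follows as soon as $\mu^{\max}_{\alpha}(\Omega_X^{[1]}\oplus\mathcal{O}_X,\theta_{\Simp})\le\tfrac{1}{m+1}\,K_X^m\cdot H_1\cdots H_{n-m}$. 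By Proposition~\ref{Prop-Properties-of-Simpsons-Higgs-Sheaf} the maximal $\alpha$-destabilising Higgs subsheaf of $(\Omega_X^{[1]}\oplus\mathcal{O}_X,\theta_{\Simp})$ is of the form $\mathcal{M}\oplus\mathcal{O}_X$ with $\mathcal{M}\subseteq\Omega_X^{[1]}$ saturated, so the bound sought is exactly a semistability-type estimate for $\Omega_X^{[1]}$ twisted by powers of $K_X$ --- this is the content of Conjecture~\ref{conj:twisted-semistability-cotangent-sheaf}. One cannot simply demand that $\Omega_X^{[1]}$ itself be $\alpha$-semistable: on a product $A\times B$ of an abelian variety with a ball quotient $B\cong\B^\nu/\Lambda$, the sub-bundle $p_B^{\ast}\Omega_B$ destabilises $\Omega_{A\times B}$ with respect to $\alpha$, while equality is attained in \eqref{conj-Miyaoka-Type-Inequality} for $m=\nu$; it is the Simpson twist $\oplus\,\mathcal{O}_X$ that compensates.

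I expect Conjecture~\ref{conj:twisted-semistability-cotangent-sheaf} to be the genuine obstacle. For $m=2$ the required bound is equivalent to Miyaoka's inequality \eqref{INTRO-eq-Miyaoka} and is classical, but for $m\ge 3$ the class $K_X^{m-1}\cdot H_1\cdots H_{n-m}$ is genuinely degenerate and no K\"ahler--Einstein metric is available on $X$ once $K_X$ is not ample, so I would not attempt the conjecture in full generality. To reach the cases of interest I would: (i) in the cases where $K_X$ can additionally be shown to be semiample --- a substantial point in its own right --- so that the Iitaka fibration $f\colon X\to X_{\mathrm{can}}$ exists with $K_{X_{\mathrm{can}}}$ ample; (ii) on $X_{\mathrm{can}}$ invoke the semistability of the (log-)cotangent sheaf in the polarised case due to Guenancia--Taji; (iii) compare the destabilising subsheaves of $\Omega_X^{[1]}$ with their images on $X_{\mathrm{can}}$, controlling the contribution of the fibres of $f$ by a semipositivity argument in the spirit of Enoki; and (iv) where the discriminant degenerates, transport semistability between polarisations by Lemma~\ref{lem-bundle-vanishing-dicriminant-universally-stable}. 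The technical heart should be step (iii): ruling out that the ``vertical'' part of the cotangent sheaf along $f$ destabilises more than the abelian fibres force it to.
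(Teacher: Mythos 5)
You correctly identify the statement as a conjecture (the paper attributes it to Langer and Chan--Leung and does not claim a full proof), and your reduction of \eqref{conj-Miyaoka-Type-Inequality} to a twisted semistability estimate for $\Omega_X^{[1]}$ --- via Proposition~\ref{Prop-Properties-of-Simpsons-Higgs-Sheaf} and Theorem~\ref{thm-Basic-Chern-Class-Inequality-Higgs-Sheaves} polarised by $K_X^{m-1}\cdot H_1\cdots H_{n-m}$ --- is exactly the route the paper takes (Proposition~\ref{prop-semistability-cotangent-bundle-implies-ss-simpson} together with Theorem~\ref{thm-Miyaoka-Type_inequality}). Your dispatch of the degenerate range $m>\nu$ via Theorem~\ref{thm-Basic-Chern-Class-Inequality-Higgs-Sheaves-v=1} is also sound, and Conjecture~\ref{conj:twisted-semistability-cotangent-sheaf} is indeed the genuine obstruction.

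Where you diverge from the paper, and where I think your plan would stall, is in the proposed strategy (i)--(iv) for verifying the twisted semistability in special cases. You route through semiampleness of $K_X$, pass to the Iitaka fibration $f\colon X\to X_{\mathrm{can}}$, and propose to compare destabilising subsheaves of $\Omega_X^{[1]}$ with their images downstairs. The paper instead applies Enoki's perturbation trick directly on $X$: for a general $H'\in|(\ell t)H|$ with $\ell\gg 0$, the pair $\bigl(X,\tfrac{1}{\ell}H'\bigr)$ is log canonical with $K_X+\tfrac{1}{\ell}H'\sim_\Q K_X+tH$ \emph{ample}, so Guenancia--Taji's semistability theorem --- which is stated for klt/lc pairs of general type, not just varieties with $K_X$ ample --- applies on $X$ itself; one then expands the resulting slope bound as a polynomial in $t$ and reads off the leading coefficients via \eqref{eq-inequality-polynomials}. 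This yields Conjecture~($\mathrm{S}_\nu$) for polarisations $(K_X+\varepsilon H)^{n-\nu}$ near $K_X$ (Theorem~\ref{thm-Enoki}, Corollary~\ref{cor-twisted-semistability-cotangent-bundle-asymptotic-nearKX}) without assuming semiampleness and without ever touching $X_{\mathrm{can}}$; the Iitaka fibration enters only later, in Section~\ref{sec-uniformisation-v>=2-Semiampleness}, for the equality analysis. Your step~(iii) --- controlling the ``vertical'' part of $\Omega_X^{[1]}$ along $f$ --- is precisely the difficulty the perturbation argument sidesteps, and your sketch offers no concrete mechanism for it; and even the paper's second confirmed case ($c_2(X)\cdot K_X^{\nu-1}\equiv 0$) is derived from the Enoki estimate via Lemma~\ref{lem-higher-order0inequalities-base-case} and Corollary~\ref{cor-proof-higher-order-inequalities-induction-start} rather than from a direct study of $f$. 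The lesson you are missing is that ampleness can be manufactured by a small boundary on $X$ rather than by passing to the canonical model.
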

    There are several cases of interest in which Conjecture \eqref{conj-Miyaoka-Type-Inequality} is classically known to hold. Indeed, by work of Miyaoka \cite{miyaoka_MiyaokaInequality}, Conjecture \eqref{conj-Miyaoka-Type-Inequality} holds when $m\in \{0,1,2\}$. Moreover, by Guenancia--Taji \cite{GuenanciaTaji_SemistabilityLogCotangentSheaf}, building on earlier work of Yau \cite{Yau_KEMetrics} and Tsuji \cite{Tsuji_InequalityChernNumbers}, Conjecture \eqref{conj-Miyaoka-Type-Inequality} holds when $m = \dim X$. In particular, Conjecture \eqref{conj-Miyaoka-Type-Inequality} holds unconditionally if $\dim X \leq 3$.

    The goal of this section is to confirm Conjecture \eqref{conj-Miyaoka-Type-Inequality} in several new instances:
    \begin{itemize}
        \item[(1)] Under the assumption that the polarisation $[H_1] = \ldots = [H_{n-m}] =: [H] \in N^1(X)_{\R}$ is chosen sufficiently close to $[K_X]$, see Subsection \ref{ssec-twisted-semistabnility-polarisations-close-to-KX}.
        \item[(2)] Under the assumption that $K_X$ is semiample and the Iitaka fibration $f\colon X \rightarrow X_{\mathrm{can}}$ is a smooth Abelian fibration in codimension one over $X_{\mathrm{can}}$, see Subsection \ref{ssec-Twisted-semistability-abelian-fibrations}.
    \end{itemize}
    Let us point out that (1) will play a key role in our proof of the Inequality \eqref{INTRO-eq-Chern-class-inequality} in Section \ref{sec-Higher-Inequalities}.
    The key insight that allows us to confirm Conjecture \eqref{conj-Miyaoka-Type-Inequality} in cases (1) and (2) is the realisation that, following ideas of Miyaoka \cite{miyaoka_MiyaokaInequality} and Langer \cite{Langer_NoteOnBogomolovInstability}, Conjecture \eqref{conj-Miyaoka-Type-Inequality} can be reduced to the following question:
    \begin{conj}
    \label{conj:twisted-semistability-cotangent-sheaf}
    Let $X$ be a projective log canonical variety of dimension $n$ such that $K_X$ is nef. Let $0\leq m \leq n$ be an integer and let $H_1, \ldots, H_{n-m}$ be ample divisors on $X$. We say that \emph{Conjecture $(\mathrm{S}_{\mathrm{m}})$ holds for $H_1, \ldots, H_{n-m}$} if
        \begin{align}
            \frac{c_1(\mathcal{F})\cdot K_X^{m-1}\cdot H_1\cdots H_{n-m}}{\rk\mathcal{F}}  \
            \leq \ \frac{K_X^{m}\cdot H_1\cdots H_{n-m}}{m} 
            \tag{$\mathrm{S}_\mathrm{m}$}
            \label{eq-twisted-semistability-cotangent-bundles}
        \end{align}
    for any subsheaf $\mathcal{F} \subseteq \Omega_X^{[1]}$.
    \end{conj}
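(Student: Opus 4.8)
\emph{The nature of the statement.} Conjecture $(\mathrm{S}_m)$ is a \emph{slack} semistability statement for $\Omega_X^{[1]}$: writing $\alpha := K_X^{m-1}\cdot H_1\cdots H_{n-m}$ for the multipolarisation in play — typically \emph{degenerate}, being a power of the merely nef class $K_X$ — it asks that
\[
  \mu^{\max}_\alpha\big(\Omega_X^{[1]}\big)\ \le\ \frac{K_X^{m}\cdot H_1\cdots H_{n-m}}{m} .
\]
Since $m\le n=\rk\Omega_X^{[1]}$ and $K_X^{m}\cdot H_1\cdots H_{n-m}>0$, the right-hand side exceeds the slope $\mu_\alpha(\Omega_X^{[1]})=\tfrac1n K_X^{m}\cdot H_1\cdots H_{n-m}$ whenever $m<n$, so $(\mathrm{S}_m)$ is strictly weaker than genuine $\alpha$-semistability of $\Omega_X^{[1]}$; this slack is the lever. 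Two endpoints are classical. For $m=n$ with $K_X$ big, $\alpha=K_X^{n-1}$ is honest and $(\mathrm{S}_n)$ is exactly $K_X^{n-1}$-semistability of $\Omega_X^{[1]}$, the theorem of Guenancia--Taji \cite{GuenanciaTaji_SemistabilityLogCotangentSheaf} (after Yau \cite{Yau_KEMetrics} and Tsuji \cite{Tsuji_InequalityChernNumbers}), proved through a singular K\"ahler--Einstein metric of negative Ricci curvature. For small $m$ one cuts by general members of the $|H_i|$ (property (2) of $\Q$-Chern classes), lands on a curve or surface, and invokes Miyaoka's generic semipositivity of $\Omega_X^{[1]}$ for non-uniruled $X$ \cite{miyaoka_MiyaokaInequality}. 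The general case I would leave open; the plan is to settle the two regimes of Section \ref{sec-twisted-semistability-cotangent-bundles}, each by reduction to this ample endpoint.

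\emph{Polarisations close to $[K_X]$} (Subsection \ref{ssec-twisted-semistabnility-polarisations-close-to-KX}). Here $H_1=\dots=H_{n-m}=H$ with $[H]$ ample and near $[K_X]$. As $K_X$ is nef and $H$ ample, $K_X+\varepsilon H$ is ample for every $\varepsilon>0$, and $\alpha$ is the limit as $\varepsilon\downarrow0$ of the honest ample multipolarisations $\alpha_\varepsilon:=(K_X+\varepsilon H)^{m-1}\cdot H^{n-m}$. The plan is: (i) pass to an adapted cover $\pi\colon Y\to X$ and the Simpson Higgs sheaf of Construction \ref{con-simpson-cotangent-sheaf}, and combine the semistability of the (adapted, logarithmic) cotangent sheaf of a minimal model — this is where Guenancia--Taji \cite{GuenanciaTaji_SemistabilityLogCotangentSheaf} enters — with Lemma \ref{lem-bundle-vanishing-dicriminant-universally-stable} to obtain uniform $\alpha_\varepsilon$-semistability information over a range $0<\varepsilon<\varepsilon_0$; (ii) let $\varepsilon\downarrow0$. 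Step (ii) is delicate because $\mu^{\max}$ is \emph{not} upper-semicontinuous in the polarisation, so a naive limit fails; following Enoki \cite{Enoki_TwistedSemistability} I would instead fix a candidate maximal $\alpha$-destabiliser $\mathcal F\subseteq\Omega_X^{[1]}$ — only finitely many relevant sheaves, by boundedness \cite{Langer_SemistableSheavesInPositiveCharacteristic,Langer_BogomolovsInequalityHiggsSheavesNormalVarieties} — test $\mu_{\alpha_\varepsilon}(\mathcal F)\le\mu_{\alpha_\varepsilon}(\Omega_X^{[1]})$ on it, and pass to the limit, where $\mu_\alpha(\mathcal F)\le\tfrac1n K_X^{m}\cdot H^{n-m}\le\tfrac1m K_X^{m}\cdot H^{n-m}$ survives: the slack $\tfrac1m-\tfrac1n$ is exactly the room needed to absorb the gap between the average slope coming out of the limit and the bound demanded by $(\mathrm{S}_m)$. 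Feeding this back into Theorem \ref{thm-Basic-Chern-Class-Inequality-Higgs-Sheaves} then yields $(\mathrm{MY}_m)$ in this range.

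\emph{Abelian Iitaka fibrations} (Subsection \ref{ssec-Twisted-semistability-abelian-fibrations}). Now $K_X$ is semiample, $K_X=f^{*}A$ for the Iitaka fibration $f\colon X\to X_{\mathrm{can}}$ and ample $A$, with $\dim X_{\mathrm{can}}=m$, and $f$ is a smooth Abelian fibration over a big open set. Cutting $X_{\mathrm{can}}$ by general members of $|A|$ to a curve $\bar C$, setting $Y:=f^{-1}(\bar C)$, and cutting $Y$ by general members of the $|H_i|$, one realises $\alpha$ (up to a positive multiple) as the class of the resulting general complete-intersection curve $C\subseteq Y$, which is finite over $\bar C$ while $Y\to\bar C$ remains Abelian. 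Since the fibres are Abelian, $\Omega^{[1]}_{Y/\bar C}$ is pulled back from $\bar C$, so $\Omega_X^{[1]}|_C$ is an extension of two bundles, each pulled back via $C\to\bar C$ from a bundle on $\bar C$ — the restriction of $\Omega_{X_{\mathrm{can}}}^{[1]}$ and the Hodge bundle $f_{*}\omega_{X/X_{\mathrm{can}}}$. Its maximal destabilising slope is therefore governed by those two: the first by semistability of $\Omega_{X_{\mathrm{can}}}^{[1]}$ on the $m$-dimensional klt variety $X_{\mathrm{can}}$ with its \emph{ample} polarisation (the endpoint case again), the second by the semistability and semipositivity of Hodge bundles of Abelian fibrations (Fujita--Kawamata). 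The care points are that $f$ is smooth only in codimension one — so one works with reflexive hulls and adapted covers throughout, and must check the degeneracy locus contributes nothing — and that the extension, though not a priori pulled back from $\bar C$, cannot raise the maximal slope since $C$ is general.

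\emph{The main obstacle.} The crux is the degeneracy of $\alpha$: being a power of the non-ample class $K_X$, it lies outside the reach of any Mehta--Ramanathan/Flenner-type restriction theorem, and $\mu^{\max}$ jumps \emph{upward} under specialisation of the polarisation, so no limit can be taken for free. What rescues the two regimes is an external rigidity — a K\"ahler--Einstein-type metric on $X$, exploited through an Enoki limit, in the first; the near-product structure of an Abelian fibration in the second — that pins down the Harder--Narasimhan behaviour of $\Omega_X^{[1]}$ along $\alpha$. For a general ample $[H]$ on a general minimal $X$ no such handle is available, which is precisely why Conjecture \ref{conj:twisted-semistability-cotangent-sheaf} is only a conjecture.
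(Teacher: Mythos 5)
The statement you were given is a conjecture that the paper does not prove in general; it is established only in two regimes (Subsections~\ref{ssec-twisted-semistabnility-polarisations-close-to-KX} and~\ref{ssec-Twisted-semistability-abelian-fibrations}): polarisations close to $[K_X]$, and varieties with $c_2(X)\cdot K_X^{\nu-1}\equiv 0$, the latter covering the smooth-Abelian-Iitaka-fibration case. You identify both regimes correctly, and for the second one your fibration-geometric route (cut to a curve in $X_{\mathrm{can}}$ and split $\Omega_X^{[1]}|_C$ as an extension of the Hodge bundle and the pullback of $\Omega^{[1]}_{X_{\mathrm{can}}}$) is genuinely different from what the paper does in Corollary~\ref{cor-proof-higher-order-inequalities-induction-start}, which instead exploits $c_2(X)\cdot K_X^{\nu-1}\equiv 0$ to pin down the Harder--Narasimhan filtration for all polarisations simultaneously (Lemma~\ref{lem-higher-order0inequalities-base-case}) and then degenerates to the near-$K_X$ case. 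The paper itself notes that an argument of exactly your flavour appears in \cite[Sec.\ 4.3]{Mul_Thesis}, so that part of your proposal is fine in spirit.

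Your treatment of the near-$K_X$ regime, however, has a gap. You propose to establish $\mu_{\alpha_\varepsilon}(\mathcal{F})\le\mu_{\alpha_\varepsilon}(\Omega_X^{[1]})$ for $\alpha_\varepsilon=(K_X+\varepsilon H)^{m-1}\cdot H^{n-m}$, pass to the limit, and land on $\mu_\alpha(\mathcal{F})\le\tfrac1n K_X^{m}\cdot H^{n-m}$. This step is unavailable and would prove too much: Guenancia--Taji give semistability of the \emph{log} cotangent sheaf of a pair $(X,D')$ with respect to the single ample class $(K_X+D')^{n-1}$, not $\alpha_\varepsilon$-semistability of $\Omega_X^{[1]}$ for a mixed polarisation, and if the maximal destabiliser actually had slope $\le\tfrac1n K_X^m\cdot H^{n-m}$ then $(\mathrm{S}_m)$ would have no content — the slack between $\tfrac1m$ and $\tfrac1n$ exists precisely because $\Omega_X^{[1]}$ can be $\alpha$-unstable. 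The actual mechanism of Theorem~\ref{thm-Enoki} is not a limit of slope inequalities at all, but a polynomial-coefficient comparison. Guenancia--Taji applied to the pair $(X,D')$ with $D'\sim_\Q tH$ a small general boundary gives
\[
\frac{c_1(\mathcal{F})\cdot(K_X+tH)^{n-1}}{\rk\mathcal{F}}\ \le\ \frac{(K_X+tH)^n}{n},
\]
and expanding both sides in $t$ one finds the left-hand side first nontrivial at order $t^{n-1-\nu}$ (giving $c_1(\mathcal{F})\cdot K_X^\nu\cdot H^{n-1-\nu}\le 0$) while the right-hand side only at order $t^{n-\nu}$; in the equality case, the order-$(n-\nu)$ comparison plus the identity $n\binom{n-1}{n-\nu}=\nu\binom{n}{n-\nu}$ produces the factor $\tfrac1\nu$ in $(\mathrm{S}_\nu)$. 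Your sketch never meets this binomial mechanism, and it is the only source of the $\tfrac1m$. Two further points: Lemma~\ref{lem-bundle-vanishing-dicriminant-universally-stable} is not used here, since nothing forces $\Delta(\Omega_X^{[1]})$ to vanish; and Corollary~\ref{cor-twisted-semistability-cotangent-bundle-asymptotic-nearKX} obtains the independence of the destabiliser over $\varepsilon$ from \cite[Thm.\ 2.2.(1)]{miyaoka_MiyaokaInequality} applied to the specific ray $K_X^{\nu-1}\cdot(K_X+\varepsilon H)^{n-\nu}$, not from a general boundedness argument.
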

    The key advantage of Conjecture \eqref{eq-twisted-semistability-cotangent-bundles} over Conjecture \eqref{conj-Miyaoka-Type-Inequality} is that it only involves the slopes of subsheaves of $\Omega_X^{[1]}$, which makes it accessible to the techniques discussed in Subsection \ref{ssec-Higgs-Sheaves-Semistability}.

    \subsection{A conjecture on twisted semistability of the cotangent bundle}
    \label{ssec-Twisted-Semistability-Cotangent-Bundle}

    In this subsection, we collect some easy observations concerning Conjecture \eqref{eq-twisted-semistability-cotangent-bundles}. In particular, we demonstrate that Conjecture \eqref{eq-twisted-semistability-cotangent-bundles} implies Conjecture \eqref{conj-Miyaoka-Type-Inequality}, see Theorem \ref{thm-Miyaoka-Type_inequality}. Let us start by collecting the cases in which Conjecture \eqref{eq-twisted-semistability-cotangent-bundles} is known:
    \begin{itemize}
        \item[(1)] Conjecture ($\mathrm{S}_{\mathrm{n}}$) has been verified by Guenancia and Taji \cite[Thm.\ C]{GuenanciaTaji_SemistabilityLogCotangentSheaf}.
        \item[(2)] By the generic semispositivity theorem of Miyaoka \cite{miyaoka_MiyaokaInequality} and Campana--P\u{a}un \cite[Thm.\ 1.3]{CampanaPaun_GenericSemipositivityCotangentBundle}, for any subsheaf $\mathcal{F}\subseteq \Omega_{X}^{[1]}$it holds that
        \[
        c_1(\mathcal{F})\cdot K_X^{m-1}\cdot H_1\cdots H_{n-m}
            \ \leq \ K_X^{m}\cdot H_1\cdots H_{n-m}.
        \]
        We deduce that \eqref{eq-twisted-semistability-cotangent-bundles} holds as soon as the rank $r$ of the maximal $(K_X^{m-1}\cdot H_1\cdots H_{n-m})$-destabilising subsheaf of $\Omega_X^{[1]}$ satisfies that $r\geq m$. In particular, ($\mathrm{S}_1$) always holds.
    \end{itemize}
    \begin{rem}
    \label{rem-twisted-semistability-cotangent-bundles}
        While we formulate Conjecture \eqref{eq-twisted-semistability-cotangent-bundles} independently of the value of the numerical dimension $\nu := \nu(X)$, by far the most interesting case is the one when $m = \nu$. Indeed, Conjecture ($\mathrm{S}_{\nu + 1}$) is equivalent to requiring that
        \begin{align*}
            c_1(\mathcal{F})\cdot K_X^{\nu}\cdot H_1\cdots H_{n-\nu -1}
            \ \leq \   K_X^{\nu + 1}\cdot H_1\cdots H_{n-\nu -1}
            =0,
        \end{align*}
        which follows from \cite[Thm.\ 1.3]{CampanaPaun_GenericSemipositivityCotangentBundle}. Moreover, when $m > \nu + 1$ Conjecture ($\mathrm{S}_{\mathrm{m}}$) is a tautology (as the terms on either side of the inequality clearly vanish). On the other hand, the validity of Conjecture ($\mathrm{S}_{\nu}$) for all ample divisors $H_1, \ldots, H_{n-\nu}$ on $X$ would immediately yield the truth of Conjecture ($\mathrm{S}_{\mathrm{m}}$) for all $m < \nu$ by simply degenerating $[H_{m+1}],\ldots,[H_{\nu}]$ towards $[K_X]$ in $N^1(X)_\R$.
    \end{rem}

    In the following, we will also consider a natural variant of Conjecture \eqref{eq-twisted-semistability-cotangent-bundles}:
    \begin{conj}
    \label{conj-twisted-semistability-Simpson-cotangent-Higgs-sheaf}
    Let $X$ be a projective log canonical variety of dimension $n$ such that $K_X$ is nef. Let $0\leq m \leq n$ be an integer and let $H_1, \ldots, H_{n-m}$ be ample divisors on $X$. We say that Conjecture \emph{(}$\mathrm{S}'_{\mathrm{m}}$\emph{)} holds for $H_1, \ldots, H_{n-m}$ if
        \begin{align}
            \frac{c_1(\mathcal{F})\cdot K_X^{m-1}\cdot H_1\cdots H_{n-m}}{\rk\mathcal{F}}  \
            \leq \ 
            \frac{K_X^{m}\cdot H_1\cdots H_{n-m}}{m+1}
            \tag{$\mathrm{S}'_\mathrm{m}$}
            \label{eq-twisted-semistability-Simpson-cotangent-bundles}
        \end{align}
    for any Higgs subsheaf $\mathcal{F} \subseteq (\Omega_X^{[1]}\oplus \mathcal{O}_X, \theta_{\Simp})$.
    \end{conj}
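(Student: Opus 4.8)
The plan is to derive Conjecture \eqref{eq-twisted-semistability-Simpson-cotangent-bundles} from the untwisted Conjecture \eqref{eq-twisted-semistability-cotangent-bundles}, and then to establish the latter in the cases of interest. Fix the ample divisors $H_1,\dots,H_{n-m}$ and abbreviate $\alpha:=K_X^{m-1}\cdot H_1\cdots H_{n-m}$ and $\beta:=K_X^{m}\cdot H_1\cdots H_{n-m}\ge 0$; if $m>\nu(X)+1$ then $\alpha\equiv 0$ and Conjecture \eqref{eq-twisted-semistability-Simpson-cotangent-bundles} holds trivially, so assume $\alpha\neq 0$. To see that Conjecture \eqref{eq-twisted-semistability-cotangent-bundles} for $H_1,\dots,H_{n-m}$ implies Conjecture \eqref{eq-twisted-semistability-Simpson-cotangent-bundles} for $H_1,\dots,H_{n-m}$, I would argue as follows: by Proposition \ref{Prop-Properties-of-Simpsons-Higgs-Sheaf}, applied to $\mathcal F=\Omega_X^{[1]}$, the maximal $\alpha$-destabilising Higgs subsheaf of $(\Omega_X^{[1]}\oplus\mathcal O_X,\theta_{\Simp})$ is of the shape $\mathcal M\oplus\mathcal O_X$ with $\mathcal M\subseteq\Omega_X^{[1]}$ saturated of some rank $r$, and since every Higgs subsheaf has slope at most that of the maximal destabilising one, it suffices to bound $\mu_\alpha(\mathcal M\oplus\mathcal O_X)=\tfrac{r}{r+1}\,\mu_\alpha(\mathcal M)$. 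Conjecture \eqref{eq-twisted-semistability-cotangent-bundles} gives $\mu_\alpha(\mathcal M)\le\beta/m$, whereas the generic semipositivity theorem of Miyaoka \cite{miyaoka_MiyaokaInequality} and Campana--P\u{a}un \cite{CampanaPaun_GenericSemipositivityCotangentBundle} gives $c_1(\mathcal M)\cdot\alpha\le\beta$, i.e.\ $\mu_\alpha(\mathcal M)\le\beta/r$; hence $\tfrac{r}{r+1}\,\mu_\alpha(\mathcal M)\le\tfrac{r}{r+1}\cdot\tfrac{\beta}{\max(m,r)}$, and a short case distinction ($r\le m$ versus $r>m$, using $\beta\ge 0$) shows the right-hand side is $\le\beta/(m+1)$, as required. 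In particular, everything reduces to Conjecture \eqref{eq-twisted-semistability-cotangent-bundles}, which is also the key input for the Miyaoka-type inequality \eqref{conj-Miyaoka-Type-Inequality}.

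It thus remains to verify Conjecture \eqref{eq-twisted-semistability-cotangent-bundles}, and by Remark \ref{rem-twisted-semistability-cotangent-bundles} it is enough to treat $m=\nu:=\nu(X)$, with $[H_1]=\dots=[H_{n-\nu}]=:[H]$ close to $[K_X]$ in $N^1(X)_{\R}$; the case $m=\dim X$ is the theorem of Guenancia--Taji \cite{GuenanciaTaji_SemistabilityLogCotangentSheaf}. For $[H]$ near $[K_X]$ I would combine the latter with Enoki's perturbation idea \cite{Enoki_TwistedSemistability} and argue by contradiction: if $\Omega_X^{[1]}$ admitted a saturated subsheaf violating the slope bound for a sequence $[H]\to[K_X]$, then by boundedness of destabilising subsheaves one such $\mathcal M$ works for the whole sequence; passing to the limit and feeding the resulting equalities into the equality discussion of the Bogomolov-type Theorem \ref{thm-Basic-Chern-Class-Inequality-Higgs-Sheaves}, together with generic semipositivity, would force $\Delta(\mathcal M)$ and the relevant intersection numbers of $c_1(\mathcal M)$ with $K_X$ to be extremal; Lemma \ref{lem-bundle-vanishing-dicriminant-universally-stable} would then propagate semistability of $\mathcal M$ and of $(\Omega_X^{[1]}/\mathcal M)^{\vee\vee}$ to a polarisation of the form $K_X^{\dim X-1}$ (up to an ample perturbation), contradicting the semistability available there. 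A separate, fibration-theoretic argument would cover the case where $K_X$ is semiample with smooth Abelian Iitaka fibration.

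The difficulty is concentrated entirely in the second step: $\alpha$-semistability of $\Omega_X^{[1]}$ is \emph{not} an open condition as $[H]$ varies, so there is no a priori hold on a hypothetical destabilising subsheaf once $[H]$ is far from $[K_X]$, and generic semipositivity controls only $c_1(\mathcal M)\cdot\alpha$ by $\beta$, which is strictly weaker than the slope bound $\beta/m$ demanded by Conjecture \eqref{eq-twisted-semistability-cotangent-bundles} unless $\rk\mathcal M\ge m$. I therefore expect this route to establish Conjecture \eqref{eq-twisted-semistability-Simpson-cotangent-bundles} — and hence \eqref{conj-Miyaoka-Type-Inequality} — only for $[H]$ in a neighbourhood of $[K_X]$ and in the Abelian-fibration case, leaving the statement for arbitrary polarisations genuinely open, consistent with its status as a conjecture.
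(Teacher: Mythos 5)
Note first that the statement you are addressing is a conjecture which the paper does not prove in full; your concluding caveat is therefore exactly right, and what you have actually reconstructed is the supporting material of Section \ref{sec-twisted-semistability-cotangent-bundles}. Your reduction of Conjecture \eqref{eq-twisted-semistability-Simpson-cotangent-bundles} to Conjecture \eqref{eq-twisted-semistability-cotangent-bundles} is precisely Proposition \ref{prop-semistability-cotangent-bundle-implies-ss-simpson}, and your argument there --- invoking Proposition \ref{Prop-Properties-of-Simpsons-Higgs-Sheaf} to see that the maximal $\alpha$-destabilising Higgs subsheaf has the shape $\mathcal{M}\oplus\mathcal{O}_X$, then bounding $\tfrac{r}{r+1}\,\mu_\alpha(\mathcal{M})$ by a case split $r\leq m$ versus $r\geq m$ using \eqref{eq-twisted-semistability-cotangent-bundles} and generic semipositivity respectively --- matches the paper's proof line for line, and the algebra is correct.

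Where you depart from the paper is in your sketch of the near-$K_X$ case of \eqref{eq-twisted-semistability-cotangent-bundles}, and this is where a genuine gap appears. Enoki's perturbation, as implemented in Theorem \ref{thm-Enoki}, is a \emph{direct} argument: for each rational $t>0$ one passes to a klt pair $(X, D')$ with $K_X+D'\sim_\Q K_X+tH$ ample, invokes Guenancia--Taji's semistability of the adapted log cotangent Higgs sheaf of such a pair, and then degenerates $t\to 0$ by reading off the leading coefficients of the resulting polynomial inequality; Corollary \ref{cor-twisted-semistability-cotangent-bundle-asymptotic-nearKX} finishes by stabilising the maximal destabilising subsheaf for small $\varepsilon$ and running the polynomial argument once more. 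Your alternative route --- a contradiction argument feeding a hypothetical sequence of slope violations into the equality discussion of Theorem \ref{thm-Basic-Chern-Class-Inequality-Higgs-Sheaves} and then propagating via Lemma \ref{lem-bundle-vanishing-dicriminant-universally-stable} --- is not well-founded as stated: a sequence of strict slope \emph{violations} does not by itself produce the vanishings of $\Delta(\mathcal{M})$ and the numerical identities on $c_1(\mathcal{M})\cdot K_X^{\bullet}$ that Lemma \ref{lem-bundle-vanishing-dicriminant-universally-stable} requires as input, so the propagation step has no starting point, and the equality analysis of Theorem \ref{thm-Basic-Chern-Class-Inequality-Higgs-Sheaves} is simply not triggered. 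Replace that sketch with the klt-pair perturbation of Theorem \ref{thm-Enoki}; the Abelian-fibration case you mention separately is indeed handled in Section \ref{ssec-Twisted-semistability-abelian-fibrations} along the lines you indicate.
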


    Here, the Higgs sheaf $(\Omega_X^{[1]}\oplus \mathcal{O}_X, \theta_{\Simp})$ was defined in \ref{con-simpson-cotangent-sheaf}.
    \begin{prop}\emph{(Conjecture \eqref{eq-twisted-semistability-cotangent-bundles} implies Conjecture \eqref{eq-twisted-semistability-Simpson-cotangent-bundles})}
    \label{prop-semistability-cotangent-bundle-implies-ss-simpson}
    
    \noindent
       Let $X$ be a projective log canonical variety of dimension $n$ such that $K_X$ is nef. Let $0\leq m \leq n$ be an integer and let $H_1, \ldots, H_{n-m}$ be ample divisors on $X$. If Conjecture \eqref{eq-twisted-semistability-cotangent-bundles} holds for $H_1, \ldots, H_{n-m}$, then also Conjecture \eqref{eq-twisted-semistability-Simpson-cotangent-bundles} holds for $H_1, \ldots, H_{n-m}$.
    \end{prop}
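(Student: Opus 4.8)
The plan is to reduce \eqref{eq-twisted-semistability-Simpson-cotangent-bundles} to the structure of Higgs subsheaves of $(\Omega_X^{[1]}\oplus\mathcal{O}_X,\theta_{\Simp})$ together with a short numerical estimate. First I would note that it is enough to verify \eqref{eq-twisted-semistability-Simpson-cotangent-bundles} for \emph{saturated} Higgs subsheaves $\mathcal{F}$: replacing an arbitrary Higgs subsheaf by its saturation preserves the rank, can only (weakly) increase the intersection number $c_1(\mathcal{F})\cdot K_X^{m-1}\cdot H_1\cdots H_{n-m}$ (the cokernel is torsion with effective first Chern class and $K_X^{m-1}\cdot H_1\cdots H_{n-m}$ is a product of nef classes), and again yields a Higgs subsheaf, so it does not decrease the left-hand side of \eqref{eq-twisted-semistability-Simpson-cotangent-bundles}. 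The structural part of what follows is then essentially a repetition of the proof of Proposition \ref{Prop-Properties-of-Simpsons-Higgs-Sheaf}.

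So fix a nonzero saturated Higgs subsheaf $\mathcal{F}$ and let $p\colon\Omega_X^{[1]}\oplus\mathcal{O}_X\to\Omega_X^{[1]}$ denote the first projection. If $p(\mathcal{F})=0$, then $\mathcal{F}\subseteq\{0\}\oplus\mathcal{O}_X$, and since $(\Omega_X^{[1]}\oplus\mathcal{O}_X)/\mathcal{F}$ is torsion-free while $(\{0\}\oplus\mathcal{O}_X)/\mathcal{F}$ embeds into it, we get $\mathcal{F}=\{0\}\oplus\mathcal{O}_X$, for which \eqref{eq-twisted-semistability-Simpson-cotangent-bundles} just reads $0\leq\tfrac{1}{m+1}K_X^m\cdot H_1\cdots H_{n-m}$, true since $K_X$ is nef. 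If $p(\mathcal{F})\neq0$, I would pick a local section $(\eta,f)$ of $\mathcal{F}$ with $\eta\neq0$ and a local vector field $v$ with $\eta(v)\neq0$; the Higgs condition then gives $\theta_{\Simp}\bigl((\eta,f)\otimes v\bigr)=(0,\eta(v))\in\mathcal{F}$, so $\mathcal{F}\cap(\{0\}\oplus\mathcal{O}_X)\neq0$, and as $\mathcal{F}$ is saturated and $\{0\}\oplus\mathcal{O}_X$ has rank one this forces $\{0\}\oplus\mathcal{O}_X\subseteq\mathcal{F}$. Hence $\mathcal{F}=p^{-1}(\mathcal{M})=\mathcal{M}\oplus\mathcal{O}_X$, where $\mathcal{M}:=p(\mathcal{F})$ is a nonzero saturated subsheaf of $\Omega_X^{[1]}$ (because $\Omega_X^{[1]}/\mathcal{M}\cong(\Omega_X^{[1]}\oplus\mathcal{O}_X)/\mathcal{F}$ is torsion-free); in particular $\rk\mathcal{F}=\rk\mathcal{M}+1$ and $c_1(\mathcal{F})=c_1(\mathcal{M})$.

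It then remains to run the numerical estimate. Writing $r:=\rk\mathcal{M}\geq1$, $\alpha':=K_X^{m-1}\cdot H_1\cdots H_{n-m}$, $D:=K_X\cdot\alpha'=K_X^m\cdot H_1\cdots H_{n-m}\geq0$ and $C:=c_1(\mathcal{M})\cdot\alpha'$, the left-hand side of \eqref{eq-twisted-semistability-Simpson-cotangent-bundles} for $\mathcal{F}$ equals $\tfrac{C}{r+1}$, so the goal is $\tfrac{C}{r+1}\leq\tfrac{D}{m+1}$. I would feed in two inputs: the hypothesis that Conjecture \eqref{eq-twisted-semistability-cotangent-bundles} holds for $H_1,\ldots,H_{n-m}$, applied to $\mathcal{M}\subseteq\Omega_X^{[1]}$, which gives $C\leq rD/m$; and the unconditional generic semipositivity bound $C\leq D$ of Miyaoka and Campana--P\u{a}un \cite[Thm.\ 1.3]{CampanaPaun_GenericSemipositivityCotangentBundle} recalled above. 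If $r\leq m$, the first yields $\tfrac{C}{r+1}\leq\tfrac{rD}{m(r+1)}\leq\tfrac{D}{m+1}$, the last inequality being equivalent to $r\leq m$ since $D\geq0$; if $r\geq m$, the second yields $\tfrac{C}{r+1}\leq\tfrac{D}{r+1}\leq\tfrac{D}{m+1}$. Either way \eqref{eq-twisted-semistability-Simpson-cotangent-bundles} holds for $\mathcal{F}$, completing the proof. I do not expect a genuine obstacle: the argument is entirely elementary, and the only two points that need a little care are the structural claim that a saturated Higgs subsheaf with nonzero image in $\Omega_X^{[1]}$ must contain the trivial summand $\{0\}\oplus\mathcal{O}_X$ (which is exactly how $\theta_{\Simp}$ constrains $\mathcal{F}$, and is the content of Proposition \ref{Prop-Properties-of-Simpsons-Higgs-Sheaf}), and the case distinction $r\leq m$ versus $r\geq m$ — Conjecture \eqref{eq-twisted-semistability-cotangent-bundles} controls $\mathcal{M}$ only in the first range, while generic semipositivity is precisely what absorbs the remaining ranks $r>m$.
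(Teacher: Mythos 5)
Your proof is correct and follows essentially the same two-case argument as the paper ($r\leq m$ via Conjecture \eqref{eq-twisted-semistability-cotangent-bundles} and $r\geq m$ via generic semipositivity, combined with the structural fact that a Higgs subsheaf containing data in $\Omega_X^{[1]}$ must contain the trivial summand $\mathcal{O}_X$). The only cosmetic difference is that you re-derive the structure of saturated Higgs subsheaves of $(\Omega_X^{[1]}\oplus\mathcal{O}_X,\theta_{\Simp})$ from scratch and verify the slope bound for \emph{all} Higgs subsheaves, whereas the paper simply invokes Proposition \ref{Prop-Properties-of-Simpsons-Higgs-Sheaf} and checks only the maximal $\alpha$-destabilising one, which of course suffices.
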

    \begin{proof}
    By Proposition \ref{Prop-Properties-of-Simpsons-Higgs-Sheaf}, the maximal $\alpha := (K_X^{m-1}\cdot H_1\cdots H_{n-m})$-destabilising Higgs subsheaf 
    \[
    \mathcal{E} \subseteq \Big(\Omega_X^{[1]}\oplus \mathcal{O}_X, \theta_{\Simp}\Big)
    \]
    is of the form $\mathcal{E} = \mathcal{F} \oplus \mathcal{O}_X$ for some saturated subsheaf $\mathcal{F}\subseteq \Omega_X^{[1]}$. Let us denote $r := \rk\mathcal{F}$. We distinguish the following two cases:

    If $r\geq m$, then it follows from the generic semipositivity theorem \cite[Thm.\ 1.3]{CampanaPaun_GenericSemipositivityCotangentBundle}, that
    \[
    \frac{c_1(\mathcal{E})\cdot \alpha}{r+1} \
    = \ \frac{c_1(\mathcal{F})\cdot \alpha}{r+1} \
    \leq \ \frac{K_X\cdot \alpha}{r+1} \
    = \ \frac{m+1}{r+1}\cdot \frac{K_X\cdot \alpha}{m+1} \
    \leq \ \frac{K_X\cdot \alpha}{m+1},
    \]
    as required. Otherwise, in case $r\leq m$, we use Conjecture \eqref{eq-twisted-semistability-cotangent-bundles} to compute
    \[
    \frac{c_1(\mathcal{E})\cdot \alpha}{r+1} \
    = \ \frac{c_1(\mathcal{F})\cdot \alpha}{r+1} \
    \leq \ \frac{r}{r+1}\cdot \frac{K_X\cdot \alpha}{m} \
    = \ \frac{r\cdot (m+1)}{(r+1)\cdot m}\cdot \frac{K_X \cdot \alpha}{m+1} \
    \leq \ \frac{K_X \cdot \alpha}{m+1}.
    \]
    In any case, the Proposition is proved.
    \end{proof}

    Finally, we demonstrate that Conjecture \eqref{eq-twisted-semistability-Simpson-cotangent-bundles} implies Conjecture \eqref{conj-Miyaoka-Type-Inequality}:

    \begin{thm}\emph{(Conjecture \eqref{eq-twisted-semistability-Simpson-cotangent-bundles} implies Conjecture \eqref{conj-Miyaoka-Type-Inequality})}
    \label{thm-Miyaoka-Type_inequality}

    \noindent
        Let $X$ be a log terminal projective variety of dimension $n$ such that $K_X$ is nef. Let $2\leq m\leq n$ be an integer and assume that Conjecture \eqref{eq-twisted-semistability-Simpson-cotangent-bundles} holds for some ample divisors $H_1, \ldots, H_{n-m}$ on $X$.
        Then
        \begin{align}
            \Big(2(m+1)c_2(X) - m c_1^2(X)\Big)\cdot K_X^{m-2}\cdot H_1\cdots H_{n-m} \geq 0.
            \label{eq-Miyaoka-Type-Inequality}
        \end{align}
    \end{thm}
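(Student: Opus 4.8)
The plan is to apply Miyaoka's Chern class inequality for reflexive Higgs sheaves (Theorem~\ref{thm-Basic-Chern-Class-Inequality-Higgs-Sheaves}, together with its companion Theorem~\ref{thm-Basic-Chern-Class-Inequality-Higgs-Sheaves-v=1}) to the Simpson Higgs sheaf $(\mathcal{E},\theta):=\big(\Omega_X^{[1]}\oplus\mathcal{O}_X,\theta_{\Simp}\big)$ from Construction~\ref{con-simpson-cotangent-sheaf}, polarised in the direction of $K_X$. Concretely, I would set $L_1=\ldots=L_{m-2}:=K_X$ and $L_{m-1}:=H_1,\ldots,L_{n-2}:=H_{n-m}$, so that $(L_1,\ldots,L_{n-2})$ is a tuple of $n-2$ nef classes, $\alpha:=c_1(\mathcal{E})\cdot L_1\cdots L_{n-2}=K_X^{m-1}\cdot H_1\cdots H_{n-m}$ is exactly the polarisation appearing in Conjecture~\eqref{eq-twisted-semistability-Simpson-cotangent-bundles}, and the intersection of $c_2(\mathcal{E})$ with $L_1\cdots L_{n-2}$ recovers $c_2(X)\cdot K_X^{m-2}\cdot H_1\cdots H_{n-m}$.

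First I would assemble the inputs required by the Miyaoka-type theorems. Since $K_X$ is nef, $X$ is not uniruled, hence $\Omega_X^{[1]}$ is generically semipositive by the generic semipositivity theorem of Miyaoka~\cite{miyaoka_MiyaokaInequality} and Campana--P\u{a}un~\cite{CampanaPaun_GenericSemipositivityCotangentBundle}; restricting to a general complete intersection curve and invoking Proposition~\ref{Prop-Properties-of-Simpsons-Higgs-Sheaf}, one checks, as in \cite{Langer_NoteOnBogomolovInstability,IMM_3c2=c1^2}, that this passes to the Higgs sheaf $(\mathcal{E},\theta)$. Moreover $\det\mathcal{E}=\mathcal{O}_X(K_X)$ is a nef $\Q$-line bundle, and from the elementary behaviour of $\Q$-Chern classes under direct sums --- together with property (3), which makes the $\mathcal{O}_X$-summand inert --- one computes $c_1(\mathcal{E})=K_X$ and $c_2(\mathcal{E})=c_2(X)$. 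In particular $c_1(\mathcal{E})^2\cdot L_1\cdots L_{n-2}=K_X^m\cdot H_1\cdots H_{n-m}$.

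The argument then branches according to the value of $\nu(X)$. If $\nu(X)\leq m-1$, then $K_X^m\cdot H_1\cdots H_{n-m}=0$, so $c_1(\mathcal{E})^2\cdot L_1\cdots L_{n-2}=0$ and Theorem~\ref{thm-Basic-Chern-Class-Inequality-Higgs-Sheaves-v=1} applies directly, yielding $c_2(\mathcal{E})\cdot L_1\cdots L_{n-2}\geq 0$; as the $c_1^2$-term of~\eqref{eq-Miyaoka-Type-Inequality} also vanishes in this range, we are done. If $\nu(X)\geq m$, then $c_1(\mathcal{E})^2\cdot L_1\cdots L_{n-2}>0$, and Theorem~\ref{thm-Basic-Chern-Class-Inequality-Higgs-Sheaves} gives
\[
2c_2(\mathcal{E})\cdot L_1\cdots L_{n-2}\ \geq\ c_1(\mathcal{E})^2\cdot L_1\cdots L_{n-2}-\mu^{\max}_\alpha(\mathcal{E},\theta).
\]
Since Conjecture~\eqref{eq-twisted-semistability-Simpson-cotangent-bundles} bounds the $\alpha$-slope of \emph{every} Higgs subsheaf of $(\mathcal{E},\theta)$, it reads precisely as $\mu^{\max}_\alpha(\mathcal{E},\theta)\leq\tfrac{1}{m+1}K_X^m\cdot H_1\cdots H_{n-m}$. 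Substituting this bound, using $c_1(\mathcal{E})^2\cdot L_1\cdots L_{n-2}=K_X^m\cdot H_1\cdots H_{n-m}$, and clearing the denominator then produces exactly the desired inequality $\big(2(m+1)c_2(X)-mc_1^2(X)\big)\cdot K_X^{m-2}\cdot H_1\cdots H_{n-m}\geq 0$.

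None of the individual steps is difficult in itself; the two points that require genuine care are (i) checking that generic semipositivity of the ordinary reflexive sheaf $\Omega_X^{[1]}$ transfers to the Simpson Higgs sheaf $(\mathcal{E},\theta)$ in the precise form demanded by Theorem~\ref{thm-Basic-Chern-Class-Inequality-Higgs-Sheaves}, and (ii) correctly handling the degenerate range $\nu(X)\leq m-1$, where $\alpha$ may be numerically trivial and~\eqref{eq-Miyaoka-Type-Inequality} degenerates to the plain statement $c_2(X)\cdot K_X^{m-2}\cdot H_1\cdots H_{n-m}\geq 0$; this is exactly why one needs the separate input Theorem~\ref{thm-Basic-Chern-Class-Inequality-Higgs-Sheaves-v=1} in that case. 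Point (i) follows from Proposition~\ref{Prop-Properties-of-Simpsons-Higgs-Sheaf} applied on a general curve, as already exploited in \cite{Langer_NoteOnBogomolovInstability} and \cite{IMM_3c2=c1^2}, so I expect the bookkeeping in (ii) to be the only real friction.
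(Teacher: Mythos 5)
Your argument is correct and follows the same fundamental strategy as the paper: apply the Miyaoka-type Chern class inequality to the Simpson Higgs sheaf $(\mathcal{E},\theta)=\big(\Omega_X^{[1]}\oplus\mathcal{O}_X,\theta_{\Simp}\big)$, polarised by $L_1=\dots=L_{m-2}=K_X$ and $L_{m-1},\dots,L_{n-2}$ ample, so that $\alpha=c_1(\mathcal{E})\cdot L_1\cdots L_{n-2}=K_X^{m-1}\cdot H_1\cdots H_{n-m}$ is exactly the polarisation in Conjecture~\eqref{eq-twisted-semistability-Simpson-cotangent-bundles}, and feed the conjectural bound $\mu^{\max}_\alpha(\mathcal{E},\theta)\leq\frac{1}{m+1}K_X^m\cdot H_1\cdots H_{n-m}$ into Theorem~\ref{thm-Basic-Chern-Class-Inequality-Higgs-Sheaves}.

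The one place you diverge from the paper is the case distinction, and yours is actually tighter. The paper splits into $\nu(X)\leq 1$ (handled by citing \cite{IMM_3c2=c1^2}) and $\nu(X)\geq 2$, and in the latter range applies Theorem~\ref{thm-Basic-Chern-Class-Inequality-Higgs-Sheaves} directly; but that theorem requires $c_1(\mathcal{E})^2\cdot L_1\cdots L_{n-2}=K_X^m\cdot H_1\cdots H_{n-m}\neq 0$, i.e.\ $\nu(X)\geq m$, so the paper's dichotomy leaves the range $2\leq\nu(X)\leq m-1$ implicit. Your split at $\nu(X)\geq m$ versus $\nu(X)\leq m-1$, invoking Theorem~\ref{thm-Basic-Chern-Class-Inequality-Higgs-Sheaves-v=1} in the degenerate range (where $c_1^2\cdot L_1\cdots L_{n-2}=0$ and the $c_1(X)^2$-term of \eqref{eq-Miyaoka-Type-Inequality} vanishes automatically), covers this explicitly and uses Conjecture \eqref{eq-twisted-semistability-Simpson-cotangent-bundles} precisely where it is needed. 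The paper's reading is defensible because in its applications $m$ is always taken $\leq\nu(X)$, but the theorem as stated allows all $2\leq m\leq n$, so your version is the cleaner one.
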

    \begin{proof}
        In case $\nu(X) \leq 1$, the validity of \eqref{eq-Miyaoka-Type-Inequality} was shown in \cite[Thm.\ 1.2]{IMM_3c2=c1^2}, see also \cite[Thm.\ 6.6]{miyaoka_MiyaokaInequality} for the case of terminal varieties.
        Now, let us assume that $\nu(X) \geq 2$.
        Let 
        \[
        (\mathcal{E}, \theta) := \Big(\Omega_X^{[1]}\oplus \mathcal{O}_X, \theta_{\Simp}\Big)
        \]
        be the Simpson cotangent Higgs sheaf and let us denote by
        $$
        0 = \mathcal{E}_0 \subsetneq \mathcal{E}_1 \subsetneq \ldots \subsetneq \mathcal{E}_\ell = \mathcal{E}
        $$
        its $\alpha := (K_X^{m-1}H_1\cdots H_{n-m})$-Harder--Narsasimhan filtration. Then $\mathcal{E}$ is generically semipositive \cite{CampanaPaun_GenericSemipositivityCotangentBundle} and $\det(\mathcal{E}) \cong \mathcal{O}_X(K_X)$ is a nef $\Q$-line bundle.
        By assumption,
        $$
        \mu_\alpha^{\mathrm{max}}(\mathcal{E}, \theta) 
        = \mu_{\alpha}(\mathcal{E}_1) 
        \leq \frac{K_X^m\cdot H_1\cdots H_{n-m}}{m+1}.
        $$
        An application of Theorem \ref{thm-Basic-Chern-Class-Inequality-Higgs-Sheaves} yields
        \begin{align*}
            \begin{split}
                2\cdot c_2(X)\cdot & K_X^{m-2}\cdot H_1\cdots H_{n-m} \\
        & \geq \ K_X^m\cdot H_1\cdots H_{n-m} - \frac{K_X^m\cdot H_1\cdots H_{n-m}}{m+1}.
            \end{split}
        \end{align*}
        A simple rearrangement of the terms on either side then gives the required estimate
        \[
        \Big(2(m+1)c_2(X) - m c_1^2(X)\Big)\cdot K_X^{m-2}\cdot H_1\cdots H_{n-m} \geq 0,
        \]
        proving the Theorem.
    \end{proof}
    \begin{rem}
    \label{rem-twisted-semistability-conjecture-pairs}
        All statements in this subsection can be formulated and proved more generally for projective klt pairs $(X, D)$ such that $K_X + D$ is nef, see \cite[Sec.\ 4.1]{Mul_Thesis}.
    \end{rem}

    \subsection{Twisted semistability for polarisations close to \texorpdfstring{$K_X$}{}}
    \label{ssec-twisted-semistabnility-polarisations-close-to-KX}
    In this subsection, we verify Conjecture \eqref{eq-twisted-semistability-cotangent-bundles} unconditionally for polarisations close to $[K_X] \in N^1(X)_{\R}$. The following result is due to Enoki \cite{Enoki_TwistedSemistability} in case $X$ is smooth; our proof is very similar:
    \begin{thm}\emph{(cf.\ Enoki \cite{Enoki_TwistedSemistability})}
    \label{thm-Enoki}
    
        \noindent
        Let $X$ be a projective log canonical variety of dimension $n$ such that $K_X$ is nef of numerical dimension $\nu := \nu(X)$. Let $H$ be an ample divisor on $X$. Then, for any rational number $t>0$ and any subsheaf $\mathcal{F} \subseteq \Omega_X^{[1]}$, it holds that
        \begin{align}
            \frac{c_1(\mathcal{F})\cdot (K_X + tH)^{n-1}}{\rk\mathcal{F}}\ 
            \leq \ \frac{(K_X + tH)^{n}}{n}.\label{eq-Enoki-1}
        \end{align}
        In particular,
        \begin{align}
           c_1(\mathcal{F})\cdot K_X^{\nu}\cdot H^{n-\nu-1} \leq 0.\label{eq-Enoki-2}
        \end{align}
        In case equality holds in \eqref{eq-Enoki-2}, then, additionally,
        \begin{align}
            \frac{c_1(\mathcal{F})\cdot K_X^{\nu-1}\cdot H^{n-\nu}}{\rk\mathcal{F}} \ 
            \leq \ \frac{K_X^{\nu}\cdot H^{n-\nu}}{\nu}.\label{eq-Enoki-3}
        \end{align}
    \end{thm}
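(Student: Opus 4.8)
The strategy is to reduce \eqref{eq-Enoki-1} to the generic semipositivity theorem of Miyaoka--Campana--P\u{a}un by making essential use of the nefness of $K_X$, following Enoki's argument in the smooth case. First, I would fix the subsheaf $\mathcal{F}\subseteq\Omega_X^{[1]}$ of rank $r$ and, replacing $\mathcal{F}$ by its saturation, assume $\mathcal{F}$ is saturated; this only increases $c_1(\mathcal{F})\cdot(K_X+tH)^{n-1}$ and hence suffices. The key observation is that, since $K_X$ is nef and $t>0$, the class $K_X+tH$ is nef and big, so $(K_X+tH)^{n-1}$ defines a movable curve class against which generic semipositivity applies. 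Concretely, by \cite[Thm.\ 1.3]{CampanaPaun_GenericSemipositivityCotangentBundle}, for any nef divisors $L_1,\ldots,L_{n-1}$ one has $c_1(\mathcal{F})\cdot L_1\cdots L_{n-1}\leq K_X\cdot L_1\cdots L_{n-1}$ for every subsheaf $\mathcal{F}\subseteq\Omega_X^{[1]}$; applying this with $L_i = K_X+tH$ gives $c_1(\mathcal{F})\cdot(K_X+tH)^{n-1}\leq K_X\cdot(K_X+tH)^{n-1}$.

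The next step is to convert this bound on $c_1(\mathcal{F})\cdot(K_X+tH)^{n-1}$ into the slope inequality \eqref{eq-Enoki-1}, which is where the precise shape $\tfrac{(K_X+tH)^n}{n}$ on the right enters. The point is that $K_X\cdot(K_X+tH)^{n-1}\leq \tfrac{(K_X+tH)^n}{n}\cdot r$ is \emph{false} in general for $r<n$, so a cruder argument is needed: one should instead exploit that the bound $c_1(\mathcal{F})\cdot(K_X+tH)^{n-1}\leq K_X\cdot(K_X+tH)^{n-1}$ holds not just for $\mathcal{F}$ but for every term of the Harder--Narasimhan filtration of $\Omega_X^{[1]}$ with respect to $(K_X+tH)^{n-1}$, and that the full sheaf $\Omega_X^{[1]}$ has $c_1 = K_X$. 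If $\mathcal{F}$ is the maximal destabilising subsheaf, then by semistability of the top graded piece and the fact that slopes decrease along the filtration, the maximal slope $\mu^{\max}$ is at most $\tfrac{1}{n}c_1(\Omega_X^{[1]})\cdot(K_X+tH)^{n-1} = \tfrac{1}{n}K_X\cdot(K_X+tH)^{n-1}$ \emph{unless} the destabilising subsheaf already violates this—at which point one feeds its $c_1$ back into the generic semipositivity bound. This is the delicate combinatorial heart of Enoki's argument: one argues by descending induction on $r$, or equivalently shows that if $\mu(\mathcal{F}) > \tfrac{1}{n}K_X\cdot(K_X+tH)^{n-1}$ then $\mu(\Omega_X^{[1]}/\mathcal{F}) > \mu(\mathcal{F})$, contradicting the HN property. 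I would organize this as: suppose for contradiction \eqref{eq-Enoki-1} fails for the maximal destabilising $\mathcal{F}$ of rank $r$; then $c_1(\mathcal{F})\cdot(K_X+tH)^{n-1} > \tfrac{r}{n}(K_X+tH)^n$, so $c_1(\Omega_X^{[1]}/\mathcal{F})\cdot(K_X+tH)^{n-1} = K_X\cdot(K_X+tH)^{n-1} - c_1(\mathcal{F})\cdot(K_X+tH)^{n-1}$, and combined with the generic semipositivity bound $K_X\cdot(K_X+tH)^{n-1}\leq \tfrac{n}{n}\cdot\!$(something) one derives $\mu(\Omega_X^{[1]}/\mathcal{F})\geq\mu(\mathcal{F})$, a contradiction.

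For \eqref{eq-Enoki-2}, I would let $t\to 0^+$ in \eqref{eq-Enoki-1}. Expanding $(K_X+tH)^{n-1}$ and $(K_X+tH)^n$ as polynomials in $t$ and using $K_X^{\nu+1} = 0$ (so that $K_X^k = 0$ for $k>\nu$), the leading nonvanishing term on the left of $c_1(\mathcal{F})\cdot(K_X+tH)^{n-1}\leq \tfrac{r}{n}(K_X+tH)^n$ is $t^{n-1-\nu}$ with coefficient $\binom{n-1}{\nu}c_1(\mathcal{F})\cdot K_X^{\nu}\cdot H^{n-1-\nu}$, whereas the right-hand side has leading term of order $t^{n-\nu}$ since $K_X^{n}=0$ for $\nu<n$—wait, more carefully: $(K_X+tH)^n$ has a $t^{n-\nu}$ term $\binom{n}{\nu}K_X^{\nu}H^{n-\nu}t^{n-\nu}$, which is of strictly higher order in $t$ than the $t^{n-1-\nu}$ term on the left. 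Hence dividing by $t^{n-1-\nu}$ and letting $t\to 0$ forces $c_1(\mathcal{F})\cdot K_X^{\nu}\cdot H^{n-1-\nu}\leq 0$, which is \eqref{eq-Enoki-2}. Finally, for \eqref{eq-Enoki-3}, one assumes equality holds in \eqref{eq-Enoki-2} and looks at the \emph{next} order term in the $t$-expansion of the inequality \eqref{eq-Enoki-1}: the vanishing of the $t^{n-1-\nu}$ coefficient on the left promotes the comparison to the $t^{n-\nu}$ coefficients on both sides, yielding exactly $\binom{n-1}{\nu-1}\tfrac{c_1(\mathcal{F})\cdot K_X^{\nu-1}H^{n-\nu}}{r}\leq \binom{n}{\nu}\tfrac{K_X^{\nu}H^{n-\nu}}{n}$, and the binomial identity $\tfrac{1}{n}\binom{n}{\nu} = \tfrac{1}{\nu}\binom{n-1}{\nu-1}$ rearranges this to \eqref{eq-Enoki-3}. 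I expect the main obstacle to be the combinatorial descending-induction argument in the second paragraph: ensuring the HN-filtration comparison is watertight in the singular (log canonical / quotient-singularity in codimension two) setting, where one must be careful that $c_1$ is only a $\Q$-multilinear form and additivity on the HN graded pieces holds only in codimension one—but this is exactly the content of \cite[Lem.\ 3.18]{GKPT_MY_Inequality_Uniformisation_of_Canonical_models} and the conventions already set up in Subsection \ref{ssec-Higgs-Sheaves-Semistability}, so it should go through.
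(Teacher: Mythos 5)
Your polynomial-expansion argument deducing \eqref{eq-Enoki-2} and \eqref{eq-Enoki-3} from \eqref{eq-Enoki-1} is correct and matches the paper's proof exactly. The problem is your proposed proof of \eqref{eq-Enoki-1} itself: you correctly identify that the Miyaoka--Campana--P\u{a}un generic semipositivity bound $c_1(\mathcal{F})\cdot(K_X+tH)^{n-1}\leq K_X\cdot(K_X+tH)^{n-1}$ is too weak by a factor of roughly $n/r$, and you propose to close the gap with a Harder--Narasimhan / descending-induction argument. But this argument does not work. Concretely, you claim that if $\mu_\alpha(\mathcal{F})>\tfrac{1}{n}K_X\cdot\alpha$ (with $\alpha=(K_X+tH)^{n-1}$), then $\mu_\alpha(\Omega_X^{[1]}/\mathcal{F})>\mu_\alpha(\mathcal{F})$, contradicting the HN property. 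In fact the opposite holds: if $c_1(\mathcal{F})\cdot\alpha>\tfrac{r}{n}K_X\cdot\alpha$, then since $c_1(\Omega_X^{[1]}/\mathcal{F})\cdot\alpha=K_X\cdot\alpha-c_1(\mathcal{F})\cdot\alpha<\tfrac{n-r}{n}K_X\cdot\alpha$, one gets $\mu_\alpha(\Omega_X^{[1]}/\mathcal{F})<\tfrac{1}{n}K_X\cdot\alpha<\mu_\alpha(\mathcal{F})$, which is entirely consistent with $\mathcal{F}$ being maximal destabilising — no contradiction arises. Generic semipositivity gives positivity of quotient slopes, i.e.\ a bound on $\mu^{\min}$, but says nothing about $\mu^{\max}$; there is no purely combinatorial route from the former to a slope bound of the required shape. (Also note that your target in that step, $\mu_\alpha(\mathcal{F})\leq\tfrac{1}{n}K_X\cdot\alpha$, is \emph{stronger} than \eqref{eq-Enoki-1} and amounts to untwisted semistability of $\Omega_X^{[1]}$ with respect to $\alpha$, which is not available.)

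The ingredient you are missing is the one the paper actually uses: fix a large divisible $\ell$, take a \emph{general} hypersurface $H'\in|\ell t H|$, and set $D':=\tfrac{1}{\ell}H'$. Then $(X,D')$ is a log canonical pair with $K_X+D'\sim_\Q K_X+tH$ \emph{ample}, so the adapted logarithmic cotangent sheaf of $(X,D')$ is semistable with respect to $(K_X+D')^{n-1}$ by Guenancia--Taji \cite[Thm.\ C]{GuenanciaTaji_SemistabilityLogCotangentSheaf}. Since $\Omega_X^{[1]}$ embeds into that adapted log cotangent sheaf, and that sheaf has slope $\tfrac{(K_X+tH)^n}{n}$, inequality \eqref{eq-Enoki-1} follows at once. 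The crucial trade is converting the ample perturbation $tH$ into boundary so as to land in the \emph{general type} regime where the strong (Yau/Guenancia--Taji type) semistability of the log cotangent sheaf holds; generic semipositivity — which applies for $K_X$ merely pseudoeffective — gives strictly less and cannot be bootstrapped to the slope bound.
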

    \begin{proof}
    Set $r:= \rk\mathcal{F}$ and fix a rational number $t>0$. Choose an integer $\ell \gg 1 $ such that $(\ell\cdot t) H$ is Cartier and very ample. Choose a general hypersurface $H' \in |(\ell\cdot t) H|$ and set $D' := \frac{1}{\ell} \cdot H'$. 
    Then the pair $(X, D')$ is log canonical. Moreover,
    \[
    K_X + D'\sim_\Q K_X + t\cdot H
    \]
    is ample. Then \eqref{eq-Enoki-1} follows from the the semistability of the logarithmic cotangent sheaf of a minimal, log canonical pair of general type \cite[Thm.\ C]{GuenanciaTaji_SemistabilityLogCotangentSheaf}. 

    To deduce \eqref{eq-Enoki-2} and \eqref{eq-Enoki-3} from \eqref{eq-Enoki-1}, we expand \eqref{eq-Enoki-1} as a polynomial in $t$:
    \begin{align*}
    \begin{split}
    \frac{1}{r} \sum_{i=0}^{n-1} \binom{n-1}{i} & \Big(c_1(\mathcal{F})\cdot K_X^{n-1-i}\cdot H^i\Big) \ t^i \
    \leq \ \frac{1}{n} \sum_{i=0}^{n} \binom{n}{i} \Big(K_X^{n-i} \cdot H^i\Big) \ t^i
    \end{split}
    \end{align*}
    In the limit $t\rightarrow 0$, it follows from \eqref{eq-inequality-polynomials} that
    \[
    \frac{1}{r} \cdot \binom{n-1}{n-\nu-1} \cdot c_1(\mathcal{F}) \cdot K_X^\nu \cdot H^{n - \nu -1} \leq 0,
    \]
    i.e.\ \eqref{eq-Enoki-2} holds. Here, we used that $(K_X + D)^{\nu + 1} \equiv 0$. Moreover, in case of equality, \eqref{eq-inequality-polynomials} yields an estimate between the next highest order terms in $t$; to wit:
    \[
    \frac{1}{r} \cdot \binom{n-1}{n-\nu} \cdot c_1(\mathcal{F})\cdot K_X^{\nu -1}\cdot  H^{n - \nu} \leq \frac{1}{n} \cdot \binom{n}{n-\nu} \cdot K_X^{\nu} \cdot H^{n - \nu}
    \]
    Since
    \[
    n \cdot \binom{n-1}{n-\nu} = \nu \cdot \binom{n}{n-\nu},
    \]
    we deduce the validity of \eqref{eq-Enoki-3}, thereby finishing the proof of the Theorem.
    \end{proof}
    \begin{cor}
    \label{cor-twisted-semistability-cotangent-bundle-asymptotic-nearKX}
        Let $X$ be a projective log canonical variety of dimension $n$ such that $K_X$ is nef of numerical dimension $\nu$. Let $H$ be an ample divisor on $X$. Then there exists a number $\varepsilon_0 >0$ such that for any rational number $0<\varepsilon < \varepsilon_0$ Conjecture \emph{(}\hyperref[eq-twisted-semistability-cotangent-bundles]{$\mathrm{S}_{\nu}$}\emph{)} holds for $(K_X + \varepsilon H)^{n-\nu}$. In other words,
        \begin{align*}
            \frac{c_1(\mathcal{F})\cdot K_X^{\nu-1}\cdot(K_X + \varepsilon H)^{n-\nu}}{\rk\mathcal{F}} \
            \leq \ \frac{K_X^{\nu}\cdot (K_X + \varepsilon H)^{n-\nu}}{\nu}
        \end{align*}
        for any subsheaf $\mathcal{F} \subseteq \Omega_X^{[1]}$ and any $0<\varepsilon < \varepsilon_0$.
    \end{cor}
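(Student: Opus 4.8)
The plan is to reduce the asserted inequality, for a single fixed subsheaf $\mathcal{F}\subseteq\Omega_X^{[1]}$, to a comparison of just two explicit intersection numbers, then to feed in the sharpened conclusions \eqref{eq-Enoki-2} and \eqref{eq-Enoki-3} of Theorem \ref{thm-Enoki} together with generic semipositivity, and finally to arrange that the resulting threshold for $\varepsilon$ can be chosen uniformly in $\mathcal{F}$.

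I may assume $2\le\nu\le n-1$: the case $\nu=n$ is Conjecture $(\mathrm{S}_n)$, verified by Guenancia--Taji \cite{GuenanciaTaji_SemistabilityLogCotangentSheaf}, and $\nu\le1$ is likewise immediate (it reduces to $(\mathrm{S}_1)$, which always holds). Fix $\mathcal{F}$ and put $r:=\rk\mathcal{F}$. Expanding $(K_X+\varepsilon H)^{n-\nu}$ binomially on both sides and using $K_X^{\nu+1}\equiv0$ to kill every monomial containing a power $K_X^{k}$ with $k\ge\nu+1$, the right-hand side collapses to $\tfrac1\nu(K_X^\nu\cdot H^{n-\nu})\,\varepsilon^{n-\nu}$ and the left-hand side to
\[
\tfrac1r\bigl[(n-\nu)\,(c_1(\mathcal{F})\cdot K_X^\nu\cdot H^{n-\nu-1})\,\varepsilon^{n-\nu-1}+(c_1(\mathcal{F})\cdot K_X^{\nu-1}\cdot H^{n-\nu})\,\varepsilon^{n-\nu}\bigr].
\]
Dividing by $\varepsilon^{n-\nu-1}>0$, the inequality to be proved becomes $a_{\mathcal{F}}\le\varepsilon\,b_{\mathcal{F}}$, where $a_{\mathcal{F}}:=\tfrac{n-\nu}{r}\,c_1(\mathcal{F})\cdot K_X^\nu\cdot H^{n-\nu-1}$ and $b_{\mathcal{F}}:=\tfrac1\nu(K_X^\nu\cdot H^{n-\nu})-\tfrac1r\,c_1(\mathcal{F})\cdot K_X^{\nu-1}\cdot H^{n-\nu}$; this is the elementary fact \eqref{eq-inequality-polynomials} applied coefficientwise.

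Then I would run the following trichotomy. By \eqref{eq-Enoki-2}, $a_{\mathcal{F}}\le0$. By generic semipositivity \cite[Thm.\ 1.3]{CampanaPaun_GenericSemipositivityCotangentBundle} with $m=\nu$, $c_1(\mathcal{F})\cdot K_X^{\nu-1}\cdot H^{n-\nu}\le K_X^\nu\cdot H^{n-\nu}$, whence (as $r\ge1$) $b_{\mathcal{F}}\ge-(1-\tfrac1\nu)(K_X^\nu\cdot H^{n-\nu})=:-M$, a finite lower bound independent of $\mathcal{F}$ which is negative since $K_X^\nu\cdot H^{n-\nu}>0$ (because $\nu=\nu(X)$). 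If $a_{\mathcal{F}}=0$, then equality holds in \eqref{eq-Enoki-2}, so \eqref{eq-Enoki-3} is available and states exactly $b_{\mathcal{F}}\ge0$, hence $a_{\mathcal{F}}\le\varepsilon\,b_{\mathcal{F}}$ for all $\varepsilon>0$; the same conclusion holds trivially when $a_{\mathcal{F}}<0$ and $b_{\mathcal{F}}\ge0$. The only remaining case is $a_{\mathcal{F}}<0$ together with $-M\le b_{\mathcal{F}}<0$, where $a_{\mathcal{F}}\le\varepsilon\,b_{\mathcal{F}}$ is implied by $\varepsilon\le|a_{\mathcal{F}}|/M$.

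The crux is thus to choose one $\varepsilon_0$ that settles this last case for all $\mathcal{F}$ at once, i.e.\ to bound $|a_{\mathcal{F}}|$ uniformly away from $0$ over the $\mathcal{F}$ with $a_{\mathcal{F}}\ne0$; then $\varepsilon_0:=\delta/M$ works, with $\delta:=\inf\{|a_{\mathcal{F}}|:a_{\mathcal{F}}\ne0\}>0$. For this I would use discreteness of the relevant intersection numbers: the numerical classes of the Weil divisors $c_1(\mathcal{F})$, over all reflexive $\mathcal{F}\subseteq\Omega_X^{[1]}$, lie in a fixed finitely generated group, on which $D\mapsto D\cdot K_X^\nu\cdot H^{n-\nu-1}$ is a $\Q$-valued linear functional (using that $K_X$ is $\Q$-Cartier and $H$ is Cartier), hence has discrete image; thus $c_1(\mathcal{F})\cdot K_X^\nu\cdot H^{n-\nu-1}$ lies in a fixed $\tfrac1N\Z$, and being $\le0$ by \eqref{eq-Enoki-2} it is either $0$ or at most $-\tfrac1N$, which with $r\le n$ yields $|a_{\mathcal{F}}|\ge\tfrac{n-\nu}{nN}=:\delta$. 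I expect this discreteness input — which rules out a sequence of subsheaves with $a_{\mathcal{F}}\to0^-$ — to be the only point genuinely requiring care; all the rest is the binomial bookkeeping above, combined with the case split driven by \eqref{eq-Enoki-2} and \eqref{eq-Enoki-3}. (Alternatively, one need only treat the last case for the maximal $(K_X^{\nu-1}(K_X+\varepsilon H)^{n-\nu})$-destabilising subsheaf of $\Omega_X^{[1]}$, which Enoki's inequality already forces to satisfy $|a_{\mathcal{F}}|=O(\varepsilon)$; discreteness then forces $a_{\mathcal{F}}=0$ once $\varepsilon$ is small.)
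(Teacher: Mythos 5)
Your proof is correct, but it takes a genuinely different route from the paper's. The paper first invokes a result of Miyaoka (cited as Thm.~2.2.(1) of \cite{miyaoka_MiyaokaInequality}) to show that the maximal $(K_X^{\nu-1}\cdot(K_X+\varepsilon H)^{n-\nu})$-destabilising subsheaf $\mathcal{F}_\varepsilon\subseteq\Omega_X^{[1]}$ is independent of $\varepsilon$ for $0<\varepsilon<\varepsilon_0$, and then works with this single sheaf $\mathcal{F}$: the maximality forces $\mu_{\alpha_\varepsilon}(\mathcal{F})\geq\mu_{\alpha_\varepsilon}(\Omega_X^{[1]})$, whose leading $\varepsilon$-coefficient yields $c_1(\mathcal{F})\cdot K_X^\nu\cdot H^{n-\nu-1}\geq 0$, and combined with \eqref{eq-Enoki-2} this gives equality. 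Hence only your ``$a_{\mathcal F}=0$'' branch ever occurs, \eqref{eq-Enoki-3} kicks in immediately, and the inequality for arbitrary subsheaves follows since $\mathcal{F}$ is the maximal destabiliser. You instead treat all subsheaves simultaneously and must therefore cope with the possibility $a_{\mathcal F}<0$; your discreteness argument (intersection of an integral Weil divisor class $c_1(\mathcal F)=c_1(\det\mathcal F)$ with $(mK_X)^\nu\cdot H^{n-\nu-1}$ is an integer, so $a_{\mathcal F}$ lies in a fixed $\tfrac{1}{N}\Z$, and $r\leq n$ caps the rank) is sound and supplies the needed uniform gap, and generic semipositivity supplies the uniform lower bound on $b_{\mathcal F}$. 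The trade-off is clear: the paper's proof is shorter because the maximal-destabiliser trick collapses the case analysis, at the cost of citing Miyaoka's stabilisation result, while yours avoids that citation at the cost of the discreteness/uniformity discussion. Both routes have \eqref{eq-Enoki-2}--\eqref{eq-Enoki-3} as the essential engine. One minor cosmetic remark: the finite-generation of the group of Weil-divisor numerical classes is not actually needed for your argument --- the integrality of $D\cdot(mK_X)^\nu\cdot H^{n-\nu-1}$ for integral Weil $D$ and Cartier $mK_X$, $H$ already gives the $\tfrac{1}{m^\nu}\Z$-valuedness directly --- so you could simplify that sentence.
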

    \begin{proof}
        For any $\varepsilon \in \Q_{>0}$ we set 
        $\alpha_{\varepsilon} := K_X^{\nu-1}\cdot (K_X + \varepsilon H)^{n-\nu}$ and we denote by $\mathcal{F}_{\varepsilon} \subseteq \Omega_X^{[1]}$ the maximal $\alpha_{\varepsilon}$-destabilising subsheaf. By \cite[Thm.\ 2.2.(1)]{miyaoka_MiyaokaInequality}, there exists $\varepsilon_0 > 0$ such that $\mathcal{F} := \mathcal{F}_\varepsilon$ is independent of $\varepsilon$ for $0<\varepsilon < \varepsilon_0$. Now,
        \begin{align*}
             \mu_{\alpha_{\varepsilon}}(\mathcal{F})
             = \mu^{\max}_{\alpha_{\varepsilon}}\Big(\Omega_X^{[1]}\Big) 
        \geq \mu_{\alpha_{\varepsilon}}\Big(\Omega_X^{[1]}\Big),
        \end{align*}
        or, in other words,
        \[
        \frac{c_1(\mathcal{F})\cdot K_X^{\nu-1}\cdot (K_X + \varepsilon H)^{n-\nu}}{\rk\mathcal{F}} 
        \geq  
        \frac{K_X^{\nu}\cdot (K_X + \varepsilon H)^{n-\nu}}{n}.
        \]
        for any $0<\varepsilon < \varepsilon_0$. Expanding both sides as polynomials in $\varepsilon$, and appealing again to \eqref{eq-inequality-polynomials}, we deduce that
        \[
        c_1(\mathcal{F})\cdot K_X^{\nu}\cdot H^{n-\nu-1} \geq 0.
        \]
        On the other hand, $c_1(\mathcal{F})\cdot K_X^{\nu}\cdot H^{n-\nu-1} \leq 0$ by Theorem \ref{thm-Enoki}. We conclude that
        \[
        c_1(\mathcal{F})\cdot K_X^{\nu}\cdot H^{n-\nu-1} = 0.
        \]
        Then, by Theorem \ref{thm-Enoki},
        \[
        \frac{c_1(\mathcal{F})\cdot K_X^{\nu-1}\cdot H^{n-\nu}}{\rk\mathcal{F}} \
            \leq \ \frac{K_X^{\nu}\cdot H^{n-\nu}}{\nu}.
        \]
        Finally, by \eqref{eq-inequality-polynomials} again, this implies that
        \[
        \frac{c_1(\mathcal{F})\cdot K_X^{\nu-1}\cdot (K_X + \varepsilon H)^{n-\nu}}{\rk\mathcal{F}} \
        \leq \ 
        \frac{K_X^{\nu}\cdot (K_X + \varepsilon H)^{n-\nu}}{\nu}.
        \]
        As $\mathcal{F} := \mathcal{F}_\varepsilon \subseteq \Omega_X^{[1]}$ was defined to be the subsheaf of maximal slope, this concludes the proof of the Corollary.
    \end{proof}
    \begin{ques}
        Can one infer from Corollary \ref{cor-twisted-semistability-cotangent-bundle-asymptotic-nearKX}, possibly using the results obtained in Xiao's thesis \cite[Thm.\ 2.0.76]{xiao_PhDThesis}, that Conjecture \eqref{eq-twisted-semistability-cotangent-bundles} holds true for $m = \nu(X)$, even when $[H_1],\ldots, [H_{n-m}]$ are possibly different, as long as they are sufficiently close to $[K_X]$ as elements in $N^1(X)_{\R}$?
    \end{ques}
    \begin{cor}
    \label{cor-Miyaoka-type-inequality-near-KX}
        Let $X$ be a log terminal projective variety of dimension $n$ such that $K_X$ is nef of numerical dimension $\nu$. Let $H$ be an ample divisor on $X$. Then there exists a number $\varepsilon_0 >0$ such that for any rational number $0<\varepsilon < \varepsilon_0$
        \[
        \Big(2(\nu+1)c_2(X) - \nu c_1(X)^2\Big) \cdot K_X^{\nu-2} \cdot (K_X + \varepsilon H)^{n-\nu} \geq 0.
        \]
    \end{cor}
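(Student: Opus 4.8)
The plan is to deduce the stated inequality by simply chaining together the three preceding results of this subsection; no genuinely new argument is needed. First I would dispose of the low numerical dimension case: the expression $K_X^{\nu-2}$ only makes sense for $\nu := \nu(X)\geq 2$, and for $\nu\leq 1$ the relevant Miyaoka-type bound is already covered by \cite[Thm.\ 1.2]{IMM_3c2=c1^2}, so we may and do assume $\nu\geq 2$.

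Next I would observe that, since $K_X$ is nef and $H$ is ample, the $\Q$-divisor $K_X+\varepsilon H$ is ample for every rational $\varepsilon>0$. Take the number $\varepsilon_0>0$ produced by Corollary \ref{cor-twisted-semistability-cotangent-bundle-asymptotic-nearKX} and fix a rational $0<\varepsilon<\varepsilon_0$. Applying that corollary with the multipolarisation $H_1=\cdots=H_{n-\nu}:=K_X+\varepsilon H$ shows that Conjecture \eqref{eq-twisted-semistability-cotangent-bundles}, i.e.\ $(\mathrm{S}_\nu)$, holds for this choice of ample divisors. Then I would invoke Proposition \ref{prop-semistability-cotangent-bundle-implies-ss-simpson} to upgrade $(\mathrm{S}_\nu)$ to $(\mathrm{S}'_\nu)$, that is Conjecture \eqref{eq-twisted-semistability-Simpson-cotangent-bundles}, for the same multipolarisation.

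Finally, since $X$ is log terminal, $K_X$ is nef, and $2\leq \nu\leq n$, Theorem \ref{thm-Miyaoka-Type_inequality} applied with $m=\nu$ and $H_1=\cdots=H_{n-\nu}=K_X+\varepsilon H$ gives exactly
\[
\Big(2(\nu+1)c_2(X) - \nu c_1(X)^2\Big)\cdot K_X^{\nu-2}\cdot (K_X+\varepsilon H)^{n-\nu}\geq 0,
\]
which is the assertion (with the very same $\varepsilon_0$).

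There is no real obstacle at this stage: all the analytic and Hodge-theoretic content has already been absorbed into Corollary \ref{cor-twisted-semistability-cotangent-bundle-asymptotic-nearKX} (through Guenancia--Taji's semistability theorem and Miyaoka's openness result for Harder--Narasimhan filtrations) and into Theorem \ref{thm-Miyaoka-Type_inequality} (through the basic Chern class inequality for Higgs sheaves, Theorem \ref{thm-Basic-Chern-Class-Inequality-Higgs-Sheaves}). The only points requiring a little care are the bookkeeping that identifies $m=\nu$ as the relevant index — so that the power $K_X^{\nu-2}$ in the conclusion matches $K_X^{m-2}$ in Theorem \ref{thm-Miyaoka-Type_inequality} — and the verification that a single $\varepsilon_0$ serves throughout, which is immediate since we just transport the one furnished by Corollary \ref{cor-twisted-semistability-cotangent-bundle-asymptotic-nearKX}.
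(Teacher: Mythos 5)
Your chain of deductions is exactly the one the paper intends: Corollary \ref{cor-twisted-semistability-cotangent-bundle-asymptotic-nearKX} furnishes $(\mathrm{S}_\nu)$ for the diagonal multipolarisation $(K_X+\varepsilon H)^{n-\nu}$ with $0<\varepsilon<\varepsilon_0$, Proposition \ref{prop-semistability-cotangent-bundle-implies-ss-simpson} upgrades this to $(\mathrm{S}'_\nu)$, and Theorem \ref{thm-Miyaoka-Type_inequality} with $m=\nu$ then yields the claimed inequality with that same $\varepsilon_0$; the paper states the corollary without proof precisely because this is the immediate consequence. Your remark about $\nu\leq 1$ is harmless but superfluous, since for $\nu\leq 1$ the expression $K_X^{\nu-2}$ does not denote a cycle class and the statement is tacitly understood to concern $\nu\geq 2$.
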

    
    \begin{rem}
        Again, the results in this subsection can be generalised to the case of klt pairs, see \cite[Sec.\ 4.1]{Mul_Thesis}.
    \end{rem}

    \subsection{Twisted semistability for smooth Abelian fibrations}
    \label{ssec-Twisted-semistability-abelian-fibrations}

    In this subsection we verify Conjecture \eqref{eq-twisted-semistability-cotangent-bundles} under the additional assumption that $K_X$ is semiample and that the Iitaka fibration $f\colon X \rightarrow Y$ is a smooth Abelian fibration, see Remark \ref{rem--twisetd-semistability-cotangent-bundle-abelian-group-scheme}.

    \begin{lem}
    \label{lem-higher-order0inequalities-base-case}
        Let $X$ be a log terminal projective variety of dimension $n$ such that $K_X$ is nef with $\nu(X) =\nu$. If $c_2(X)\cdot K_X^{\nu -1} \equiv 0$,
        then the $(K_X^{\nu-1}\cdot H_1\cdots H_{n-\nu})$-Harder--Narasimhan filtration
        \[
        0 = \mathcal{E}_0 \subsetneq \mathcal{E}_1 \subsetneq \mathcal{E}_2 \subsetneq \ldots \subsetneq \mathcal{E}_\ell = \Omega_X^{[1]}
        \]
        of $\Omega_X^{[1]}$ is independent of the choice of ample divisors $H_1, \ldots, H_{n-\nu}$ on $X$; moreover, denoting $\mathcal{G}_i := (\mathcal{E}_i/\mathcal{E}_{i-1})^{\vee\vee}$, there exist rational numbers $\lambda_i \in \Q$ such that
        \begin{itemize}
            \item[\textmd{(1)}] $c_2(\mathcal{G}_i) \cdot K_X^{\nu-1} \equiv 0$.
            \item[\textmd{(2)}] $c_1(\mathcal{G}_i)\cdot K_X^{\nu-1} \equiv \lambda_i \cdot K_X^\nu$. 
        \end{itemize}
    \end{lem}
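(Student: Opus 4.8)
The plan is to read off the statement from the equality case of Theorem~\ref{thm-Basic-Chern-Class-Inequality-Higgs-Sheaves-v=1}, applied to $\mathcal{E} := \Omega_X^{[1]}$ equipped with the trivial Higgs field $\theta = 0$, so that Higgs subsheaves of $(\mathcal{E}, \theta)$ are just subsheaves of $\Omega_X^{[1]}$ and the Higgs-theoretic Harder--Narasimhan filtration coincides with the ordinary one. First I would fix ample divisors $H_1, \ldots, H_{n-\nu}$ on $X$ and apply that theorem with $L_1 = \cdots = L_{\nu - 1} := K_X$, $L_\nu := H_1, \ldots, L_{n-2} := H_{n-\nu-1}$ and $H := H_{n-\nu}$. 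Its hypotheses are met: $\Omega_X^{[1]}$ is generically semipositive by \cite[Thm.\ 1.3]{CampanaPaun_GenericSemipositivityCotangentBundle} because $K_X$ is nef, $\det\Omega_X^{[1]} = \mathcal{O}_X(K_X)$ is a nef $\Q$-line bundle, and, since $\nu(X) = \nu$,
\[
c_1(\mathcal{E})^2\cdot L_1\cdots L_{n-2} \ = \ K_X^{\nu+1}\cdot H_1\cdots H_{n-\nu-1} \ = \ 0 .
\]
Moreover $c_2(\mathcal{E})\cdot L_1\cdots L_{n-2} = c_2(X)\cdot K_X^{\nu-1}\cdot H_1\cdots H_{n-\nu-1} = 0$ by assumption, so equality holds in \eqref{eq-Basic-Chern-Class-Inequality-Higgs-Sheaves-v=1}. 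The first of the two alternatives offered by the theorem is impossible: as $K_X$ is nef of numerical dimension $\nu$ we have $K_X^{\nu}\cdot A^{n-\nu} > 0$ for every ample $A$, so the linear form $c_1(\mathcal{E})\cdot L_1\cdots L_{n-2} = K_X^{\nu}\cdot H_1\cdots H_{n-\nu-1}$ on $N^1(X)$ is nonzero; hence we are in the second alternative, and the $(K_X^{\nu-1}\cdot H_1\cdots H_{n-\nu})$-Harder--Narasimhan filtration of $\Omega_X^{[1]}$ is independent of $H_{n-\nu}$, with graded pieces satisfying \eqref{eq-Basic-Chern-Class-Inequality-Higgs-Sheaves-v=1-2} and \eqref{eq-Basic-Chern-Class-Inequality-Higgs-Sheaves-v=1-4}.

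Next I would promote this to genuine polarisation-independence. The divisors $H_1, \ldots, H_{n-\nu}$ enter $K_X^{\nu-1}\cdot H_1\cdots H_{n-\nu}$ symmetrically, and for every choice of them the hypotheses checked above continue to hold; so letting $H_j$ play the role of the varying divisor $H$ shows, for each $j$, that the filtration does not change when $H_j$ alone is varied. Varying the $H_j$ one at a time then gives independence of the whole tuple; write $0 = \mathcal{E}_0 \subsetneq \cdots \subsetneq \mathcal{E}_\ell = \Omega_X^{[1]}$ for this common filtration and $\mathcal{G}_i := (\mathcal{E}_i/\mathcal{E}_{i-1})^{\vee\vee}$. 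Part (1) is now immediate: by \eqref{eq-Basic-Chern-Class-Inequality-Higgs-Sheaves-v=1-2} the $\Q$-multilinear form $c_2(\mathcal{G}_i)\cdot K_X^{\nu-1}$ vanishes on every $(n-\nu-1)$-tuple of ample classes, hence, ample classes spanning $N^1(X)_\Q$, it vanishes identically, i.e.\ $c_2(\mathcal{G}_i)\cdot K_X^{\nu-1}\equiv 0$.

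For part (2), \eqref{eq-Basic-Chern-Class-Inequality-Higgs-Sheaves-v=1-4} provides, for each choice of ample $H_1, \ldots, H_{n-\nu-1}$, a rational number $\lambda_i = \lambda_i(H_\bullet)$, well-defined because the linear form $K_X^{\nu}\cdot H_1\cdots H_{n-\nu-1}$ is nonzero, with
\[
c_1(\mathcal{G}_i)\cdot L\cdot K_X^{\nu-1}\cdot H_1\cdots H_{n-\nu-1} \ = \ \lambda_i \cdot K_X^{\nu}\cdot L\cdot H_1\cdots H_{n-\nu-1}
\]
for all Weil $\Q$-divisors $L$. Comparing the two symmetric $\Q$-multilinear forms $c_1(\mathcal{G}_i)\cdot K_X^{\nu-1}$ and $K_X^{\nu}$ of degree $n-\nu$, and using repeatedly that $K_X^{\nu}\cdot A^{n-\nu} > 0$ for ample $A$, a routine polarisation argument shows that $\lambda_i(H_\bullet)$ does not depend on the $H_j$; hence there is a single $\lambda_i \in \Q$ with $c_1(\mathcal{G}_i)\cdot K_X^{\nu-1}\equiv \lambda_i K_X^{\nu}$, which is (2).

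Each appeal to Theorem~\ref{thm-Basic-Chern-Class-Inequality-Higgs-Sheaves-v=1} above is immediate and the generic semipositivity input is quoted, so no single step is hard. The only real work --- and the place where a careless write-up would leave a gap --- is the bookkeeping needed to turn the theorem's output, which concerns a single fixed choice of $L_1, \ldots, L_{n-2}$ and asserts independence only of the remaining factor $H$, into the honestly polarisation-independent statements above; this is what the symmetry-and-spanning and polarisation arguments accomplish.
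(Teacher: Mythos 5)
Your proposal is correct and takes essentially the same route as the paper: both apply Theorem~\ref{thm-Basic-Chern-Class-Inequality-Higgs-Sheaves-v=1} to $\Omega_X^{[1]}$ (with zero Higgs field) and the polarisation $K_X^{\nu-1}\cdot H_1\cdots H_{n-\nu}$, check that the ``$c_1(\mathcal{E})\cdot L_1\cdots L_{n-2}\equiv 0$'' branch is excluded since $K_X^\nu\cdot H_1\cdots H_{n-\nu-1}$ is a nonzero linear form, and then exploit the symmetry of the $H_j$ to upgrade independence of the single varying polariser $H$ to independence of the whole tuple. You are somewhat more careful than the paper in spelling out the two bookkeeping points it leaves implicit --- ruling out the first alternative of the theorem, and the argument that the rational number $\lambda_i$ returned by the theorem for a fixed auxiliary tuple $H_1,\dots,H_{n-\nu-1}$ is in fact independent of that tuple --- but these are exactly the steps the paper's phrase ``permuting the roles played by the $H_i$'' is meant to cover, so the substance is the same.
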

    \begin{proof}
        Choose ample divisors $H_1, \ldots, H_{n-\nu}$ on $X$. By Theorem \ref{thm-Basic-Chern-Class-Inequality-Higgs-Sheaves-v=1}, the $(K_X^{\nu-1}\cdot H_1\cdots H_{n-\nu})$-Harder--Narasimhan filtration
        \begin{equation}
            0 = \mathcal{E}_0 \subsetneq \mathcal{E}_1 \subsetneq \mathcal{E}_2 \subsetneq \ldots \subsetneq \mathcal{E}_\ell = \Omega_X^{[1]}
            \label{eq-HN-90}
        \end{equation}
        of $\Omega_X^{[1]}$  is independent of $H_1$; in particular, $\mathcal{G}_i := (\mathcal{E}_i/\mathcal{E}_{i-1})^{\vee\vee}$ is $(K_X^{\nu-1}\cdot H'_1\cdots H_{n-\nu})$-semistable for any ample divisor $H_1'$. Moreover, there exist rational numbers $\lambda_i \in \Q$ such that
        \begin{itemize}
            \item[\textmd{(1)}] $c_2(\mathcal{G}_i) \cdot K_X^{\nu-1} \cdot H_2\cdots H_{n-\nu}=0$.
            \item[\textmd{(2)}] $c_1(\mathcal{G}_i)\cdot K_X^{\nu-1} \cdot H_2\cdots H_{n-\nu} \equiv \lambda_i \cdot K_X^\nu\cdot H_2\cdots H_{n-\nu}$. 
        \end{itemize}
        Permuting the roles played by the $H_i$, we find that \eqref{eq-HN-90} is independent of the choice of  $H_1, \ldots, H_{n-\nu}$ and that, for any ample divisors $H_1', \ldots, H_{n-\nu}'$, the sheaves $\mathcal{G}_i$ are $(K_X^{\nu-1}\cdot H'_1\cdots H'_{n-\nu})$-semistable with $c_2(\mathcal{G}_i) \cdot K_X^{\nu-1} \cdot H'_2\cdots H'_{n-\nu}=0$ and $c_1(\mathcal{G}_i)\cdot K_X^{\nu-1} \cdot H'_2\cdots H'_{n-\nu} \equiv \lambda_i \cdot K_X^\nu\cdot H'_2\cdots H'_{n-\nu}$, as required.
    \end{proof}
    \begin{cor}
    \label{cor-proof-higher-order-inequalities-induction-start}
        Let $X$ be a log terminal projective variety of dimension $n$ such that $K_X$ is nef. 
        If $c_2(X)\cdot K_X^{\nu(X) -1} \equiv 0$,
        then Conjecture \eqref{eq-twisted-semistability-cotangent-bundles} holds on $X$. In other words,
        \begin{equation}
            \frac{c_1(\mathcal{F})\cdot K_X^{m-1}\cdot H_1\cdots H_{n-m}}{\rk\mathcal{F}}  \
            \leq \ \frac{K_X^{m}\cdot H_1\cdots H_{n-m}}{m} 
            \label{eq-twisetd-semistability-cotangent-bundle-abelian-group-scheme}
        \end{equation}
        for any integer $0\leq m \leq n$, any ample divisors $H_1, \ldots, H_{n-m}$ on $X$ and any coherent subsheaf $\mathcal{F}\subseteq \Omega_X^{[1]}$.
    \end{cor}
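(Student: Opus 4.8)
The plan is to reduce the statement to the single case $m=\nu:=\nu(X)$ with \emph{arbitrary} ample polarisation, and then to settle that case by combining the rigidity statement of Lemma \ref{lem-higher-order0inequalities-base-case} with the asymptotic statement of Corollary \ref{cor-twisted-semistability-cotangent-bundle-asymptotic-nearKX}.

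\emph{Reduction.} First I would observe that the cases $0\le m\le 1$ are vacuous or coincide with the generic semipositivity theorem \cite[Thm.\ 1.3]{CampanaPaun_GenericSemipositivityCotangentBundle}, that the cases $m\ge\nu+1$ are again that theorem ($m=\nu+1$) or tautological ($m>\nu+1$), and that the cases $2\le m<\nu$ follow from the case $m=\nu$ — assumed to hold for \emph{all} ample polarisations — by degenerating the ample classes towards $[K_X]$, exactly as in Remark \ref{rem-twisted-semistability-cotangent-bundles}; in particular one may assume $\nu\ge 2$. Since replacing a coherent subsheaf by its saturation only raises its slope against products of nef classes, it then suffices, for fixed ample $H_1,\dots,H_{n-\nu}$ and $\alpha:=K_X^{\nu-1}\cdot H_1\cdots H_{n-\nu}$, to bound the slope of the maximal $\alpha$-destabilising subsheaf of $\Omega_X^{[1]}$.

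\emph{The case $m=\nu$.} Here the hypothesis $c_2(X)\cdot K_X^{\nu-1}\equiv 0$ enters through Lemma \ref{lem-higher-order0inequalities-base-case}: the $\alpha$-Harder--Narasimhan filtration of $\Omega_X^{[1]}$ is independent of the chosen ample divisors, so its first step $\mathcal{F}_0$ — the maximal $\alpha$-destabilising subsheaf — is canonically defined, and by conclusion (2) of that lemma (for $i=1$, using $c_1(\mathcal{F}_0)=c_1(\mathcal{F}_0^{\vee\vee})$) there is a rational number $\lambda$ with $c_1(\mathcal{F}_0)\cdot K_X^{\nu-1}\equiv\lambda\,K_X^\nu$. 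As $K_X^\nu\cdot H_1\cdots H_{n-\nu}>0$ (because $\nu(X)=\nu$), the inequality \eqref{eq-twisted-semistability-cotangent-bundles} for $m=\nu$ is then equivalent to the purely numerical assertion $\lambda/\rk\mathcal{F}_0\le 1/\nu$, which no longer involves the $H_i$. To verify this, I would fix an auxiliary ample divisor $H$ and apply Corollary \ref{cor-twisted-semistability-cotangent-bundle-asymptotic-nearKX} to the subsheaf $\mathcal{F}_0$: for all sufficiently small rational $\varepsilon>0$,
\[
\frac{c_1(\mathcal{F}_0)\cdot K_X^{\nu-1}\cdot(K_X+\varepsilon H)^{n-\nu}}{\rk\mathcal{F}_0}\ \le\ \frac{K_X^\nu\cdot(K_X+\varepsilon H)^{n-\nu}}{\nu};
\]
substituting $c_1(\mathcal{F}_0)\cdot K_X^{\nu-1}\equiv\lambda\,K_X^\nu$ in the numerator and dividing by $K_X^\nu\cdot(K_X+\varepsilon H)^{n-\nu}>0$ gives exactly $\lambda/\rk\mathcal{F}_0\le 1/\nu$. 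This would prove \eqref{eq-twisted-semistability-cotangent-bundles} for $m=\nu$, and hence, by the reduction, for all $0\le m\le n$.

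\emph{Main obstacle.} The crux is Lemma \ref{lem-higher-order0inequalities-base-case}, which is the only place the vanishing $c_2(X)\cdot K_X^{\nu-1}\equiv 0$ is used: it simultaneously makes the maximal destabilising subsheaf polarisation-independent and forces its first Chern class to be proportional to $K_X$ against $K_X^{\nu-1}$, and this is precisely what lets the near-$[K_X]$ bound of Corollary \ref{cor-twisted-semistability-cotangent-bundle-asymptotic-nearKX} propagate to every ample polarisation. I expect the only routine loose ends to be the passage from arbitrary to saturated subsheaves and the degeneration argument in the range $2\le m<\nu$, both already available from \S\ref{ssec-Twisted-Semistability-Cotangent-Bundle}.
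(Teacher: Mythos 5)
Your proposal is correct and follows essentially the same route as the paper: reduce to $m=\nu$ via Remark \ref{rem-twisted-semistability-cotangent-bundles}, use Lemma \ref{lem-higher-order0inequalities-base-case} to make the maximal destabilising subsheaf polarisation-independent and to write $c_1(\mathcal{E}_1)\cdot K_X^{\nu-1}\equiv\lambda_1 K_X^\nu$, and then pin down $\lambda_1\le\rk\mathcal{E}_1/\nu$ by testing against $(K_X+\varepsilon H)^{n-\nu}$ via Corollary \ref{cor-twisted-semistability-cotangent-bundle-asymptotic-nearKX}. Your write-up actually corrects what looks like a slip in the paper's proof (which writes $\lambda_1\le\rk\mathcal{E}_1/\rk\mathcal{E}$ where the denominator should be $\nu$) and fills in the routine steps (saturation, positivity of $K_X^\nu\cdot H_1\cdots H_{n-\nu}$) that the paper leaves implicit.
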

    \begin{proof}
        By Remark \ref{rem-twisted-semistability-cotangent-bundles}, it suffices to verify \eqref{eq-twisetd-semistability-cotangent-bundle-abelian-group-scheme} in case $m = \nu := \nu(X)$.
        Then, in the notation of Lemma \ref{lem-higher-order0inequalities-base-case}, it suffices to verify that
        \[
        \lambda_1\leq \frac{\rk\mathcal{E}_1}{\rk\mathcal{E}}.
        \]
        But indeed, to do so, simply choose $H_1 = \ldots = H_{n-\nu} = K_X +\varepsilon H$ for a sufficiently small rational number $\varepsilon >0$ and apply Corollary \ref{cor-twisted-semistability-cotangent-bundle-asymptotic-nearKX}.
    \end{proof}
    \begin{rem}
    \label{rem--twisetd-semistability-cotangent-bundle-abelian-group-scheme}
       Let $X$ be a log terminal projective variety of dimension $n$ such that $K_X$ is semiample with Iitaka fibration $f\colon X \rightarrow Y$. Then it follows from \cite{IMM_3c2=c1^2}, that the assumptions of Corollary \ref{cor-proof-higher-order-inequalities-induction-start} are satisfied, if (and essentially only if) $f$ is a \emph{smooth Abelian fibration in codimension one} over $Y$, i.e.\ there exists an open subset $U\subseteq Y$, with $\codim (Y\setminus U) \geq 2$, such that $f\colon V := f^{-1}(U) \rightarrow U$ is a smooth morphism with Abelian varieties as fibres.
    \end{rem}
    Note that, assuming $K_X$ to be semiample, an alternative proof of Corollary \ref{cor-proof-higher-order-inequalities-induction-start} is given in \cite[Sec.\ 4.3]{Mul_Thesis} by analysing the geometry of the Iitaka fibration. This leads to the following, admittedly very vague, question:
    \begin{ques}
        Let $X$ be a log terminal projective variety of dimension $n$ such that $K_X$ is semiample. What is the relation between the maximal $(K_X^{\nu-1}\cdot H)$-destabilising Higgs subsheaf of $(\Omega_X^{[1]}\oplus \mathcal{O}_X, \theta_{\Simp})$ and the Iitaka fibration of $X$? 
    \end{ques}

    \section{Higher order inequalities and proof of the Inequality \texorpdfstring{(\ref{INTRO-eq-Chern-class-inequality})}{}}
    \label{sec-Higher-Inequalities}

    This goal of this section is to prove the following result, see subsection \ref{ssec-higher-order-inequalities}:
    \begin{thm}
    \label{intro-thm-higher-inequalities-cycle-classes}
        Let $X$ be a log terminal projective variety of dimension $n$ such that $K_X$ is nef and set $\nu := \nu(X)$. If there exists an integer $k\geq 2$ such that
        \begin{equation}
             \Big(2(\nu+1)c_2(X) - \nu c_1(X)^2\Big)\cdot K_X^{k-1} \equiv 0,
             \label{intro-eq-higher-inequalities-assumption}
        \end{equation}
        then for all ample divisors $H_1, \ldots, H_{n-k}$ on $X$ it holds that
        \begin{equation}
            \Big(2(\nu+1)c_2(X) - \nu c_1(X)^2\Big)\cdot K_X^{k-2} \cdot H_1\cdot H_{n-k} \geq 0.
            \label{intro-eq-higher-inequalities}
        \end{equation}
    \end{thm}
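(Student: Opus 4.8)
The plan is to apply Miyaoka's Chern class inequality for Higgs sheaves, Theorem \ref{thm-Basic-Chern-Class-Inequality-Higgs-Sheaves} (and Theorem \ref{thm-Basic-Chern-Class-Inequality-Higgs-Sheaves-v=1} in the degenerate range), to Simpson's cotangent Higgs sheaf $(\mathcal{E},\theta):=(\Omega_X^{[1]}\oplus\mathcal{O}_X,\theta_{\Simp})$ of Construction \ref{con-simpson-cotangent-sheaf}, and to push the maximal destabilising slope occurring there down to the sharp value $\tfrac{1}{\nu+1}\bigl(K_X^{k}\cdot H_1\cdots H_{n-k}\bigr)$; a rearrangement of the terms in \eqref{eq-Basic-Chern-Class-Inequality-Higgs-Sheaves-1} then produces exactly \eqref{intro-eq-higher-inequalities}. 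Recall $c_1(\mathcal{E})=K_X$ and $c_2(\mathcal{E})=c_2(X)$. It is enough to argue for a single ample polarisation $H_1=\dots=H_{n-k}=H$ and to recover the general statement afterwards by a routine multilinearity argument (write each $H_i$ in a fixed basis of $N^1(X)_\Q$ consisting of ample classes and compare coefficients).

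The range $k\geq\nu+1$ needs no hypothesis: there $c_1(X)^2\cdot K_X^{k-2}\cdot H^{n-k}=K_X^{k}\cdot H^{n-k}=0$ in cohomology, so \eqref{intro-eq-higher-inequalities} reduces to $c_2(X)\cdot K_X^{k-2}\cdot H^{n-k}\geq 0$, which is Theorem \ref{thm-Basic-Chern-Class-Inequality-Higgs-Sheaves-v=1} applied to $\Omega_X^{[1]}$ with zero Higgs field and $L_1\cdots L_{n-2}:=K_X^{k-2}\cdot H^{n-k}$ (the sheaf $\Omega_X^{[1]}$ is generically semipositive by \cite[Thm.\ 1.3]{CampanaPaun_GenericSemipositivityCotangentBundle}, $\det\Omega_X^{[1]}=\mathcal{O}_X(K_X)$ is nef, and $c_1(\Omega_X^{[1]})^2\cdot L_1\cdots L_{n-2}=0$). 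Assume henceforth $2\leq k\leq\nu$. Multiplying \eqref{intro-eq-higher-inequalities-assumption} by $K_X^{\nu-k}$---legitimate since $\equiv 0$ means vanishing against every class of $N^1(X)_\Q$, in particular $K_X$---and using $c_1(X)^2\cdot K_X^{\nu-1}=K_X^{\nu+1}=0$ cohomologically, one obtains $c_2(X)\cdot K_X^{\nu-1}\equiv 0$. Hence Corollary \ref{cor-proof-higher-order-inequalities-induction-start} applies: Conjecture \eqref{eq-twisted-semistability-cotangent-bundles}---and, by Proposition \ref{prop-semistability-cotangent-bundle-implies-ss-simpson}, Conjecture \eqref{eq-twisted-semistability-Simpson-cotangent-bundles}---holds on $X$ for all $m$ and all ample polarisations; and Lemma \ref{lem-higher-order0inequalities-base-case} puts the Harder--Narasimhan filtration $0=\mathcal{E}_0\subsetneq\dots\subsetneq\mathcal{E}_\ell=\Omega_X^{[1]}$ with respect to $K_X^{\nu-1}\cdot H^{n-\nu}$ into good shape: it is independent of the ample part, and its graded pieces $\mathcal{G}_i$ satisfy $\Delta(\mathcal{G}_i)\cdot K_X^{\nu-1}\equiv 0$ and $c_1(\mathcal{G}_i)\cdot K_X^{\nu-1}\equiv\lambda_i K_X^{\nu}$.

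Next apply Theorem \ref{thm-Basic-Chern-Class-Inequality-Higgs-Sheaves} to $(\mathcal{E},\theta)$ with $L_1\cdots L_{n-2}:=K_X^{k-2}\cdot H^{n-k}$; as $k\leq\nu$, $c_1(\mathcal{E})^2\cdot L_1\cdots L_{n-2}=K_X^{k}\cdot H^{n-k}>0$, and \eqref{eq-Basic-Chern-Class-Inequality-Higgs-Sheaves-1} reads $2c_2(X)\cdot K_X^{k-2}\cdot H^{n-k}\geq K_X^{k}\cdot H^{n-k}-\mu^{\max}_{\alpha}(\mathcal{E},\theta)$ with $\alpha:=K_X^{k-1}\cdot H^{n-k}$. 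By Proposition \ref{Prop-Properties-of-Simpsons-Higgs-Sheaf}, $\mu^{\max}_{\alpha}(\mathcal{E},\theta)=c_1(\mathcal{F})\cdot\alpha/(\rk\mathcal{F}+1)$ for some saturated $\mathcal{F}\subseteq\Omega_X^{[1]}$, so it suffices to establish the slope estimate $c_1(\mathcal{F})\cdot K_X^{k-1}\cdot H^{n-k}\leq\tfrac{\rk\mathcal{F}+1}{\nu+1}\bigl(K_X^{k}\cdot H^{n-k}\bigr)$. If $\rk\mathcal{F}\geq\nu$ this follows at once from generic semipositivity, which bounds the left-hand side by $K_X^{k}\cdot H^{n-k}$. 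If $\rk\mathcal{F}<\nu$ one argues as follows: the full hypothesis \eqref{intro-eq-higher-inequalities-assumption} gives $\bigl(2(\nu+1)c_2(X)-\nu c_1(X)^2\bigr)\cdot K_X^{j}\equiv 0$ for all $j\geq k-1$, and combining this with the additivity relations \eqref{eq-Basic-Chern-Class-Inequality-Higgs-Sheaves-v=1-2}--\eqref{eq-Basic-Chern-Class-Inequality-Higgs-Sheaves-v=1-3} for the pieces forces $\Delta(\mathcal{G}_i)\cdot K_X^{k-1}\equiv 0$; feeding these vanishings together with the semistability of the $\mathcal{G}_i$ with respect to $K_X^{\nu-1}\cdot H^{n-\nu}$ repeatedly into Lemma \ref{lem-bundle-vanishing-dicriminant-universally-stable} lowers the exponent of $K_X$ one unit at a time, so $0=\mathcal{E}_0\subsetneq\dots\subsetneq\mathcal{E}_\ell$ stays the Harder--Narasimhan filtration for $K_X^{k-1}\cdot H^{n-k}$ and $\mathcal{F}=\mathcal{E}_1$. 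An argument analogous to the proof of Corollary \ref{cor-proof-higher-order-inequalities-induction-start} (degenerate the ample polarisation towards $[K_X]$ and invoke Theorem \ref{thm-Enoki} and Corollary \ref{cor-twisted-semistability-cotangent-bundle-asymptotic-nearKX}) then bounds $c_1(\mathcal{F})\cdot K_X^{k-1}\cdot H^{n-k}$ by $\tfrac{\rk\mathcal{F}}{\nu}\bigl(K_X^{k}\cdot H^{n-k}\bigr)$, which suffices since $\rk\mathcal{F}/\nu\leq(\rk\mathcal{F}+1)/(\nu+1)$ whenever $\rk\mathcal{F}\leq\nu$.

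The hard part is exactly this last transportation step in the range $\rk\mathcal{F}<\nu$: keeping the Harder--Narasimhan filtration polarisation-independent, and keeping $\Delta(\mathcal{G}_i)$ together with the proportionality of $c_1(\mathcal{G}_i)$ to $K_X$ under control, as the power of $K_X$ in the polarisation drops from $\nu-1$ to $k-1$---this is where one must use \eqref{intro-eq-higher-inequalities-assumption} in full and not merely its consequence $c_2(X)\cdot K_X^{\nu-1}\equiv 0$, and it may well be cleanest to organise it as a downward induction on $k$ that feeds the equality case of Theorem \ref{thm-Basic-Chern-Class-Inequality-Higgs-Sheaves} at level $k+1$ into level $k$, in the spirit of Theorem \ref{INTRO-thm-statement-strong}. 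By contrast, the reduction of the hypothesis, the case $k\geq\nu+1$, and the passage from a single $H$ to general $H_1,\dots,H_{n-k}$ are all routine.
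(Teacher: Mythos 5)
Your global strategy matches the paper's: apply Miyaoka's Chern class inequality (Theorem \ref{thm-Basic-Chern-Class-Inequality-Higgs-Sheaves}) to Simpson's cotangent Higgs sheaf, reduce to a sharp slope bound for its maximal $\alpha$-destabilising Higgs subsheaf, handle $k\geq\nu+1$ separately, reduce the hypothesis to $c_2(X)\cdot K_X^{\nu-1}\equiv 0$, and ultimately organise everything as a descending induction feeding the equality case at level $k+1$ into level $k$. You name this induction as the ``cleanest'' route, and indeed that is exactly the paper's proof. However, there are two genuine gaps in the part you flag as routine and in the part you flag as hard, and they are not mere technicalities.

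First, the reduction to a single $H$. The multi-linear form $\Phi(L_1,\dots,L_{n-k}):=\bigl(2(\nu+1)c_2(X)-\nu c_1(X)^2\bigr)\cdot K_X^{k-2}\cdot L_1\cdots L_{n-k}$ is not known to be non-negative on the ample cone just because $\Phi(H,\dots,H)\geq 0$ for every ample $H$. Writing each $H_i$ in a fixed ample basis and ``comparing coefficients'' does not close this: nonnegativity of a homogeneous polynomial on the positive orthant does not force nonnegativity of its mixed coefficients (and the paper's own remark after this theorem exhibits exactly this phenomenon for the form $(2(\nu+1)c_2-\nu c_1^2)\cdot H_1\cdots H_{n-2}$ when the hypothesis fails). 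The paper's proof avoids this entirely by carrying arbitrary $H_1,\dots,H_{n-k}$ through from the start; assertion $(\mathrm{T}_{\mathrm{k}})$ is quantified over all multi-polarisations.

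Second, the slope bound in the range $\rk\mathcal{F}<\nu$. What you attribute to ``an argument analogous to Corollary \ref{cor-proof-higher-order-inequalities-induction-start}'' is a bound $c_1(\mathcal{F})\cdot K_X^{k-1}\cdot H^{n-k}/\rk\mathcal{F}\leq K_X^k\cdot H^{n-k}/\nu$; but Corollary \ref{cor-proof-higher-order-inequalities-induction-start} (Conjecture $\mathrm{S}_k$ for the polarisation $K_X^{k-1}\cdot H^{n-k}$) produces only the denominator $k$, not $\nu$. The sharper constant is precisely what has to be proved, and the ingredients you cite---the discriminant vanishings \eqref{eq-Basic-Chern-Class-Inequality-Higgs-Sheaves-v=1-2}--\eqref{eq-Basic-Chern-Class-Inequality-Higgs-Sheaves-v=1-3} at level $\nu-1$ plus Lemma \ref{lem-bundle-vanishing-dicriminant-universally-stable}---lower the power of $K_X$ in the \emph{polarisation}, but they do not re-establish the needed discriminant vanishing $\Delta(\mathcal{G}_i)\cdot K_X^{m}\cdot(\text{ample})\equiv 0$ at each intermediate level $m$, so the iteration cannot be sustained as written. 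You acknowledge this gap yourself.

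The paper closes both gaps at once by strengthening what is carried through the descending induction. Instead of merely the slope estimate $(\mathrm{T}_{\mathrm{k}})$ and the inequality $(\mathrm{I}_{\mathrm{k}})$, it proves simultaneously a structure statement $(\mathrm{F}_{\mathrm{k}})$ (= Lemma \ref{lem-uniformisation-light}): a short exact sequence $0\to\mathcal{F}\to\Omega_X^{[1]}\to\mathcal{Q}\to 0$ with $\rk\mathcal{F}=\nu$, $(\mathcal{F}\oplus\mathcal{O}_X,\theta_\Simp)$ stable for $K_X^{k-2}\cdot H_1\cdots H_{n-k+1}$, $\mathcal{Q}$ semistable, and the precise Chern class relations $c_1(\mathcal{F})\cdot K_X^{k-2}\equiv K_X^{k-1}$, $c_1(\mathcal{Q})\cdot K_X^{k-2}\equiv 0$. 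The key point is that $\rk\mathcal{F}=\nu$ is \emph{forced} by the equality case of Miyaoka's inequality together with $(\mathrm{E}_{\mathrm{k}})$, so there is never any case distinction by rank. Once $(\mathrm{F}_{\mathrm{k+1}})$ is known, the sequence $0\to\mathcal{F}\oplus\mathcal{O}_X\to\Omega_X^{[1]}\oplus\mathcal{O}_X\to\mathcal{Q}\to 0$ is precisely the $(K_X^{k-1}\cdot H_1\cdots H_{n-k})$-Harder--Narasimhan filtration of the Simpson Higgs sheaf, and one reads off $\mu^{\max}_\alpha=c_1(\mathcal{F})\cdot\alpha/(\nu+1)=K_X^k\cdot H_1\cdots H_{n-k}/(\nu+1)$ directly, which is $(\mathrm{T}_{\mathrm{k}})$. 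No transportation of slope bounds is needed; the sharp bound simply \emph{is} the slope of the known first graded piece. You guessed the right induction shape, but the missing idea is to make the induction carry the rank-and-$c_1$ normalisation of the maximal destabiliser, rather than trying to re-derive the slope bound at each level from the $\nu$-level data.
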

    Here, recall that we denote $(2(\nu+1)c_2(X) - \nu c_1(X)^2)\cdot  K_X^{k-2} \equiv 0$ to mean that 
    \[
        \Big(2(\nu+1)c_2(X) - \nu c_1(X)^2\Big)\cdot  K_X^{k-2} \cdot L_1\cdots L_{n-k} = 0
    \]
    for all Cartier divisors $L_1, \ldots, L_{n-k}$ on $X$. Note that by Theorem \ref{intro-thm-higher-inequalities-cycle-classes}, \cite[Prop.\ 2.6]{IMM_3c2=c1^2} and descending induction on $k$, the identity \eqref{intro-eq-higher-inequalities-assumption} is equivalent to the existence of an ample divisor $H$ on $X$ such that
    \begin{equation}
             \Big(2(\nu+1)c_2(X) - \nu c_1(X)^2\Big)\cdot K_X^{j-2} \cdot H^{n-j} = 0, \qquad \forall j=k+1, \ldots, n.
             \label{intro-eq-higher-inequalities-assumption-alternative}
    \end{equation}
    As an immediate consequence of Theorem \ref{intro-thm-higher-inequalities-cycle-classes}, we deduce \eqref{INTRO-eq-Chern-class-inequality}, see Subsection \ref{ssec-proof-of-star}:
    \begin{cor}
    \label{cor-inequality-star}
       Let $X$ be a log terminal projective variety of dimension $n$ such that $K_X$ is nef, with $\nu := \nu(X)$. Then there exists a number $\varepsilon_0 >0$ such that
    \begin{equation*}
        \Big(2(\nu+1)c_2(X) - \nu c_1(X)^2\Big)\cdot (K_X + \varepsilon H)^{n-2} \geq 0, \qquad \forall\ 0 \leq \varepsilon < \varepsilon_0.
        \tag{$\star$}
    \end{equation*}
    \end{cor}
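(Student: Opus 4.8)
The plan is to read the left-hand side of $(\star)$ as a polynomial in $\varepsilon$, namely
$P_X(\varepsilon) = \sum_{i=0}^{n-2}\binom{n-2}{i}\,a_{X,i}\,\varepsilon^i$ with coefficients $a_{X,i} = \bigl(2(\nu+1)c_2(X) - \nu c_1(X)^2\bigr)\cdot K_X^{n-i-2}\cdot H^i$ exactly as in \eqref{INTRO-eq-coefficients}, and then to invoke the elementary equivalence \eqref{eq-inequality-polynomials}: there is an $\varepsilon_0>0$ with $P_X(\varepsilon)\ge 0$ for all $0<\varepsilon\le\varepsilon_0$ as soon as either all $a_{X,i}$ vanish or the first non-vanishing $a_{X,\ell}$ is strictly positive. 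Since a nonzero real number is positive exactly when it is nonnegative, it suffices to prove the following claim: whenever $a_{X,0}=\cdots=a_{X,\ell-1}=0$ for some $0\le\ell\le n-2$, one has $a_{X,\ell}\ge 0$. The value $\varepsilon=0$ is then automatic, because $P_X(0)=a_{X,0}$ is either $0$ or equal to the (positive) leading non-vanishing coefficient.

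To prove the claim I would split into two cases. If $\ell\ge 1$, the vanishing of $a_{X,0},\dots,a_{X,\ell-1}$ says precisely that $\bigl(2(\nu+1)c_2(X)-\nu c_1(X)^2\bigr)\cdot K_X^{j-2}\cdot H^{n-j}=0$ for $j=k+1,\dots,n$, where $k:=n-\ell$ lies in the admissible range $2\le k\le n-1$. By the equivalence of \eqref{intro-eq-higher-inequalities-assumption} and \eqref{intro-eq-higher-inequalities-assumption-alternative} recorded right after Theorem \ref{intro-thm-higher-inequalities-cycle-classes}, this forces $\bigl(2(\nu+1)c_2(X)-\nu c_1(X)^2\bigr)\cdot K_X^{k-1}\equiv 0$, so Theorem \ref{intro-thm-higher-inequalities-cycle-classes} applies with this $k$ and yields $\bigl(2(\nu+1)c_2(X)-\nu c_1(X)^2\bigr)\cdot K_X^{k-2}\cdot H^{n-k}\ge 0$; since $k-2=n-\ell-2$ and $n-k=\ell$, this is exactly $a_{X,\ell}\ge 0$. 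If $\ell=0$ — the boundary case, corresponding to the out-of-range value $k=n$ — I would instead read off $a_{X,0}=\bigl(2(\nu+1)c_2(X)-\nu c_1(X)^2\bigr)\cdot K_X^{n-2}\ge 0$ from Corollary \ref{cor-Miyaoka-type-inequality-near-KX}: the polynomial in $\varepsilon$ appearing there is nonnegative for all small $\varepsilon>0$ and has $a_{X,0}$ as its $\varepsilon^0$-coefficient, so \eqref{eq-inequality-polynomials} gives $a_{X,0}\ge 0$ (alternatively, for $\nu<n$ one has $c_1(X)^2\cdot K_X^{n-2}=K_X^n=0$ and $c_2(X)\cdot K_X^{n-2}\ge 0$ by Theorem \ref{thm-Basic-Chern-Class-Inequality-Higgs-Sheaves-v=1} applied to $\Omega_X^{[1]}$, while for $\nu=n$ this is the case $m=n$ of \eqref{conj-Miyaoka-Type-Inequality}, known by \cite{GuenanciaTaji_SemistabilityLogCotangentSheaf}).

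Assembling these pieces finishes the proof. I do not expect a genuine obstacle in this deduction: the real content sits entirely in Theorem \ref{intro-thm-higher-inequalities-cycle-classes}, and what remains is the bookkeeping translating the vanishing of an initial segment of the coefficients $a_{X,i}$ into the hypothesis \eqref{intro-eq-higher-inequalities-assumption} via the equivalence with \eqref{intro-eq-higher-inequalities-assumption-alternative}. The only points deserving a moment's care are the index arithmetic $k=n-\ell$, which must be checked to keep $k$ inside the range $2\le k\le n-1$ for which Theorem \ref{intro-thm-higher-inequalities-cycle-classes} is available, and the separate treatment of the boundary case $\ell=0$, which has to be supplied by Corollary \ref{cor-Miyaoka-type-inequality-near-KX} rather than by Theorem \ref{intro-thm-higher-inequalities-cycle-classes} itself.
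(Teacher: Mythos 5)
Your proof is correct and follows essentially the same strategy as the paper: establish $a_{X,0}\ge 0$ separately, then reduce the positivity of the first nonvanishing coefficient $a_{X,\ell}$ to Theorem \ref{intro-thm-higher-inequalities-cycle-classes} by translating the vanishing $a_{X,0}=\cdots=a_{X,\ell-1}=0$ (with a fixed $H$) into the cycle-class hypothesis \eqref{intro-eq-higher-inequalities-assumption} via the equivalence with \eqref{intro-eq-higher-inequalities-assumption-alternative}. The paper packages the induction as a ``descending induction on $k$'' and cites \cite{GKPT_MY_Inequality_Uniformisation_of_Canonical_models} for $a_{X,0}\ge 0$ in both the cases $\nu=n$ and $\nu<n$ (where it degenerates to $c_2(X)\cdot K_X^{n-2}\ge 0$), while you spell out the same index arithmetic explicitly and invoke Corollary \ref{cor-Miyaoka-type-inequality-near-KX} (or the alternative you offer) for the base case; note only that Corollary \ref{cor-Miyaoka-type-inequality-near-KX} implicitly requires $\nu\ge 2$, so your parenthetical alternative is what actually covers $\nu\le 1$.
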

    \noindent
    Theorem \ref{intro-thm-higher-inequalities-cycle-classes} is new even in the case $\nu = n$ and, to the best of the author's knowledge, has not been conjectured or expected before. 
    \begin{rem}
        In general, one can not expect \eqref{intro-eq-higher-inequalities} to hold without assuming \eqref{intro-eq-higher-inequalities-assumption}. Indeed, let $S$ be a K3 surface, let $B = \B^2/\Lambda$ be a smooth ball quotient surface and let $C$ be a smooth projective curve of general type. Denote $X:= S\times B \times C$. Then $X$ is a smooth projective variety, $K_X$ is nef and $\nu(X) = 3$. Let $H_S, H_B$ and $H_C$ be ample $\Q$-divisors on $S, B$ and $C$, respectively, such that $H_S^2 = H_B^2 = 1$ and $\deg H_C = 1$. Then
        \[
        \Big(8c_2(X) - 3c_1(X)^2\Big) \cdot  pr_3^*H_C \cdot pr_1^*H_S^2 = 8c_2(B) - 3c_1(B)^2 < 0,
        \]
        while
        \[
        \Big(8c_2(X) - 3c_1(X)^2\Big) \cdot pr_2^*H_B^2 \cdot pr_3^*H_C = 8c_2(S) - 3c_1(S)^2 = 8c_2(S) > 0.
        \]
        In particular, we can find ample divisors $H_1, H_2, H_3$ on $X$ such that the number $ (2(\nu+1)c_2(X) - \nu c_1(X)^2) \cdot H_1\cdots H_{n-2}$ is positive, negative or zero.
    \end{rem}
    During the course of the proof of Theorem \ref{intro-thm-higher-inequalities-cycle-classes}, we will also verify the following lemma, which will be useful later in Sections \ref{sec-uniformisation-v>=2-Semiampleness} and \ref{sec-proof-of-thm-B}:
    \begin{lem}
    \label{lem-uniformisation-light}
        Let $X$ be a log terminal projective variety of dimension $n$ such that $K_X$ is nef. 
        Let $k$ be an integer satisfying $2\leq k\leq  \nu := \nu(X)$. 
        If
        \begin{equation*}
            \Big(2(\nu+1)c_2(X) - \nu c_1(X)^2\Big) \cdot K_X^{k-2} \equiv 0,
        \end{equation*}
        then there is a short exact sequence
        \[
        0 \rightarrow \mathcal{F} \rightarrow \Omega_X^{[1]} \rightarrow \mathcal{Q} \rightarrow 0
        \]
        of coherent sheaves on $X$ such that, for any ample divisors $H_1, \ldots, H_{n-k+1}$,
        \begin{itemize}
        \setlength\itemsep{0.2em}
            \item[\emph{(0)}] $\mathcal{F}$ is reflexive and $\mathcal{Q}$ is torsion-free; moreover, $\rk\mathcal{F} = \nu$.
            \item[\emph{(1)}] $(\mathcal{F}\oplus \mathcal{O}_X, \theta_{\Simp})$ is $(K_X^{k-2}\cdot H_1\cdots H_{n-k+1})$-stable.
            \item[\emph{(2)}] $\mathcal{Q}$ is $(K_X^{k-2}\cdot H_1\cdots H_{n-k+1})$-semistable.
            \item[\emph{(3)}] $c_1(\mathcal{F})\cdot K_X^{k-2} \equiv K_X^{k-1}$ and $c_2(\mathcal{F})\cdot K_X^{k-2} \equiv c_2(X)\cdot K_X^{k-2}$.
            \item[\emph{(4)}] $c_1(\mathcal{Q})\cdot K_X^{k-2} \equiv 0$ and $c_2(\mathcal{Q}^{\vee\vee})\cdot K_X^{k-2} \equiv 0$. 
        \end{itemize}
    \end{lem}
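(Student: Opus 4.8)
The plan is to deduce the lemma from the equality case of Miyaoka's Chern class inequality (Theorem~\ref{thm-Basic-Chern-Class-Inequality-Higgs-Sheaves}) applied to Simpson's Higgs sheaf; this is why it is announced together with Theorem~\ref{intro-thm-higher-inequalities-cycle-classes}, whose proof supplies the required input. First I would take $(\mathcal E,\theta):=(\Omega_X^{[1]}\oplus\mathcal O_X,\theta_{\Simp})$ of Construction~\ref{con-simpson-cotangent-sheaf}, which is generically semipositive by \cite{CampanaPaun_GenericSemipositivityCotangentBundle} with $\det\mathcal E\cong\mathcal O_X(K_X)$ a nef $\Q$-line bundle. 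Fixing ample divisors $H_1,\dots,H_{n-k}$ and applying Theorem~\ref{thm-Basic-Chern-Class-Inequality-Higgs-Sheaves} with $L_1=\dots=L_{k-2}=K_X$ and $L_{k-1}=H_1,\dots,L_{n-2}=H_{n-k}$, one has $c_1(\mathcal E)^2\cdot L_1\cdots L_{n-2}=K_X^{k}\cdot H_1\cdots H_{n-k}>0$ because $2\le k\le\nu$, so one is in the first regime of that theorem. Writing $D:=2(\nu+1)c_2(X)-\nu c_1(X)^2$, the hypothesis $D\cdot K_X^{k-2}\equiv0$ reads $2c_2(X)\cdot K_X^{k-2}\cdot H_1\cdots H_{n-k}=\tfrac{\nu}{\nu+1}K_X^{k}\cdot H_1\cdots H_{n-k}$, and combining this with the slope bound $\mu^{\max}_{\alpha}(\mathcal E,\theta)\le\tfrac1{\nu+1}K_X^{k}\cdot H_1\cdots H_{n-k}$, where $\alpha:=K_X^{k-1}\cdot H_1\cdots H_{n-k}$, forces equality in \eqref{eq-Basic-Chern-Class-Inequality-Higgs-Sheaves-1}.

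Granting the equality, Theorem~\ref{thm-Basic-Chern-Class-Inequality-Higgs-Sheaves} furnishes an $\alpha$-Harder--Narasimhan filtration $0\subsetneq\mathcal E_1\subseteq\mathcal E$ with at most two steps and the numerical relations \eqref{eq-Basic-Chern-Class-Inequality-Higgs-Sheaves-2}--\eqref{eq-Basic-Chern-Class-Inequality-Higgs-Sheaves-4}. By Proposition~\ref{Prop-Properties-of-Simpsons-Higgs-Sheaf}, $\mathcal E_1=\mathcal F\oplus\mathcal O_X$ for a saturated subsheaf $\mathcal F\subseteq\Omega_X^{[1]}$; I set $\mathcal Q:=\Omega_X^{[1]}/\mathcal F=\mathcal E/\mathcal E_1$ and $\mathcal G:=\mathcal Q^{\vee\vee}$. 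Then $\mathcal F$ is reflexive (being saturated in the reflexive sheaf $\Omega_X^{[1]}$) and $\mathcal Q$ is torsion-free, which is half of (0). Relation \eqref{eq-Basic-Chern-Class-Inequality-Higgs-Sheaves-3} — with $c_1(\mathcal E)=K_X$, $c_1(\mathcal E_1)=c_1(\mathcal F)$, $c_1(\mathcal G)=c_1(\mathcal Q)$, and using that ample classes span $N^1(X)_\Q$ — gives $c_1(\mathcal F)\cdot K_X^{k-2}\equiv K_X^{k-1}$ and $c_1(\mathcal Q)\cdot K_X^{k-2}\equiv0$, the first halves of (3) and (4). In particular $c_1(\mathcal F)\cdot\alpha=K_X^{k}\cdot H_1\cdots H_{n-k}$, so matching $\mu_\alpha(\mathcal E_1)=c_1(\mathcal F)\cdot\alpha/(\rk\mathcal F+1)$ against the value $\tfrac1{\nu+1}K_X^{k}\cdot H_1\cdots H_{n-k}$ extracted from the equality forces $\rk\mathcal F=\nu$, completing (0).

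For the remaining Chern-number statements I would substitute $\rk\mathcal E_1=\nu+1$ and $\Delta(\mathcal E_1)=2(\nu+1)c_2(\mathcal F)-\nu c_1(\mathcal F)^2$ into $\Delta(\mathcal E_1)\cdot K_X^{k-2}\cdot H_1\cdots H_{n-k}=0$ from \eqref{eq-Basic-Chern-Class-Inequality-Higgs-Sheaves-2}; together with $c_1(\mathcal F)^2\cdot K_X^{k-2}\equiv K_X^{k}$ — deduced from $c_1(\mathcal F)\cdot K_X^{k-2}\equiv K_X^{k-1}$ by the $\Q$-Chern-class/Hodge-index manipulations of \cite{GKPT_MY_Inequality_Uniformisation_of_Canonical_models} — and the rewritten hypothesis, this yields $c_2(\mathcal F)\cdot K_X^{k-2}\equiv c_2(X)\cdot K_X^{k-2}$, i.e. (3). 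Symmetrically, $c_1(\mathcal G)\cdot K_X^{k-2}\equiv0$ forces $c_1(\mathcal G)^2\cdot K_X^{k-2}\equiv0$, so $\Delta(\mathcal G)\cdot K_X^{k-2}\cdot H_1\cdots H_{n-k}=0$ gives $c_2(\mathcal Q^{\vee\vee})\cdot K_X^{k-2}\equiv0$, i.e. (4). Finally, the graded pieces $\mathcal E_1$ and $\mathcal G$ are $\alpha$-semistable with discriminant vanishing against $K_X^{k-2}\cdot H_1\cdots H_{n-k}$, so Lemma~\ref{lem-bundle-vanishing-dicriminant-universally-stable} upgrades them to $(K_X^{k-2}\cdot H_1\cdots H_{n-k+1})$-semistability for arbitrary ample divisors; since $\mathcal Q\subseteq\mathcal G$ has the same slope, this proves (2), and combining the upgrade for $\mathcal E_1$ with Proposition~\ref{Prop-Properties-of-Simpsons-Higgs-Sheaf} (here $\mathcal F$ is $\alpha$-semistable and $c_1(\mathcal F)\cdot\alpha>0$, so $(\mathcal F\oplus\mathcal O_X,\theta_{\Simp})$ is $\alpha$-stable, hence $(K_X^{k-2}\cdot H_1\cdots H_{n-k+1})$-stable) proves (1).

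The hard part is the slope bound $\mu^{\max}_{\alpha}(\mathcal E,\theta)\le\tfrac1{\nu+1}K_X^{k}\cdot H_1\cdots H_{n-k}$ used at the start — equivalently, the substance of Theorem~\ref{intro-thm-higher-inequalities-cycle-classes}. By Proposition~\ref{Prop-Properties-of-Simpsons-Higgs-Sheaf} it reduces to bounding $c_1(\mathcal M)\cdot\alpha/(\rk\mathcal M+1)$ for a saturated $\mathcal M\subseteq\Omega_X^{[1]}$, and I would split according to $\rk\mathcal M$: if $\rk\mathcal M\ge\nu$ the generic semipositivity theorem \cite{CampanaPaun_GenericSemipositivityCotangentBundle} gives $c_1(\mathcal M)\cdot\alpha\le K_X^{k}\cdot H_1\cdots H_{n-k}$, and the bound follows; if $\rk\mathcal M<\nu$ one must invoke Corollary~\ref{cor-twisted-semistability-cotangent-bundle-asymptotic-nearKX} (Enoki-type twisted semistability for polarisations close to $[K_X]$) together with a degeneration of the $H_i$ towards $[K_X]$ and a descending induction on $k$ that feeds the already-proven instances of Theorem~\ref{intro-thm-higher-inequalities-cycle-classes} back into the argument. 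This intertwining of the two proofs — together with the minor point of verifying the $\alpha$-semistability of $\mathcal F$ and the bookkeeping of $\Q$-Chern classes of the possibly non-$\Q$-Cartier sheaves $\mathcal F$, $\mathcal Q$ — is where the most care is needed.
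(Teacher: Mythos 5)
Your overall architecture matches the paper's: both proofs run Theorem~\ref{thm-Basic-Chern-Class-Inequality-Higgs-Sheaves} on the Simpson Higgs sheaf $(\Omega_X^{[1]}\oplus\mathcal O_X,\theta_\Simp)$ with the multipolarisation $K_X^{k-2}\cdot H_1\cdots H_{n-k}$, force equality via the slope bound, extract $\mathcal F$ and $\mathcal Q$ from the two-step Harder--Narasimhan filtration (correctly identified via Proposition~\ref{Prop-Properties-of-Simpsons-Higgs-Sheaf}), read the numerics off \eqref{eq-Basic-Chern-Class-Inequality-Higgs-Sheaves-2}--\eqref{eq-Basic-Chern-Class-Inequality-Higgs-Sheaves-4} to pin down $\rk\mathcal F=\nu$ and (3)--(4), and upgrade (semi)stability via Lemma~\ref{lem-bundle-vanishing-dicriminant-universally-stable}. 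Two places deserve more care. First, you deduce $c_1(\mathcal F)^2\cdot K_X^{k-2}\equiv K_X^{k}$ (and likewise $c_1(\mathcal Q)^2\cdot K_X^{k-2}\equiv 0$) ``from $c_1(\mathcal F)\cdot K_X^{k-2}\equiv K_X^{k-1}$ by Hodge-index manipulations''; as stated this is not an implication --- Hodge index only bounds the sign of $(c_1(\mathcal F)-K_X)^2\cdot(\text{surface class})$, it does not force vanishing, and $c_1(\mathcal F)$ need not be $\Q$-Cartier. The correct input is simply \eqref{eq-Basic-Chern-Class-Inequality-Higgs-Sheaves-3} at full strength: that identity holds with $L$ ranging over all Weil $\Q$-divisors, so plugging $L=c_1(\mathcal F)-K_X$ (resp.\ $L=c_1(\mathcal Q)$) gives the quadratic vanishing directly. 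Second, your description of the ``hard part'' (the slope bound $\mu^{\max}_\alpha\le\tfrac1{\nu+1}K_X^k\cdot H_1\cdots H_{n-k}$) as a descending induction feeding back ``already-proven instances of Theorem~\ref{intro-thm-higher-inequalities-cycle-classes}'' misidentifies which statement actually carries the induction: what the paper proves and uses is that the instance of the \emph{Lemma} at level $k+1$ (namely, the short exact sequence together with its stability/semistability against $K_X^{k-1}\cdot H_1\cdots H_{n-k}$, which is exactly what the Lemma~\ref{lem-bundle-vanishing-dicriminant-universally-stable} upgrade at level $k+1$ delivers) \emph{is} the $\alpha$-Harder--Narasimhan filtration needed at level $k$, so that $\mu^{\max}_\alpha=\mu_\alpha(\mathcal F\oplus\mathcal O_X)=K_X^k\cdot H_1\cdots H_{n-k}/(\nu+1)$ on the nose; the Enoki-type Corollary~\ref{cor-twisted-semistability-cotangent-bundle-asymptotic-nearKX} and the degeneration of polarisations towards $K_X$ enter only at the induction start $k=\nu$, not at each step. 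Neither point is a fatal flaw, but both need to be made precise to turn your plan into a complete proof.
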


    \subsection{Proof of the higher order inequalities}
    \label{ssec-higher-order-inequalities}
    In this subsection, we will prove Theorem \ref{intro-thm-higher-inequalities-cycle-classes} and Lemma \ref{lem-uniformisation-light} simultaneously by descending induction on $k\geq 2$.

    We start by observing that in case $k\geq \nu(X)+1$ the assertion in Theorem \ref{intro-thm-higher-inequalities-cycle-classes} follows immediately from the inequality
    \begin{equation*}
            c_2(X)\cdot H_1\cdots H_{n-2} \geq 0,
    \end{equation*}
    which holds for any ample divisors $H_1, \ldots, H_{n-2}$ \cite[Thm.\ 1.2]{IMM_3c2=c1^2}, see also \cite[Thm.\ 6.6]{miyaoka_MiyaokaInequality} for the terminal case. Moreover, in case $k = \nu(X)$, the assertion of Theorem \ref{intro-thm-higher-inequalities-cycle-classes} follows immediately from Corollary \ref{cor-proof-higher-order-inequalities-induction-start}, see also
    Theorem \ref{thm-Miyaoka-Type_inequality} and Proposition \ref{prop-semistability-cotangent-bundle-implies-ss-simpson}.

    \begin{proof}[Proof of Theorem \ref{intro-thm-higher-inequalities-cycle-classes} and Lemma \ref{lem-uniformisation-light}]
    Fix a log terminal projective variety $X$ of dimension $n$ such that $K_X$ is nef. Set $\nu := \nu(X)$. We consider the following assertions:
    \begin{itemize}
        \item[($\mathrm{T}_{\mathrm{k}}$)] For any ample divisors $H_1, \ldots, H_{n-k}$ on $X$ and any Higgs subsheaf $\mathcal{E}\subsetneq (\Omega_X^{[1]}\oplus \mathcal{O}_X, \theta_{\Simp})$ it holds that
        \[
        \frac{c_1(\mathcal{E})\cdot K_X^{k-1}\cdot H_1\cdots H_{n-k}}{\rk\mathcal{E}}
        \ \leq \
        \frac{K_X^{k}\cdot H_1\cdots H_{n-k}}{\nu +1}.
        \]
        \item[($\mathrm{I}_{\mathrm{k}}$)] For any ample divisors $H_1, \ldots, H_{n-k}$ on $X$ it holds that
        \[
        \Big(2(\nu+1)c_2(X) - \nu c_1(X)^2\Big) \cdot K_X^{k-2} \cdot H_1 \cdots H_{n-k} \geq 0.
        \]

        \item[($\mathrm{E}_{\mathrm{k}}$)] It holds that $(2(\nu+1)c_2(X) - \nu c_1(X)^2) \cdot K_X^{k-2} \equiv 0$.
        
        \item[($\mathrm{F}_{\mathrm{k}}$)] There is a short exact sequence
        \[
        0 \rightarrow \mathcal{F} \rightarrow \Omega_X^{[1]} \rightarrow \mathcal{Q} \rightarrow 0
        \]
        satisfying properties (0)-(4) in Lemma \ref{lem-uniformisation-light}$_{\mathrm{k}}$.
    \end{itemize}

    In the following, we are going to show, by descending induction on $2\leq k\leq \nu$ that ($\mathrm{F}_{\mathrm{k+1}}$) $\Rightarrow$  ($\mathrm{T}_{\mathrm{k}}$) $\Rightarrow$  ($\mathrm{I}_{\mathrm{k}}$) and that ($\mathrm{T}_{\mathrm{k}}$) and ($\mathrm{E}_{\mathrm{k}}$) together imply ($\mathrm{F}_{\mathrm{k}}$). In particular, this shows Theorem \ref{intro-thm-higher-inequalities-cycle-classes} and Lemma \ref{lem-uniformisation-light}.

    To start the induction, observe that ($\mathrm{T}_{\nu}$) follows from Corollary \ref{cor-proof-higher-order-inequalities-induction-start}, see Proposition \ref{prop-semistability-cotangent-bundle-implies-ss-simpson}.

    Next, let us show that ($\mathrm{T}_{\mathrm{k}}$) $\Rightarrow$  ($\mathrm{I}_{\mathrm{k}}$). Let $\mathcal{E} \subseteq (\Omega_X^{[1]}\oplus\mathcal{O}_X, \theta_{\Simp})$ be the maximal $(K_X^{k-1}\cdot H_1\cdots H_{n-k})$-destabilising Higgs subsheaf. Recall, that $\mathcal{E} = \mathcal{F}\oplus \mathcal{O}_X$ for some saturated subsheaf $\mathcal{F}\subseteq \Omega_X^{[1]}$, see Proposition \ref{Prop-Properties-of-Simpsons-Higgs-Sheaf}. By Theorem \ref{thm-Basic-Chern-Class-Inequality-Higgs-Sheaves}, 
    \begin{align}
        \begin{split}
            2 c_2 & (X)\cdot K_X^{k-2} \cdot H_1\cdots H_{n-k} \\
            & \geq K_X^{k}\cdot  H_1\cdots H_{n-k}  - \mu^{\max}_{\alpha}\Big(\Omega_X^{[1]}\oplus \mathcal{O}_X, \theta_\Simp\Big) \\
           & = K_X^{k}\cdot  H_1\cdots H_{n-k} - \mu_\alpha(\mathcal{E}) \\
           & = \frac{\nu}{\nu+1} \cdot K_X^{k}\cdot  H_1\cdots H_{n-k},
        \end{split}
        \label{eq-computation-45}
    \end{align}
    where we used ($\mathrm{T}_{\mathrm{k}}$) in the final step. Rearranging the terms on either sides yields ($\mathrm{I}_{\mathrm{k}}$).
    Now, assume that the equality in \eqref{eq-computation-45} holds. Set $\mathcal{Q} := \Omega_X^{[1]}/\mathcal{F}$ and $\mathcal{G} := \mathcal{Q}^{\vee\vee}$. Theorem \ref{thm-Basic-Chern-Class-Inequality-Higgs-Sheaves} implies that
    \begin{gather}
     c_1(\mathcal{F})\cdot K_X^{k-2} \equiv K_X^{k-1}     
    \quad \mathrm{and} \quad
     c_1(\mathcal{Q})\cdot K_X^{k-2} \equiv 0,
     \label{eq-proof-higher-order-inequalities-1}
    \\
    \Delta(\mathcal{F}\oplus \mathcal{O}_X)\cdot K_X^{k-2} \equiv 0
    \quad \mathrm{and} \quad
    \Delta(\mathcal{Q})\cdot K_X^{k-2} \equiv 0,
     \label{eq-proof-higher-order-inequalities-2}
    \\
    c_2(\mathcal{F})\cdot K_X^{k-2} \equiv c_2(X)\cdot K_X^{k-2}
    \quad \mathrm{and} \quad
    c_2(\mathcal{Q})\cdot K_X^{k-2} \equiv 0.
     \label{eq-proof-higher-order-inequalities-3}
    \end{gather}
    In particular, by \eqref{eq-proof-higher-order-inequalities-1} and \eqref{eq-proof-higher-order-inequalities-3},  
    \[
    \Big(2(\nu+1) c_2(\mathcal{F}) - \nu c_1(\mathcal{F})^2\Big)\cdot K_X^{k-2} 
    \equiv
    \Big(2(\nu+1) c_2(X) - \nu c_1(X)^2\Big)\cdot K_X^{k-2} 
    \equiv 0.
    \]
    Comparing with \eqref{eq-proof-higher-order-inequalities-2}, we find that $\rk\mathcal{F} = \nu$. Moreover, from Lemma \ref{lem-bundle-vanishing-dicriminant-universally-stable} and \eqref{eq-proof-higher-order-inequalities-2} we infer that $(\mathcal{F}\oplus \mathcal{O}_X, \theta_\Simp)$ is stable with respect to $K_X^{k-2}\cdot H'_1\cdots H'_{n-k+1}$ for any ample divisors $H_1',\ldots, H_{n-k+1}'$. Furthermore, from \eqref{eq-proof-higher-order-inequalities-1} and the generic semipositivity of $\Omega_X^{[1]}$ \cite{CampanaPaun_GenericSemipositivityCotangentBundle}, we deduce that $\mathcal{Q}$ is semistable with respect to $K_X^{k-2}\cdot H'_1\cdots H'_{n-k+1}$. In total, ($\mathrm{F}_{\mathrm{k}}$) holds.

    In summary, we have verified that ($\mathrm{T}_{\mathrm{k}}$) $\Rightarrow$  ($\mathrm{I}_{\mathrm{k}}$) and that (($\mathrm{T}_{\mathrm{k}}$) $+$ ($\mathrm{E}_{\mathrm{k}}$)) $\Rightarrow$ ($\mathrm{F}_{\mathrm{k}}$). Thus, to finish the induction, and, hence, the proof of Theorem \ref{intro-thm-higher-inequalities-cycle-classes} and Lemma \ref{lem-uniformisation-light}, it remains to verify that, for $k< \nu$, ($\mathrm{F}_{\mathrm{k+1}}$) $\Rightarrow$  ($\mathrm{T}_{\mathrm{k}}$). But indeed, by ($\mathrm{F}_{\mathrm{k+1}}$), we have the short exact sequence
    \begin{equation}
        0 \rightarrow \mathcal{F}\oplus\mathcal{O}_X \rightarrow \Omega_X^{[1]}\oplus \mathcal{O}_X \rightarrow \mathcal{Q} \rightarrow 0.
        \label{eq-ses-79}
    \end{equation}
    In fact, this is the $\alpha := (K_X^{k-1}H_1\cdots H_{n-k})$-Harder--Narasimhan filtration of
    \[
    \Big(\Omega_X^{[1]}\oplus \mathcal{O}_X, \theta_\Simp\Big);
    \]
    indeed, both $(\mathcal{F}\oplus\mathcal{O}_X, \theta_\Simp)$ and $\mathcal{Q}^{\vee\vee}$ are $\alpha$-semistable by part (1) and (2) of Lemma \ref{lem-uniformisation-light}$_{\mathrm{k}}$. Moreover, by part (3) and (4) of Lemma \ref{lem-uniformisation-light}$_{\mathrm{k}}$,
    \[
    \mu_\alpha(\mathcal{F}\oplus \mathcal{O}_X) > 0 = \mu_\alpha(\mathcal{Q}).
    \]
    This shows that \eqref{eq-ses-79} is the $\alpha$-Harder--Narasimhan filtration of the Higgs sheaf $(\Omega_X^{[1]}\oplus \mathcal{O}_X, \theta_\Simp)$, as asserted. In particular, ($\mathrm{T}_{\mathrm{k}}$) holds.
    \end{proof}

    \subsection{Proof of the Inequality (\texorpdfstring{\ref{INTRO-eq-Chern-class-inequality}}{})}
    \label{ssec-proof-of-star}

    \begin{proof}[Proof of Corollary \ref{cor-inequality-star} \emph{(}= Inequality \eqref{INTRO-eq-Chern-class-inequality}\emph{)}]
        Let $X$ be a log terminal projective variety of dimension $n$ such that $K_X$ is nef of numerical dimension $\nu$. Then
        \begin{equation}
             \Big(2(\nu+1) c_2(X) - \nu c_1(X)^2\Big) \cdot K_X^{n-2} \geq 0.
             \label{eq-proof-of-star-1}
        \end{equation}
        Indeed, if $\nu = n$, then \eqref{eq-proof-of-star-1} is just \cite[Thm.\ 1.1]{GKPT_MY_Inequality_Uniformisation_of_Canonical_models}; otherwise, \eqref{eq-proof-of-star-1} is equivalent to
        \[
        c_2(X) \cdot K_X^{n-2} \geq 0,
        \]
        which is true, for example by \cite[Thm.\ 1.1]{GKPT_MY_Inequality_Uniformisation_of_Canonical_models}. With this in mind, Corollary \ref{cor-inequality-star} follows immediately from Theorem \ref{intro-thm-higher-inequalities-cycle-classes} by descending induction on 
        \[
        k := \min\left\{j \geq 2 \ | \ \Big(2(\nu+1)c_2(X) - \nu c_1(X)^2\Big)\cdot K_X^{j-2} \neq 0 \right\}.
        \]
    \end{proof}

    \begin{rem}
        \label{rem-Higher-Inequalities-klt-pairs}
        Once more, all results in this section can be generalised to the case of klt pairs; we leave it to the interested reader to verify the details. 
    \end{rem}

\section{On the semiampleness of the canonical divisor in Theorem \ref{INTRO-thm-statement-light}}
    \label{sec-uniformisation-v>=2-Semiampleness}

    In this section, we are concerned with the proof of the semiampleness statement in Theorem \ref{INTRO-thm-statement-light}. Taking into account the results obtained in Section \ref{sec-Higher-Inequalities}, see \eqref{intro-eq-higher-inequalities-assumption-alternative}, the statement reads as follows:
    \begin{thm}
    \label{thm-uniformisation-v=c-semiampleness-part}
        Let $X$ be a log terminal projective variety such that $K_X$ is nef. If
        \[
        2\big(\nu(X)+1\big) \cdot c_2(X) \equiv \nu(X) \cdot c_1(X)^2,
        \]
        then $K_X$ is semiample.
    \end{thm}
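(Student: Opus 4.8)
The plan is to combine Lemma \ref{lem-uniformisation-light} (in the case $k=2$) with the Non-Abelian Hodge correspondence to produce a large linear representation of $\pi_1(X)$, to identify its Shafarevich morphism with the sought-for fibration, and to finish via the canonical bundle formula. We may assume $\nu:=\nu(X)\geq 2$; for $\nu\leq 1$ the semiampleness of $K_{X}$ is the already known case of abundance in numerical dimension at most one, cf.\ \cite{LuTaji_QuasiEtaleQuotientsAbelianVarieties,IwaiMatsumura_ManifoldsWithC2=0}. Since a finite quasi-\'etale cover of a log terminal variety is again log terminal and preserves the nefness of the canonical class, the numerical dimension and the Chern-class identity, and since for such a cover $K_{X'}=\pi^{*}K_{X}$ so that $K_{X}$ is semiample whenever $K_{X'}$ is, I would first replace $X$ by a maximally quasi-\'etale Galois cover and assume $X$ maximally quasi-\'etale. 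The hypothesis $2(\nu+1)c_{2}(X)\equiv\nu c_{1}(X)^{2}$ is precisely assertion $(\mathrm{E}_{2})$ in the notation of Subsection \ref{ssec-higher-order-inequalities}, and it visibly entails $(\mathrm{E}_{k})$ for every $k$, so the descending induction there supplies $(\mathrm{F}_{2})$, i.e.\ Lemma \ref{lem-uniformisation-light} with $k=2$. I thus obtain a short exact sequence
\[
0\longrightarrow\mathcal{F}\longrightarrow\Omega_{X}^{[1]}\longrightarrow\mathcal{Q}\longrightarrow 0
\]
with $\mathcal{F}$ reflexive of rank $\nu$, $c_{1}(\mathcal{F})\equiv K_{X}$, $c_{2}(\mathcal{F})\equiv c_{2}(X)$, such that $(\mathcal{F}\oplus\mathcal{O}_{X},\theta_{\Simp})$ is stable with respect to every multipolarisation; a direct computation gives $\Delta(\mathcal{F}\oplus\mathcal{O}_{X})=2(\nu+1)c_{2}(X)-\nu c_{1}(X)^{2}\equiv 0$, so $(\mathcal{F}\oplus\mathcal{O}_{X},\theta_{\Simp})$ is numerically projectively Higgs flat.

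By Example \ref{ex-num=proj-Higgs-flat-implies-End-Higgs-flat}, the Higgs sheaf $\mathcal{E}\mathrm{nd}(\mathcal{F}\oplus\mathcal{O}_{X})$ is numerically Higgs flat and polystable, hence by the Non-Abelian Hodge correspondence on klt spaces (Theorem \ref{thm-Non-Abelian-Hodge-Correspondence}) it is locally free and corresponds to a semisimple representation; concretely one obtains a reductive representation $\rho\colon\pi_{1}(X)\to\mathrm{PGL}_{\nu+1}(\C)\hookrightarrow\mathrm{GL}_{(\nu+1)^{2}}(\C)$, irreducible as a projective representation since $(\mathcal{F}\oplus\mathcal{O}_{X},\theta_{\Simp})$ is stable. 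The decomposition $\mathcal{F}\oplus\mathcal{O}_{X}$ together with $\theta_{\Simp}$ exhibits $\rho$ as the adjoint of a polarised complex variation of Hodge structure whose Kodaira--Spencer map $\mathcal{T}_{X}\to\mathcal{F}^{\vee}$ is dual to the inclusion $\mathcal{F}\hookrightarrow\Omega_{X}^{[1]}$, and hence of generic rank $\nu$; in particular the associated Griffiths period map, defined on a resolution $\sigma\colon\widetilde{X}\to X$ (with $\pi_{1}(\widetilde{X})\cong\pi_{1}(X)$) and valued in a ball quotient of dimension $\nu$, has $\nu$-dimensional image. I would then invoke Eyssidieux's linear Shafarevich theorem \cite{Eyssidieux_LinearShafarevichConjecture}: the image of $\rho$ being Zariski-dense in the simple group $\mathrm{PU}(\nu,1)$, $\rho$ is \emph{large}, so its Shafarevich morphism $\mathrm{sh}\colon\widetilde{X}\to S$ exists, is surjective with connected fibres onto a normal projective variety with $\dim S=\nu$, and contracts exactly the subvarieties along which $\rho$ --- equivalently the variation of Hodge structure --- is isotrivial. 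Since the fibres of $\sigma$ are simply connected (being fibres of a resolution of log terminal singularities), they are contracted by $\mathrm{sh}$, and therefore $\mathrm{sh}$ descends to a genuine \emph{morphism} $g\colon X\to S$.

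It remains to show that $K_{X}$ is numerically trivial on the general fibre of $g$ and then to invoke the canonical bundle formula. Let $F$ be a general fibre, of dimension $n-\nu$; along $F$ the variation of Hodge structure is constant, so $\theta_{\Simp}|_{F}=0$, which forces the composite $\mathcal{F}|_{F}\hookrightarrow\Omega_{X}^{[1]}|_{F}\to\Omega_{F}^{1}$ to vanish. Hence $\mathcal{F}|_{F}$ embeds into the conormal sheaf $N_{F/X}^{\vee}\cong\mathcal{O}_{F}^{\nu}$ of the general fibre; as $\rk\mathcal{F}|_{F}=\nu$, taking determinants shows that $\det(\mathcal{F}|_{F})$ --- which is numerically $K_{X}|_{F}$ --- embeds into $\mathcal{O}_{F}$, so $-K_{X}|_{F}$ is pseudoeffective. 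Being also nef, $K_{X}|_{F}$ is numerically trivial, whence $K_{F}\equiv K_{X}|_{F}\equiv 0$ and $K_{F}$ is torsion by abundance in numerical dimension zero. Thus $g$ is a fibration whose general fibre admits a good minimal model, and Kawamata's resolution of the Iitaka conjecture in this situation \cite{Kawamata_MinimalModelsKodairaDimension}, together with the canonical bundle formula \cite{Ambro_CanonicalBundleFormula}, yields a $\Q$-linear equivalence $K_{X}\sim_{\Q}g^{*}(K_{S}+\Delta_{S}+M_{S})$ with $(S,\Delta_{S})$ klt and $M_{S}$ nef. Since $\nu(g^{*}L)=\nu(L)$ for a surjective morphism, the nef divisor $K_{S}+\Delta_{S}+M_{S}$ has numerical dimension $\nu=\dim S$ and is therefore big; consequently $\kappa(X)=\dim S=\nu(X)$. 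As $X$ is klt with $K_{X}$ nef, the equality $\kappa(X)=\nu(X)$ implies that $K_{X}$ is semiample (the relevant known case of the abundance conjecture), and pulling this back down the initial quasi-\'etale cover gives the assertion for the original $X$.

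The main obstacle, I expect, is the middle step: making rigorous that the Shafarevich morphism of $\rho$ has exactly $\dim S=\nu$ and that its fibres are precisely the loci on which $\theta_{\Simp}$ restricts to zero, so that the conormal-bundle computation of the third paragraph applies --- this is where the ``ball-quotient type'' of the Higgs bundle $(\mathcal{F}\oplus\mathcal{O}_{X},\theta_{\Simp})$ must be converted, via the theory of Simpson and Eyssidieux, into precise geometric information about $g$; the descent of $\mathrm{sh}$ to a morphism on $X$ (keeping $K_{X}$ nef) and the canonical bundle formula bookkeeping on a possibly singular base $S$ also require care. The remaining inputs --- abundance in numerical dimension zero, Kawamata's additivity result, and the semiampleness of a nef klt canonical class with $\kappa=\nu$ --- are by now standard.
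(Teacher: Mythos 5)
Your overall strategy matches the paper's: reduce to the maximally quasi-\'etale case, obtain the rank-$\nu$ numerically projectively Higgs flat sub-Higgs-sheaf $(\mathcal{F}\oplus\mathcal{O}_X,\theta_{\Simp})$ of $(\Omega_X^{[1]}\oplus\mathcal{O}_X,\theta_{\Simp})$, pass through the Non-Abelian Hodge correspondence to a reductive representation, apply Eyssidieux's Shafarevich theorem, bound the fibre dimension via the conormal sheaf, and finish with Kawamata's abundance for $\kappa=\nu$. However, there are three genuine gaps.

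First, you never establish that $X$ is smooth. The paper proves this (Lemma \ref{lem-uniformisation-light-c=m=v}) by combining the local freeness of $\mathcal{Q}^{\vee\vee}$ from \cite{LuTaji_QuasiEtaleQuotientsAbelianVarieties}, local freeness of $\mathcal{E}$ and $\mathcal{F}$, exactness of the sequence as a sequence of \emph{vector bundles} via \cite[Lem.\ 9.9]{AD_FanoFoliations}, and then the Lipman--Zariski theorem. You only invoke Lemma \ref{lem-uniformisation-light}, which produces reflexive sheaves and a torsion-free quotient but not smoothness of $X$. Since Eyssidieux's theorem is stated for smooth projective varieties, and since your conormal-sheaf computation requires a general fibre $F$ that is smooth with trivial normal bundle and a locally free $\mathcal{F}|_F$, the smoothness of $X$ is needed before either step. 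Your proposed detour through a resolution $\sigma\colon\widetilde X\to X$ and descent of $\mathrm{sh}$ is more delicate than you allow for: you would then need to transport $\mathcal{F}$ and its Higgs structure to $\widetilde X$, which is not automatic, and you would be performing the conormal computation on a variety where $\mathcal{F}$ is only reflexive.

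Second, and more seriously, your dimension claim $\dim S=\nu$ is not justified, and the bigness of $K_S+\Delta_S+M_S$ hinges on it. You infer $\dim S=\nu$ from the period map having $\nu$-dimensional image, but the Shafarevich base of a reductive representation can a priori have dimension strictly larger than the image of the associated period map (unitary summands with infinite monodromy are not contracted by $\mathrm{sh}$ even though they are invisible to the period map). If $\dim S>\nu$, the nef divisor $K_S+\Delta_S+M_S$ has numerical dimension $\nu<\dim S$ and is \emph{not} big, and your argument collapses. The conormal inequality only gives $\dim F\leq n-\nu$, hence $\dim S\geq\nu$, which is the wrong direction. The paper sidesteps this entirely: instead of trying to pin down $\dim Y$, it invokes \cite[Thm.\ 1]{CampanaClaudonEyssidieux_LinearShafarevich} (which applies because the Zariski closures of $\rho$ and $\sigma$ are semisimple) to conclude that any resolution $\widetilde Y$ of the Shafarevich base is of general type, and then runs Kawamata's subadditivity $\kappa(X)\geq\kappa(\widetilde F)+\kappa(\widetilde Y)=0+\dim\widetilde Y\geq\nu$. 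This only needs the one-sided bound $\dim Y\geq\nu$, and avoids the canonical bundle formula entirely at this stage. To make your argument rigorous you would need to either prove $\dim S\leq\nu$ directly (which is not obvious) or switch to the paper's route through the general-type statement.

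Third, the step ``along $F$ the variation of Hodge structure is constant, so $\theta_{\Simp}|_F=0$'' is too quick. A Shafarevich fibre only has $\sigma(\pi_1(F))$ \emph{finite}, not trivial, and you must first pass to a finite \'etale cover $F'$ where the monodromy is trivial (as the paper does via Malcev's residual finiteness theorem and Selberg's lemma) before the period map descends to a holomorphic map $F'\to\B^\nu$ which is then constant by hyperbolicity, forcing $\theta'|_{F'}=0$ and hence $\theta_{\Simp}|_F=0$. This is a fixable omission, but as stated the assertion is not justified. Related: the paper also keeps track of the quotient representation $\rho$ coming from $\mathcal{Q}$, applying Eyssidieux to $\rho\oplus\sigma$, and uses the numerical flatness of $\mathcal{Q}'$ together with the triviality of $\mathcal{F}'$ on $F'$ to conclude that $F'$ is a finite \'etale quotient of an Abelian variety, so that $\kappa(\widetilde F)=0$ is secured without invoking abundance for $\nu=0$. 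You get $K_X|_F\equiv 0$ from the determinant of the conormal embedding, which leads to the same conclusion via abundance in numerical dimension zero, so this part is a viable alternative once the first two gaps are filled.
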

    The proof of Theorem \ref{thm-uniformisation-v=c-semiampleness-part} will take up the rest of this section. As a preparation, let us prove the following Lemma:
    \begin{lem}
    \label{lem-uniformisation-light-c=m=v}
        Let $X$ be a log terminal projective variety such that $K_X$ is nef of numerical dimension $\nu(X) \geq 2$. Assume that $X$ is maximally quasi-\'etale. If
        \[
        2(\nu+1)\cdot c_2(X) \equiv \nu \cdot c_1(X)^2,
        \]
        then $X$ is smooth, not uniruled and there exists a short exact sequence of vector bundles
        \[
        0\rightarrow \mathcal{F} \rightarrow \Omega_X^1 \rightarrow \mathcal{Q} \rightarrow 0
        \]
        on $X$ such that $\mathcal{Q}$ is numerically flat, polystable and $\rk\mathcal{Q} = n- \nu$. Moreover,  the Higgs bundle $(\mathcal{E}, \theta) := (\mathcal{F} \oplus \mathcal{O}_X, \theta_\Simp)$ is numerically projectively Higgs flat and stable.
    \end{lem}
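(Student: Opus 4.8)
The plan is to feed the output of Lemma~\ref{lem-uniformisation-light}, applied with $k=2$, into the Non-Abelian Hodge correspondence of Theorem~\ref{thm-Non-Abelian-Hodge-Correspondence}. Since $\nu\geq 2$, the hypothesis $2(\nu+1)c_2(X)\equiv\nu c_1(X)^2$ is exactly the hypothesis of Lemma~\ref{lem-uniformisation-light} with $k=2$, so we obtain a short exact sequence $0\to\mathcal{F}\to\Omega_X^{[1]}\to\mathcal{Q}\to 0$ having properties (0)--(4) of that lemma. In particular $\rk\mathcal{F}=\nu$, hence $\rk\mathcal{Q}=n-\nu$; and from property~(3) together with the hypothesis one gets $\Delta(\mathcal{F}\oplus\mathcal{O}_X)=2(\nu+1)c_2(\mathcal{F})-\nu c_1(\mathcal{F})^2\equiv 2(\nu+1)c_2(X)-\nu c_1(X)^2\equiv 0$, so, combined with property~(1), the Higgs sheaf $(\mathcal{E},\theta):=(\mathcal{F}\oplus\mathcal{O}_X,\theta_{\Simp})$ is numerically projectively Higgs flat and stable --- this already gives the ``Moreover'' part, up to local freeness of $\mathcal{F}$. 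Finally, since $\theta_{\Simp}$ carries $\Omega_X^{[1]}\oplus\mathcal{O}_X$ into the subsheaf $0\oplus\mathcal{O}_X\subseteq\mathcal{F}\oplus\mathcal{O}_X$, the Higgs field induced on the quotient $\mathcal{Q}$ vanishes identically, so by properties~(2) and~(4) the reflexive Higgs sheaf $(\mathcal{Q}^{\vee\vee},0)$ is numerically Higgs flat.

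Local freeness now follows from Theorem~\ref{thm-Non-Abelian-Hodge-Correspondence}. By Example~\ref{ex-num=proj-Higgs-flat-implies-End-Higgs-flat} the reflexive Higgs sheaf $\mathcal{E}\mathrm{nd}(\mathcal{E})$ is numerically Higgs flat, and it is polystable since $(\mathcal{E},\theta)$ is stable; hence $\mathcal{E}\mathrm{nd}(\mathcal{E})$ is locally free. As $\mathcal{E}\mathrm{nd}(\mathcal{E})\cong\mathcal{E}\mathrm{nd}(\mathcal{F})\oplus\mathcal{F}\oplus\mathcal{F}^{\vee}\oplus\mathcal{O}_X$ as an $\mathcal{O}_X$-module, its direct summand $\mathcal{F}$ is locally free, so $(\mathcal{E},\theta)$ is a Higgs bundle and the ``Moreover'' part is complete. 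Likewise $\mathcal{Q}^{\vee\vee}$ is locally free, by Theorem~\ref{thm-Non-Abelian-Hodge-Correspondence} applied to $(\mathcal{Q}^{\vee\vee},0)$.

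To see that $X$ is smooth, I would first show that $\mathcal{Q}$ is reflexive. Property~(4) gives $c_2(\mathcal{Q}^{\vee\vee})\equiv 0$ and $\Delta(\mathcal{Q})\equiv 0$, and a comparison of the second Chern classes of $\mathcal{Q}$ and $\mathcal{Q}^{\vee\vee}$ (using that $\mathcal{Q}^{\vee\vee}$ is locally free and that effective codimension-two cycles contribute non-negatively) forces $\mathcal{Q}=\mathcal{Q}^{\vee\vee}$; in particular $\mathcal{Q}$ is locally free and numerically flat. Then $\Omega_X^{[1]}$ is an extension of the locally free sheaf $\mathcal{Q}$ by the locally free sheaf $\mathcal{F}$, hence locally free, so the reflexive tangent sheaf $(\Omega_X^{[1]})^{\vee}$ is locally free; by the Lipman--Zariski conjecture for klt spaces this forces $X$ to be smooth. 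Consequently $\Omega_X^{[1]}=\Omega_X^1$, and $0\to\mathcal{F}\to\Omega_X^1\to\mathcal{Q}\to 0$ is a short exact sequence of vector bundles with $\mathcal{Q}$ numerically flat of rank $n-\nu$. That $X$ is not uniruled is immediate, since $K_X$ is nef, hence pseudo-effective, while uniruled varieties have non-pseudo-effective canonical class.

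The remaining assertion --- that the numerically flat bundle $\mathcal{Q}$ is \emph{polystable} --- is, I expect, the main obstacle: a numerically flat bundle need not split its canonical filtration by Hermitian-flat subbundles, so polystability is not a formal consequence of numerical flatness. The plan is to exploit that $(\mathcal{Q}^{\vee\vee},0)$ is a graded piece of the Harder--Narasimhan filtration of the Simpson Higgs sheaf $(\Omega_X^1\oplus\mathcal{O}_X,\theta_{\Simp})$, together with the harmonic-bundle structure supplied by Theorem~\ref{thm-Non-Abelian-Hodge-Correspondence} on the polystable objects $\mathcal{E}$ and $\mathcal{E}\mathrm{nd}(\mathcal{E})$: concretely, one shows that the $\pi_1(X)$-representation attached to $\mathcal{Q}$ is semisimple, equivalently that $\mathcal{Q}$ carries a flat Hermitian metric, e.g.\ by analysing the system of Hodge bundles underlying the filtration or by comparing with the representation attached to $\mathcal{E}\mathrm{nd}(\mathcal{E})$. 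Besides this, the only other step requiring a genuine geometric input --- rather than bookkeeping --- is the reflexivity of $\mathcal{Q}$; the rest is a direct combination of Lemma~\ref{lem-uniformisation-light} with the Non-Abelian Hodge correspondence.
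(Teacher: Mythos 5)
Your overall structure matches the paper's: apply Lemma~\ref{lem-uniformisation-light} with $k=2$, exploit the numerical identities to identify $(\mathcal{E},\theta)$ as stable and numerically projectively Higgs flat and $(\mathcal{Q}^{\vee\vee},0)$ as numerically Higgs flat, deduce local freeness from the Non-Abelian Hodge correspondence, and finish with Lipman--Zariski. Extracting local freeness of $\mathcal{F}$ from $\mathcal{E}\mathrm{nd}(\mathcal{E})$ is the same device the paper uses (referring to Cor.\ 2.11 of \cite{IMM_3c2=c1^2}), and getting local freeness of $\mathcal{Q}^{\vee\vee}$ directly from Theorem~\ref{thm-Non-Abelian-Hodge-Correspondence} rather than from Lu--Taji is an equally valid shortcut.

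The genuine gap is the step where you claim $\mathcal{Q}=\mathcal{Q}^{\vee\vee}$, and hence that $\Omega_X^{[1]}$ is locally free, by comparing $c_2(\mathcal{Q})$ with $c_2(\mathcal{Q}^{\vee\vee})$. The torsion sheaf $\mathcal{Q}^{\vee\vee}/\mathcal{Q}$ is supported in codimension $\geq 2$, and the class $c_2(\mathcal{Q})-c_2(\mathcal{Q}^{\vee\vee})$ only detects its codimension-two components. A torsion quotient supported purely in codimension $\geq 3$ is invisible to $c_2$ (e.g.\ the ideal sheaf of a point in $\C^3$ has trivial double dual but is not reflexive, and $c_2$ is unchanged), so your comparison does not force $\mathcal{Q}$ to be reflexive, let alone locally free. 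The paper's route is different: it first records that $0\to\mathcal{F}\to\Omega_X^{[1]}\to\mathcal{Q}\to 0$ is Zariski locally split in codimension two --- a consequence of the vanishing of $c_1(\mathcal{Q})$ and $c_2(\mathcal{Q}^{\vee\vee})$, by Lemma~2.13 of \cite{IMM_3c2=c1^2} --- and then appeals to Lemma~9.9 of \cite{AD_FanoFoliations}, which, given that $\mathcal{F}$ and $\mathcal{Q}^{\vee\vee}$ are already locally free, promotes the codimension-two splitting into the conclusion that all three sheaves in the sequence are locally free everywhere. You would need some analogue of this splitting-and-extension step to control codimension $\geq 3$.

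The polystability of $\mathcal{Q}$, which you leave open, is not in fact a sticking point: once $X$ is smooth and not uniruled and $\mathcal{Q}$ is a numerically flat quotient bundle of $\Omega_X^1$, the paper cites Lemma~2.1 of \cite{PereiraTouzet_FoliationsVanishingChernClasses}, which says precisely that such a quotient carries a flat hermitian metric, hence is polystable. Your plan to show that the associated $\pi_1(X)$-representation is semisimple is essentially what that reference establishes; you should cite it rather than reprove it.
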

    \begin{proof}
        By Lemma \ref{lem-uniformisation-light}, there exists a short exact sequence
        \begin{equation}
            0 \rightarrow \mathcal{F} \rightarrow \Omega_X^{[1]} \rightarrow \mathcal{Q} \rightarrow 0,
            \label{eq-3c2=c1^2-ses}
        \end{equation}
        where $\mathcal{F}$ is a reflexive coherent sheaf of rank $\nu$ and $\mathcal{Q}$ is torsion-free. Moreover, the Higgs sheaf $(\mathcal{E}, \theta) := (\mathcal{F}\oplus \mathcal{O}_X, \theta_\Simp)$ is stable with respect to any multipolarisation, while the sheaf $\mathcal{Q}$ is semistable with respect to any multipolarisation. Furthermore,
        \[
        c_2(\mathcal{F}) \equiv c_2(X),\quad
        c_2(\mathcal{Q}^{\vee\vee}) \equiv 0,\quad
        c_1(\mathcal{F}) \equiv K_X\quad \mathrm{and} \quad
        c_1(\mathcal{Q}) \equiv 0.
        \]
        In particular, $(\mathcal{E}, \theta)$ is numerically projectively Higgs flat, $\mathcal{Q}^{\vee\vee}$ is numerically flat and \eqref{eq-3c2=c1^2-ses} is Zariski locally split in codimension two, see \cite[Lem.\ 2.13]{IMM_3c2=c1^2}. By \cite[Thm.\ 1.4]{LuTaji_QuasiEtaleQuotientsAbelianVarieties}, the sheaf $\mathcal{Q}^{\vee\vee}$ is locally free. Moreover, by the argument in \cite[Cor.\ 2.11]{IMM_3c2=c1^2}, the same is true of $\mathcal{E}$ and $\mathcal{F}$. Then \cite[Lemma 9.9]{AD_FanoFoliations} shows that the sequence \eqref{eq-3c2=c1^2-ses} is exact and that all sheaves involved are locally free. By the resolution of the Lipman--Zariski conjecture for klt spaces, \cite[Thm.\ 6.1]{GKKP_DifferentialFOrmsLogCanonicalSpaces}, $X$ is smooth. Finally, since $K_X$ is nef, $X$ can not be uniruled \cite[Thm.\ IV.1.9, Ex.\ II.3.1.3]{Kollar_RationalCurvesOnAlgVarieties} and the polystability of $\mathcal{Q}$ follows from \cite[Lem.\ 2.1]{PereiraTouzet_FoliationsVanishingChernClasses}.
    \end{proof}
    We are now in position to prove Theorem \ref{thm-uniformisation-v=c-semiampleness-part}:
    \begin{proof}[Proof of Theorem \ref{thm-uniformisation-v=c-semiampleness-part}]

        We may assume that $X$ is maximally quasi-\'etale. According to Lemma \ref{lem-uniformisation-light-c=m=v}, $X$ is smooth and there exists a short exact sequence of vector bundles
        \begin{equation}
            0\rightarrow \mathcal{F} \rightarrow \Omega_X^1 \rightarrow \mathcal{Q} \rightarrow 0
            \label{eq-ses-123}
        \end{equation}
        on $X$ such that $\rk\mathcal{F} = \nu$ and such that $\mathcal{Q}$ is numerically flat and polystable. Let $\rho\colon \pi_1(X) \rightarrow \mathrm{GL}_{n-\nu}(\C)$ be the corresponding representation, see Theorem \ref{thm-Non-Abelian-Hodge-Correspondence}.
        Moreover,
        $(\mathcal{E}, \theta) := (\mathcal{F} \oplus \mathcal{O}_X, \theta_\Simp)$ is numerically projectively Higgs flat and stable. By a  version of Theorem \ref{thm-Non-Abelian-Hodge-Correspondence}, see \cite{PZZ_ProjectivelyFlatSimpsonCorrespondence}, \cite[Sect.\ 8]{Simpson_VHSandUniformisation}, to $(\mathcal{E}, \theta)$ we can associate an irreducible projective representation $\sigma\colon \pi_1(X) \rightarrow \mathrm{PGL}_{\nu +1}(\C)$. Concretely, by Example \ref{ex-num=proj-Higgs-flat-implies-End-Higgs-flat}, $(\mathcal{E}\mathrm{nd}(\mathcal{E}), \theta)$ is numerically Higgs flat and polystable and, by functoriality of the correspondence in \cite{PZZ_ProjectivelyFlatSimpsonCorrespondence}, the corresponding representation
        $\psi\colon \pi_1(X) \rightarrow \mathrm{GL}_{(\nu +1)^2}(\C)$ satisfies $\psi \cong \sigma \otimes \sigma^\vee$.

        In any case, by Selberg's Lemma \cite[Thm.\ II]{Cassels_SelbergsLemma}, up to replacing $X$ by some finite \'etale cover, we may assume that the images of $\sigma$ and $\rho$ are torsion-free.
        In this situation, Eyssidieux \cite[Thm.\ 2]{Eyssidieux_LinearShafarevichConjecture} showed that the \emph{Shafarevich morphism} $f\colon X \rightarrow Y$ of $\rho\oplus \sigma$ exists. Concretely, $f$ is a fibration which is determined by the following property: $f$ contracts a closed subvariety $Z\subseteq X$ to a point if and only if the image of the compositions
        \[
        \pi_1(Z) \rightarrow \pi_1(X) \xrightarrow{\rho} \mathrm{GL}_{n-\nu}(\C) \quad \mathrm{and} \quad
        \pi_1(Z) \rightarrow \pi_1(X) \xrightarrow{\sigma} \mathrm{PGL}_{\nu +1}(\C)
        \]
        are both finite. Note that the Zariski closures of the images of $\rho, \sigma$ are semisimple algebraic group. Indeed, for $\rho$ this follows from \cite[Lem.\ 2.1, Prop.\ 2.6]{PereiraTouzet_FoliationsVanishingChernClasses} while for $\sigma$ it it true because $\sigma$ is irreducible, see \cite[Prop.\ 3.1.15]{Mul_Thesis}. In particular, by \cite[Thm.\ 1]{CampanaClaudonEyssidieux_LinearShafarevich}, any resolution of singularities of $Y$ is of general type.

        During the rest of this proof, the aim is, morally speaking, to show that $f$ is the Iitaka fibration of $X$. To this end, let $F$ be a general fibre of $f$.

        \vspace{0.5\baselineskip}
            \emph{Claim: $F$ is a finite \'etale quotient of an Abelian variety and $\dim F \leq n-\nu$.}
        \vspace{0.5\baselineskip}

        \noindent
        Assuming the \emph{Claim} for a moment, the theorem immediately follows. Indeed, let $\phi\colon \widetilde{X}\rightarrow X$ and $\varphi\colon \widetilde{Y} \rightarrow Y$ be resolutions of singularities such that the induced rational map $\widetilde{f}\colon \widetilde{X} \dashrightarrow \widetilde{Y}$ is everywhere defined. Note that a general fibre $\widetilde{F}$ of $\widetilde{f}$ is birational to a general fibre of $f$.
        Then, by \cite[Cor.\ 1.2]{Kawamata_MinimalModelsKodairaDimension}, we find that
        \begin{align}
            \begin{split}
                \kappa(X) 
             \geq & \ \kappa\big(\widetilde{X}\big)
            \geq \kappa\big(\widetilde{F}\big) + \kappa\big(\widetilde{Y}\big) 
            = 0 + \dim \widetilde{Y} \\
            & = \dim Y
            = \dim X - \dim F
            \geq n - (n-\nu(X))
            = \nu(X).
            \end{split}
            \label{eq-Iitaka-conjecture}
        \end{align}
        Then, \cite[Thm.\ 6.1]{Kawamata_NefAndGoodDivisorsAreAbundant} shows that $K_X$ is semiample.
        \vspace{0.5\baselineskip}

        It remains to verify the \emph{Claim}. To this end, note that, by definition of $f$,
        \[
        \pi_1(F) \rightarrow \pi_1(X) \xrightarrow{\psi} \mathrm{GL}_{(\nu +1)^2}(\C)
        \]
        has finite image. By Malcev's theorem \cite[Thm.\ 4.2]{Wehrfritz_InfiniteLinearGroups}, $\psi(\pi_1(X))$ is residually finite. Consequently, there exists a subgroup $H\subseteq \pi_1(X)$ of finite index such that 
        \begin{equation}
            \psi(H)\cap \psi\big(\pi_1(F)\big) = \{1\}.
            \label{eq-Malcev}
        \end{equation}
        Let $\pi\colon X'\rightarrow X$ be the finite \'etale cover corresponding to $H$. Let $f'\colon X'\rightarrow Y'$ be the Stein factorisation of $f\circ \pi$ and let $F'$ be a general fibre of $f'$. Then, by \eqref{eq-Malcev}, the composition
        \begin{equation}
          \psi' \colon \pi_1(F') \rightarrow \pi_1(X') \hookrightarrow \pi_1(X) \xrightarrow{\psi} \mathrm{GL}_{(\nu +1)^2}(\C)  
          \label{eq-abundance-54}
        \end{equation}
        vanishes. Set $\mathcal{F}' := \mathcal{F}|_{F'}$, $\mathcal{E}' := \mathcal{E}|_{F'}$ and $\theta' :=\theta|_{F'}$. Recall that, by construction, $(\mathcal{E}\mathrm{nd}(\mathcal{E}), \theta)$ is the numerically Higgs flat Higgs vector bundle associated, via the Non-Abelian Hodge correspondence \ref{thm-Non-Abelian-Hodge-Correspondence}, to $\psi$. Consequently,
        $
        (\mathcal{E}\mathrm{nd}(\mathcal{E}'), \theta') = (\mathcal{E}\mathrm{nd}(\mathcal{E}), \theta)|_{F'}
        $
        is numerically Higgs, associated to $\psi' = \psi|_{\pi_1(F')}$, which is trivial, see \eqref{eq-abundance-54}. We deduce that 
        \begin{equation}
           \mathcal{E}\mathrm{nd}(\mathcal{E}') \cong \big(\mathcal{O}_{F'}\big)^{\oplus(\nu+1)^2} 
           \quad \mathrm{and} \quad
           \theta' = 0.
           \label{eq-abundance-55}
        \end{equation}
        On the other hand, one easily verifies from \ref{con-simpson-cotangent-sheaf} that
        \begin{align*}
            \left(\mathcal{E}\mathrm{nd}(\mathcal{F}')\oplus \mathcal{F}' \oplus \big(\mathcal{F}'\big)^\vee \oplus \mathcal{O}_{F'}\right) \otimes \mathcal{T}_{F'}
            \xrightarrow{\theta'}
        \mathcal{E}\mathrm{nd}(\mathcal{F}')
        \oplus \mathcal{F}' 
        \oplus \big(\mathcal{F}'\big)^\vee
        \oplus \mathcal{O}_{F'}
        \end{align*}
        is given explicitly by the formula
        \[
        (A, \eta, v, f) \otimes w \mapsto \Big(\mathrm{id}\otimes r(\eta)(w), 0, f \cdot (r(-))(w) + (r(A(-)))(w), (r(\eta))(w)\Big),
        \]
        where 
        $
        r\colon \mathcal{F}'\hookrightarrow \Omega_{X'}^{1}\big|_{F'} \rightarrow \Omega^{1}_{F'}
        $
        is the restriction map. Since $\theta'=0$ by \eqref{eq-abundance-55}, we conclude that $r(\mathcal{F}') = 0$, i.e.\ that
        \[
        \mathcal{F}' \subseteq \mathcal{N}^\vee_{F'/X'} = \ker\Big(\Omega_{X'}^{1}\big|_{F'} \rightarrow \Omega^{1}_{F'}\Big).
        \]
        We deduce that $\rk\mathcal{F} \leq \rk \mathcal{N}^\vee_{F'/X'} = n - \dim F'$. Since $\rk\mathcal{F} = \nu$ by \eqref{eq-ses-123}, a simple rearrangement shows that 
        $
        n - \nu \geq \dim F' = \dim F
        $,
        as required. This proves the dimension estimate in the \emph{Claim}. It remains to verify that the fibres of $f$ are \'etale quotients of Abelian varieties. To this end, we consider once more the short exact sequence
        \[
        0\rightarrow \mathcal{F}' \rightarrow \Omega_{X'}^1|_{F'} \rightarrow \mathcal{Q}' \rightarrow 0.
        \]
        Recall that $\mathcal{Q}'$ is numerically flat. Moreover, by \eqref{eq-abundance-55}, the vector bundle $\mathcal{F}'$ is a direct summand of a trivial vector bundle. It follows that both $\mathcal{F}'$ and its dual are nef \cite[Thm.\ 6.2.12]{lazarsfeld_PositivityII}. Then $\mathcal{F}'$ and, consequently,  $\Omega_{X'}^1|_{F'}$ are numerically flat \cite[Thm.\ 6.2.12]{lazarsfeld_PositivityII}. As $\Omega_{X'}^1|_{F'}$ is an extension of $\Omega^1_{F'}$ and the trivial vector bundle $\mathcal{N}^\vee_{F'/X'}$, we conclude that $\Omega^1_{F'}$ is numerically flat \cite[Thm.\ 6.2.12]{lazarsfeld_PositivityII}. In view of \cite{LuTaji_QuasiEtaleQuotientsAbelianVarieties}, this shows that $F'$ and, consequently, $F$ are finite \'etale quotients of an Abelian variety, thereby finishing the proof of the \emph{Claim} and, hence, of the theorem.
    \end{proof}

\section{Uniformisation: proof of Theorem \ref{INTRO-thm-statement-light} and Theorem \ref{INTRO-thm-statement-strong}}
    \label{sec-proof-of-thm-B}

    \noindent
    The goal of this section is to prove Theorem \ref{INTRO-thm-statement-strong}. Note that, in view of Theorem \ref{thm-uniformisation-v=c-semiampleness-part}, this will immediately imply Theorem \ref{INTRO-thm-statement-light} as well.

    \begin{notation}
    \label{not-uniformisation-codimension-two}
        For the rest of this section, let us fix the following notation: Let $X$ be a log terminal projective variety such that $K_X$ is semiample. We assume that there exists an integer $2\leq k \leq \nu := \nu(X)$ such that
        \[
        \Big(2(\nu + 1)c_2(X) - \nu c_1(X)^2\Big)\cdot K_X^{k-2} \equiv 0.
        \]
        Let $f\colon X \rightarrow Y := X_{\mathrm{can}}$ denote the Iitaka fibration of $X$. Write $K_X \sim_\Q f^*H$ for some ample $\Q$-divisor $H$ on $Y$, 
        fix a sufficiently large and divisible integer $\ell \gg 1$ and let 
        \[
        S= D_1\cap\ldots \cap D_{k-2} \subseteq Y
        \]
        be the complete intersection variety for general hyperplanes $D_1, \ldots, D_{k -2} \in |\ell H|$.
        Denote
        \[
        Z := f^{-1}(S) = f^{-1}(D_1) \cap \ldots \cap f^{-1}(D_{k -2}) \subseteq X.
        \]
        Then $Z$ is a log terminal projective variety of dimension $n-k +2$ and $S$ is a normal projective variety of dimension $\nu - k + 2$. As
        $
        K_Z \sim_{\Q} (1+(k -2)\ell)\cdot K_X|_Z
        $,
        we see that $K_Z$ is semiample and that $f\colon Z \rightarrow S$ is the Iitaka fibration of $Z$. 
    \end{notation}

    \begin{lem}
    \label{lem-uniformisation-structure-cotangent-bundle}
        In the notation of \ref{not-uniformisation-codimension-two}, let $\pi\colon Z'\rightarrow Z$ be a maximally quasi-\'etale cover. Then $Z'$ is smooth. Moreover, there exists a short exact sequence of vector bundles
        \[
        0 \rightarrow \mathcal{F}' \rightarrow \Omega^{[1]}_X\big|_{Z'} \rightarrow \mathcal{Q}' \rightarrow 0,
        \]
        such that
        \begin{itemize}
            \item[(1)] the Higgs vector bundle $(\mathcal{E}', \theta') := (\mathcal{F}' \oplus \mathcal{O}_{Z'}, \theta_{\Simp})$ is numerically projectively Higgs flat and stable; moreover, $\rk\mathcal{F}' =\nu$.
            \item[(2)] $\mathcal{Q}'$ is numerically flat.
        \end{itemize}
        In particular, $\Omega^1_{Z'}$ is nef.
    \end{lem}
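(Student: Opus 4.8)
The plan is to apply Lemma~\ref{lem-uniformisation-light} to $X$ itself, with the given integer $k$, and then to transport the resulting structure first to $Z=f^{-1}(S)$ and then to $Z'$. Since $2\le k\le\nu$ and $\bigl(2(\nu+1)c_2(X)-\nu c_1(X)^2\bigr)\cdot K_X^{k-2}\equiv 0$, Lemma~\ref{lem-uniformisation-light} produces a short exact sequence $0\to\mathcal{F}\to\Omega_X^{[1]}\to\mathcal{Q}\to 0$ satisfying (0)--(4). Combining (3), (4), the hypothesis and $c_1(X)^2=K_X^2$, a direct rearrangement gives $\Delta(\mathcal{F}\oplus\mathcal{O}_X)\cdot K_X^{k-2}\equiv 0$, $\Delta(\mathcal{Q}^{\vee\vee})\cdot K_X^{k-2}\equiv 0$, $c_1(\mathcal{F})\cdot K_X^{k-2}\equiv K_X^{k-1}$ and $c_1(\mathcal{Q})\cdot K_X^{k-2}\equiv 0$. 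I would then restrict everything to $Z$: recall that $Z$ is a general complete intersection of $k-2$ members of the basepoint-free system $f^*|\ell H|$ and that $f^*H\sim_{\Q}K_X$, so that cutting down to $Z$ amounts numerically (up to the constant $\ell^{k-2}$) to intersecting with $K_X^{k-2}$. For general such $Z$ the sequence stays exact with $\mathcal{F}|_Z$ reflexive and $\mathcal{Q}|_Z$ torsion-free, and by property~(2) of the $\Q$-Chern classes all of the above numerical identities descend to $Z$ (against classes restricted from $X$, which will be all that is needed); the semistability statements of Lemma~\ref{lem-uniformisation-light}(1)--(2) --- and the $(K_X^{k-2}\cdot\mathrm{amples})$-Higgs-stability of $(\mathcal{F}\oplus\mathcal{O}_X,\theta_{\Simp})$ --- are carried along by a Mehta--Ramanathan-type restriction theorem for a polarisation twisted by the basepoint-free class $f^*H$ (reducing to curves after Langer), which for a general member of sufficiently high degree even preserves stability.

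Pulling back along the finite quasi-\'etale Galois cover $\pi\colon Z'\to Z$ then produces a short exact sequence $0\to\mathcal{F}'\to\Omega_X^{[1]}|_{Z'}\to\mathcal{Q}'\to 0$ with $\rk\mathcal{F}'=\nu$, with $(\mathcal{Q}')^{\vee\vee}$ numerically flat, and with $(\mathcal{E}',\theta')=(\mathcal{F}'\oplus\mathcal{O}_{Z'},\theta_{\Simp})$ numerically projectively Higgs flat: numerical identities are preserved by property~(1) of the $\Q$-Chern classes, reflexive pull-back along a finite map preserves semistability and polystability, and pull-backs from $X$ of ample divisors restrict and pull back to ample divisors on $Z'$, so one may test all numerical conditions against a multipolarisation coming from $X$ and avoid any Lefschetz-type theorem on $Z'$. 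Moreover $c_1(\mathcal{F}')\cdot\alpha>0$ for the relevant polarisations (as $\nu(Z')=\nu-k+2\ge 2$, so $K_X|_{Z'}$ is nef and not numerically trivial), and $(\mathcal{E}',\theta')$ is Higgs-polystable, being the pull-back along the Galois cover $\pi$ of the Higgs-stable $(\mathcal{F}|_Z\oplus\mathcal{O}_Z,\theta_{\Simp})$. \textbf{The point I expect to require the most care} is upgrading this to Higgs-\emph{stability} on $Z'$: pull-back along a finite cover degenerates stability to polystability, and one must exclude a nontrivial Higgs decomposition of $(\mathcal{E}',\theta')$ --- e.g. by exploiting the nilpotent shape of $\theta_{\Simp}$ together with the distinguished line subsheaf $\mathcal{O}_{Z'}$, or by transporting irreducibility of the associated projective representation along the finite-index (hence Zariski-dense) inclusion $\pi_1(Z'_{\mathrm{reg}})\hookrightarrow\pi_1(Z_{\mathrm{reg}})$.

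For the remaining assertions I would follow the end of the proof of Lemma~\ref{lem-uniformisation-light-c=m=v} essentially verbatim: $(\mathcal{Q}')^{\vee\vee}$ is locally free by \cite[Thm.~1.4]{LuTaji_QuasiEtaleQuotientsAbelianVarieties}, the sheaves $\mathcal{E}'$ and $\mathcal{F}'$ are locally free by the argument of \cite[Cor.~2.11]{IMM_3c2=c1^2}, and \cite[Lem.~9.9]{AD_FanoFoliations} shows the displayed sequence is exact with all three terms locally free; in particular $\Omega_X^{[1]}|_{Z'}$ is locally free. Feeding this into the reflexive conormal sequence $0\to\mathcal{N}'\to\Omega_X^{[1]}|_{Z'}\to\Omega_{Z'}^{[1]}\to 0$, in which $\mathcal{N}'=\pi^{*}\bigoplus_i\mathcal{O}_Z(-f^*D_i)$ is locally free of rank $k-2$ and a subbundle away from codimension $\ge 2$, forces $\Omega_{Z'}^{[1]}$ to be locally free, whence $Z'$ is smooth by the Lipman--Zariski conjecture for klt spaces \cite[Thm.~6.1]{GKKP_DifferentialFOrmsLogCanonicalSpaces}. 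Finally, once $Z'$ is smooth, $\mathcal{E}\mathrm{nd}(\mathcal{E}')$ is numerically Higgs flat by Example~\ref{ex-num=proj-Higgs-flat-implies-End-Higgs-flat}, and the rank-$\nu$ subbundle $\mathcal{H}\mathrm{om}(\mathcal{F}',\mathcal{O}_{Z'})=(\mathcal{F}')^{\vee}\subseteq\mathcal{E}\mathrm{nd}(\mathcal{E}')$ carries the zero Higgs field, so $\mathcal{F}'$ is nef by Proposition~\ref{prop-kernel-Higgs-field-seminegative}; as $\mathcal{Q}'$ is numerically flat and hence nef, the extension $\Omega_X^{[1]}|_{Z'}$ and its quotient $\Omega_{Z'}^1$ are nef as well.

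Apart from the stability issue just flagged, the two other genuinely delicate steps are the basepoint-free version of Mehta--Ramanathan used in the restriction to $Z$, and the passage from local freeness of $\Omega_X^{[1]}|_{Z'}$ to local freeness of $\Omega_{Z'}^{[1]}$ via the conormal sequence; everything else amounts to bookkeeping with $\Q$-Chern classes and to quoting the cited structure results.
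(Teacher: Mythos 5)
Your plan follows the paper's architecture except in the step establishing Higgs-\emph{stability} of $(\mathcal{E}, \theta) := (\mathcal{F}|_Z \oplus \mathcal{O}_Z, \theta_{\Simp})$ on $Z$, and this is where there is a genuine gap. The paper does not obtain this stability from any Mehta--Ramanathan type restriction theorem, and two obstructions make such an appeal problematic. First, $Z$ is cut out by general members of the basepoint-free system $f^*|\ell H|$ with $H$ ample on the canonical model $Y$, not by ample hyperplanes on $X$; the basepoint-free restriction theorem that the paper invokes, [Langer, Thm.~7.3], is a statement about \emph{semistability}, and the paper cites it only to deduce semistability of the quotient $\mathcal{Q}$. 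A stability-preserving version of Mehta--Ramanathan for basepoint-free linear systems and Higgs sheaves is not an available reference, and ``sufficiently high degree'' does not rescue it. Second, and more structurally, restricting a Higgs sheaf from $X$ to $Z$ replaces $\mathcal{T}_X|_Z$-invariance by $\mathcal{T}_Z$-invariance in the class of test subsheaves; this genuinely enlarges what must be ruled out, so $(K_X^{k-2}\cdot H^{n-k+1})$-Higgs-stability of $(\mathcal{E}_X, \theta_{\Simp})$ on $X$ does not formally restrict. The paper circumvents both points by a direct case analysis on Higgs subsheaves $\mathcal{K}\subseteq(\mathcal{E},\theta)$: if the projection $p\colon \mathcal{K}\hookrightarrow \mathcal{F}\oplus\mathcal{O}_Z\to\mathcal{O}_Z$ vanishes, then $\mathcal{T}_Z$-invariance together with the explicit form of $\theta_{\Simp}$ forces $\mathcal{K}\subseteq\mathcal{N}^\vee_{Z/X}\cong\bigoplus_i\mathcal{O}_Z(-\ell K_X|_Z)$, whose slope is nonpositive, hence $\mu_\alpha(\mathcal{K})\leq 0 < \mu_\alpha(\mathcal{E})$; if $p\neq 0$, one passes to $\mathcal{K}' := \ker p\oplus\mathcal{O}_Z$, which \emph{is} invariant under the full $\mathcal{T}_X|_Z$-Higgs field, and concludes by [Langer, Thm.~4.16]. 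Your proposal has no substitute for this argument, and without it the stability claim in~(1) at the level of $Z$ is unproven.

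The rest of your argument -- pullback to $Z'$, the non-abelian Hodge correspondence to make $\mathcal{Q}'$ and $\mathcal{E}\mathrm{nd}(\mathcal{E}')$ (hence $\mathcal{F}'$) locally free, [AD, Lem.~9.9], the splitting via the conormal sequence of the complete intersection $Z\subseteq X$, Lipman--Zariski, and nefness via the kernel of the Higgs field on $\mathcal{E}\mathrm{nd}(\mathcal{E}')$ -- matches the paper's Steps~2 and~3. Your flagged concern about whether stability of $(\mathcal{E},\theta)$ upgrades to stability (rather than merely polystability) of $(\mathcal{E}',\theta')$ after the quasi-\'etale pullback is legitimate and worth noting: the paper's own Step~2 only asserts polystability of $(\mathcal{E}',\theta')$, and the downstream use in Proposition~\ref{prop-uniformisation-over-complete-intersection-surface} in fact only needs polystability of a further pullback, so nothing breaks, but the discrepancy between the lemma's phrasing and its proof is real.
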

    \begin{proof}
        According to Lemma \ref{lem-uniformisation-light}, there exists a short exact sequence
        \[
        0 \rightarrow \mathcal{F}_X \rightarrow \Omega_X^{[1]} \rightarrow \mathcal{Q}_X \rightarrow 0,
        \]
        where $\mathcal{F}_X$ is a reflexive sheaf of rank $\nu$, $\mathcal{Q}_X$ is torsion-free and the Chern classes of $\mathcal{F}_X$ and  $\mathcal{Q}_X$ satisfy various numerical constraints. Restricting to $Z$, we obtain the short exact sequence
        \begin{equation}
            0 \rightarrow \mathcal{F} \rightarrow \Omega_X^{[1]}\big|_Z \rightarrow \mathcal{Q} \rightarrow 0,
            \label{eq-uniformisation-codimension-two-ses}
        \end{equation}
        where $\mathcal{F} := \mathcal{F}_X|_Z$ and $\mathcal{Q} := \mathcal{Q}_X|_Z$. We can arrange for $\mathcal{F}$ to be reflexive and for $\mathcal{Q}$ to be torsion-free \cite[Cor.\ 1.1.14]{huybrechtsLehn_ModuliOfSheaves}. We will also require the Higgs subsheaf $(\mathcal{E}_X := \mathcal{F}_X \oplus \mathcal{O}_X, \theta_\Simp) \subseteq (\Omega_X^{[1]}\oplus \mathcal{O}_X, \theta_\Simp)$, see Construction \ref{con-simpson-cotangent-sheaf},
        and its restriction to $Z$:
        \begin{equation}
            \mathcal{E} := \mathcal{F} \oplus \mathcal{O}_Z, \quad
        \theta\colon \mathcal{E}\otimes\mathcal{T}_Z 
        \hookrightarrow \mathcal{E}\otimes\mathcal{T}_X\big|_Z
        \xrightarrow{\vartheta_\Simp} \mathcal{E},
        \label{eq-uniformisation-codimension-two-Higgs}
        \end{equation}
        which is itself a reflexive coherent Higgs sheaf on $Z$. We will divide the proof of Proposition \ref{prop-uniformisation-over-complete-intersection-surface} into a series of steps.
        
        \vspace{0.5\baselineskip}
            \emph{Step 1: $\mathcal{Q}^{\vee\vee}$ is numerically flat and $(\mathcal{E}, \theta)$ is numerically projectively Higgs flat and stable.}
            
         \vspace{0.5\baselineskip}   

        \noindent
        By Lemma \ref{lem-uniformisation-light} and the Mehta--Ramanathan restriction theorem for basepoint free linear series \cite[Thm.\ 7.3]{Langer_BogomolovsInequalityHiggsSheavesNormalVarieties}, $\mathcal{Q}$ is semistable with respect to any multipolarisation on $Z$. Moreover, by Lemma \ref{lem-uniformisation-light}, we have the following numerical identities:
        \begin{gather}
            c_2(\mathcal{F}) \equiv c_2(X),\quad
            c_1(\mathcal{F}) \equiv K_X,\quad
            c_2(\mathcal{Q}^{\vee\vee}) \equiv 0\quad \mathrm{and} \quad
            c_1(\mathcal{Q}) \equiv 0.
            \label{eq-uniformisation-codimension-two-numerics}
        \end{gather}
        In particular, $\mathcal{Q}^{\vee\vee}$ is numerically flat and the sequence
        \begin{equation}
            0 \rightarrow \mathcal{F} \rightarrow \Omega_X^{[1]}\big|_Z \rightarrow \mathcal{Q} \rightarrow 0
            \label{eq-uniformisation-codimension-two-ses-3}
        \end{equation}
        is Zariski locally split in codimension 2, see \cite[Lem.\ 2.13]{IMM_3c2=c1^2}. 
        Taking into account \eqref{eq-uniformisation-codimension-two-numerics}, it remains to verify that $(\mathcal{E}, \theta)$ is stable with respect to some multipolarisation on $Z$. Indeed, pick an ample divisor $H$ on $X$ and set $\alpha:= H|_Z^{n-k+1}$.
        By Lemma \ref{lem-uniformisation-light}, $(\mathcal{E}_X, \theta_{\Simp})$ is $(K_X^{k-2}\cdot H^{n-k+1})$-stable.
        Pick a proper Higgs subsheaf $\mathcal{K} \subseteq (\mathcal{E}, \theta)$. If the image of the projection
        \begin{equation}
        p\colon \mathcal{K} \hookrightarrow \mathcal{E} = \mathcal{F} \oplus \mathcal{O}_Z \rightarrow \mathcal{O}_Z
            \label{eq-projection}
        \end{equation}
        is trivial, then $\mathcal{K} \subseteq \mathcal{F}\subseteq \Omega_X^{[1]}|_Z$. In fact, since $\mathcal{K}$ is a Higgs subsheaf, we have that
        \[
        \mathcal{K} \subseteq \ker\left(\Omega_X^{[1]}\big|_Z \rightarrow \Omega_Z^{[1]}\right) = \mathcal{N}^\vee_{Z/X} \cong \bigoplus_{i=1}^{k-2} \mathcal{O}_Z(-\ell K_X|_Z).
        \]
        Since $K_X$ is nef, this implies that $\mu_{\alpha}(\mathcal{K}) \leq 0 < \mu_{\alpha}(\mathcal{E})$. On the other hand, if the image of \eqref{eq-projection} is non-trivial, then we compute
        \[
        \mu_{\alpha}(\mathcal{K}) 
        = \frac{c_1(\mathcal{K})\cdot \alpha}{\rk\mathcal{K}} 
        \leq \frac{c_1(\ker p)\cdot \alpha}{\rk\mathcal{K}} 
        = \mu_{\alpha}\big(\ker p \oplus \mathcal{O}_Z\big).
        \]
        Note that $\mathcal{K}' := \ker p \oplus \mathcal{O}_Z$ satisfies $\theta(\mathcal{K'}\otimes \mathcal{T}_X|_Z) \subseteq \mathcal{K}'$. Then, by \cite[Thm.\ 4.16]{Langer_BogomolovsInequalityHiggsSheavesNormalVarieties}, $\mu_{\alpha}(\mathcal{K}') < \mu_{\alpha}(\mathcal{E})$.
        In summary, $(\mathcal{E}, \theta)$ is $\alpha$-stable and, consequently, numerically projectively Higgs flat. This finishes the proof of \emph{Step 1}. 

        \vspace{0.5\baselineskip}
            \emph{Step 2: Let $\pi\colon Z'\rightarrow Z$ be a maximally quasi-\'etale cover. Then $Z'$ is smooth.}
        \vspace{0.5\baselineskip}

        \noindent
        Denoting
        $\mathcal{F}' := \pi^{[*]}\mathcal{F}$,
        $\mathcal{E}' := \pi^{[*]}\mathcal{E}$, 
        $\mathcal{Q}' := \pi^{[*]}\mathcal{Q}$, and
        $\Omega_X^{[1]}\big|_{Z'} := \pi^{[*]}(\Omega_X^{[1]}|_Z)$,
        we have the following sequence of sheaves on $Z'$:
        \begin{equation}
            0 \rightarrow \mathcal{F}' \rightarrow \Omega_X^{[1]}\big|_{Z'} \rightarrow \mathcal{Q}' \rightarrow 0.
            \label{eq-uniformisation-codimension-two-ses-2}
        \end{equation}
        It follows from \emph{Step 1} that $\mathcal{Q}'$ is numerically flat and that $(\mathcal{E}', \theta')$ is numerically projectively Higgs flat and polystable, see \cite[Lem.\ 3.16, Prop.\ 5.19, Prop.\ 5.20]{GKPT_MY_Inequality_Uniformisation_of_Canonical_models}.
        Moreover, \eqref{eq-uniformisation-codimension-two-ses-2} is exact in codimension 2, see \eqref{eq-uniformisation-codimension-two-ses-3}.
        Theorem \ref{thm-Non-Abelian-Hodge-Correspondence} now shows that the sheaves
        \[
        \mathcal{Q}' \quad \mathrm{and} \quad
        \mathcal{E}\mathrm{nd}(\mathcal{E}') 
        = \mathcal{E}\mathrm{nd}(\mathcal{F}' \oplus \mathcal{O}_{Z'})
        \cong \mathcal{E}\mathrm{nd}(\mathcal{F}') \oplus \mathcal{F}' \oplus \big(\mathcal{F}'\big)^\vee \oplus \mathcal{O}_{Z'}
        \]
        are locally free. In particular, also $\mathcal{F}'$ is locally free \cite[\href{https://stacks.math.columbia.edu/tag/00NX}{Lem.\ 00NX}]{stacks-project}. Then, according to \cite[Lem.\ 9.9]{AD_FanoFoliations}, the sequence \eqref{eq-uniformisation-codimension-two-ses-2} is exact and $\Omega_X^{[1]}|_{Z'}$ is locally free. 
        
        Recall that $Z\subseteq X$ is a complete intersection; hence, we have a locally split short exact sequence
        \[
        0 \rightarrow \mathcal{N}^\vee_{Z/X} \rightarrow \Omega_X^{1}|_Z \rightarrow \Omega_Z^{1} \rightarrow 0.
        \]
        This indicates that, locally, $\Omega_Z^{[1]}$ is a direct summand of $\Omega_X^{[1]}|_Z$. Consequently, locally, $\Omega_{Z'}^{[1]}$ is a direct summand of $\Omega_X^{[1]}|_{Z'}$. It follows that $\Omega_{Z'}^{[1]}$ is locally free \cite[\href{https://stacks.math.columbia.edu/tag/00NX}{Lem.\ 00NX}]{stacks-project}. Finally, by the resolution of the Lipman--Zariski conjecture for klt spaces \cite[Thm.\ 6.1]{GKKP_DifferentialFOrmsLogCanonicalSpaces}, we conclude that $Z'$ is smooth, thereby finishing the proof of \emph{Step 2}.

        \vspace{0.5\baselineskip}
            \emph{Step 3: The cotangent bundle $\Omega_{Z'}^1$ is nef.}
        \vspace{0.5\baselineskip}

        \noindent
        By \eqref{eq-uniformisation-codimension-two-ses-2}, we have the short exact sequence of vector bundles
        \[
        0 \rightarrow \mathcal{F}' \rightarrow \Omega_X^{[1]}\big|_{Z'} \rightarrow \mathcal{Q}' \rightarrow 0
        \]
        on $Z'$. By \emph{Step 1}, $(\mathcal{E}\mathrm{nd}(\mathcal{E}'), \theta')$ is numerically Higgs flat. Now, one easily verifies from \ref{con-simpson-cotangent-sheaf},
        that the sub vector bundle
        \[
        \big(\mathcal{F}'\big)^\vee 
        \subseteq \mathcal{E}\mathrm{nd}(\mathcal{F}') \oplus \mathcal{F}' \oplus \big(\mathcal{F}'\big)^\vee \oplus \mathcal{O}_{Z'}
        \cong \mathcal{E}\mathrm{nd}(\mathcal{E}') 
        \]
        is contained in the kernel of the Higgs field. Then, by Proposition \ref{prop-kernel-Higgs-field-seminegative}, $\mathcal{F}' = (\mathcal{F}')^{\vee\vee}$ is nef. Similarly, $\mathcal{Q}'$ is nef, since it is numerically flat. Consequently, also $\Omega_X^{[1]}|_{Z'}$ is nef \cite[Thm.\ 6.2.12]{lazarsfeld_PositivityII}. Finally, we observe that $\Omega_{Z'}^1$ is nef as it is a quotient of $\Omega_X^{[1]}|_{Z'}$, see \cite[Thm.\ 6.2.12]{lazarsfeld_PositivityII}.
    \end{proof}

    \begin{prop}
    \label{prop-uniformisation-over-complete-intersection-surface}
        In the notation above, there exists a commutative diagram
        \[\begin{tikzcd}
        Z' \arrow[swap]{r}{\varpi} \arrow[bend left=25]{rr}{\pi} \arrow[swap]{rd}{g} &  Z_{S'} \arrow[swap]{r}{\psi'} \arrow{d}{f'} & Z \arrow{d}{f} \\
        & S' \arrow{r}{\psi} & S
        \end{tikzcd}
        \]
        with the following properties:
        \begin{itemize}
            \item[\emph{(1)}] The horizontal arrows are all finite and Galois; moreover, $\pi, \varpi$ and $\psi'$ are quasi-\'etale.
            \item[\emph{(2)}] The varieties $Z'$ and $S'$ are smooth and $Z_{S'}$ coincides with the normalisation of the fibre product $Z\times_S S'$.
            \item[\emph{(3)}] $f'\colon Z_{S'} \rightarrow S'$ is a topological fibre bundle with connected fibres and $g\colon Z'\rightarrow S'$ is a holomorphic fibre bundle with Abelian varieties as fibres.
        \end{itemize}
    \end{prop}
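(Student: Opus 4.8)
The plan is to build the tower of covers in the diagram explicitly, in stages over $S$, and then to pin down the two fibrations using that $\Omega^1_{Z'}$ is nef (Lemma~\ref{lem-uniformisation-structure-cotangent-bundle}) together with Ambro's canonical bundle formula \cite{Ambro_CanonicalBundleFormula} and H\"oring's structure theory for projective manifolds with nef cotangent bundle \cite{Hoering_ManifoldsNefCotangentBundle}. The cover $\psi\colon S'\to S$ will be obtained as a composite of finitely many finite covers of $S$, each achieving one of: (a) making the $\pi$-preimage of a general fibre of $f$ connected, which is done by a Stein factorisation and only contributes a quasi-\'etale cover; (b) removing the multiple fibres of $f\colon Z\to Z_{\mathrm{can}}=S$ by Kawamata's covering trick --- here $\psi$ genuinely ramifies, but the corresponding base change becomes \'etale in codimension one after normalisation; (c) trivialising the finite monodromy both of the family and of the canonical abelianisation covers of its fibres. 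Then $Z_{S'}$ is defined to be the normalisation of $Z\times_S S'$, with its maps $\psi'\colon Z_{S'}\to Z$ and $f'\colon Z_{S'}\to S'$, and, after replacing $Z'$ by a suitable finite \'etale cover if necessary --- again a maximally quasi-\'etale cover of $Z$, so that Lemma~\ref{lem-uniformisation-structure-cotangent-bundle} still applies ---, the map $\pi$ factors as $\pi=\psi'\circ\varpi$ with $\varpi\colon Z'\to Z_{S'}$. Since $\pi$ is quasi-\'etale and a composition of finite morphisms is quasi-\'etale exactly when both factors are, $\psi'$ and $\varpi$ are quasi-\'etale; the Galois property of the horizontal maps is then arranged by passing throughout to Galois closures, which preserve quasi-\'etaleness \cite{gkp_QuasiEtaleCovers}. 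Finally $Z$ is log terminal by hypothesis, hence so is $Z_{S'}$ (as $\psi'$ is quasi-\'etale), while $Y=X_{\mathrm{can}}$ is log terminal by Ambro's canonical bundle formula, hence so is the general complete intersection $S\subseteq Y$.

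Next I would determine the geometry of $g$ and $f'$. Let $F$ be a general fibre of $g$; by generic smoothness $F$ is smooth, $K_F=(g^*(\text{ample}))|_F\equiv 0$, and $\Omega^1_F$, being a quotient of the nef bundle $\Omega^1_{Z'}|_F$, is nef with numerically trivial first Chern class, hence numerically flat; therefore $T_F$ is nef, and with $K_F$ numerically trivial the structure theory of manifolds with nef tangent bundle shows that $F$ is, up to a finite \'etale cover, an abelian variety, while step~(c) guarantees that this \'etale cover has been absorbed, so that $F$ itself is abelian. Feeding $\Omega^1_{Z'}$ nef into H\"oring's results \cite{Hoering_ManifoldsNefCotangentBundle} I expect to obtain that $g$ is everywhere submersive --- no multiple fibres remain after step~(b) ---, whence $S'$ is smooth and \emph{every} fibre of $g$ is an abelian variety. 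Then $g$, and likewise $f'$, which is the quotient of $g$ by $\mathrm{Gal}(\varpi)$, are proper submersions onto the smooth base $S'$, hence $C^\infty$ fibre bundles by Ehresmann's theorem; since all monodromy has been killed in the construction of $S'$, the map $f'$ is a genuine topological fibre bundle with connected fibres.

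Finally I would upgrade $g$ to a holomorphic fibre bundle, i.e.\ show that the family of abelian varieties $g\colon Z'\to S'$ is holomorphically isotrivial. Here Ambro's canonical bundle formula for $g$ reads $K_{Z'}\sim_\Q g^*(K_{S'}+M_{S'})$ --- the discriminant part vanishing because $g$ has no multiple or singular fibres ---, with $M_{S'}\sim_\Q\det g_*\omega_{Z'/S'}$ the (nef) Hodge line bundle of the associated weight-one variation of Hodge structure. I would then exploit that $\Omega^1_{Z'}$ is nef, together with H\"oring's description of the Albanese map of $Z'$, to force $M_{S'}\equiv 0$, equivalently the period map of $g$ to be constant; the family is then holomorphically isotrivial, and since the monodromy and the passage to abelian fibres have already been dealt with, $g$ is a holomorphic fibre bundle with abelian varieties as fibres, as required. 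The main obstacle, I expect, is precisely this last point --- excluding variation of the complex structure of the fibres of the Iitaka fibration --- together with the need, in the previous paragraph, to propagate submersivity of $g$ across the a priori divisorial locus of bad fibres. Both are exactly the places where only the \emph{global} positivity carried by $\Omega^1_{Z'}$ being nef, rather than its restriction to individual fibres, is enough, and where H\"oring's structure theorem and Ambro's formula are indispensable.
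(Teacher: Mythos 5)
Your overall strategy tracks the paper's --- take a maximally quasi-\'etale cover, Stein factor, use nefness of $\Omega^1_{Z'}$ plus H\"oring and Ambro --- but there are two genuine gaps.

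First, your argument that $f'\colon Z_{S'}\to S'$ is a topological fibre bundle does not work as written. You write ``$g$, and likewise $f'$, which is the quotient of $g$ by $\mathrm{Gal}(\varpi)$, are proper submersions onto the smooth base $S'$, hence $C^\infty$ fibre bundles by Ehresmann's theorem.'' But $Z_{S'}=Z'/G_0$, where $G_0$ is the subgroup acting trivially on $S'$; $G_0$ may well have fixed points in $Z'$, in which case $Z_{S'}$ is \emph{singular}, $f'$ is not a submersion, and Ehresmann is not applicable. The assertion that $f'$ is a topological fibre bundle is precisely the point where the paper invokes rigidity of group actions (\cite[Thm.\ 2.4.B']{GP_RigidityOfLattices}, or the analytic slice theorem \cite[Prop.\ 2.5]{Snow_ReductiveGroupActionsOnSteinSpaces}); these results give a local \emph{equivariant} topological trivialisation of the $G_0$-action along the base, which descends to a topological --- not smooth --- local trivialisation of the quotient $f'$. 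Without this input the claim is unsupported.

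Second, the decisive step --- showing that the period map of the abelian fibration $g$ is constant, i.e.\ that the moduli/boundary part $D_{S'}$ in Ambro's formula vanishes --- is left as a hope (``I would then exploit that $\Omega^1_{Z'}$ is nef, together with H\"oring's description of the Albanese map of $Z'$, to force $M_{S'}\equiv 0$''). The paper's actual argument here is quite specific: by \cite[Lem.\ 2.1.(3)]{Hoering_ManifoldsNefCotangentBundle}, after a further \'etale cover $g$ admits a section $\sigma\colon S'\to Z'$; pulling back the numerically projectively Higgs flat, stable Higgs bundle $(\mathcal{E}',\theta')$ from Lemma \ref{lem-uniformisation-structure-cotangent-bundle} along $\sigma$ (and then an adapted cover $\gamma$ of the pair $(S',D_{S'})$) gives a numerically Higgs flat \emph{polystable} Higgs bundle on $S''$; meanwhile Guenancia--Taji \cite[Thm.\ C]{GuenanciaTaji_SemistabilityLogCotangentSheaf} makes the adapted Simpson Higgs bundle $\bigl(\Omega^1_{(S'',D_{S'},\gamma)}\oplus\mathcal{O}_{S''},\theta_{\Simp}\bigr)$ \emph{stable} of the same slope. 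The natural map $s$ between the two is therefore zero or an isomorphism; it cannot be zero (else $\theta_{\Simp}$ on the target would factor through the numerically flat $\gamma^*\sigma^*\mathcal{Q}'$ with trivial Higgs field, a contradiction), and as an isomorphism it factors through $\gamma^*\Omega^1_{S'}$, which forces $D_{S'}=0$. Your sketch neither produces the section, nor the comparison of Higgs bundles, nor the contradiction, and ``$M_{S'}\equiv 0$'' and ``$D_{S'}=0$'' are not quite the same normalisation. Relatedly, your step (c) --- ``trivialising the finite monodromy $\dots$ of the canonical abelianisation covers of its fibres'' --- is exactly the sort of thing \cite[Thm.\ 1.2]{Hoering_ManifoldsNefCotangentBundle} achieves (after a finite \'etale cover of the total space, not the base); as stated it is too vague to count as a proof step.
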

    Recall, that a holomorphic map $f\colon W \rightarrow V$ between complex analytic varieties is a \emph{topological fibre bundle} if for every $x\in W$ there exists an open neighbourhood $f(x) \in U \subseteq V$ such that $f^{-1}(U)$ is homeomorphic to $U\times f^{-1}(f(x))$ over $U$. It is called a \emph{holomorphic fibre bundle} if $f^{-1}(U)\cong U\times f^{-1}(f(x))$ over $U$ as complex analytic varieties.
    \begin{proof}
        Let $\pi\colon Z'\rightarrow Z$ be a maximally quasi-\'etale cover and let
        \[\begin{tikzcd}
        Z' \arrow{r}{\pi} \arrow[swap]{d}{g} & Z \arrow{d}{f} \\
        S' \arrow{r}{\psi} & S
        \end{tikzcd}
        \]
        be the Stein factorisation of $f\circ \pi$. Since $f\colon Z \rightarrow S$ is the Iitaka fibration of $Z$, $g\colon Z' \rightarrow S'$ is the Iitaka fibration of $Z'$. By Lemma \ref{lem-uniformisation-structure-cotangent-bundle}, $\Omega_{Z'}^1$ is nef. Then, according to \cite[Thm.\ 1.2]{Hoering_ManifoldsNefCotangentBundle}, up to replacing $Z'$ by some finite \'etale cover, $g$ is a submersion with Abelian varieties as fibres. In particular, $S'$ is smooth. Moreover, we may assume that $g$ admits a section $\sigma \colon S'\rightarrow Z'$, see \cite[Lem.\ 2.1.(3)]{Hoering_ManifoldsNefCotangentBundle}.

        Next, let us show that $g\colon Z'\rightarrow S'$ is a holomorphic fibre bundle. Indeed, by Ambro's canonical bundle formula \cite[Proof of Thm.\ 2.16]{LazicFloris_CanonicalBundleFormula}, see also \cite[Thm.\ 0.2]{Ambro_CanonicalBundleFormula}, there exists an effective divisor $D_{S'}$ on $S'$ such that the pair $(S', D_{S'})$ is klt and such that
        \begin{equation}
            K_{Z'} \sim_\Q g^*(K_{S'} + D_{S'}).
            \label{eq-uniformisation-over-complete-intersection-surface-can-bun-form}
        \end{equation}
        In particular, the divisor $K_{S'} + D_{S'} = H'$ is ample. Let $\gamma\colon S'' \rightarrow (S', D_{S'})$ be an adapted cover \cite[Prop.\ 2.38]{ClaudonKebekusTaji_GenericSemipositivity}. It follows from \cite[Thm.\ C]{GuenanciaTaji_SemistabilityLogCotangentSheaf}, that the Higgs sheaf $(\Omega^1_{(S'', D_{S'}, \gamma)}\oplus\mathcal{O}_{S''}, \theta_{\Simp})$ is $((H')^{\nu-k+2})$-stable. 
        
        Now, recall that, according to Lemma \ref{lem-uniformisation-structure-cotangent-bundle}, there exists a short exact sequence of Higgs vector bundles on $Z'$:
        \[
        0 \rightarrow (\mathcal{E}', \theta') \rightarrow \Big(\Omega^{[1]}_X\oplus \mathcal{O}_X, \theta_{\Simp}\Big)\Big|_{Z'}
        \rightarrow (\mathcal{Q}', 0)
        \rightarrow 0
        \]
        Recall also, that $(\mathcal{E}', \theta')$ is Higgs numerically projectively flat and stable. Then also $\gamma^*\sigma^*(\mathcal{E}', \theta')$ is Higgs nummerically flat and polystable \cite[Thm.\ 1.5]{JahnkeRadloff_ProjFlatPairs}. Consider the composition
        \begin{align*}
            \begin{split}
                s\colon\gamma^*\sigma^*(\mathcal{E}', \theta')
                \hookrightarrow
                &\gamma^*\sigma^*\big(\Omega^{[1]}_X\oplus \mathcal{O}_X, \theta_{\Simp}\big)\Big|_{Z'} 
                \twoheadrightarrow
                 \gamma^*\sigma^*\big(\Omega_{Z'}^1 \oplus \mathcal{O}_{Z'}, \theta_{\Simp}\big)\\
                &\quad \twoheadrightarrow
                \gamma^*\big(\Omega^1_{S'}\oplus \mathcal{O}_{S'}, \theta_{\Simp}\big)
                \hookrightarrow
                \big(\Omega^1_{(S'', D_{S'}, \gamma)}\oplus \mathcal{O}_{S''}, \theta_{\Simp}\big).
            \end{split}
        \end{align*}
       As $\sigma^*(\mathcal{E}', \theta')$ is a polystable Higgs bundle and since $(\Omega^1_{S'}(\log D_{S'})\oplus \mathcal{O}_{S'}, \theta_{\Simp})$  is a stable Higgs bundle, see Proposition \ref{Prop-Properties-of-Simpsons-Higgs-Sheaf}, of the same slope, $s$ is either an isomorphism or the zero map \cite[Prop.\ 1.2.7]{huybrechtsLehn_ModuliOfSheaves}. Note that in the latter case, the induced morphism $\gamma^*\sigma^*(\mathcal{Q}', 0) \twoheadrightarrow (\Omega^1_{(S'', D_{S'}, \gamma)}\oplus \mathcal{O}_{S''}), \theta_{\Simp})$ would be generically surjective, which contradicts the fact that $\theta_{\Simp}$ is non-trivial. In effect, $s$ is an isomorphism. Since $s$ factors through $\gamma^*(\Omega^1_{S'}\oplus \mathcal{O}_{S'}, \theta_{\Simp})$, it follows that $D_{S'} = 0$. Consequently, $K_{Z'} \sim_\Q g^*K_{S'}$, see \eqref{eq-uniformisation-over-complete-intersection-surface-can-bun-form}.
        Then, it follows from \cite[Thm.\ 3.3]{Ambro_CanonicalBundleFormula}, that $g\colon Z'\rightarrow S'$ is generically a holomorphic fibre product. By \cite[Lem.\ 2.1.(4)]{Hoering_ManifoldsNefCotangentBundle}, up to replacing $Z'$ by some finite \'etale cover, $g$ is in fact a holomorphic fibre bundle, as required. Passing to some further cover, if necessary, we may also assume that $\pi\colon Z'\rightarrow Z$ is Galois, with Galois group $G$ say.

        Since $g$ is the Iitaka fibration of $Z'$, the action of $G$ on $Z'$ induces an action of $G$ on $S'$ such that $g$ becomes $G$-equivariant. Let $G_0\subseteq G$ be the kernel of the action of $G$ on $S'$. We obtain a factorisation
        \[\begin{tikzcd}
        Z' \arrow{r}{\varpi} \arrow[swap]{rd}{g} &  Z'/G_0 \arrow{r}{\psi'} \arrow{d}{f'} & Z \arrow{d}{f} \\
        & S' \arrow{r}{\psi} & S,
        \end{tikzcd}
        \]
        where $\varpi$ is finite Galois with Galois group $G_0$ and $\psi'$ is finite Galois with Galois group $G/G_0$ and $\pi = \psi'\circ \varpi$. Since $\pi$ is quasi-\'etale, so too are $\varpi$ and $\psi'$. Moreover, it follows from \cite[Thm.\ 2.4.B']{GP_RigidityOfLattices}, or, alternatively, the analytic slice theorem \cite[Prop.\ 2.5]{Snow_ReductiveGroupActionsOnSteinSpaces}, that $f'$ is a topological fibre bundle. Finally, $Z'/G_0$ may be identified with the normalisation of the fibre product $Z\times_S S'$; indeed, by universality, there is a natural finite morphism between both spaces and both are normal and finite Galois covers of $Z$ of the same degree.
    \end{proof}

    \noindent
    Next, we observe that the conclusion in Proposition \ref{prop-uniformisation-over-complete-intersection-surface} can be spread out over an open subset of the base of the Iitaka fibration of $X$:
    \begin{cor}
        \label{cor-uniformisation-codimension-two}
        In the notation of \ref{not-uniformisation-codimension-two}, there exists an open subset $U\subseteq Y$ with $\codim (Y\setminus U) \geq \nu - k + 3$ and a finite quasi-\'etale Galois cover $V' \rightarrow V := f^{-1}(U)$ with corresponding Stein factorisation
        \[\begin{tikzcd}
        V' \arrow{r}{\pi} \arrow[swap]{d}{g} & V \arrow{d}{f} \\
        U' \arrow{r}{\psi} & U
        \end{tikzcd}
        \]
        such that $V'$ and $U'$ are smooth. Moreover, we can arrange for there to exist an Abelian variety $A$ such that $V'\cong A\times U'$ over $U'$.
    \end{cor}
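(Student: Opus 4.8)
The plan is to globalise the structure found over the complete intersection $S$ in Proposition~\ref{prop-uniformisation-over-complete-intersection-surface}: a Lefschetz-type theorem transports the cover of $S$ to a quasi-\'etale cover of $Y$, a dimension count confines the locus over which the transported family degenerates to codimension at least $\nu-k+3$, and a monodromy argument makes the family constant there.

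\emph{Step 1 (a product cover over $S$).} Proposition~\ref{prop-uniformisation-over-complete-intersection-surface} yields, over $S$, a finite quasi-\'etale Galois tower $Z'\to Z_{S'}\to Z=f^{-1}(S)$ with $S'$ smooth and $g\colon Z'\to S'$ a holomorphic fibre bundle with Abelian fibres which, by the proof of that proposition, admits a section. The section makes $g$ a projective isotrivial abelian scheme over $S'$, so after a finite \'etale base change $\widehat S\to S'$ it becomes the constant family $A\times\widehat S$ for a fixed Abelian variety $A$ (by standard facts about isotrivial families of polarised Abelian varieties). The cover $A\times\widehat S\to Z$ need not be Galois, so we apply Proposition~\ref{prop-Galois-covers-Abelian-varieties-ball-quotients} to $f\colon Z\to S$ to replace it by a finite quasi-\'etale \emph{Galois} cover $Z_0=A\times\overline S\to Z$; since the Iitaka fibration is canonical, the induced base cover $\overline S\to S$ is then finite quasi-\'etale Galois, and $Z_0$ is the normalisation of $Z\times_S\overline S$. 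As this construction applies to every complete intersection in a dense subfamily, and any two of them that meet force the same $A$, we conclude in addition that the general fibre of $f\colon X\to Y$ is isomorphic to $A$; in particular $f$ is equidimensional over $S$.

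\emph{Step 2 (transporting the cover and locating the degeneration locus).} Since $k\le\nu$, the variety $S$ is a general complete intersection of dimension $\nu-k+2\ge 2$ in the normal variety $Y$, so a Lefschetz-type theorem for fundamental groups of smooth loci gives an isomorphism $\pi_1(S_{\mathrm{reg}})\cong\pi_1(Y_{\mathrm{reg}})$; the normal finite-index subgroup corresponding to $\overline S\to S$ thus determines a finite quasi-\'etale Galois cover $\overline Y\to Y$ whose restriction over the general $S$ recovers $\overline S\to S$. Let $\overline f\colon X_{\overline Y}\to\overline Y$ denote the normalisation of $X\times_Y\overline Y$, with $\overline\pi\colon X_{\overline Y}\to X$; then $\overline\pi$ is quasi-\'etale over the big locus where $f$ is equidimensional, and $X_{\overline Y}|_{\overline S}\cong Z_0=A\times\overline S$ over $\overline S$. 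Hence $\overline f$ is smooth along $\overline f^{-1}(\overline S)$, $X_{\overline Y}$ is smooth along $\overline f^{-1}(\overline S)$, and $\overline Y$ is smooth along $\overline S$ (a complete intersection through a singular point of the ambient space is singular there); since $\overline S$ is a general complete intersection of dimension $\nu-k+2$ in $\overline Y$, it avoids each of the closed sets $\overline Y_{\mathrm{sing}}$, the images under $\overline f$ of $X_{\overline Y,\mathrm{sing}}$ and of the non-smooth locus of $\overline f$, and the non-equidimensional locus of $f$, so all of these have dimension at most $k-3$. Letting $U\subseteq Y$ be the complement of the $\mathrm{Gal}(\overline Y/Y)$-invariant union of the images in $Y$ of all these loci, we get $\codim(Y\setminus U)\ge\nu-k+3$; setting $V:=f^{-1}(U)$, $U':=\overline Y\times_Y U$ and $V':=$ the normalisation of $V\times_U U'$, the map $\pi\colon V'\to V$ is finite quasi-\'etale Galois (equidimensionality of $f$ over $U$ bounds its branch locus), $U'$ and $V'$ are smooth, and $g\colon V'\to U'$ is the Stein factorisation of $f\circ\pi$.

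\emph{Step 3 (triviality over $U'$).} By construction $g\colon V'\to U'$ is a smooth projective morphism with connected fibres whose general fibre is isomorphic to $A$, so every fibre is an Abelian variety (torus fibres are open and closed in a smooth projective family); moreover its restriction over the general complete intersection $\overline S\subseteq U'$ is the constant family $A\times\overline S$, whence the weight-one variation of Hodge structure $R^1 g_*\mathbb{Z}$ has trivial monodromy over $\overline S$, and therefore — using the Lefschetz surjection $\pi_1(\overline S_{\mathrm{reg}})\twoheadrightarrow\pi_1(U'_{\mathrm{reg}})$ — trivial monodromy over $U'_{\mathrm{reg}}$; the associated period map then extends holomorphically over a smooth projective compactification into the bounded Siegel domain, hence is constant, so by Torelli $g$ is an isotrivial family with fibre $A$. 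As in Step 1, such a family becomes the constant family after a finite \'etale base change $U''\to U'$; replacing $U'$ and $V'$ by $U''$ and $V'\times_{U'}U''$, and passing to a Galois closure over $U$ (which preserves the product structure), we obtain $V'\cong A\times U'$ over $U'$, completing the proof. The load-bearing inputs are the Lefschetz isomorphism $\pi_1(S_{\mathrm{reg}})\cong\pi_1(Y_{\mathrm{reg}})$ for a general complete intersection of the possibly singular $Y$, together with the by-now-standard but technical reductions of isotrivial families of (polarised) Abelian varieties to constant ones; the dimension estimates of Step 2 are elementary once one knows that $S$ belongs to a dense family of complete intersections over which Step 1 applies.
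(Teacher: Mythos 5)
Your Step~1 ends by asserting that ``the induced base cover $\overline{S}\to S$ is then finite quasi-\'etale Galois,'' and Step~2 uses this to identify $\overline{S}\to S$ with a normal finite-index subgroup of $\pi_1(S_{\reg})$, which you then transport to $Y$ via a Lefschetz isomorphism $\pi_1(S_{\reg})\cong\pi_1(Y_{\reg})$. This is the decisive gap: although $\pi\colon Z'\to Z$ is quasi-\'etale, the Stein factor $\psi\colon S'\to S$ (and a fortiori $\overline{S}\to S$) is \emph{not} quasi-\'etale in general. Its branch divisor is exactly the boundary divisor $B_Y$ produced by the canonical bundle formula for $f\colon Z\to S$, supported on the discriminant of $f$ -- this is precisely what the paper establishes later in Lemma~\ref{lem-uniformisation-codimension-two-base-of-Iitaka}, where $R_Y = B_Y$ is shown to be the branch divisor of $\psi\colon Y'\to Y$. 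So $\overline{S}\to S$ does not correspond to any finite-index subgroup of $\pi_1(S_{\reg})$, and your Lefschetz step has nothing to act on.

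The paper resolves this by applying the Lefschetz theorem to the \emph{complement of the discriminant}: it proves $\pi_1\bigl(S_{\reg}\setminus(S_{\reg}\cap D_f)\bigr)\cong\pi_1\bigl(Y_{\reg}\setminus(Y_{\reg}\cap D_f)\bigr)$, where $D_f$ is the discriminant divisor, and extends $\psi\colon S'\to S$ to a finite Galois $Y'\to Y$ \'etale over $Y_{\reg}\setminus D_f$ only. The normalisation $X_{Y'}$ of $X\times_Y Y'$ is then formed, and the remaining, genuinely quasi-\'etale layer $Z'\to Z_{S'}$ is transported separately by comparing the homotopy exact sequences of the topological fibre bundles $f'|_{Z_{S'}}$ and $f'|_{V_{U'}}$. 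This two-layer transport is what your proposal collapses, by pre-emptively applying Proposition~\ref{prop-Galois-covers-Abelian-varieties-ball-quotients} over $S$ and pretending the resulting base cover is quasi-\'etale. Two further, secondary points: from Proposition~\ref{prop-uniformisation-over-complete-intersection-surface} one only gets that the general fibre of $f$ is a finite \'etale \emph{quotient} of $A$, not that it is isomorphic to $A$; and your Step~3 period-map argument would need justification of holomorphic extension across the possibly singular boundary $Y'\setminus U'$, whereas the paper simply invokes a direct fibre-bundle argument from \cite[6.9]{Kollar_ShafarevichMapsAndPlurigenera}.
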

    \begin{proof}
        By the Lefschetz hyperplane theorem \cite[Sec.\ II.5.3]{GM_StratifiedMorseTheory}, the natural map
        \[
        \pi_1\big(S_{\reg}\setminus (S_{\reg} \cap D_f)\big)\rightarrow \pi_1\big(Y_{\reg}\setminus (Y_{\reg} \cap D_f)\big)
        \]
        is an isomorphism, where $D_f$ is the discriminant divisor of $f$. Hence, the finite Galois cover $\psi\colon S'\rightarrow S$ in Proposition \ref{prop-uniformisation-over-complete-intersection-surface} extends to a finite Galois cover $\psi\colon Y'\rightarrow Y$, which is \'etale over $Y_{\reg}\setminus D_f$, see also \cite[Prop.\ 3.13]{gkp_QuasiEtaleCovers}. Denote by $X_{Y'}$ the normalisation of the fibre product $X\times_Y Y'$ and let $f'\colon X_{Y'}\rightarrow Y'$ denote the induced morphism.
        
        Now, varying the general hypersurfaces $D_1, \ldots, D_{k -2} \in |\ell H|$ cutting out 
        $
        S= D_1\cap\ldots \cap D_{k-2} \subseteq Y,
        $
        we see that, by Proposition \ref{prop-uniformisation-over-complete-intersection-surface}, there exists an open subset $U\subseteq Y$ with $\codim (Y\setminus U) \geq \nu - k+3$ such that $U':= \psi^{-1}(U)$ is smooth and
        \[
        f'|_{U'}\colon V_{U'} := \big(f'\big)^{-1}\big(U'\big) \rightarrow U'
        \]
        is a topological fibre bundle. Hence, denoting by $F$ a general fibre of $f$, the sequence
        \[
        \pi_1\big(F_{\reg}\big) \rightarrow \pi_1\big((V_{U'})_{\reg}\big) \rightarrow \pi_1\big(U'\big) \rightarrow 0,
        \]
        is exact. Comparing with the analogous exact sequence
        \[
        \pi_1\big(F_{\reg}\big) \rightarrow \pi_1\big((Z_{S'})_{\reg}\big) \rightarrow \pi_1\big(S'\big) \rightarrow 0,
        \]
        and, noting that $\pi_1(U') \cong \pi_1(S')$, by Lefschetz, we see that the natural morphism
        \begin{equation}
            \pi_1\big((Z_{S'}\big)_{\reg}) \rightarrow \pi_1\big((V_{U'})_{\reg}\big)
            \label{eq-uniformisation-codimension-two-iso-pi1}
        \end{equation}
        is an isomorphism. Consequently, the finite quasi-\'etale cover $\varpi\colon Z' \rightarrow Z_{S'}$ in Proposition \ref{prop-uniformisation-over-complete-intersection-surface} extends to a finite quasi-\'etale cover $\varpi\colon V'\rightarrow V_{U'}$. Let us consider the composition 
        \[
        \pi\colon V'\xrightarrow{\varpi} V_{U'} \xrightarrow{\psi'} V:= f^{-1}(U).
        \]
        Varying again $S= D_1\cap\ldots \cap D_{k-2} \subseteq U$, we see that, up to shrinking $U$, the variety $V'$ is smooth and $g\colon V'\rightarrow U'$ is a holomorphic fibre bundle with an Abelian variety $A$ as fibres. But then, as in \cite[6.9]{Kollar_ShafarevichMapsAndPlurigenera}, one sees that, after possbly replacing $U'$ by some finite \'etale cover, $V'\cong A\times U'$, see \cite[Thm.\ 3.3.10]{Mul_Thesis} for the precise argument.
        Finally, by Proposition \ref{prop-Galois-covers-Abelian-varieties-ball-quotients}, we may assume that $\pi\colon V'\rightarrow V$ and, hence, $\psi\colon U'\rightarrow U$ are Galois. This concludes the proof.
    \end{proof}

    Now, by \cite[Thm.\ 3.8]{gkp_QuasiEtaleCovers}, there exists a unique normal projective compactification $U'\hookrightarrow Y'$ and a unique finite Galois morphism $\psi\colon Y'\rightarrow Y$ extending $\psi\colon U'\rightarrow U$. 
    To finish the proof of Theorem \ref{INTRO-thm-statement-strong}, essentially all that remains to do is to show that $Y'$ is a ball quotient; indeed, this will be an immediate consequence of the following Lemma:
    \begin{lem}
    \label{lem-uniformisation-codimension-two-base-of-Iitaka}
        In the notation above, the variety $Y'$ is log terminal and $K_{Y'}$ is ample.
    \end{lem}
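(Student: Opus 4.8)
The plan is to realise $Y'$ as the base of the Iitaka fibration of a log terminal quasi-\'etale cover of $X$ and then to read off its singularities from the canonical bundle formula. Let $X_{Y'}$ denote the normalisation of $X\times_Y Y'$, with induced fibration $f'\colon X_{Y'}\to Y'$, which over $U':=\psi^{-1}(U)$ restricts to the normalisation $X_{U'}$ of $V\times_U U'$. \emph{The first step} is to check that $X_{Y'}\to X$ is quasi-\'etale. Over $V$, the universal property of the fibre product shows that the quasi-\'etale Galois cover $\pi\colon V'\to V$ of Corollary~\ref{cor-uniformisation-codimension-two} factors as $V'\to X_{U'}\to V$; since a composition of two finite morphisms can be quasi-\'etale only if both factors are (Section~\ref{sec-preliminaries}), both $V'\to X_{U'}$ and $X_{U'}\to V$ are quasi-\'etale. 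Hence the locus where $X_{Y'}\to X$ fails to be \'etale is contained in the union of a codimension-$\ge 2$ subset of $V$ with $f^{-1}(Y\setminus U)$, and the latter has codimension $\ge 2$ in $X$ because $\codim_Y(Y\setminus U)\ge\nu-k+3\ge 3$; as $X$ and $X_{Y'}$ are normal, $X_{Y'}\to X$ is quasi-\'etale. In particular $X_{Y'}$ is log terminal \cite[Cor.~2.43]{Kollar_SIngularitiesOfTheMMP} and $K_{X_{Y'}}\sim_\Q (f')^*(\psi^*H)$, with $\psi^*H$ ample.

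\emph{The second step} is to apply Ambro's canonical bundle formula \cite{Ambro_CanonicalBundleFormula,LazicFloris_CanonicalBundleFormula} to $f'\colon X_{Y'}\to Y'$: since $X_{Y'}$ is log terminal, $K_{X_{Y'}}\sim_\Q (f')^*(\psi^*H)$, and the general fibre of $f'$ is an \'etale quotient of an Abelian variety, hence $K$-trivial, we obtain an effective $\Q$-divisor $B$ and a nef $\Q$-divisor $M$ on $Y'$ such that $(Y',B)$ is log terminal and $\psi^*H\sim_\Q K_{Y'}+B+M$. It then suffices to show that $B=0$ and $M\sim_\Q 0$: this gives at once that $K_{Y'}\sim_\Q\psi^*H$ is $\Q$-Cartier and ample, and that $(Y',0)=Y'$ is log terminal.

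\emph{The third step} establishes the vanishing of $B$ and $M$ by restricting to $U'$. There, $X_{U'}$ is quasi-\'etale-covered, over $U'$, by the \emph{product} $V'\cong A\times U'$ of Corollary~\ref{cor-uniformisation-codimension-two}. Since the canonical bundle formula is unchanged under a base-preserving quasi-\'etale cover of the total space, and since $K_{A\times U'}\sim_\Q\mathrm{pr}_{U'}^*K_{U'}$ as $K_A\sim 0$, the discriminant and moduli parts of $f'$ restrict to $0$ over $U'$; that is, $B|_{U'}=0$ and $M|_{U'}\sim_\Q 0$. Finally, $\codim_{Y'}(Y'\setminus U')=\codim_Y(Y\setminus U)\ge\nu-k+3\ge 2$, so $Y'\setminus U'$ contains no prime divisor; hence the effective divisor $B$, which vanishes on $U'$, is zero, and the $\Q$-linear equivalence $M|_{U'}\sim_\Q 0$ extends over $Y'$ to give $M\sim_\Q 0$.

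The step I expect to be delicate is the first one: ensuring that $X_{Y'}\to X$ is genuinely quasi-\'etale, which requires that the ramification of $\psi\colon Y'\to Y$ be matched by multiple fibres of $f$ of the right multiplicity, and that nothing goes wrong along $f$-exceptional or excess-dimensional fibres over $Y\setminus U$. The factorisation of the quasi-\'etale cover $V'\to V$ through $X_{U'}$ handles the former over $U$, and the codimension bound $\codim_Y(Y\setminus U)\ge\nu-k+3$ built into Corollary~\ref{cor-uniformisation-codimension-two} is exactly what is needed to dispose of the latter; once $X_{Y'}$ is known to be log terminal, everything else is a formal consequence of the canonical bundle formula together with the product structure $V'\cong A\times U'$, which is precisely what forces the boundary and moduli contributions on the base to be trivial.
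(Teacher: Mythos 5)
Your strategy is close in spirit to the paper's but rearranges it: the paper applies Ambro's canonical bundle formula to $f\colon X\to Y$ directly and then shows that the discriminant $B_Y$ equals the branch divisor $R_Y$ of $\psi$ and that the moduli part vanishes, deducing $(Y,R_Y)$ klt with $K_Y+R_Y$ ample; you instead pass to the normalised fibre product $X_{Y'}\to Y'$, try to show it is a quasi-\'etale (hence log terminal) cover of $X$, and then apply Ambro there so that both discriminant and moduli part vanish outright. Steps~2 and~3 of your argument are fine and are essentially the same mechanism the paper uses (the product structure $V'\cong A\times U'$ forces the discriminant and moduli contributions to vanish over $U'$, and $\codim_{Y'}(Y'\setminus U')\ge 2$ extends this to $Y'$).

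However, Step~1 has a genuine gap. You assert that $\codim_X f^{-1}(Y\setminus U)\ge 2$ \enquote{because $\codim_Y(Y\setminus U)\ge\nu-k+3\ge 3$}, but this implication fails for a general fibration: the Iitaka fibration $f$ need not be equidimensional, and a priori there could be a prime divisor $E\subseteq X$ mapped by $f$ into the codimension-$\ge 3$ subset $Y\setminus U$ (for instance an irreducible component of some excess-dimensional fibre). In that case $f^{-1}(Y\setminus U)$ has codimension $1$, your factorisation argument (which only controls the branch locus over $V=f^{-1}(U)$) says nothing about ramification along $E$, and $X_{Y'}\to X$ could fail to be quasi-\'etale. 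Nothing in the hypotheses, nor in Corollary~\ref{cor-uniformisation-codimension-two}, rules out such divisors: the corollary controls $\codim_Y(Y\setminus U)$, not $\codim_X f^{-1}(Y\setminus U)$, and Theorem~\ref{INTRO-thm-statement-strong} itself only asserts a \emph{birational} identification of $X$ with $(A\times B)/G$, precisely because $f$ may fail to be an isomorphism (or even equidimensional) outside $f^{-1}(U)$. The paper sidesteps this entirely: since $B_Y$ and $R_Y$ are divisors on $Y$ and $\codim_Y(Y\setminus U)\ge 3$, every prime divisor of $Y$ has its generic point in $U$, so the local quotient description of $f$ from Corollary~\ref{cor-uniformisation-codimension-two} suffices to compare their coefficients; no information about the fibres of $f$ over $Y\setminus U$ is ever needed. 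To repair your proof you would either have to establish $\codim_X f^{-1}(Y\setminus U)\ge 2$ by a separate argument (which would require additional input about the Iitaka fibration), or, as the paper does, work with the discriminant on $Y$ rather than with the cover $X_{Y'}$.
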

    \begin{proof}
        Let $R_Y$ denote the branch divisor of $\psi\colon Y'\rightarrow Y$. Then the assertion of Lemma \ref{lem-uniformisation-codimension-two-base-of-Iitaka} is equivalent to verifying that the pair $(Y, R_Y)$ is klt and $K_Y + R_Y$ is ample, see \cite[Cor.\ 2.43]{Kollar_SIngularitiesOfTheMMP}. 

        Indeed, consider the divisor $B_Y := \sum_{D\subseteq Y} (1-\gamma_D) \cdot [D]$ on $Y$, where
        \[
        \gamma_D := \sup\left\{ t\in \R \big| (X, t\cdot f^*D) \textmd{ is log canonical near the generic point of } f^*D\ \right\}.
        \]
        By Ambro's canonical bundle formula \cite[Proof of Thm.\ 2.16]{LazicFloris_CanonicalBundleFormula}, see also \cite[Thm.\ 0.2]{Ambro_CanonicalBundleFormula}, there exists an effective $\Q$-divisor $M_Y$ on $Y$ such that the pair $(Y, B_Y + M_Y)$ is klt and satisfies
        \begin{equation}
            K_X \sim_\Q f^*(K_Y + B_Y + M_Y).
            \label{eq-uniformisation-codimension-two-base-of-Iitaka-2}
        \end{equation}
        Now, according to Corollary \ref{cor-uniformisation-codimension-two}, locally, over codimension one points of $Y$, the morphism $f\colon X \rightarrow Y$ is given by the quotient of the submersion $g\colon V'\rightarrow U'$ by a finite group $G$. Moreover, the action of $G$ on $V'$ is free in codimension one. Then it follows from  \cite[Cor.\ 2.43]{Kollar_SIngularitiesOfTheMMP} that $B_Y = R_Y$. Consequently, by \eqref{eq-uniformisation-codimension-two-base-of-Iitaka-2},
        \begin{equation}
            K_{V'} \sim_\Q \pi^*K_V \sim_\Q \pi^*f^*(K_Y + B_Y + M_Y) \sim_\Q g^*\psi^*(K_Y + R_Y + M_Y) \sim_\Q g^*(K_Y + M_Y).
            \label{eq-uniformisation-codimension-two-base-of-Iitaka-3}
        \end{equation}
        On the other hand,  by Corollary \ref{cor-uniformisation-codimension-two}, 
        \begin{equation}
            K_{V'} \sim_\Q g^*K_{U'}.
            \label{eq-uniformisation-codimension-two-base-of-Iitaka-4}
        \end{equation}
        Comparing \eqref{eq-uniformisation-codimension-two-base-of-Iitaka-3} and \eqref{eq-uniformisation-codimension-two-base-of-Iitaka-4}, and using that $\codim_Y (Y\setminus U) \geq 2$, we deduce that
        \[
        M_Y = 0.
        \]
        In summary, $(Y, R_Y)$ is klt and $K_Y + R_Y$ is ample, as required.
    \end{proof}
    
    We are now finally in position to complete the proof of Theorem \ref{INTRO-thm-statement-strong}:
    \begin{proof}[Proof of Theorem \ref{INTRO-thm-statement-strong}]
        Let $X$ be a log terminal projective variety of dimension $n$ such that $K_X$ is semiample. We assume that there exists an ample divisor $H$ on $X$ and an integer $2\leq k \leq \nu(X)$ such that
        \[
        \Big(2(\nu+1)c_2(X)  - \nu c_1(X)^2\Big)\cdot K_X^{i-2}\cdot H^{n-i} = 0, \quad \forall i=k, \ldots, n.
        \]
        Equivalently, by Theorem \ref{intro-thm-higher-inequalities-cycle-classes},
        \begin{equation}
        \Big(2(\nu+1)c_2(X)  - \nu c_1(X)^2\Big)\cdot K_X^{k-2} \equiv 0.
            \label{eq-uniformisation-codim-c}
        \end{equation}
        Denote by $f\colon X \rightarrow Y$ the Iitaka fibration of $X$. According to Corollary \ref{cor-uniformisation-codimension-two} there exists an open subset $U\subseteq Y$ with $\codim(Y\setminus U) \geq \nu - k + 3$, a finite, quasi-\'etale Galois cover $V'\rightarrow V := f^{-1}(U)$ and with corresponding Stein factorisation
        \[\begin{tikzcd}
        V' \arrow{r}{\pi} \arrow[swap]{d}{g} & V \arrow{d}{f} \\
        U' \arrow{r}{\psi} & U
        \end{tikzcd}
        \]
        and an Abelian variety $A$ such that
        \begin{equation}
            V' \cong A\times U'.\label{eq-uniformisation-codimension-two-2}
        \end{equation}
        By \cite[Thm.\ 3.8]{gkp_QuasiEtaleCovers}, there exists a unique normal projective compactification $U'\hookrightarrow Y'$ and a unique finite Galois morphism $\psi\colon Y'\rightarrow Y$ extending $\psi\colon U'\rightarrow U$.  According to Lemma \ref{lem-uniformisation-codimension-two-base-of-Iitaka}, $Y'$ is log terminal and $K_{Y'}$ is ample. Moreover, from \eqref{eq-uniformisation-codim-c}, \eqref{eq-uniformisation-codimension-two-2}, and the fact that $\codim(Y\setminus U)\geq 3$, we deduce that
        \[
        \Big(2(\nu+1)c_2\big(Y'\big) - \nu c_1\big(Y'\big)^2\Big) \cdot K_{Y'}^{\nu - 2} = 0.
        \]
        Then, by \cite[Thm.\ 1.5]{GKPT_HarmonicMetricsUniformisation}, there exists a finite quasi-\'etale cover $B\rightarrow Y'$ by a smooth ball quotient variety $B \cong \B^\nu/\Lambda$. Replacing $g\colon V'\rightarrow U'$ by its base change via $B\rightarrow Y'$, we may assume that $Y'\cong \B^n/\Lambda$ is a smooth ball quotient.

        Finally, denote by $G$ the Galois group of $\pi\colon V'\rightarrow V$ and set $U_B := (\psi')^{-1}(U')$. since $A\times B$ contains no rational curves, the action of $G$ on $A\times U_B$ extends to an action of $G$ on $A\times B$ by biregular automorphisms \cite[Cor.\ 1.5]{kollarMori_BirationalGeometry}. Then the birational map
        \[
        X\dashrightarrow (A\times B)/G
        \]
        induced by the isomorphism $V \cong (A\times U_B)/G$ satisfies all the assertions in Theorem \ref{INTRO-thm-statement-strong}.
    \end{proof} 

    \begin{proof}[Proof of Theorem \ref{INTRO-thm-statement-light}]
        According to Theorem \ref{thm-uniformisation-v=c-semiampleness-part}, the canonical divisor $K_X$ is semiample. Thus, Theorem \ref{INTRO-thm-statement-strong} applies and yields the conclusion.
    \end{proof}

    Let us end this section by proposing some questions:
    \begin{ques}
        Are the analogous statements of Theorem \ref{INTRO-thm-statement-light} and \ref{INTRO-thm-statement-strong} for klt pairs $(X, D)$ true? 
    \end{ques}
    Note that the generalisation of \eqref{INTRO-eq-Chern-class-inequality} to the setting of pairs should be easy to obtain, cf.\ Remark \ref{rem-Higher-Inequalities-klt-pairs}. However, treating the equality case seems more challenging. See \cite{CGG_UniformisationOrbifoldPairs,Dailly_FanosMYequality,GrafPatel_UniformisationByBoundedSymmetricDomains} for some related results in this direction.
    \begin{ques}
        In the situation of Theorem \ref{INTRO-thm-statement-strong}, is it true that the birational map $\varphi\colon X \dashrightarrow (A\times B)/G$ is everywhere defined, i.e.\ a morphism?
    \end{ques}
    Note that $(A\times B)/G$ may contain (many) rational curves, so \cite[Cor.\ 1.5]{kollarMori_BirationalGeometry} does not apply.
    \begin{ex}
        Let $B$ be a fake projective plane which admits an automorphism 
        $\sigma \colon B \rightarrow B$
        of order three. Then it is well-known that $B$ is a smooth projective ball quotient surface; moreover, $\sigma$ has precisely three fixed points \cite[Thm.\ 1.1]{Keum_QuotientsFakeProjectivePlanes}. 
       Let $E$ be an elliptic curve, set $A:= E\times E$ and let 
       \[
       \tau\colon A \rightarrow A, \quad (z_1, z_2)\mapsto (z_2, -(z_1 + z_2)),
       \]
       which is an automorphism of order three. Note that $\tau$ has precisely $9$ fixed points. The minimal resolution $\varsigma\colon S \rightarrow A/\langle \tau\rangle$ is a K3 surface of Picard rank $\rho(S) \geq 19$ \cite{Barth_K3Surfaces9cusps}. In particular, $S$ and, consequently, $A/\langle \tau\rangle$ contain infinitely many rational curves, see for example \cite{CGL_CurvesOnK3s}.

       Finally, set $X := (A\times B)/\langle \sigma\times \tau\rangle$. Then $X$ has $27$ singular points and contains infinitely many rational curves.
    \end{ex}

    \section{Proof of Theorem \ref{INTRO-thm-counterexample-catanese}}
\label{sec-examples}

    In this section, we prove Theorem \ref{INTRO-thm-counterexample-catanese} by explicitly constructing a variety $X$ with the required properties. Let us start by exhibiting a singular ball quotient $B = \B^4/\Lambda$ which will act as the canonical model $X_{\mathrm{can}} \cong B$. The following construction is a special case of a classical construction, employed previously by Kazhdan \cite{Khazdan_SomeApplicationsWeilRepresentations} and Rapoport--Zink \cite{RZ_LokaleZetaFunktionen}, see also \cite[Introduction]{Klingler_SymmetricDifferentialsKaehlerGroupsBallQuotients}:

    Set $K := \Q(\sqrt{3})$ and $L:= K(i)$. 
    On the $L$-vector space $V := L^{\oplus 5}$ we consider the anti-symmetric sesquilinear form $h\colon V\times V \rightarrow K$ determined by
        \[
        h\left( \left(
        \begin{matrix}
            x_1 + i y_1\\
            x_2 + i y_2\\
            x_3 + i y_3\\
            x_4 + i y_4\\
            x_5 + i y_5\\
        \end{matrix}
        \right)
        ,
        \left(
        \begin{matrix}
            x_1 + i y_1\\
            x_2 + i y_2\\
            x_3 + i y_3\\
            x_4 + i y_4\\
            x_5 + i y_5\\
        \end{matrix}
        \right)\right)
        := 
        \left(\sum_{i=1}^4 
        x_i^2+y_i^2\right)
        - 
        \sqrt{3} \cdot \Big(x_5^2+y_5^2\Big).
        \]
        Let $\mathcal{O}_L \subseteq L$ denote the ring of integers; concretely, $\mathcal{O}_L = \Z[\zeta_3, i]$, with $\zeta_3 := \frac{-1+\sqrt{3}}{2}$.
        We are going to construct the sought-after ball quotient $B$ as a quotient of $\B^4$ by a subgroup $\Lambda\subseteq \mathrm{SU}(h)$ commensurable with
        \[
        \Gamma := \mathrm{SU}(h) \cap \mathrm{SL}_5\left(\mathcal{O}_L\right) \subseteq \mathrm{SU}(h).
        \]
        Indeed, observe that $\Gamma \subseteq \mathrm{SU}(h)$ is discrete and finitely generated \cite{BorelHC_AithmeticSUbgroupsAlgebraicGroups}. Moreover, $\Gamma$ acts properly discontinuously on $\B^4$ through fractional linear transformations \cite[Lem.\ 9.3.(1)]{GKPT_MY_Inequality_Uniformisation_of_Canonical_models} and the quotient $\B^4/\Gamma$ is compact \cite[Rmk.\ 1.2]{Klingler_SymmetricDifferentialsKaehlerGroupsBallQuotients}. 
        However, the quotient $\B^4/\Gamma$ is not suitable for our purposes because the action of $\Gamma$ on $\B^4$ has fixed points in codimension one.
        
        By Selberg's lemma \cite[Thm.\ II]{Cassels_SelbergsLemma}, there exists a finite index normal subgroup $\Gamma_{\mathrm{tf}}\subseteq \Gamma$ which is torsion-free. Fix a prime ideal ${\mathfrak{p}} \subseteq \mathcal{O}_K$ such that $\zeta_3 \not\equiv 1$ modulo $\mathfrak{p}$ and set
        \[
        \Lambda_{\mathrm{tf}} := \big\{ \gamma \in \Gamma_{\mathrm{tf}} \ \big| \ \gamma \equiv \mathrm{Id} \quad \mathrm{mod}\ \mathfrak{p} \big\}
        \]
        Then $\Lambda_{\mathrm{tf}} \subseteq \Gamma_{\mathrm{tf}}$ is torsion-free and it is a normal subgroup of finite index in $\Gamma$. Hence, the action of $\Lambda_{\mathrm{tf}}$ on $\B^4$ is free, see for example \cite[Lem.\ 9.3.(3)]{GKPT_MY_Inequality_Uniformisation_of_Canonical_models}, and we deduce that
        \[
        B' := \B^4/\Lambda_{\mathrm{tf}}
        \]
        is a smooth projective ball quotient variety of dimension $4$. We wish to slightly enlarge $\Lambda_{\mathrm{tf}}$ to construct a singular quotient of $B'$. To this end, consider the element
        \[
        R := \left(
        \begin{matrix}
            \zeta_3 & 0 & 0 & 0 & 0 \\
            0 & \zeta_3 & 0 & 0 & 0 \\
            0 & 0 & \zeta_3 & 0 & 0 \\
            0 & 0 & 0 & 1 & 0 \\
            0 & 0 & 0 & 0 & 1 \\
        \end{matrix}
        \right)
        \in \Gamma.
        \]
        Note that $R^3 = 1$, so $R$ is torsion. We denote by $\Lambda\subseteq \Gamma$ the subgroup generated by $\Lambda_{\mathrm{tf}}$ and $R$; then it is easy to see that for any element $g\in \Lambda$ there exist unique elements $k\in \Lambda_{\mathrm{tf}}$ and $\delta\in \Z/3\Z$ such that $g = k\cdot R^\delta$, see \cite[Prop.\ 6.5.5]{Mul_Thesis} for details; in particular, $\Lambda_{\mathrm{tf}}\subseteq \Lambda$ is a subgroup of index three.

        We now turn our attention to the compact analytic variety
        \[
        B := \B^4/\Lambda = B'/G,
        \]
        where $G := \langle R\rangle \cong \Z/3\Z$. Our next order of business is to understand the fixed points of the action of $G$ on $B'$; in particular, we claim that there are none in codimension one. To this end, given $x\in \B^4$, we denote by $G_{[x]} \subseteq G$ the isotropy group of $[x] \in B'$. 
        We have a natural identification
        \[
        G_{[x]}\cong \Lambda_x := \{ \gamma \in \Lambda \ | \ \gamma \cdot x = x\} \subseteq \Lambda_{\mathrm{torsion}}.
        \]
        Now, let $x\in \B^4$ be a point such that $G_{[x]}$ is non-trivial. As $G\cong \Z/3\Z$ is simple, we deduce that $G_{[x]} \cong G$. Let $\gamma_x \in \Lambda_x$ be a generator for $G_{[x]}$. Then
        \begin{align}
            \gamma_x^3 = 1,
            \label{eq-generator-isotropy-group}
        \end{align}
        so $\gamma_x \notin \Lambda_{\mathrm{tf}}$. Consequently, there exists $\delta \in \{1, 2\}$ such that $\gamma_x \equiv R^\delta$ modulo $\mathfrak{p}$. Without loss of generality, we may assume that
        \begin{align}
            \gamma_x \equiv R \quad \mathrm{mod}\ \mathfrak{p}.
            \label{eq-reduction-of-stabiliser-mod-p}
        \end{align}
        \begin{prop}
        \label{prop-fixed-points}
            In the notation above, let $x \in Z_x \subseteq \B^4$ be the germ of $(\B^4)^{\Lambda_x}$ passing through $x$. Then $Z_x$ is either a smooth curve or a smooth surface.
        \end{prop}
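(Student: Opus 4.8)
The plan is to reduce the statement to an explicit computation with the linear action of $\gamma_x$ on $\mP^4$.

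First I would note that $\Lambda_x$ has order three: since $G_{[x]}\cong \Lambda_x$ and $G_{[x]}\cong G\cong\Z/3\Z$, the group $\Lambda_x$ is generated by $\gamma_x$, so $(\B^4)^{\Lambda_x} = \mathrm{Fix}(\gamma_x)$. Recall that $\mathrm{SU}(h)$ acts on $\B^4$ through the restriction of the projective-linear action of $\mathrm{SL}_5(\C)$ on $\mP^4$, under the identification $\B^4 \cong \{[v]\in\mP^4 : h(v,v)<0\}$, see \cite[Lem.\ 9.3.(1)]{GKPT_MY_Inequality_Uniformisation_of_Canonical_models}. As $\gamma_x^3 = 1$, the matrix $\gamma_x$ is diagonalisable over $\C$ with eigenspaces $E_1, E_{\zeta_3}, E_{\zeta_3^2}$ of dimensions $a, b, c$ where $a+b+c=5$, and $\mathrm{Fix}(\gamma_x)$ equals the intersection of $\B^4$ with the disjoint union of the projectivised eigenspaces $\mP(E_1)\sqcup\mP(E_{\zeta_3})\sqcup\mP(E_{\zeta_3^2})\subseteq\mP^4$. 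In particular $Z_x$ is automatically smooth, and what remains is to identify its dimension.

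Next I would determine $(a,b,c)$. The characteristic polynomial of $\gamma_x$ is $(T-1)^a(T-\zeta_3)^b(T-\zeta_3^2)^c \in \mathcal{O}_L[T]$; after reducing modulo a prime $\tilde{\mathfrak p}\subseteq\mathcal{O}_L$ lying over $\mathfrak p$ and invoking \eqref{eq-reduction-of-stabiliser-mod-p}, it coincides with the reduction of the characteristic polynomial of $R$, namely $(T-1)^2(T-\zeta_3)^3$. Since $\mathfrak p$ was chosen so that $\zeta_3\not\equiv 1$, the relation $1+\zeta_3+\zeta_3^2=0$ forces $1,\zeta_3,\zeta_3^2$ to stay pairwise distinct in the residue field of $\tilde{\mathfrak p}$; comparing the two factorisations in the polynomial ring over that field then yields $(a,b,c)=(2,3,0)$. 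Hence $\mathrm{Fix}(\gamma_x) = \big(\mP(E_1)\sqcup\mP(E_{\zeta_3})\big)\cap\B^4$, with $\mP(E_1)\cong\mP^1$ and $\mP(E_{\zeta_3})\cong\mP^2$. This reduction-mod-$\mathfrak p$ step is the one place that needs a little care — it is exactly why $\mathfrak p$ was chosen with $\zeta_3\not\equiv 1$ — but it is short; everything else is bookkeeping.

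Finally I would exploit that $\gamma_x$ is $h$-unitary. Eigenspaces for distinct eigenvalues are $h$-orthogonal, so $h$ is the orthogonal direct sum of its non-degenerate restrictions to $E_1$ and $E_{\zeta_3}$; as $h$ has signature $(4,1)$, exactly one of these two restrictions is indefinite — necessarily of signature $(1,1)$ on $E_1$, resp.\ $(2,1)$ on $E_{\zeta_3}$ — and the other is positive definite. For a subspace $W\subseteq\C^5$, the projective linear subspace $\mP(W)$ meets $\B^4 = \{h<0\}$ precisely when $h|_W$ is indefinite, and then $\mP(W)\cap\B^4$ is a complex ball of dimension $\dim_{\C}W-1$. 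Therefore $\mathrm{Fix}(\gamma_x)\cap\B^4$ is connected and equals either $\mP(E_1)\cap\B^4\cong\B^1$ or $\mP(E_{\zeta_3})\cap\B^4\cong\B^2$; since $x$ lies on it, the germ $Z_x$ is this entire set, hence a smooth curve or a smooth surface, respectively.
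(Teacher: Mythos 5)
Your proof is correct, and it takes a genuinely different route from the paper's. Both arguments determine the eigenvalue multiplicities of $\gamma_x$ by reduction modulo $\mathfrak{p}$ (this step is essentially identical), but they diverge afterwards. The paper invokes Luna's \'etale slice theorem to reduce the claim to computing the $1$-eigenspace of $d\gamma_x|_x$; it then uses transitivity of $\mathrm{SU}(h)$ on $\B^4$ to move $x$ to the origin, puts $\gamma_x$ in block-diagonal form (with a $U(4)$ block stabilising $0$ and a scalar), and reads off the eigenvalues of the differential as the ratios $w_i/w_5$. You instead observe directly that $\mathrm{Fix}(\gamma_x) \subseteq \mP^4$ is a disjoint union of projectivised eigenspaces --- so smoothness is immediate, without Luna --- and then determine \emph{which} eigenspace meets $\B^4$ by a signature count: the eigenspaces are $h$-orthogonal because $\gamma_x$ is $h$-unitary, and since $h$ has signature $(4,1)$ exactly one of the two restrictions $h|_{E_1}$, $h|_{E_{\zeta_3}}$ is indefinite. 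Your route is somewhat slicker and yields a sharper global conclusion --- the entire fixed locus $(\B^4)^{\Lambda_x}$ is connected and is a totally geodesic sub-ball $\B^1$ or $\B^2$, rather than merely a smooth germ --- at the cost of a short Hermitian-algebra argument. The paper's computation is more hands-on with the differential and avoids signature bookkeeping. Both are valid; yours is arguably the more conceptual of the two.
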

        \begin{proof}
            By Luna's \'etale slice theorem \cite{luna_EtaleSlicing}, $Z_x$ is smooth and
            \begin{align}
                T_xZ_x = \ker\Big( d\gamma_x|_x - \mathrm{id}_{T_x\B^4}\Big).
                \label{eq-tangent-space-fixed-points}
            \end{align}
            In effect, we need to understand the eigenvalues of $d\gamma_x|_x$. Indeed, we consider the matrix $\gamma_x \in \Lambda \subseteq \mathrm{GL}_5(L)$. Recall that $\gamma_x^3 = 1$, see \eqref{eq-generator-isotropy-group}. Since $\zeta_3\in L$, we see that $\gamma_x$ is diagonalisable, i.e.\ there exists an $L$ basis $v_1, \ldots, v_5$ of eigenvectors for $\gamma_x$, of respective eigenvalue $\lambda_i \in \{1, \zeta_3, \zeta_3^2\}$. Replacing $v_i$ by $\mu \cdot v_i$ for some $\mu \in L$, we may assume that the entries of $v_i$ lie in $\mathcal{O}_L$ and that $v_i \not\equiv 0$ modulo $\mathfrak{p}$.
            
            Now, in view of \eqref{eq-reduction-of-stabiliser-mod-p},
            \[
            \lambda_i\cdot v_i = \gamma_xv_i \equiv Rv_i \quad \mathrm{mod}\ \mathfrak{p},
            \]
            so the reduction modulo $\mathfrak{p}$ of $v_i$ is an eigenvector for $R$. Consequently, the reductions modulo $\mathfrak{p}$ of the eigenvalues of $\gamma_x$ equal the eigenvalues of $R$. As $\zeta_3 \not\equiv 1$ modulo $\mathfrak{p}$ by assumption, we conclude that the eigenvalues of $\gamma_x$, with multiplicities, are given by the eigenvalues of $R$, namely $1, 1, \zeta_3, \zeta_3, \zeta_3$.

            Recall that $\Aut(\B^4) \cong \mathrm{SU}(h)$ acts transitively on $\B^4$. Hence, there exists $t\in \mathrm{SU}(h)$ such that $tx = 0\in \B^4$. Then the matrix $t\gamma_xt^{-1} \in \mathrm{SU}(h) \subseteq \mathrm{GL}_5(\C)$ is of the form
            \[
            t\gamma_xt^{-1} = 
            \left(
                \begin{matrix}
                V & 0  \\
                0 & w  \\
            \end{matrix}
            \right)
            \]
            for some unitary matrix $V \in U(4)$ and a complex number $w\in \C$. By the spectral theorem from linear algebra,  we can find $P \in \mathrm{SU}(4)$ such that the matrix $PVP^{-1}$ is diagonal. Set
            \[
            g := \left( \begin{matrix}
                P & 0 \\
                0 & 1
            \end{matrix}\right)\cdot t \in \mathrm{SU}(h).
            \]
            By construction, the matrix
            \[
            g\gamma_xg^{-1} = 
            \left(
                \begin{matrix}
                w_1 & 0 & 0 & 0 & 0 \\
                0 & w_2 & 0 & 0 & 0 \\
                0 & 0 & w_3 & 0 & 0 \\
                0 & 0 & 0 & w_4 & 0 \\
                0 & 0 & 0 & 0 & w_5 \\
                \end{matrix}
            \right)
            \]
            is diagonal. Then
            \begin{equation}
                g(d\gamma_x|_x)g^{-1} = d\big(g\gamma_xg^{-1}\big)\big|_0 =
            \left(
                \begin{matrix}
                \frac{w_1}{w_5} & 0 & 0 & 0 \\
                0 & \frac{w_2}{w_5} & 0 & 0 \\
                0 & 0 & \frac{w_3}{w_5} & 0 \\
                0 & 0 & 0 & \frac{w_4}{w_5} \\
                \end{matrix}
                \right).
                \label{eq-matrix}
            \end{equation}
            We conclude by observing that $w_1, \ldots, w_5$ are precisely the eigenvalues of $\gamma_x$. In particular, up to permutation, the numbers $w_1, \ldots, w_5$ are simply given by $1, 1, \zeta_3, \zeta_3, \zeta_3$. Consequently, by \eqref{eq-matrix}, the $1$-eigenspace of $d\gamma_x|_x$ has dimension one or two. The result then follows from \eqref{eq-tangent-space-fixed-points}.
        \end{proof}
        \begin{rem}
        \label{rem-fixed-points}
          Observe that $R$ acts on $\B^4$ via
            \[
            R \colon \B^4 \rightarrow \B^4, \quad (z_1, z_2, z_3, z_4) \mapsto \big(\zeta_3\cdot z_1, \zeta_3\cdot z_2, \zeta_3\cdot z_3, z_4\big).
            \]
            In particular, $\Lambda_0 = \langle R \rangle$ and we see that $0 \in Z_0 \subseteq \B^4$ is a smooth curve germ.
        \end{rem}

        \noindent
        We are finally in position to construct the variety $X$ in Theorem \ref{INTRO-thm-counterexample-catanese}:
        \begin{proof}[Proof of Theorem \ref{INTRO-thm-counterexample-catanese}]
        Set $Z := (B')^G$. Let $Z = Z_1\cup \ldots \cup Z_s$ denote the decomposition into connected components. By Proposition \ref{prop-fixed-points}, each component $Z_i$ is smooth and we may assume that there exists an integer $0\leq r\leq s$ such that $Z_1, \ldots, Z_r$ are smooth projective curves while $Z_{r+1}, \ldots, Z_{s}$ are smooth projective surfaces. In fact, $r\geq 1$ according to Remark \ref{rem-fixed-points}.

        Now, let $f'\colon X'\rightarrow B'$ be the blow-up along $Z_1, \ldots, Z_r$. 
        Then $X'$ is a smooth projective variety and, denoting by $E' = E_1' + \ldots + E_r'$ the exceptional divisor,
        \[
        K_{X'} \sim (f')^*K_{B'} + 2\cdot E'.
        \]
        Observe that the fixed points for the induced action of $G$ on $X'$ are given precisely by the (disjoint) union of the divisors $E'_i = \mP(\mathcal{N}_{Z_i/B'})$ and the strict transforms of the surfaces $Z_{r+1}, \ldots, Z_s$. The quotient $\pi\colon X'\rightarrow X := X'/G'$ is a log terminal projective variety and the ramification formula shows that
        \[
        K_{X'} \sim \pi^*K_X + 2\cdot E'.
        \]
        Consequently, denoting by $f\colon X \rightarrow B$ the induced birational morphism, it holds that $K_X \sim_\Q f^*K_B$.
        Since $K_B$ is ample, we deduce that $K_X$ is big and nef but not ample. Moreover, as $f$ is an isomorphism outside of the codimension three subset $Z_1 \cup \ldots \cup Z_r\subseteq B$, we conclude that
        \[
        \Big( 10c_2(X) - 4c_1(X)^2\Big)\cdot K^2_X = \Big( 10c_2(B) - 4c_1(B)^2\Big)\cdot K^2_B = 0.
        \]
        \end{proof}
        \begin{rem}
            While the fixed point locus $Z\subseteq B'$ always contains at least one curves, from the construction it seems plausible that $Z$ might not contain any surface, at least for suitable choice of $\Lambda_{\mathrm{tf}}$. If this were true, the quotient $X$ would even be smooth.
        \end{rem}
        \begin{ques}
            Can one find, for any $n\geq 3$, an example of a smooth projective varieties $X$ of dimension $n$ such that $K_X$ is big and nef but not ample and such that
            \[
            \Big(2(n+1)c_2(X) - nc_1(X)^2\Big)\cdot K_X^{n-2} = 0?
            \]
        \end{ques}
        Note that for $n\leq 2$ this is not possible, see \cite{Miyaoka_SurfacesWith3c2=c1^2}.


\bibliography{Literatur.bib}

 \end{document}